\documentclass[12pt,reqno]{amsart}
\usepackage[margin=1in]{geometry}
\usepackage{dsfont}
% MODIFYING AMSART.CLS:
\makeatletter
\def\section{\@startsection{section}{1}%
	\z@{.7\linespacing\@plus\linespacing}{.5\linespacing}%
	{\bfseries%\normalfont\scshape
		\centering
}}
\def\@secnumfont{\bfseries}
\makeatother
% END OF MODIFICATION OF AMSART.CLS.
\usepackage{amsmath,amssymb,amsthm,graphicx,amsxtra, setspace}
\usepackage{relsize}
\usepackage{mathrsfs}
\usepackage{alltt}
\usepackage{relsize}
\usepackage{hyperref}
\usepackage{aliascnt}
\usepackage{tikz}
\usepackage{multicol}
\usepackage{upgreek}
\usepackage{graphicx,type1cm,eso-pic,color}
\allowdisplaybreaks
\usepackage{scalerel,stackengine}
\stackMath
\newcommand\reallywidehat[1]{%
	\savestack{\tmpbox}{\stretchto{%
			\scaleto{%
				\scalerel*[\widthof{\ensuremath{#1}}]{\kern-.6pt\bigwedge\kern-.6pt}%
				{\rule[-\textheight/2]{1ex}{\textheight}}%WIDTH-LIMITED BIG WEDGE
			}{\textheight}%
		}{0.5ex}}%
	\stackon[1pt]{#1}{\tmpbox}%
}
\parskip 1ex
%% Package Library %%
%%%%%%%%%%%%%%%%%%%%%%%%%%%%%%%%%%%%%%%%%%%%%%%
\usepackage{graphicx,amssymb}
\usepackage{amsmath,amssymb,amsfonts}
\usepackage{hyperref}
\usepackage{amscd}
\usepackage{upgreek}
\usepackage{amsthm}
\usepackage{mathrsfs}
\usepackage{stmaryrd}
\usepackage[toc,page]{appendix}
\usepackage{enumitem}

%% Theorem Enviorment %%
%%%%%%%%%%%%%%%%%%%%%%%%%%%%%%%%%%%%%%%%%%%%%%%

\numberwithin{equation}{section}
\newtheorem{Thm}{Theorem}[section]
\newtheorem{Def}{Definition}[section]
\newtheorem{Lem}{Lemma}[section]
\newtheorem{Pro}{Proposition}[section]

\newtheorem{Ass}{Assumption}[section]
\newtheorem{Rem}{Remark}[section]

%% Userdefined Codes %%
%%%%%%%%%%%%%%%%%%%%%%%%%%%%%%%%%%%%%%%%%%%%%%%%
\renewcommand{\d}{\/\mathrm{d}\/}
\def\mR{\mathbb{R}}
\def\mH{\mathbb{H}} 
\def\mN{\mathbb{N}} 
\def\W{\mathrm{W}} 
\def\F{\mathrm{F}} 

\def\mV{\mathbb{V}}
\def\mL{\mathbb{L}}

\def\C{\mathrm{C}}

\def\S{\mathrm{S}}
\def\mE{\mathbb{E}}

\def\mZ{\mathcal{Z}}
\def\Z{\mathrm{Z}}
\def\K{\mathrm{K}}
\def\mO{\mathcal{O}}
\def\Q{\mathrm{Q}}
\def\G{\mathrm{G}}

\def\mD{\mathcal{D}}
\def\mA{\mathscr{A}} 
\def\mcL{\mathscr{L}} 
\def\macL{\mathcal{L}}
 
\def\mcY{\mathcal{Y}}
\def\mB{\mathscr{B}}
\def\mF{\mathscr{F}}
\def\mP{\mathbb{P}}
\def\mU{\mathscr{U}}
\def\A{{\bf A}} 
\def\B{{\mathrm B}}
\def\X{{\mathrm X}}
\def\Y{{\mathrm Y}}
\def\x{{ \bf x}}
\def\H{{ \mathrm{H}}} 

\def\V{ \mathrm{V}}
\def\A{ \mathrm{A}}
\def\k{{ \bf k}}
\def\y{{ \bf y}}
\def\u{{\bf u}}
\def \v{v}
\def \I{\mathrm{I}}

\def\w{w}
\def\p{{\bf p}}
\def\z{{\bf z}}
\def\U{{\mathrm U}}
\def\M{{\mathrm M}}
\def\ze{{\bf \zeta}}
\def\l{\label}
\def\si{\sigma} 
\def\lam{\lambda} 
\def\ga{\uptheta} 

\def\c{\wedge}
\def\t{\tau_N}
\def\ta{\tau_{N}}
\def\var{\varepsilon}
\def\b{\beta}
\def\al{\alpha}
\def\na{\nabla}

\def\vph{\varphi}
\def\th{\theta}
\def\om{\omega}
\def\Om{\Omega}

\def\pa{\partial}
\def\wi{\widetilde } 

\def\la{\left(}
\def\ra{\right)}
\def\iZ{\int_{\mathcal Z}}
\def\iZm{\int_{\mathcal{Z}_m}}
\def\iO{\int_{\mO}}
\def\i0t{\int_0^t}

\def\f2{\frac{1}{2}}
\def\ds{\displaystyle}

\let\originalleft\left
\let\originalright\right
\renewcommand{\left}{\mathopen{}\mathclose\bgroup\originalleft}
\renewcommand{\right}{\aftergroup\egroup\originalright}

\def\be{\begin{equation}}
\def\ee{\end{equation}}
\def\bea{\begin{eqnarray}}
\def\eea{\end{eqnarray}}
\def\bean{\begin{eqnarray*}}
	\def\eean{\end{eqnarray*}}
\def\bit{\begin{itemize}}
	\def\eit{\end{itemize}} 
\def\bi{\bibitem} 

\def\bas{\begin{assumptions}}
	\def\eas{\end{assumptions}}

\def\no{\nonumber}
\def\en{\end{document}}
%\pagestyle{empty} 

% MATH -----------------------------------------------------------
\newcommand{\Tr}{\mathop{\mathrm{Tr}}}
\renewcommand{\d}{\/\mathrm{d}\/}

\def\w{\textbf{W}^{\varepsilon}_{{\theta}^{\varepsilon}}}

\def\t{t\wedge\tau_N^n}

\def\S{\mathcal{S}}

\def\A{\mathrm{A}}
\def\I{\mathrm{I}}
\def\F{\mathrm{F}}
\def\C{\mathrm{C}}
\def\f{\mathbf{f}}

\def\B{\mathrm{B}}

\def\y{\mathbf{y}}
\def\Y{\mathrm{Y}}
\def\Z{\mathrm{Z}}
\def\E{\mathbb{E}}
\def\X{\mathbb{X}}
\def\x{\mathbf{x}}
\def\z{\mathbf{z}}
\def\v{\mathbf{v}}
\def\V{\mathbb{V}}
\def\w{\mathbf{w}}
\def\W{\mathrm{W}}
\def\G{\mathrm{G}}
\def\Q{\mathrm{Q}}
\def\M{\mathrm{M}}
\def\N{\mathbb{N}}

\def\no{\nonumber}
\def\V{\mathbb{V}}
\def\wi{\widetilde}
\def\Q{\mathrm{Q}}
\def\U{\mathrm{U}}

\def\u{\mathbf{u}}
\def\H{\mathbb{H}}

\renewcommand{\d}{\/\mathrm{d}\/}

% ----------------------------------------------------------------

%% Package Library %%
%%%%%%%%%%%%%%%%%%%%%%%%%%%%%%%%%%%%%%%%%%%%%%%
\usepackage{graphicx,amssymb}
\usepackage{amsmath,amssymb,amsfonts}
\usepackage{hyperref}
\usepackage{amscd}
\usepackage{amsthm}
\usepackage{mathrsfs}
\usepackage{stmaryrd}
\usepackage[toc,page]{appendix}
\usepackage{enumitem}

%% Theorem Enviorment %%
%%%%%%%%%%%%%%%%%%%%%%%%%%%%%%%%%%%%%%%%%%%%%%%

%% Userdefined Codes %%
%%%%%%%%%%%%%%%%%%%%%%%%%%%%%%%%%%%%%%%%%%%%%%%%

\def\mR{\mathbb{R}}
\def\mH{\mathbb{H}} 
\def\mN{\mathbb{N}} 

\def\mV{\mathbb{V}}
\def\mL{\mathbb{L}}

\def\mE{\mathbb{E}}

\def\mZ{\mathcal{Z}}
\def\mO{\mathcal{O}}

\def\mD{\mathcal{D}}
\def\mA{\mathscr{A}} 
\def\mcL{\mathscr{L}} 
\def\macL{\mathcal{L}}
 
\def\mcY{\mathcal{Y}}
\def\mB{\mathcal{B}}
\def\mF{\mathscr{F}}
\def\mP{\mathbb{P}}
\def\mU{\mathcal{U}}
\def\A{{\bf A}} 
\def\B{{\bf B}}
\def\G{{\bf G}}
\def\X{{\bf X}}
\def\Y{{\bf Y}}
\def\Z{{\bf Z}}
\def\N{{\bf N}}
\def\K{{\bf K}}
\def\W{{\bf W}}
\def\x{{ \bf x}}
\def\H{{ \bf h}} 
\def\k{{ \bf k}}
\def\y{{ \bf y}}
\def\u{{\bf u}}
\def\v{v}
\def\w{w}
\def\p{{\bf p}}
\def\z{{\bf z}}
\def\U{{\bf U}}
\def\M{{\bf M}}
\def\ze{{\bf \zeta}}
\def\l{\label}
\def\si{\sigma} 
\def\lam{\lambda} 
\def\ga{\uptheta} 

\def\c{\wedge}
\def\t{\tau_l}
\def\ta{\tau_{\tilde l}}
\def\vf{\frac{\varepsilon}{2}}
\def\va{\varepsilon}
\def\b{\beta}
\def\al{\alpha}
\def\na{\nabla}

\def\vph{\varphi}
\def\th{\uptheta}
\def\de{\delta}
\def\om{\omega}
\def\Om{\Omega}

\def\f2{\frac{1}{2}}

\def\pa{\partial}
\def\wi{\widetilde } 
\newcommand{\tr}{\mathop{\mathrm{Tr}}}

\def\la{\langle}
\def\ra{\rangle}
\def\iZ{\int_{\mathcal Z}}
\def\iO{\int_{\mO}}
\def\i0t{\int_0^t}

\def\ds{\displaystyle}

\def\be{\begin{equation}}
\def\ee{\end{equation}}
\def\bea{\begin{eqnarray}}
\def\eea{\end{eqnarray}}
\def\bean{\begin{eqnarray*}}
\def\eean{\end{eqnarray*}}
\def\bit{\begin{itemize}}
\def\eit{\end{itemize}} 
\def\bi{\bibitem} 

\def\bas{\begin{assumptions}}
\def\eas{\end{assumptions}}

\def\no{\nonumber}
\def\en{\end{document}}

\newcommand{\Addresses}{{% additional braces for segregating \footnotesize
\footnote{
	
	\noindent \textsuperscript{1}Department of Mathematics,
	Indian Institute of Technology,  Roorkee , Uttarakhand - 47667,  INDIA.\par\nopagebreak
	\noindent  \textit{e-mail:} \texttt{manilfma@iitr.ac.in}
	
	\noindent \textsuperscript{2}Department of Mathematics, Indian Institute of Space Science and Technology (IIST),
	Trivandrum- 695 547, INDIA. \par\nopagebreak \noindent
	\textit{e-mail:} \texttt{sakthivel@iist.ac.in}
	
	\noindent \textsuperscript{3} Go.AI, Inc, Beavercreek, OH 45431, U.S.A.
	\par\nopagebreak \noindent
	\textit{e-mail:} \texttt{provostsritharan@gmail.com}
	
%	\noindent \textsuperscript{*}Corresponding author.

}}}

\begin{document}
\title[Stochastic Navier-Stokes Equations with L\'evy Noise]{Dynamic Programming of  the Stochastic 2D-Navier-Stokes Equations Forced by L\'evy Noise \Addresses}
\author[M. T. Mohan, K. Sakthivel and S. S. Sritharan]
{Manil T. Mohan\textsuperscript{1}, K. Sakthivel\textsuperscript{2}   and S. S. Sritharan\textsuperscript{3}}

\maketitle

\begin{abstract}
In this article, we study optimal feedback control synthesis of  stochastic 2D Navier-Stokes equations perturbed  L\'evy type noise with distributed stochastic  control process acting on the state equation. We use  the dynamic programming approach to solve this control problem which involves the study of second order infinite dimensional  Hamilton-Jacobi-Bellman (HJB) equation  consisting of an integro-differential operator with L\'evy measure  associated with the stochastic control problem. Using the regularizing properties of the transition semigroup corresponding to  the stochastic 2D Navier-Stokes equation,  we obtain a smooth solution in weighted function space for the HJB equation and  solve the resultant feedback control problem. 

\end{abstract}
\keywords{\textit{Key words:} stochastic Navier-Stokes equation, L\'{e}vy noise, dynamic programming, Hamilton-Jacobi-Bellman equation.}

Mathematics Subject Classification (2010): 49L20, 60H15,  60J75,  93E20.

\section{Introduction}
The study of optimal  control theory of fluid mechanics has been one of the active research areas  in applied mathematics
with  several engineering  applications. A significant interesting problem in
this direction is the rigorous study of the feedback synthesis of optimal control problems for the stochastic Navier-Stokes equations (SNSEs) forced by  random noise  using the   infinite-dimensional HJB equation associated with the problem.  The flow control of deterministic models and stochastic viscous flow problems with Gaussian noise have been extensively  analysed over the past decades. For a  rigorous study of optimal control of Navier-Stokes equations, one may look at \cite{Sr0,Fu} and cited references therein.   A systematic and general existence theory of optimal control for a class of deterministic  viscous flow problems with different geometries was done in \cite{FS} and chattering control of this problem was studied in \cite{FS1} . This control problem has been studied  in \cite{Sr1} by showing that the value function, which is the minimum for an objective functional, is the  viscosity solution of the associated HJB equation and  the authors  \cite{G} extended this analysis  for  the optimal control of 2D SNSEs with Gaussian noise.    In \cite{Da1}, the authors considered the optimal control of stochastic Burgers equation with Gaussian noise and they solved the  problem by  using dynamic programming approach joint with Hopf transformation. The  existence of optimal
controls for stochastically forced fluid flow models in two and three dimensions  with  Newtonian and non-Newtonian type constitutive relationships (\cite{SSS}) have been studied by establishing the space-time statistical solutions.   The mild form of the HJB equation associated with controlled stochastic Burgers and Navier-Stokes equations  are considered respectively  in \cite{Da}  and \cite{Da1},  and used the smoothing properties of the transition semigroup to solve the control problem.    For various aspects of the optimal control of deterministic and stochastic fluid dynamic models, one may refer to \cite{Sr1} and  also for other general systems, one can  look at the books \cite{Y,Ba,Fab}.

The onset of turbulence is often related to the randomness of background movement, for instance, structural vibrations, magnetic fields and other environmental disturbances.   The random noise term enters in the classical Navier-Stokes equation as a forcing due to these external effects and that may be incorporated either as a random boundary forcing or as a random distributed forcing acting on the right hand side of the state equation.  In fact, in the recent book \cite{BB},  a phenomenological study of fully developed turbulence  and intermittency is carried out and in which  it is nicely proposed that the experimental observations of these physical characteristics  can be modeled by SNSEs with L\'evy noise.  The existence and uniqueness results are proved  for  2D  Navier-Stokes  equations  \cite{DD,B} and a class of  stochastic partial differential equations (\cite{BZ}) with L\'evy noise.   The  qualitative properties like, ergodicity (\cite{DX,MSS}), nonlinear filtering (\cite{PFS}) and invariant measure (\cite{Do})   for  stochastic Navier-Stokes/Burgers equations with L\'evy noise have  been  studied in the literature. By applying the L\'evy type stochastic forces on the state equation,  the ergodic control of   Navier-Stokes equations  is treated in \cite{MM}  and the optimal control of tidal dynamics model with control on the initial data is discussed in the recent paper \cite{AA}. 

In this paper, we consider the  optimal control problem of minimization of turbulence specified  by a quadratic cost functional subject to the dynamics of the system governed by  2D SNSEs perturbed by  L\'evy type noise with distributed stochastic control force acting on the state equation.  This problem is solved using the dynamic programming approach, as developed in \cite{Da1}, which involves the study of  HJB  equation of  partial integro-differential type involving a compensated integral with L\'evy measure associated with the stochastic equation and smoothing properties of its transition semigroup. A smooth solution to the HJB equation is obtained  by transforming this equation  into a mild form using transition semigroup associated with the SNSEs.  The regularity of  solutions for the  mild form of the HJB equation directly depends on the smoothing properties  of the semigroup and this is achieved via the Bismut-Elworthy-Li type formula (see, \cite{E}) derived for the Navier-Stokes equation with L\'evy noise. However,  boundedness of the derivative of the transition semigroup demands the finiteness of exponential moments of the SNSEs with L\'evy noise. In order to handle this issue, we consider a transformed HJB equation with exponentially large negative weights. It is worth noting that the transformation leads to an auxiliary  HJB equation perturbed by an integral with exponential of jump noise coefficient (see \eqref{g12}-\eqref{ft}) with L\'evy measure.  This has been tackled by deriving regularity of solutions for SNSEs with L\'evy noise with negative  exponential gradients and obtain a smooth solution in weighted functions spaces by applying compactness arguments. This justifies the required smoothness of solutions for the feedback control formula and therefore by standard arguments, we prove the existence of an optimal pair for the control problem.  This work also set the foundation for reinforcement machine learning techniques for fluid mechanics(see, \cite{Be}).

\section{Mathematical Formulation} 
We consider an optimal control problem for incompressible  Navier-Stokes equations subject to  stochastic forces  in a bounded domain   $\mO\subset \mR^2,$  with smooth boundary $\pa\mO$ and Dirichlet boundary conditions:
\begin{equation}\label{2p1} 
\left\{ \begin{aligned}
&\frac{\partial \u}{\partial t}-\nu\Delta\u+(\u\cdot\na)\u+\na p
=\U +{\bf F}  \ \ \   \mbox{in} \ \ \ \mO\times (0,T) , \\[2mm]
&\na\cdot\u=0 \ \ \mbox{in}  \ \ \ \mO\times (0,T), \\[2mm]
&\u(x,t) =0 \ \ \mbox{on} \ \ \pa\mO\times [0,T],  \\ &\u(x,0)=\y(x) \ \ \mbox{in} \ \ \ \mO.  
\end{aligned}
\right.
\end{equation}
The variables  $\u=\u(x,t)$ and $p=p(x,t)$ for $(x,t)\in\mathcal{O}\times(0,T),$ denote the velocity and pressure fields, respectively. The function   $\U=\U(x,t)$  is a stochastic control process taking values in the space of square integrable functions.  It is worth noting that this kind of distributed controls may be realized by a suitable Lorentz force distribution in electrically conducting fluids such as salt water, liquid metals etc. The term ${\bf F}={\bf F}(x,t)$ denote the  external force acting on the fluid flow.  In the sequel, the coefficient of kinematic viscosity $\nu$ will be set to one by suitable scaling.

Since    the level of turbulence in a fluid flow can be characterized by the time averaged enstrophy,   it seems appropriate  to consider the minimization (over all controls from a suitable admissible set) of a  cost functional 
\begin{equation} \label{2p13a}
{\mathcal J}(0,T;\y,\U)= \mE\left[\int_0^T\int_\mO\big(|\text{curl }  \u(x,t)|^2+\frac{1}{2}|\U(x,t)|^2\big)\d x\d t+\int_\mO|\u(T,x)|^2\d x\right],
\end{equation}
where  $\text{curl }\u:=\nabla\times\u$ defines the vorticity of the flow field.  The first term in principle measures the average turbulence in the flow field through the space of square integrable functions of  the vorticity field and other two terms arise due to the mathematical technicalities of the control problem.

\subsection{Function  Spaces}
Let us define  divergence free Hilbert spaces
\begin{equation} \label{2p5}
\mH:=\big\{\mathbf{v}|\in \mL^2(\mO;\mR^2) : \na\cdot \mathbf{v}=0 \ \ \mathbf{v}\cdot {\bf n}|_{\pa\mO}=0\big\},
\end{equation}
with norm $\|\mathbf{v}\|_\mH:=\big(\int_\mO |\mathbf{v}(x)|^2\d x\big)^{1/2}$, which is  denoted as $\|\mathbf{v}\|$, where ${\bf n}$ is the outward normal to $\pa\mO.$ We also use the space
\begin{equation} \label{2p6}
\mV:=\big\{\mathbf{v}\in \mH_0^1(\mO;\mR^2) : \na\cdot \mathbf{v}=0 \big\},
\end{equation}
with norm $\|\mathbf{v}\|_\mV:=\big(\int_\mO |\na\mathbf{v}(x)|^2\d x\big)^{1/2}$ and it will be denoted as $\|\mathbf{v}\|_{\frac{1}{2}}$ to be consistent with the fractional powers  introduced later.  The inner product in the Hilbert space $\mH$ is denoted by $\la\cdot,\cdot\ra$ and the induced duality, for instance between the spaces $\mV$ and its dual $\mV^\prime,$ by $(\!( \cdot,\cdot)\!).$

Let $\mathrm{P}_\mH:\mL^2(\mO)\to \mH$ be the \emph{Helmholtz-Hodge (orthogonal) projection}.   Let us define the \emph{Stokes operator}
\begin{equation}\label{2p7}
\A: \mD(\A)\to \mH   \ \ \ \mbox{with} \ \ \A\mathbf{v}=- \mathrm{P}_\mH\Delta\mathbf{v},
\end{equation}
where $\mD(\A)=\mV\cap\mH^2(\mO)=\big\{\mathbf{v}\in \mH_0^1(\mO)\cap \mH^2(\mO) : \na\cdot \mathbf{v}=0\big\}$ and the nonlinear operator
\begin{equation}\label{2p8}
\B: \mD(\B)\subset\mH\times\mV\to \mH   \ \ \ \mbox{with} \ \ \B(\u,\mathbf{v})=\mathrm{P}_\mH(\u\cdot\na\mathbf{v}).
\end{equation}

According to the \emph{Helmholtz decomposition}, $\mL^2(\mO)$ admits an orthogonal decomposition of a sum of two non-trivial subspaces such that $\mL^2(\mO)=\mH(\mO)\oplus\mH^\perp(\mO),$ where the space $\mH^\perp$ is characterized by $\mH^\perp(\mO)=\big\{\u\in \mL^2(\mO) : \u=\na p, \ p\in \mH^1(\mO)\big\}.$

Note that with the use of the Gelfand triple $\mV\subset \mH\equiv\mH^\prime\subset\mV^\prime,$ we may consider $\A$ as the mapping from $\mV$ into $\mV^\prime.$ Besides, setting $\u=(u_i), \mathbf{v}=(v_i)$ and $\mathbf{w}=(w_i)$ for $i=1,2$, an integration by parts yields 
$$(\!( \A\u,\mathbf{w})\!)=\sum_{i,j=1}^2\iO\pa_iu_j\pa_iw_j \d x=\langle\nabla\u,\nabla\mathbf{w}\rangle=(\!( \u,\A\mathbf{w})\!), \ \ \mbox{where} \ \ \pa_iu_j=\frac{\pa u_j}{\pa x_i}.$$
Note that $\A^{-1}$ is a compact self-adjoint operator and hence by the spectral theorem(see \cite{L}),  there exists a sequence of  orthonormal basis functions $\{e_m\}$ in $\mH$ belonging to $\mathcal{D}(\A)$ and eigenvalues $\{\sigma_m\}$ accumulating at zero so that $\A^{-1}e_m=\sigma_m e_m, \ m=1,2,\ldots.$   Taking $\lambda_m=1/\sigma_m, $ we see that $\A e_m=\lambda_m e_m, \ m=1,2,\ldots.$  and 
$0<\lam_1\leq \lam_2\leq \cdots\leq \lam_m\leq \cdots\to +\infty, \ \ \mbox{as} \ \ m\to +\infty,$  in particular, $\lam_m\sim \lambda_1m$ (see, page 54, \cite{Fo}) . 

We will also use the fractional powers of $\A.$  For $\u\in \mH$ and $\alpha>0,$ let us define
$$\A^\alpha \u=\sum_{m=1}^\infty \lam_m^{\alpha} \langle \u,e_m\rangle  e_m, \ \ \mbox{for}  \ \u\in\mD(\A^\alpha),$$
where 
$$\mD(\A^\alpha)=\Big\{\u\in\mH:\sum_{m=1}^\infty \lam_m^{2\alpha}|\langle\u,e_m\rangle|^2<+\infty\Big\}.$$  
Here  $\mD(\A^\alpha)$ is equipped with the norm 
\begin{align} \label{fn}
\|\u\|_\alpha=\|\A^\alpha \u\|=\left(\sum_{m=1}^\infty \lam_m^{2\alpha}|\langle\u,e_m\rangle|^2\right)^{1/2}.
\end{align} 
In particular, note that $\mD(\A^0)=\mH,$  $\mD(\A^{1/2})=\mV.$    For any $s_1<s_2,$ the embedding $\mD(\A^{s_2})\subset \mD(\A^{s_1})$ is also compact.  In a similar manner, we can define the negative fractional powers of $\A.$ 
Applying H\"older's inequality in the  expression \eqref{fn}, one can get the following interpolation estimate:
\begin{align}\label{II}
\|\u\|_s\leq \|\u\|_{s_1}^\theta\|\u\|_{s_2}^{1-\theta}
\end{align}
for any real numbers  $ s_1\leq s\leq s_2$ and $\theta$ is given by $s=s_1\theta+s_2(1-\theta).$ 

Define trilinear form $b(\cdot,\cdot,\cdot): \mV\times\mV\times\mV\to \mR$ by the relation
\[b(\u,\mathbf{v},\mathbf{w})= \sum_{i,j=1}^2\iO u_i\pa_iv_jw_jdx.\]
Then denote by  $\B(\u,\mathbf{v})=\mathrm{P}_\mH [(\u\cdot \nabla)\mathbf{v}], \  \u,\mathbf{v} \in \mV,$ the linear continuous form from   $\mV\times\mV\to\mV^\prime$ defined by
$$(\!(\B(\u,\mathbf{v}),\mathbf{w})\!)=b(\u,\mathbf{v},\mathbf{w}), \ \  \text{ for all } \ \ \u,\mathbf{v},\mathbf{w} \in \mV.$$
For $\u=\mathbf{v},$ we write it as $\B(\u)=\B(\u,\u).$  Integrating by parts in the previous equality and using the incompressibility condition,  we also get
\begin{align}\label{2p9}
b(\u,\mathbf{v},\mathbf{w})=-b(\u,\mathbf{w},\mathbf{v})  \ \ \ \mbox{and} \ \ b(\u,\mathbf{v},\mathbf{v})=0, \ \ \text{ for all }\ \ \u,\mathbf{v},\mathbf{w} \in \mV.
\end{align}
Moreover, recall that (\cite{T})  $b(\cdot,\cdot,\cdot)$ is a trilinear continuous  form on $\mD(\A^{r_1})\times\mD(\A^{r_2+\f2})\times\mD(\A^{r_3}),$ where $r_i\geq 0,i=1,2,3$ satisfy $r_1+r_2+r_3\geq 1\ \mbox{if} \ r_i\neq 1, i=1,2,3.$   By taking $r_2=0$  and $r_1=r,$ we obtain that there exists a constant $c$ depending on $r$ and $\mO$ such that 
\begin{align} \label{ne}
|b(\u,\mathbf{v},\mathbf{w})|\leq C(\mO,r) \|\u\|_{r}\|\mathbf{v}\|_{\frac{1}{2}}\|\mathbf{w}\|_{\frac{1}{2}-r} \ \mbox{  for } \ r\in(0,1/2)  \  \mbox{and} \  \u,\mathbf{v},\mathbf{w}\in\mV. 
\end{align}

\begin{Def}
Let $(\Omega,\mF,\mP)$ be a probability space equipped with an increasing family of sub-sigma fields $\{\mF_t\}_{0\leq t\leq T}$ of $\mF$ satisfying usual conditions. 

Then  the stochastic process $\{\W(t) : 0\leq t\leq T\}$ is an $\mH$-valued  cylindrical Wiener process on $(\Omega,\mF,\{\mF_t\}_{t\geq 0},\mP)$  if and only if for arbitrary $t,$ the process $\W(t)$ can be expressed as  $\W(t)=\sum\limits_{k=1}^\infty  \beta_k(t)\xi_k,$ where $\beta_k(t), k\in \mathbb{N}$ are independent one dimensional Brownian motions on the space $(\Omega,\mF,\{\mF_t\}_{t\geq 0},\mP).$  
\end{Def}
%The $\mQ$-Wiener process $\W(t)$ satisfies $\mE(\W(t))=0$ and $\mbox{Cov}(\W(t))=tI, \ t\geq 0.$ 

Next we describe the \emph{Poisson random measure}. Let $(\mZ,|\cdot|)$ be a separable Banach space and $({\bf L}_t)_{t\geq 0}$ be a $\mZ$-valued L\'evy process.  For every $\om\in\Om,$ ${\bf L}_t(\om)$ has at most countable number of jumps in an interval and the jump  $\Delta  {\bf L}_t(\om):[0,T]\to \mZ$ is    defined by $\Delta {\bf L}_t(\om):= {\bf L}_t(\om)-{\bf L}_{t-}(\om)$ at $t\geq 0.$   Then
$$\uppi([0,T],\Gamma)=\#\{t\in [0,T] : \Delta {\bf L}_t(\om)\in \Gamma\}, \  \mbox{where} \  \Gamma \in \mB(\mZ\backslash\{0\}), \ \om\in\Om,$$
is the \emph{Poisson random measure} or \emph{jump measure}     associated with the L{\'e}vy process $({\bf L}_t)_{t\geq 0}.$ Here $\mB(\mZ\backslash\{0\})$ is the Borel  $\si$-field, $\uppi([0,T],\Gamma)$  is the random measure defined on $([0,T]\times (\mZ\backslash\{0\}), \mB([0,T]\times(\mZ\backslash\{0\}))),$ and $\mu(\cdot)=\mE(\uppi(1,\cdot))$ is the intensity measure defined on $((\mZ\backslash\{0\}), \mB(\mZ\backslash\{0\})).$ The intensity measure $\mu(\cdot)$ on $\mZ$ satisfies the conditions $\mu(\{0\})=0$   and 
\begin{align}\label{2p10}
\int_{\mZ} (1\wedge |z|^p)  \mu(\d z)< +\infty,\ \ p\geq 2.
\end{align} 
Then the \emph{compensated Poisson random measure} is defined by $\wi \uppi(\d t,\Gamma)=\uppi(\d t,\Gamma)-\d t\mu(\Gamma),$ where $\d t\mu(\Gamma)$ is the compensator of the L\'evy process $({\bf L}_t)_{t\geq 0}$   and  $\d t$ is the Lebesgue  measure. 
Let  $\G:[0,T]\times\mO\times \mZ \to\mH$ be a measurable and $\mF_t$-adapted process satisfying 
$$\mE\left[\left\|\int_0^T\iZ \G(t,x,z)\wi\uppi(\d t,\d z)\right\|^2\right]<+\infty.$$ Hereafter, we don't write the  explicit dependence of $\G(\cdot,\cdot,\cdot)$ in $x$ variable.   Then it is known that the  integral defined by  $M(t):=\displaystyle\int_0^t\iZ \G(s,z)\wi\uppi(\d s,\d z)$ is an  $\mH$-valued local martingale and there exist  increasing c\'adlag processes so-called quadratic variation process $[M]_t$ and Meyer process $\langle M\rangle_t$ such that $[M]_t-\langle M\rangle_t$ is a local martingale(see, \cite{Sa}).  Indeed, we have $$\mE\|M(t)\|^2=\mE[M]_t=\mE\langle M\rangle_t.$$
Moreover, the following It\^o isometry holds:
$$\mE\left[\left\|\int_0^T\iZ \G(t,z)\wi\uppi(\d t,\d z)\right\|^2\right]=\int_0^T\iZ  \|\G(t,z)\|^2\mu(\d z)\d t .$$ For more details on L\'evy process on may refer to \cite{A} and \cite{P}.

\subsection{The Hamilton-Jacobi-Bellman Equation}

We control the system state through the control process $\U:\Omega\times\mathcal{O}\times [0,T]\to \mH$ which is adapted to the filtration  $\{\mF_t\}_{t\geq 0}.$ For a fixed constant $R>0,$ we define the set of all admissible control  as 
\begin{equation}\label{3.0}
\mU^{0,T}_{R}=\Big\{\U\in \mathrm{L}^2(\Omega, \mathrm{L}^2(0,T;\mH)):\|\U(\cdot, t)\|\leq R,\mP\text{-a.s.,} \ \mbox{and}  \  \U \ \mbox{is adapted to } \{\mF_t\}_{t\geq 0}\Big\}.
\end{equation}
The control $\U$ is also subject to the action of a linear operator $\K\in {\mathcal L}(\mH).$ 

Given an initial time $t\geq 0$ and fixed terminal  time $T> t,$  the abstract controlled stochastic  Navier-Stokes equation  is given by
\begin{equation*}
\left\{
\begin{aligned} 
\d\X(t)&=-[\A\X(t)+\B(\X(t))-\K\U(t)]\d t+{\bf F}_0(t), \ \ \ t\in(0,T]   \\[2mm]
\X(0)&=\x, \ \x\in \mH,
\end{aligned}
\right.
\end{equation*}
where  ${\bf F}_0=\mathrm{P}_\mH{\bf F}.$ In this work, we consider the noise term ${\bf F}_0$ as the sum of Gaussian and L\'evy process as follows
\begin{equation}\label{2p12}
\left\{
\begin{aligned} 
\d\X(t)&=-[\A\X(t)+\B(\X(t))-\K\U(t)]\d t+\A^{-\frac{\va}{2}}\d\W(t)+ \int_{\mZ}\G(t,z) \wi\uppi(\d t,\d z),  \\[2mm]
\X(0)&=\x, \ \x\in \mH.
\end{aligned}
\right.
\end{equation}
The applied random force   $\W(\cdot)$ is  the Hilbert space valued cylindrical Wiener process (or $\frac{\d\W}{\d t}$  is the  space time white noise) and $\int_0^t\int_{\mZ}\G(t,x,z) \wi\uppi(\d t,\d z)$ is the compound Poisson process  defined by the compensated  Poisson random measure $\wi\uppi(\d t,\d z)=\uppi(\d t,\d z)-\d t\mu(\d z)$ with L\'evy measure $\mu(\cdot)$   defined over the probability space $(\Omega,\mF,\{\mF_t\}_ {t\geq 0}, \mP).$  We assume throughout the paper that $\W(\cdot)$ and $\uppi(\cdot,\cdot)$ are independent.

We have following existence and uniqueness theorem for the stochastic Navier-Stokes  equation  \eqref{2p12}.

\begin{Thm}[Theorem 2.6,  \cite{PFS}] \label{eu1} Let $\var>1$, $\G$ satisfies $\int_0^T\int_{\mZ}\|\G(t,z)\|^2\mu(\d z)\d t\leq C$  and  the control   $\U\in \mathrm{L}^2(\Om,\mathrm{L}^2(0,T;\mH)).$ If $\x\in \mH,$ then there exists \emph{a unique  solution} $\X(\cdot)=\X(\cdot;\x,\U)$ of the Navier-Stokes equation \eqref{2p12} with  $\X\in\mathrm{L}^2(\Omega;\mathrm{L}^{\infty}(0,T;\mH)\cap \mathrm{L}^2(0,T;\V)),$  and the $\mathscr{F}_t$-adapted trajectories having paths in $\mathscr{D}([0,T];\mH)\cap \mathrm{L}^2(0,T;\V),$ $\mathbb{P}$-a.s., where $\mathscr{D}([0,T];\mH)$ is the space of all c\`adl\`ag functions from $[0,T]$ to $\mH$. 
\end{Thm}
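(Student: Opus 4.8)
The plan is to construct the solution by a Galerkin scheme, derive uniform a priori bounds, extract a limit by a compactness argument adapted to the c\`adl\`ag setting, and finally prove pathwise uniqueness via an energy estimate on the difference of two solutions. First I would project \eqref{2p12} onto the finite-dimensional space $\mH_n=\mathrm{span}\{e_1,\ldots,e_n\}$ spanned by the first $n$ eigenfunctions of $\A$, writing $\X_n=P_n\X$ with $P_n$ the orthogonal projection onto $\mH_n$. This yields a finite-dimensional It\^o SDE driven by the truncated Wiener process and Poisson random measure, whose drift is locally Lipschitz (the nonlinearity $P_n\B$ is polynomial in the coordinates); standard SDE-with-jumps theory then gives a unique local solution, which the estimates below promote to a global one.

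Next I would apply the It\^o formula for processes with jumps to $\|\X_n(t)\|^2$. The cancellation $b(\X_n,\X_n,\X_n)=0$ from \eqref{2p9} eliminates the nonlinear contribution, leaving the dissipation $-2\|\X_n\|_{1/2}^2$, the control term $2\langle P_n\K\U,\X_n\rangle$, the Gaussian correction $\tr(P_n\A^{-\var})$, a jump term controlled by $\int_{\mZ}\|\G(t,z)\|^2\mu(\d z)$, and martingale terms. Here the hypothesis $\var>1$ is exactly what is required: since $\lam_m\sim\lam_1 m$, the trace $\tr(\A^{-\var})=\sum_{m=1}^\infty\lam_m^{-\var}$ is finite precisely when $\var>1$. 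Taking expectations, using Cauchy-Schwarz and Young on the control term and Gronwall's lemma, I obtain, with $C$ independent of $n$,
\[
\mE\Big[\sup_{t\in[0,T]}\|\X_n(t)\|^2\Big]+\mE\Big[\int_0^T\|\X_n(t)\|_{1/2}^2\,\d t\Big]\leq C,
\]
where the supremum bound additionally uses the Burkholder-Davis-Gundy inequality on both martingale parts.

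The hard part is passing to the limit in the nonlinear term, which needs strong rather than merely weak convergence and is genuinely more delicate than in the continuous case because the trajectories live in the Skorokhod space $\mathscr{D}([0,T];\mH)$. I would establish tightness of the laws of $\{\X_n\}$ from the uniform bounds together with the compact embedding $\V\hookrightarrow\mH$ and an Aldous-type condition controlling the jumps, then invoke the Jakubowski-Skorokhod representation theorem to obtain, on a new probability space, almost surely convergent copies $\wi\X_n\to\wi\X$. The uniform bounds transfer to $\wi\X$, the strong $\mathrm{L}^2(0,T;\mH)$ convergence lets me identify the limit of $\B(\wi\X_n)$ using the continuity estimate \eqref{ne}, and one verifies that $\wi\X$ is a martingale solution of \eqref{2p12} with the stated regularity $\mathrm{L}^2(\Omega;\mathrm{L}^\infty(0,T;\mH)\cap\mathrm{L}^2(0,T;\V))$ and c\`adl\`ag paths.

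Finally, for uniqueness I would take two solutions $\X_1,\X_2$ with the same data and control and set $\Y=\X_1-\X_2$; since both noise terms are additive they cancel, so $\Y$ satisfies a pathwise equation. Using the energy identity and the expansion $\B(\X_1)-\B(\X_2)=\B(\Y,\X_2)+\B(\X_2,\Y)+\B(\Y,\Y)$ together with $b(\u,\v,\v)=0$, only $b(\Y,\X_2,\Y)$ survives. In two dimensions the Ladyzhenskaya inequality $\|\Y\|_{\mathrm{L}^4}^2\leq C\|\Y\|\,\|\Y\|_{1/2}$ gives $|b(\Y,\X_2,\Y)|\leq\frac{1}{2}\|\Y\|_{1/2}^2+C\|\X_2\|_{1/2}^2\|\Y\|^2$, so the dissipation absorbs the top-order part and Gronwall's lemma, together with $\int_0^T\|\X_2\|_{1/2}^2\,\d t<\infty$ almost surely, forces $\Y\equiv 0$. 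Pathwise uniqueness combined with the existence of a martingale solution then yields a unique strong solution by the Yamada-Watanabe theorem.
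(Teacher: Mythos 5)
Your overall architecture is reasonable, but note first that the paper does not prove this theorem at all: it is quoted as Theorem 2.6 of \cite{PFS}, so the comparison must be with the method of that reference and with the related constructions the paper actually carries out. The route in \cite{PFS} (in the spirit of Menaldi and Sritharan) is not a compactness argument: it exploits the \emph{local monotonicity} of $\A+\B$ in 2D to pass to the limit in the Galerkin scheme directly on the original probability space, so no tightness, no Skorokhod representation and no Yamada--Watanabe step is needed, and strong (probabilistic) solutions come out in one stroke. Your proposal instead follows the martingale-solution route of Brz\'ezniak--Hausenblas--Zhu \cite{B}: tightness in $\mathscr{D}([0,T];\mH)$, Jakubowski--Skorokhod representation, identification of the limit, then pathwise uniqueness plus Yamada--Watanabe. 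Both are legitimate; the monotonicity route is shorter and never leaves the original probability space (which matters here, see below), while your route generalizes more readily to situations where only martingale solutions can be expected. Your energy estimates, the role of $\var>1$ for $\tr(\A^{-\var})<+\infty$, and the uniqueness argument (additive noise cancels, Ladyzhenskaya plus Gronwall on $b(\Y,\X_2,\Y)$) are all correct and standard.

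The one genuine soft spot is the treatment of the control. The equation contains an arbitrary $\{\mF_t\}$-adapted process $\U\in\mathrm{L}^2(\Omega;\mathrm{L}^2(0,T;\mH))$ which is tied to the original probability space and carries no prescribed distributional structure. If you run the Skorokhod argument you must include $\U$ (together with $\W$ and $\uppi$) in the tightness claim and pass to almost surely convergent copies of the whole vector $(\X_n,\U,\W,\uppi)$; the limit then solves the equation driven by a copy $\wi\U$ having the same joint law, not by the given $\U$, and the classical Yamada--Watanabe theorem does not directly apply to an equation with an exogenous adapted input --- one needs Kurtz's generalized version. Your write-up does not address this, and as stated the final step does not deliver a solution for the \emph{given} control on the \emph{given} filtered space. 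Since the noise here is purely additive, a cleaner repair (and the one the paper itself uses for the uncontrolled system in the proof of Proposition \ref{prop3.3}) is to subtract the stochastic convolution $\mathrm{K}(t)$: the remainder solves a pathwise random PDE in which $\U(\omega,\cdot)\in\mathrm{L}^2(0,T;\mH)$ is merely a square-integrable forcing, solvable $\omega$-by-$\omega$ by deterministic 2D Navier--Stokes arguments, with adaptedness recovered from pathwise uniqueness. This keeps everything on the original space and renders the adapted control harmless.
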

The cost   functional  associated with  the   optimal  control problem \eqref{2p12}  consists of the minimization over all controls $\U\in {\mathcal U_{R}^{0,T}} $ of the functional
\begin{align} \label{2p13b}
{\mathcal J}(0,T;\x,\U)= \mE\left\{\int_0^T\left(\|\text{curl }  \X(t)\|^2+\frac{1}{2}\|\U(t)\|^2\right)\d t+\|\X(T)\|^2\right\}.
\end{align}
We aim to find a $\wi \U\in{\mathcal U_{R}^{0,T}} $ such that ${\mathcal J}(0,T;\x,\wi \U)=\inf\limits_{\U\in {\mathcal U_{R}^{0,T}}} {\mathcal J}(0,T;\x,\U).$ We make use of dynamic programming approach to  solve this control problem which  involves the study of the \emph{value function}   defined as
$$\v(t,\x):=\inf_{\U\in {\mathcal U_{R}^{0,t}}}\mE\left\{\int_0^t\left(\|\text{curl }  \X(s)\|^2+\frac{1}{2}\|\U(s)\|^2\right)\d s+\|\X(t)\|^2\right\}, $$  
which is formally a  solution   of the following   HJB equation (see, Appendix-A) for $t\in (0,T)$:
\begin{equation} \label{2p14}
\left\{
\begin{aligned}
\mathrm{D}_t\v(t,\x)&=\mcL \v(t,\x) +g(\x)+ \inf_{\U\in\mathfrak{U}}\left(\la \U(t),\K^*\mathrm{D}_\x\v(t,\x)\ra+\frac{1}{2}\|\U(t)\|^2\right),  \\
\v(0,\x)&= \|\x\|^2, \ \x\in \mH, 
\end{aligned} 
\right.
\end{equation}
where $g(\x)=\|\text{curl} \ \x\|^2, \mathfrak{U}=B_\mH(0,R),$ a ball of radius $R$ in $\mH$  and $\mcL \v$ is the integro-differential  operator given by
\begin{align} \label{g1}
\mcL \v(t,\x)&:=\frac{1}{2}\tr(\A^{-\va} \mathrm{D}_\x^2\v(t,\x))-\la\A\x+\B(\x),\mathrm{D}_\x\v(t,\x)\ra \no\\
&\quad+\iZ\big(\v(t,\x+\G(t,z))-\v(t,\x)-\la \G(t,z),\mathrm{D}_\x\v(t,\x)\ra\big)\mu(\d z). 
\end{align}   
Here   the Hamiltonian  function  $$\mathrm{F}(\p):= \inf_{\U\in\mathfrak{U}}\left(\la \U,\p\ra+\frac{1}{2}\|\U\|^2\right)$$ can be evaluated explicitly as 
\begin{align} \label{2p15}
\mathrm{F}(\p)=\left\{\begin{array}{lclclclc}
-\frac{1}{2}\| \p\|^2   &\mbox{for}&  \| \p\|\leq R, \\
-R\| \p\|+\frac{R^2}{2}   &\mbox{for}&  \| \p\|> R.
\end{array}\right.
\end{align} 
The HJB equation \eqref{2p14} can now be written as 
\begin{equation} \label{2p16}
\left\{
\begin{aligned}
\mathrm{D}_t\v(t,\x)&=\mcL \v(t,\x)+ \mathrm{F}(\K^* \mathrm{D}_\x \v(t,\x))+ g(\x), \ \ t\in (0,T), \\[2mm]
\v(0,\x)&=\|\x\|^2, \ \x\in \mH.
\end{aligned}
\right.   
\end{equation}
Moreover, if $\v$ is a smooth solution of the HJB equation \eqref{2p14}, the optimal control is given by   $\wi\U(t)= \mathcal{G}(\K^* \mathrm{D}_\x\v(T-t,\wi \X(t))),$ where 
\begin{align} \label{2p17}
\mathcal{G}(\p)=\mathrm{D}_\p (\mathrm{F}(\p))=\left\{\begin{array}{lclclclc}
-\p   &\mbox{for}&  \| \p\|\leq R, \\
-R\frac{ \p}{\| \p\|}  &\mbox{for}&  \|\p\|> R.
\end{array}\right.
\end{align}
Here $\wi\X(t)$ is the optimal solution of the following uncontrolled Navier-Stokes equation
\begin{equation} \label{2p18}
\left\{
\begin{aligned}
\d\wi\X(t)&=-[\A\wi\X(t)+\B(\wi\X(t))-\mathrm{D}_\p \mathrm{F}(\mathbf{K}^*\mathrm{D}_\x\v(T-t,\wi \X(t)))]\d t+\A^{-\frac{\va}{2}}\d\W(t)\\
&\quad + \int_{\mZ}\G(t,z) \wi\uppi(\d t,\d z), \ \ t\in (0,T),  \\
\wi\X(0)&=\x\in \mH.
\end{aligned}
\right.
\end{equation}
The pair $(\wi\X, \wi\U)$ is the optimal pair of the control problem.   In order to obtain such a smooth solution to \eqref{2p16}, we use the transition semigroup  $(\mathrm{S}_t)_{t\geq 0}$ defined on    $(\Omega,\mathscr{F},\{\mF_t\}_{t\geq 0},\mP)$    of the   uncontrolled stochastic Navier-Stokes equations   associated with \eqref{2p12}. The semigroup is defined by $(\mathrm{S}_tf)(\x)=\mE[f(\Y(t,\x))],$ where  $f:\mH\to\mR$ is a mesurable function and $\Y(t,\x)$ is the solution of the following uncontrolled Navier-Stokes equation:
\begin{align} \label{2p19}
\left\{\begin{array}{lclclc}
\d\Y(t)=-[\A\Y(t)+\B(\Y(t))]\d t+\A^{-\frac{\va}{2}}\d\W(t)
\displaystyle+ \int_{\mZ}\G(t,z) \wi\uppi(\d t,\d z),   t\in (0,T), \\[2mm]
\Y(0)=\x\in \mH.
\end{array}
\right.
\end{align}
Under the transition semigroup, the value function $\v$ solves the following equation  in  mild form 
\begin{align}  \label{2p19a}
\v(t,\x)=\mathrm{S}_tf(\x)+\int_0^t\mathrm{S}_{t-s}\mathrm{F}(\mathbf{K}^*\mathrm{D}_\x\v(s,\x))\d s+\int_0^t\mathrm{S}_{t-s}g(\x)\d s,
\end{align}
with  $f(\x)=\|\x\|^2.$

\begin{Ass}\label{ass2.1} The following are the crucial conditions imposed throughout the paper:
\bit 
\item[ $(H_1)$] For some fixed constant $\th>0$ and for any fixed $p\geq 2,$   the jump noise coefficient $\G$  satisfies  the following exponential bound: 
$$\int_0^T\int_{\mZ}(1+\|\A^{\al_1}\G(t,z)\|)^p\exp\left(2\th\|\G(t,z)\|^2\right)\mu(\d z)\d t\leq C,\ \ \al_1\in [0,1/2].$$
%\item[ $(H_2)$] For any $p\geq 2,$  the jump noise coefficient $\G$  satisfies  
% $$\int_0^T\int_{\mZ}\|\A^{\al_1} \G(t,z)\|^{p} \mu(\d z)\d t\leq C, \ \al_1\in [0,1/2].$$ 
\item[ $(H_2)$] The linear operator $\K$ acting on the control satisfies 
$\|\K^\ast \x\|\leq C_\K\|\A^{-\widetilde{\al}_1}\x\|$   for any  $\x\in \mD(\A^{-\widetilde{\al}_1})$  and  $\al_1<{\widetilde{\al}_1}<\frac{1}{2}.$ 
\eit  
\end{Ass}

%\begin{Rem}
%  \item[ $(H_1)^\prime$]  By making use  of $(H_1),$   for any fixed measurable subsets $\mZ_m$ of $\mZ$ with $\mZ_m\uparrow \mZ$ and $\mu(\mZ_m)<\infty$,  we have 
%	$$ \int_0^T\int_{\mathcal{Z}_{m}^c}\exp\left({\ga \|\G(t,z)\|^2}\right)\mu(\d z)\d t\to 0, \ \text{ as }\ m\to\infty,$$ where $\mZ_m^c$ is the complement of $\mZ.$ 
%\end{Rem}
\begin{Rem}
Note that if we take $\G$ to be independent of $z$ and $t$ and $\G\in D(A^{\alpha_{1}})$ then we get the special case of $(H_1)$ where the jump measure is a Poisson random measure with intensity measure $\mu$.
\end{Rem}
\begin{Rem}
In order to prove the smoothing property of the transition semigroup, we appeal to Bismut-Elworthy-Li type formula that produces unbounded covariance operator. By properly choosing the value of $\var,$ we are able to show that the semigroup is differentiable in a subspace of $\mH.$  This in turn demands the smoothness  of the control operator  as given in $(H_2).$
\end{Rem}

Next we define the functional spaces on $\mD(\A^\al), \al\geq 0$ having polynomial growth.   If   $\psi:\mH \to \mathbb{R}$ and $ \x,\H\in \mH,$  we set $$\la\mathrm{D}_\x\psi(\x), \H\ra=\lim_{\tau\to 0} \frac{1}{\tau}[\psi(\x+\tau \H)-\psi(\x)], \  \mbox{ if the limit }   \mbox{exists}. $$
The following are the functional spaces frequently used in the rest of the paper.  
\bit
\item The set $\mathrm{C}^0_b(\mH;\mathbb{R})$ denotes the space of all continuous and bounded functions from $\mH$ to $\mathbb{R}$ endowed with the norm $$\|\psi\|_0=\sup_{\x\in\mH}|\psi(\x)| \  \mbox{for any} \ \psi\in \mathrm{C}^0_b(\mH;\mathbb{R}).$$

\item For  $\al=0$ and $k\in\mN,$ the space    $\mathrm{C}^{0, k}(\mH;\mathbb{R})$ denotes the set of all weighted functions  from $\mH$ to $\mathbb{R}$ such that 
$$\|\psi\|_{0,k}=\sup_{\x\in\mH}\frac{|\psi(\x)|}{(1+\|\x\|)^k} <+\infty.$$   

\item  For  $\al=0,\b\in \mN, k\in \mN$ and $\gamma\in [0,1],$ the space    $\mathrm{C}^{0, k,\b+\gamma}(\mH;\mathbb{R})$ denotes the set  of    $\beta$ times differentiable functions such that $\mathrm{D}_{\x}^\b\psi$ is $\gamma$-H\"older continuous satisfying
$$\|\psi\|_{0,k,\b+\gamma}=\|\psi\|_{0,k} + \sup_{r>0}\frac{1}{(1+r)^{k}} [\mathrm{D}_{\x}^\b\psi]_{\gamma} <+\infty,$$
where 
$$[\mathrm{D}_{\x}^\b\psi]_{\gamma}:=\sup_{\substack{\x,\y\in \mathrm{B}_r,\    \x\neq \y}} \frac{|\mathrm{D}_{\x}^\b\psi(\x)-\mathrm{D}_{\y}^\b\psi(\y)|_{{\mathcal L^\b}(\mH^\b;\mR)}}{\|\x-\y\|^\gamma}$$
and $\mathrm{B}_r=\{\x,\y\in\mH; \|\x\|\leq r, \|\y\|\leq r\}.$  

\item For  $\al>0,$   $\mathrm{C}^{\al, k,0}(\mD(\A^\al);\mathbb{R})$ is the space of all functions  from $\mD(\A^\al)$ to $\mathbb{R}$ such that 
$$\|\psi\|_{\al,k,0}=\sup_{\x\in\mD(\A^\al)}\frac{|\psi(\x)|}{(1+\|\x\|_\al)^k} <+\infty.$$

\item Moreover,   $\mathrm{C}^{\al, k,\b+\gamma}(\mD(\A^\al);\mathbb{R})$ is the space of all functions  from $\mD(\A^\al)$ to $\mathbb{R}$ such that  $\psi(\A^{-\al} \cdot)\in \mathrm{C}^{0, k,\b+\gamma}(\mH;\mathbb{R})$ and $$\|\psi\|_{\al,k,\b+\gamma}=\|\psi(\A^{-\al} \cdot)\|_{0,k,\b+\gamma}.$$
In particular note that when $\b=0$ and $\gamma=1,$ we can write
$$\|\psi\|_{\al,k,1}=\|\psi\|_{\al,k,0}+\sup_{\x\in\mD(\A^\al)}\frac{1}{(1+\|\x\|_\al)^k}\sup_{\H\in\mD(\A^\al)}\frac{|\la \mathrm{D}_\x\psi(\x),\H\ra|}{\|\H\|_\al},$$
and when $\b=1$  and $\gamma=1$,
$$\|\psi\|_{\al,k,2}=\|\psi\|_{\al,k,0}+\sup_{\x\in\mD(\A^\al)}\frac{1}{(1+\|\x\|_\al)^k}\sup_{\H\in\mD(\A^\al)}\frac{|\mathrm{D}_\x^2\psi(\x)\la \H,\H\ra|}{\|\H\|_\al^2}.$$
\eit 
We use the following interpolation result in several places (see, \cite{Da1}). 
\begin{Lem}\label{lem2.1}
Let $k,k_1,k_2\in\mN.$ Further let $ \b_i\geq 0,$  $\gamma_i\geq 0,i=1,2$ with $(k,\b+\gamma)=\lambda( k_1,\b_1+\gamma_1)+ (1-\lambda)( k_2,\b_2+\gamma_2)$ for any   $\lambda\in[0,1].$  If $\psi\in \mathrm{C}^{\al, k_1,\b_1+\gamma_1}\cap \mathrm{C}^{\al, k_2,\b_2+\gamma_2}$ then $\psi\in \mathrm{C}^{\al, k,\b+\gamma}$ and there exists a constant $c>0$ depending on $\lambda,\b_i,\gamma_i,i=1,2$  satisfying  
\begin{align} \label{I}
\|\psi\|_{\al,k,\b+\gamma} \leq C \|\psi\|^\lambda_{\al,k_1,\b_1+\gamma_1} \|\psi\|^{ (1-\lambda)}_{\al,k_2,\b_2+\gamma_2}. 
\end{align}
\end{Lem}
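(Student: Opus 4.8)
The plan is to reduce the statement to the base case $\al=0$ and then to split the weighted $\mathrm{C}^{0,k,\b+\gamma}$-norm into its zeroth-order (value) part and its top-order H\"older seminorm, treating each by a logarithmic-convexity argument. First I would set $\phi:=\psi(\A^{-\al}\,\cdot)$; by the very definition of the weighted spaces one has $\|\psi\|_{\al,k,\b+\gamma}=\|\phi\|_{0,k,\b+\gamma}$ and likewise $\|\psi\|_{\al,k_i,\b_i+\gamma_i}=\|\phi\|_{0,k_i,\b_i+\gamma_i}$ for $i=1,2$, so it suffices to prove \eqref{I} for $\phi\in \mathrm{C}^{0,k_1,\b_1+\gamma_1}\cap \mathrm{C}^{0,k_2,\b_2+\gamma_2}$. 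For the value part I would use $k=\lambda k_1+(1-\lambda)k_2$ to factor the weight: for every $\x\in\mH$,
\[
\frac{|\phi(\x)|}{(1+\|\x\|)^{k}}=\left(\frac{|\phi(\x)|}{(1+\|\x\|)^{k_1}}\right)^{\lambda}\left(\frac{|\phi(\x)|}{(1+\|\x\|)^{k_2}}\right)^{1-\lambda}\leq \|\phi\|_{0,k_1}^{\lambda}\,\|\phi\|_{0,k_2}^{1-\lambda},
\]
which upon taking the supremum gives $\|\phi\|_{0,k}\leq \|\phi\|_{0,k_1,\b_1+\gamma_1}^{\lambda}\|\phi\|_{0,k_2,\b_2+\gamma_2}^{1-\lambda}$.

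The remaining and main task is the seminorm part $\sup_{r>0}(1+r)^{-k}[\mathrm{D}_\x^\b\phi]_{\gamma}$, the H\"older seminorm being taken over $\mathrm{B}_r$; the cases $\lambda\in\{0,1\}$ are trivial, so assume $\b_1+\gamma_1\leq \b+\gamma\leq \b_2+\gamma_2$. I would fix a ball $\mathrm{B}_r$ and write $M_i:=(1+r)^{k_i}\|\phi\|_{0,k_i,\b_i+\gamma_i}$. Since $\|\x\|\leq r$ on $\mathrm{B}_r$, the definitions yield $\sup_{\mathrm{B}_r}|\phi|\leq M_i$ and $[\mathrm{D}_\x^{\b_i}\phi]_{\gamma,\mathrm{B}_r}\leq M_i$, i.e.\ on each ball $\phi$ lies in the classical H\"older class of order $\b_i+\gamma_i$ with norm controlled by $M_i$. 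For $\x\neq\y$ in $\mathrm{B}_r$ with $h=\|\x-\y\|$ I would bound the increment $|\mathrm{D}_\x^\b\phi(\x)-\mathrm{D}_\y^\b\phi(\y)|$ in two complementary ways: from the higher regularity $(\b_2,\gamma_2)$, where $\b_2\geq\b$ forces $\mathrm{D}^\b\phi$ to be H\"older of order $\b_2+\gamma_2-\b\geq \gamma$ and a Taylor expansion gives a bound of order $M_2\,h^{\min(1,\,\b_2+\gamma_2-\b)}$; and from the lower regularity $(\b_1,\gamma_1)$, where $\b_1\leq\b$ forces one to control the intermediate derivative $\mathrm{D}^\b\phi$ by a Landau--Kolmogorov/finite-difference estimate in terms of $\phi$ (equivalently $\mathrm{D}^{\b_1}\phi$) and the top derivative. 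Dividing by $h^{\gamma}$ and optimising over the scale $h$ at which the two bounds cross---exactly as in the elementary model case $\b_1=\b_2$, where one maximises $\min\{M_1h^{\gamma_1-\gamma},\,M_2h^{\gamma_2-\gamma}\}$ and the crossing value equals $M_1^{\lambda}M_2^{1-\lambda}$ by the relation $\gamma=\lambda\gamma_1+(1-\lambda)\gamma_2$---leads to $[\mathrm{D}_\x^\b\phi]_{\gamma,\mathrm{B}_r}\leq C\,M_1^{\lambda}M_2^{1-\lambda}$.

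Finally, since $\lambda k_1+(1-\lambda)k_2=k$ the weights recombine as $M_1^{\lambda}M_2^{1-\lambda}=(1+r)^{k}\,\|\phi\|_{0,k_1,\b_1+\gamma_1}^{\lambda}\|\phi\|_{0,k_2,\b_2+\gamma_2}^{1-\lambda}$; dividing by $(1+r)^{k}$ removes all dependence on $r$, and taking the supremum over $r>0$ bounds the seminorm part by $C\,\|\phi\|_{0,k_1,\b_1+\gamma_1}^{\lambda}\|\phi\|_{0,k_2,\b_2+\gamma_2}^{1-\lambda}$, which together with the value-part estimate proves \eqref{I}. I expect the genuine difficulty to be the smoothness interpolation when $\b_1\neq\b_2$: there the two top-order seminorms cannot be compared directly, and the intermediate derivative $\mathrm{D}^\b\phi$ must be estimated by a finite-difference (Landau--Kolmogorov) argument carried out uniformly in the radius $r$, so that the scale-balancing neither loses powers of $(1+r)$ nor degenerates as $r\to 0$. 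An alternative, cleaner route would be to recognise the family $\{\mathrm{C}^{\al,k,\b+\gamma}\}$ as a real-interpolation scale and deduce \eqref{I} from the logarithmic convexity (reiteration) of the interpolation norm, at the cost of first establishing that identification.
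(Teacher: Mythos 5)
The paper never proves this lemma: it is stated with the citation ``(see, \cite{Da1})'' and no argument is given, so there is no internal proof to compare yours against; you should judge your proposal on its own merits. On those merits, the easy parts of your plan are correct: the reduction to $\al=0$ via $\phi=\psi(\A^{-\al}\cdot)$ is immediate from the definition of the weighted norms, the value-part estimate follows from factoring the weight through $k=\lambda k_1+(1-\lambda)k_2$, and in the equal-order case $\b_1=\b_2$ your balancing argument is complete, because both H\"older bounds $[\mathrm{D}_{\x}^{\b}\phi]_{\gamma_i}\leq M_i$ hold on the \emph{same} ball $\mathrm{B}_r$ and the supremum over step sizes of the minimum of the two bounds is exactly $M_1^{\lambda}M_2^{1-\lambda}$.

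The genuine gap is the case $\b_1\neq\b_2$, which is not a side case but the one the paper actually uses: the interpolation following \eqref{S5} is between $\|\cdot\|_{0,0,0}$ (no derivatives) and $\|\cdot\|_{0,0,2}$ (one derivative, Lipschitz), and \eqref{337} interpolates between exponents $0$ and $1+\beta$. Your proposal reduces this case to a Landau--Kolmogorov estimate for the intermediate derivative ``carried out uniformly in the radius $r$'' --- i.e.\ you name the key lemma but do not prove it, and proving it is the actual content of the statement. Concretely, to estimate $\mathrm{D}_{\x}^{\b}\phi(\x)$ for $\x\in\mathrm{B}_r$ by finite differences one must evaluate $\phi$ (or $\mathrm{D}_{\x}^{\b_1}\phi$) at points $\x+h\H$, which lie only in $\mathrm{B}_{r+h}$; there the available bounds carry the weight $(1+r+h)^{k_i}$ rather than $(1+r)^{k_i}$, and the balancing step $h_0\sim(M_1/M_2)^{1/((\b_2+\gamma_2)-(\b_1+\gamma_1))}$ is not a priori bounded, so one must cap $h$ (say at $1$), check that the weight loss is then only a factor $2^{k_i}$, and verify separately that in the regime where the cap is active the minimum of the two bounds is still $\leq C\,M_1^{\lambda}M_2^{1-\lambda}$. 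None of this is in your write-up, and it is exactly where the constant $C$ of \eqref{I} and its dependence on $\lambda,\b_i,\gamma_i$ arise. Your alternative suggestion (identify the scale with a real-interpolation family) likewise replaces the proof by an unverified identification. So, as written, the proposal is a correct outline with its core analytic step missing; completing it means doing the finite-difference estimate on the weighted scale over growing balls, which is elementary but must actually be carried out.
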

The following are the two main results proved in this paper. The first one concerns with the existence of smooth solutions of the HJB equation \eqref{2p16} in the weighted function spaces.
\begin{Thm}\label{main1}
Let the assumptions ($H_1$)-($H_2$) hold true. For any $\al_1<\al<\wi \al_1$ and $d\geq 0$, there exists a function $\v\in\C([0,T];\C^{\alpha,d,2})$ such that $\v$ satisfies the mild form \eqref{2p19a} of the HJB equation \eqref{2p16} in a ball in $\mH$.
\end{Thm}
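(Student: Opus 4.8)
The plan is to realise $\v$ as the unique fixed point of the Volterra operator attached to the mild equation \eqref{2p19a}, working in the Banach space $\C([0,T];\C^{\alpha,d,2})$ and asking the identity to hold only on a prescribed ball of $\mH$. Define
\[
(\Lambda\psi)(t,\x):=\mathrm{S}_tf(\x)+\int_0^t\mathrm{S}_{t-s}\,\F\big(\K^*\mathrm{D}_\x\psi(s,\cdot)\big)(\x)\,\d s+\int_0^t\mathrm{S}_{t-s}g(\x)\,\d s,
\]
with $f(\x)=\|\x\|^2$ and $g(\x)=\|\mathrm{curl}\,\x\|^2$; a fixed point of $\Lambda$ is exactly a mild solution of \eqref{2p16}. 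Two structural facts drive the whole argument: first, the Hamiltonian $\F$ is globally Lipschitz with constant $R$ and its derivative $\mathrm{D}_\p\F=\mathcal{G}$ is bounded by $R$, both immediate from \eqref{2p15}--\eqref{2p17}; second, by $(H_2)$ the operator $\K^*$ gains $\wi\al_1$ fractional derivatives, so that the nonlinear argument $\K^*\mathrm{D}_\x\psi$ is measured in a norm which is $\wi\al_1-\al>0$ smoother than a bare gradient.

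First I would record the smoothing estimates for the transition semigroup $(\mathrm{S}_t)$, which are the genuine analytic input. Via the Bismut--Elworthy--Li formula for \eqref{2p19} one expects, for $\phi$ of polynomial growth, bounds of the type $\|\mathrm{S}_t\phi\|_{\al,d,1}\le C\,t^{-\eta_1}\|\phi\|_{\al,d,0}$ and $\|\mathrm{S}_t\phi\|_{\al,d,2}\le C\,t^{-\eta_2}\|\phi\|_{\al,d,0}$, together with an intermediate gain estimate in which one extra fractional derivative $\A^{\wi\al_1}$ applied to the increment costs only a power $t^{-(\eta_1+\wi\al_1-\al)}$ whose exponent is still $<1$; this is precisely where the range $\al_1<\al<\wi\al_1<\tfrac12$ and the exponential control of $(H_1)$ are consumed. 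With these in hand the three summands of $\Lambda\psi$ are treated in turn: $\mathrm{S}_tf$ lies in $\C([0,T];\C^{\alpha,d,2})$ because $f$ is already smooth of quadratic growth and $(\mathrm{S}_t)$ preserves the polynomially weighted spaces (the smoothness of $f$ also settles continuity up to $t=0$); the source $\int_0^t\mathrm{S}_{t-s}g\,\d s$ is finite and $t$-continuous since $g$ grows like $\|\cdot\|_{1/2}^2$ and the singularity $(t-s)^{-\eta_2}$ is integrable once we localise to a ball of $\mH$; and the nonlinear term is controlled by combining the Lipschitz bound on $\F$, the gain from $\K^*$, and the semigroup smoothing. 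The interpolation Lemma \ref{lem2.1} is invoked to trade regularity for growth wherever the weight exponents fail to match.

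The contraction step involves only the nonlinear term, the other two summands being independent of $\psi$. Combining the two structural facts with the smoothing estimates yields a weakly singular Volterra inequality
\[
\|(\Lambda\psi_1)(t)-(\Lambda\psi_2)(t)\|_{\al,d,2}\le C\int_0^t(t-s)^{-\eta}\,\|\psi_1(s)-\psi_2(s)\|_{\al,d,2}\,\d s,\qquad \eta<1.
\]
Because $\eta<1$, a sufficiently high iterate $\Lambda^{n}$ is a strict contraction (equivalently one uses the weighted norm $\sup_t e^{-\lambda t}\|\cdot\|_{\al,d,2}$ with $\lambda$ large, or argues on short subintervals and concatenates), so the Banach fixed point theorem produces a unique $\v\in\C([0,T];\C^{\alpha,d,2})$ satisfying \eqref{2p19a} on the given ball.

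The main obstacle is the second-derivative smoothing estimate $\|\mathrm{S}_t\phi\|_{\al,d,2}\lesssim t^{-\eta_2}\|\phi\|_{\al,d,0}$ with $\eta_2<1$. A direct Bismut--Elworthy--Li bound on $\mathrm{D}_\x^2\mathrm{S}_t$ generates a non-integrable $t^{-1}$ singularity together with the unbounded covariance $\A^{-\va}$, and the L\'evy-driven state \eqref{2p19} need not possess the finite exponential moments that the naive formula requires. Circumventing this is exactly the role of the negative-exponential weight transformation leading to \eqref{g12}--\eqref{ft}: one first derives gradient and Hessian bounds for the transformed, exponentially weighted semigroup, where the compensated-jump contribution carries the factor $\exp(2\th\|\G(t,z)\|^2)$ tamed by $(H_1)$, and then recovers the polynomially weighted estimates through a compactness and limiting argument. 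Finally, writing $\mathrm{S}_t=\mathrm{S}_{t/2}\mathrm{S}_{t/2}$ and applying the first-order bound twice, reinforced by the $\wi\al_1-\al$ gain from $\K^*$, lowers the effective singularity below $1$ and closes the fixed-point scheme.
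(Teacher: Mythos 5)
Your proposal takes a genuinely different route from the paper: you try to build $\v$ by a Banach fixed-point iteration for the mild equation directly in infinite dimensions, whereas the paper never solves the infinite-dimensional equation by iteration at all. It starts from the finite-dimensional value functions $\v_m$ (which exist by classical finite-dimensional control theory and satisfy the mild form \eqref{2p20}), transforms them to $\w_m=e^{-\ga\|\x\|^2}\v_m$, proves uniform weighted bounds via the Bismut--Elworthy--Li formula for the Feynman--Kac semigroup, and passes to the limit by Arzel\`a--Ascoli together with the almost sure convergence $\Y_m\to\Y$ (Propositions \ref{prop3.3} and \ref{prop4.4}). Your scheme, however, has gaps that I do not see how to close within your framework.

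The main one concerns the contraction step. As you yourself note, the untransformed semigroup $\mathrm{S}_t$ has no usable derivative bounds (they require exponential moments of the L\'evy-driven state, which Proposition \ref{lem3.2} provides only for $\x$ restricted to a ball), so the iteration must be run for the transformed equation, i.e.\ with $\mathrm{T}_t$ and the nonlinearity $\widetilde{\mathrm{F}}$ of \eqref{ft}; this contains the jump term $\int_{\mZ}\big(e^{\ga\|\x+\G\|^2-\ga\|\x\|^2}-1\big)\w(t,\x+\G)\mu(\d z)$, which your proposal never estimates and which is precisely where $(H_1)$ is consumed. More decisively, the smoothing estimate that is actually available, \eqref{S1}, degrades the polynomial weight: $\|\mathrm{T}_tf\|_{\al,k+2\b,\b+\gamma}\leq Ct^{-\b(1-\upepsilon)}\|f\|_{\al,k,\gamma}$, and correspondingly \eqref{C} controls the nonlinear term only in a norm with weight $k+4+2\b$ in terms of $\|\w\|_{\al,k,1+\gamma}$. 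Hence your operator $\Lambda$ maps $\C([0,T];\C^{\al,d,2})$ into spaces with strictly larger weight exponent, never into itself, and your displayed Volterra inequality with the same norm $\|\cdot\|_{\al,d,2}$ on both sides is unjustified. The paper closes this weight loop by interpolation (Lemma \ref{lem2.1}, used in \eqref{336}--\eqref{337}) against the uniform bound $\|\w_m\|_{\al,0,0}\leq C(\ga)$; but that zero-weight bound comes from the control interpretation, $0\leq\v_m(t,\x)\leq C(1+\|\x\|^2)$ by comparison with the zero control (Proposition \ref{prop3.2}), i.e.\ from $\w_m$ being a value function. A generic iterate $\Lambda^n\psi_0$ carries no such bound (the nonlinear term alone contributes weight $k+2$), so the sublinear bootstrap of \eqref{336}--\eqref{337} has no analogue inside a fixed-point iteration.

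A second, quantitative gap is your route to second-order smoothing. Composing gradient bounds through $\mathrm{S}_t=\mathrm{S}_{t/2}\mathrm{S}_{t/2}$ yields the singularity $t^{-2(1-\upepsilon)}$, and since $\upepsilon=\al-\al_1<\wi\al_1<\frac12$ the exponent $2(1-\upepsilon)$ exceeds $1$, so it is not integrable; the $\wi\al_1-\al$ gain from $\K^*$ cannot repair this, because $\K^*$ appears only inside the Hamiltonian acting on first derivatives, never in front of the Hessian of the solution. The paper never needs a two-derivative gain with integrable singularity: in \eqref{C} the semigroup gains only $1+\b$ derivatives, $\b$ small, from data of H\"older regularity $\gamma$ controlled by $\|\w_m\|_{\al,k,1+\gamma}$, at cost $t^{-(1+\b)(1-\upepsilon)}$ with $(1+\b)(1-\upepsilon)<1$, and the $\C^{\al,d,2}$ regularity of the limit is then recovered through interpolation and the compactness argument. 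If you wish to keep a fixed-point formulation, you would need to reproduce exactly this finer bookkeeping and, in addition, find a substitute for the missing zero-weight a priori bound --- which is what ultimately forces the paper's detour through the finite-dimensional control problems.
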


The existence of smooth solution of the HJB equation \eqref{2p16} justifies the feedback control formula  given by $\wi\U(t)= \mathcal{G}(\K^* \mathrm{D}_\x\v(T-t,\wi \X(t)))$.  Using this, we prove the existence of optimal control for the control problem \eqref{2p12}-\eqref{2p13b}, which justifies the dynamic programming approach. Moreover, we establish an identity satisfied by the cost functional in terms of the solution of the HJB equation \eqref{2p16}. 

\begin{Thm}\label{thm3.3}
Suppose the conditions given in Theorem \ref{main1} are satisfied. Then for any control $\U\in\mathcal{U}_{R}^{0,T}$, the following identity holds:
\begin{align}\label{3.122a}
&	\mathcal{J}(0,T;\x,\U)\\&=\v(T,\x)+\frac{1}{2}\mE\left[\int_0^T\|\U(t)+\K^*\mathrm{D}_{\x} \v(T-t,\X(t))\|^2-\chi(\|\K^*\mathrm{D}_{\x} \v(T-t,\X(t))\|-R)\d t\right],\nonumber
\end{align}
where $\mathcal{J}(0,T;\x,\U)$ is defined in (\ref{2p13b}), the function $\chi$ satisfies  $\chi(a)=0$ for $a\leq 0$ and $\chi(a)=a^2$ for $a\geq 0$ and $\X(\cdot)$ is the solution of (\ref{2p12}). Moreover, the closed loop equation (\ref{2p18}) has an  optimal pair  $(\wi\X,\wi\U)$ with $\wi \U(t)= {\mathcal G}(\K^* \mathrm{D}_{\x}\v(T-t,\wi \X(t)))$.
\end{Thm}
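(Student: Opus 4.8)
The plan is to establish \eqref{3.122a} by a verification argument and then read off optimality from the nonnegativity of the resulting integrand. Let $\X(\cdot)$ solve \eqref{2p12} for an arbitrary admissible control $\U\in\mathcal{U}_R^{0,T}$, and apply the It\^o formula for jump diffusions to the process $t\mapsto\v(T-t,\X(t))$. Since $\partial_t\v(T-t,\x)=-\mathrm{D}_t\v(T-t,\x)$, and the covariance of $\A^{-\va/2}\W$ is $\A^{-\va}$, the drift part reads
\begin{align*}
\d\v(T-t,\X)&=\Big[-\mathrm{D}_t\v+\tfrac12\tr(\A^{-\va}\mathrm{D}_\x^2\v)-\la\A\X+\B(\X),\mathrm{D}_\x\v\ra+\la\K\U,\mathrm{D}_\x\v\ra\\
&\quad+\iZ\big(\v(T-t,\X+\G)-\v(T-t,\X)-\la\G,\mathrm{D}_\x\v(T-t,\X)\ra\big)\mu(\d z)\Big]\d t+\d\mathrm{M}_t,
\end{align*}
where $\mathrm{M}_t$ gathers the stochastic integrals against $\A^{-\va/2}\d\W$ and against $\wi\uppi$. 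The bracketed expression contains precisely $\mcL\v$ from \eqref{g1} together with $-\mathrm{D}_t\v+\la\U,\K^*\mathrm{D}_\x\v\ra$; substituting the HJB relation $\mathrm{D}_t\v=\mcL\v+\mathrm{F}(\K^*\mathrm{D}_\x\v)+g$ from \eqref{2p16} cancels the whole $\mcL\v$ block and leaves the drift equal to $-\mathrm{F}(\K^*\mathrm{D}_\x\v(T-t,\X))-g(\X)+\la\U,\K^*\mathrm{D}_\x\v(T-t,\X)\ra$.

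Next I would integrate over $[0,T]$, use $\v(T-T,\X(T))=\v(0,\X(T))=\|\X(T)\|^2$ together with $\v(T-0,\X(0))=\v(T,\x)$, and take expectations so the martingale $\mathrm{M}_t$ vanishes. Writing $\p:=\K^*\mathrm{D}_\x\v(T-t,\X(t))$ and $g(\x)=\|\text{curl}\,\x\|^2$, and comparing with \eqref{2p13b}, this rearranges to
\[
\mathcal{J}(0,T;\x,\U)=\v(T,\x)+\mE\int_0^T\Big[\tfrac12\|\U\|^2+\la\U,\p\ra-\mathrm{F}(\p)\Big]\d t.
\]
The algebraic core is the pointwise identity
\[
\tfrac12\|\U\|^2+\la\U,\p\ra-\mathrm{F}(\p)=\tfrac12\|\U+\p\|^2-\tfrac12\chi(\|\p\|-R),
\]
which follows directly from the explicit form \eqref{2p15}: for $\|\p\|\le R$ completing the square gives $\tfrac12\|\U+\p\|^2$ on both sides (and $\chi=0$), while for $\|\p\|>R$ the right side equals $\tfrac12\|\U+\p\|^2-\tfrac12(\|\p\|-R)^2$, matching the left after expanding $\mathrm{F}(\p)=-R\|\p\|+R^2/2$. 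Inserting this identity yields \eqref{3.122a}.

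For the optimal pair I would substitute $\wi\U(t)=\mathcal{G}(\p)$ with $\p=\K^*\mathrm{D}_\x\v(T-t,\wi\X(t))$ into the integrand of \eqref{3.122a}. By \eqref{2p17}, if $\|\p\|\le R$ then $\wi\U+\p=0$ and $\chi(\|\p\|-R)=0$, and if $\|\p\|>R$ then $\wi\U+\p=\p(\|\p\|-R)/\|\p\|$ so that $\|\wi\U+\p\|^2=(\|\p\|-R)^2=\chi(\|\p\|-R)$; in both cases the integrand is zero, whence $\mathcal{J}(0,T;\x,\wi\U)=\v(T,\x)$. On the other hand, for any admissible $\U$ with $\|\U\|\le R$ the triangle inequality gives $\|\U+\p\|\ge\|\p\|-R$ when $\|\p\|>R$, so $\|\U+\p\|^2-\chi(\|\p\|-R)\ge0$ always; hence $\mathcal{J}(0,T;\x,\U)\ge\v(T,\x)=\mathcal{J}(0,T;\x,\wi\U)$, and $(\wi\X,\wi\U)$ solving \eqref{2p18} is the optimal pair.

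The main obstacle is the rigorous justification of the It\^o formula, since $\v$ possesses only the weighted regularity $\v\in\C([0,T];\C^{\alpha,d,2})$ with $\al_1<\al<\wi\al_1$, whereas $\X(t)$ evolves in $\mH$ under the unbounded operators $\A,\B$ and the compensated jump term. I would proceed through a Galerkin (or Yosida) regularization of \eqref{2p12}, apply the finite-dimensional It\^o formula to the smoothed value function, and pass to the limit using: the moment bounds of Theorem \ref{eu1}; the trace-class property of $\A^{-\va}$, valid because $\va>1$ and $\lambda_m\sim\lambda_1 m$; and the exponential integrability of $\G$ in $(H_1)$, which—together with the second-order control built into the $\C^{\alpha,d,2}$-norm—dominates both the jump integrand $\v(T-t,\X+\G)-\v(T-t,\X)-\la\G,\mathrm{D}_\x\v\ra$ and the It\^o-isometry bound $\mE\int_0^T\iZ|\v(T-t,\X+\G)-\v(T-t,\X)|^2\mu(\d z)\d t$ ensuring the Poisson martingale vanishes; the corresponding Wiener martingale requires $\mE\int_0^T\|\A^{-\va/2}\mathrm{D}_\x\v\|^2\d t<\infty$, supplied by $(H_2)$ and the gradient bound in the norm. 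Finally, because Theorem \ref{main1} only guarantees that $\v$ solves the mild equation \eqref{2p19a} inside a ball of $\mH$, a localization by the exit time $\tau_N$ of $\X$ from that ball is needed, with the contributions beyond $\tau_N$ shown to vanish as $N\to\infty$ via the same moment estimates.
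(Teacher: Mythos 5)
Your derivation of the identity \eqref{3.122a} is, in substance, the paper's own argument: a verification computation based on the It\^o formula for $\v(T-t,\X(t))$, cancellation of the integro-differential operator through the HJB equation \eqref{2p16}, and the pointwise completion-of-square identity $\tfrac12\|\U\|^2+\la\U,\p\ra-\mathrm{F}(\p)=\tfrac12\|\U+\p\|^2-\tfrac12\chi(\|\p\|-R)$, which you verify correctly in both regimes of $\|\p\|$; the vanishing of the integrand along $\wi\U=\mathcal{G}(\p)$ and its nonnegativity for $\|\U\|\le R$ are also exactly the paper's optimality reasoning. On the analytic side, your plan to regularize and pass to the limit is in the paper's spirit, but the paper's realization is cleaner than ``apply It\^o to a smoothed value function'': it applies the finite-dimensional It\^o formula to $v_m(T-t,\X_m(s))$, where $v_m$ solves the approximated HJB equation \eqref{3.4} \emph{exactly}, so the drift cancellation is exact at level $m$ and no error terms appear; the limit $m\to\infty$ is then taken in the resulting identity itself, using the almost sure convergence $\X_m\to\X$ in $\mathscr{D}([0,T];\mathcal{D}(\A^{\alpha}))$ (Lemma \ref{lem6.1}), the pointwise convergences $v_m\to\v$, $\mathrm{D}_{\x}v_m\to\mathrm{D}_{\x}\v$ and the local Lipschitz bounds \eqref{5.34}--\eqref{5.35} of Remark \ref{rem5.1}, together with dominated convergence. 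This sidesteps the infinite-dimensional It\^o formula you identify as the main obstacle.

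The genuine gap is in the second assertion of the theorem. The statement claims that the closed loop equation \eqref{2p18} \emph{has} an optimal pair; your proposal takes a solution $\wi\X$ of \eqref{2p18} as given and only checks that the integrand in \eqref{3.122a} vanishes along it. Existence is not automatic: the feedback drift $\x\mapsto\mathcal{G}(\K^*\mathrm{D}_{\x}\v(T-t,\x))$ is bounded by $R$ but is not globally Lipschitz, and $\mathcal{G}$ itself changes form across $\|\p\|=R$. The paper spends roughly half of its proof on precisely this point: existence of $\wi\X$ is obtained from a Galerkin scheme and compactness (exploiting $\|\mathcal{G}(\p)\|\leq R$), and uniqueness --- needed both for the convergence of the whole approximating sequence and for the feedback law to define an admissible control --- is proved by a Gr\"onwall argument in $\mathcal{D}(\A^{\alpha})$ combining the trilinear estimate \eqref{ne}, the interpolation inequality \eqref{II}, and, crucially, the local Lipschitz continuity of $\mathrm{D}_{\x}\v$ in $\mathcal{D}(\A^{-\alpha})$ from \eqref{5.35}, with a separate case analysis according to whether $\|\K^*\mathrm{D}_{\x}\v(T-t,\wi\X_1)\|$ and $\|\K^*\mathrm{D}_{\x}\v(T-t,\wi\X_2)\|$ lie below or above $R$. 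Without this well-posedness step, your conclusion that ``$(\wi\X,\wi\U)$ solving \eqref{2p18} is the optimal pair'' rests on an unproved hypothesis, so the proof of the theorem as stated is incomplete.
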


These two theorems are proved in sections \ref{sec5} and \ref{sec6}, respectively. 

\section{Finite Dimensional  Approximation}  
We introduce an approximation for the controlled Navier-Stokes systems \eqref{2p12}.  Let $\{e_1,e_2,\ldots,e_m\}$ be the first $m$ eigenvectors of $\A$ and $\mathrm{P}_m$ be the projector of  $\mH$ onto the space spanned by these $m$ eigenvectors. Then $\X_m(t)=\sum\limits_{i=1}^mc_i(t)e_i$ solves the following finite dimensional control system:
\begin{equation}\label{2F1}
\left\{
\begin{aligned} 
\d\X_m(t)&=-[\A\X_m(t)+\B_m(\X_m(t))-\K_m\U_m(t)]\d t+\A^{-\frac{\va}{2}}\d\W_m(t)\\
&\quad +\displaystyle \int_{\mZ_m}\G_m(t,z) \wi\uppi(\d t,\d z),  \ t\in [0,T), \\
\X_m(0)&=\mathrm{P}_m\x, \ \x\in \mH,
\end{aligned}
\right.
\end{equation}
where $\B_m(\X_m(t))=\mathrm{P}_m\B(\mathrm{P}_m\X_m),\K_m=\mathrm{P}_m\K , \W_m=\mathrm{P}_m\W $, $\G_m=\mathrm{P}_m\G$ and $\mZ_m=\mathrm{P}_m\mZ$.  The approximated cost functional is the minimization over  all $\U_m\in {\mathcal U_{R}^{0,T}} \cap \mathrm{L}^2(\Omega\times [0,T];\mathrm{P}_m\mH) $ of the functional
\begin{align} \label{2F2}
{\mathcal J_m}(0,T;\mathrm{P}_m\x,\U_m)= \mE\left\{\int_0^T\left(\|\text{curl }  \X_m(t)\|^2+\frac{1}{2}\|\U_m(t)\|^2\right)\d t+\|\X_m(T)\|^2\right\}.
\end{align}
The finite dimensional  equation associated with \eqref{2p19} is given by 
\begin{equation}\label{2F3}
\left\{
\begin{aligned} 
\d\Y_m(t)&=-[\A\Y_m(t)+\B_m(\Y_m(t))]\d t+\A^{-\frac{\va}{2}}\d\W_m(t) \\
&\quad+ \int_{\mZ}\G_m(t,z) \wi\uppi(\d t,\d z), \   t\in (0,T),  \\
\Y_m(0)&=\mathrm{P}_m\x, \ \x\in \mH.
\end{aligned}
\right.\end{equation}
Then  by the semigroup $(\mathrm{S}^m_tf)(\x)=\mE[f(\Y_m(t,\x))],$  we have the following approximated HJB equation:
\begin{equation}\label{3.4}
\left\{
\begin{aligned}
\mathrm{D}_t\v_m(t,\x)&=\mcL_m \v_m(t,\x)+ \mathrm{F}_m(\K^* \mathrm{D}_\x \v_m(t,\x))+ g_m(\x), \ \ t\in (0,T), \\[2mm]
\v(0,\x)&=\|\x\|^2, \ \x\in \mathrm{P}_m\mH,
\end{aligned}
\right.   
\end{equation}
and the corresponding mild  form:
\begin{align}  \label{2p20}
\v_m(t,\x)=(\mathrm{S}^m_tf)(\x)+\int_0^t\mathrm{S}^m_{t-s}\mathrm{F}_m(\K^*\mathrm{D}_\x\v_m(s,\x))\d s+\int_0^t\mathrm{S}^m_{t-s}g(\x)\d s.
\end{align} 
As we are looking for a smoothness of the Feynman-Kac semigroup, we appeal to Bismut-Elworthy-Li type formula for the c\`adl\`ag process which in turn requires the differential of $\Y_m$ of the system \eqref{2F3} with respect to initial data. 

The first differential of $\Y_m$ in the direction of $\H \in \mathrm{P}_m \mH$   defined by the form  $\eta_m^\H(t)=\la \mathrm{D}_\x\Y_m(t,\x),\H\ra $ satisfies 
\begin{equation}\label{d1}
\left\{
\begin{aligned} 
\mathrm{D}_t\eta_m^\H(t)&=-\A\eta_m^\H(t)-\B_m(\eta_m^\H(t),\Y_m(t))-\B_m(\Y_m(t),\eta_m^\H(t)), \ t\in(0,T), \\
\eta_m^\H(0)&=\H.
\end{aligned}
\right.
\end{equation}
The second  differential of $\Y_m$ in the direction of $\H,\k \in \mathrm{P}_m \mH$    defined by the form $\ze_m^{\H,\k}(t)= \mathrm{D}^2_\x\Y_m(t,\x)\la\H,\k\ra $ satisfies 
\begin{equation}\label{d2}
\left\{
\begin{aligned} 
\mathrm{D}_t\ze_m^{\H,\k}(t)&=-\A\ze_m^{\H,\k}(t)-\B_m(\ze_m^{\H,\k}(t),\Y_m(t))-\B_m(\Y_m(t),\ze_m^{\H,\k}(t))\\
&\quad-2\B_m(\eta_m^\H(t),\eta_m^\H(t)), \ t\in(0,T), \\
\ze_m^{\H,\k}(0)&=0.
\end{aligned}
\right.
\end{equation}
For simplicity, we will only use  $\eta_m,\ze_m$ for   $\eta_m^\H$ and $\ze_m^{\H,\k}$ respectively.  Recall that  the Bismut-Elworthy-Li type formula  gives the derivative of the semigroup $S_t$ when the system \eqref{2F1} is forced by L\'evy noise is as follows:
$$\la \mathrm{D}_\x (\S^m_tf)(\x),\H\ra=\frac{1}{t}\mE\left[f(\Y_m(t,\x))\int_0^t\la\A^{\frac{\va}{2}}\eta_m(s),\d\W_m(s)\ra\right], \ \  \H\in \mathrm{P}_m\mH.$$

In order to get finite expectation on the right hand side, we will effectively use the  transformation (see, \cite{Da1})  $\w_m(t,\x)=e^{-\ga \|\x\|^2}\v_m(t,\x)$ for sufficiently large $\ga>0$ as chosen later to arrive at 
\begin{eqnarray}\label{2p21}
\left\{
\begin{aligned} 
\mathrm{D}_t\w_m(t,\x)&=\mcL_m \w_m(t,\x)-2\ga\|\x\|_{\frac{1}{2}}^2\w_m(t,\x)+\widetilde{\mathrm{F}}_m(\x,\w_m(t,\x),\mathrm{D}_\x \w_m(t,\x))+ \wi g(\x),  \\
\w_m(0,\x)&=\wi f(\x), \ \x\in \mathrm{P}_m\mH,
\end{aligned}
\right.
\end{eqnarray}
for $t\in(0,T)$. Here, $\wi g(\x)=e^{-\ga\|\x\|^2}\|\text{curl} \ \x\|^2, \ \wi f(\x)=e^{-\ga\|\x\|^2}\|\x\|^2,$
\begin{align} \label{g12}
\mcL_m \w_m(t,\x)&=\frac{1}{2}\tr(\A^{-\va} \mathrm{D}_\x^2\w_m(t,\x))-\la\A\x+\B(\x),\mathrm{D}_\x\w_m(t,\x)\ra \\
&\quad+\int_{\mathcal{Z}_m}\big[\w_m(t,\x+\G_m(t,z))-\w_m(t,\x)-\la \G_m(t,z),\mathrm{D}_\x\w_m(t,\x)\ra\big]\mu(\d z)\no
\end{align}  
and 
\begin{align}\label{ft}
&\widetilde{\mathrm{F}}_m(\x,\w_m,\mathrm{D}_\x \w_m)\no\\&=2\th\la\A^{-\va}\x,\mathrm{D}_\x\w_m\ra+(4
\ga^2\|\A^{-\frac{\va}{2}}\x\|^2+2\ga\tr(\A^{-\va}))\w_m+e^{-\ga\|\x\|^2}\mathrm{F}(e^{\ga\|\x\|^2}(\mathbf{K}^*\mathrm{D}_\x\w_m+2\ga\mathbf{K}^*\w_m\x))\no\\
&\quad +\iZm\big[(e^{\ga \|\x+\G_m\|^2-\ga\|\x\|^2}-1)\w_m(t,\x+\G_m(t,z))-2\th\la \G_m(t,z),\w_m\x\ra\big]\mu(\d z).
\end{align}
Further, consider the Feynman-Kac semigroup defined by  
$$(\mathrm{T}^m_tf)(\x)=\mE\big[e^{-2\ga\mathcal{Y}_m(t)}f(\Y_m(t,\x))\big] \ \mbox{for} \   f\in \mathrm{C}_b(\mathrm{P}_m\mH;\mR), $$  where  $\mathcal{Y}_m(t)=\int_0^t\|\Y_m(r)\|_{\frac{1}{2}}^2\d r$ and  $\Y_m$ is a solution of the stochastic equation \eqref{2F3}. Then by the Feynman-Kac formula,  it is clear that $\varphi_m(t,\x)= (\mathrm{T}^m_t\wi f)(\x)$ formally solves the following auxiliary Kolmogorov equation 
\begin{align}  \label{2p22}
\left\{
\begin{array}{lclclclc}
\mathrm{D}_t\varphi_m(t,\x)=\mcL_m\varphi_m(t,\x)-2\ga\|\x\|_{\frac{1}{2}}^2\varphi_m(t,\x),  \ \ t\in (0,T),  \\
\varphi_m(0,\x)=   \wi f(\x), \ \x\in \mathrm{P}_m\mH.
\end{array}
\right.   
\end{align}
Besides, the mild form of \eqref{2p21} can now be written as 
\begin{align}  \label{2p23}
\w_m(t,\x)=\mathrm{T}^m_t\wi f(\x)+\int_0^t\mathrm{T}^m_{t-s}\widetilde{\mathrm{F}}_m(\x,\w_m,\mathrm{D}_\x\w_m(s,\x))\d s+\int_0^t\mathrm{T}^m_{t-s}\wi g(\x)\d s.
\end{align}
Note that the semigroup $(\mathrm{T}_t)_{t\geq 0}$ behaves smoother than the semigroup $(\mathrm{S}_t)_{t\geq 0}$ due to the  exponential inside the expectation with large enough negative weights.  

Now we state the Bismut-Elworthy-Li type formula  for the Feynman-Kac semigroup  $\mathrm{T}^m_t$ involving the   It\^o -L\'evy  process $\Y_m$ of the system \eqref{2F3}.   

\begin{Pro}[see, Appendix C, \cite{MSS}]\label{p2.1}
For each $f\in \mathrm{C}_b(\mathrm{P}_m \mH),$  the semigroup $(\mathrm{T}^m_t)_{t\geq0}$ is G\^ateaux differentiable  and  its directional derivative in any direction $\H\in \mathrm{P}_m \mH$ is given by the following: 
\begin{align} \label{BE}
\la \mathrm{D}_\x (\mathrm{T}^m_tf)(\x),\H\ra&=\mE\left[e^{-2\ga\mathcal{Y}_m(t)}f(\Y_m(t,\x))\left(\frac{1}{t}\int_0^t\la\A^{\frac{\va}{2}}\eta_m(s),\d\W_m(s)\ra\right.\right.\no\\
&\quad\left.\left.+4\ga\int_0^t\left(1-\frac{s}{t}\right)\la\A^{\f2}\eta_m(s),\A^{\f2}\Y_m(s)\ra \d s\right)\right],  
\end{align}
for $\x \in \mathrm{P}_m\mH,  \ t\in[0,T],$  where  $\mathcal{Y}_m(t)=\int_0^t\|\Y_m(r)\|_{\frac{1}{2}}^2\d r.$
\end{Pro}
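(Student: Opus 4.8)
The plan is to prove \eqref{BE} first for smooth $f$ and then extend to general $f\in\mathrm{C}_b(\mathrm{P}_m\mH)$ by approximation; the essential point is to shift the spatial derivative off $f$ onto a stochastic weight by a Malliavin (Bismut) integration by parts. Since \eqref{2F3} is finite dimensional with additive, state-independent Gaussian and Poisson forcing, differentiation under the expectation is legitimate, and for $f\in\mathrm{C}^1_b$ I would first write
\begin{align*}
\la\mathrm{D}_\x(\mathrm{T}^m_tf)(\x),\H\ra
&=\mE\big[\mathrm{D}_\x\big(e^{-2\ga\mathcal{Y}_m(t)}\big)\cdot\H\;f(\Y_m(t,\x))\big]\\
&\quad+\mE\big[e^{-2\ga\mathcal{Y}_m(t)}\la\mathrm{D}f(\Y_m(t,\x)),\eta_m(t)\ra\big],
\end{align*}
where $\eta_m$ solves \eqref{d1}. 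Because $\mathcal{Y}_m(t)=\int_0^t\|\Y_m(r)\|_{\f2}^2\,\d r$, the first (weight) term equals $-4\ga\,\mE\big[e^{-2\ga\mathcal{Y}_m(t)}f(\Y_m(t))\int_0^t\la\A^{\f2}\Y_m(r),\A^{\f2}\eta_m(r)\ra\,\d r\big]$, which carries the full sensitivity $\eta_m(r)$.

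The heart of the argument is to remove $\mathrm{D}f$ from the second term. Writing $\Phi(t,s)$ for the two-parameter solution operator of the linearized flow \eqref{d1}, so that $\eta_m(s)=\Phi(s,0)\H$ and $\Phi(t,s)\Phi(s,0)=\Phi(t,0)$, I would choose the adapted Cameron-Martin direction $g(s)=\tfrac1t\A^{\frac{\va}{2}}\eta_m(s)$. A variation-of-constants computation then gives, for the Malliavin derivative of \eqref{2F3} along $g$, $\mathrm{D}_g\Y_m(r)=\int_0^r\Phi(r,s)\A^{-\frac{\va}{2}}g(s)\,\d s=\tfrac rt\,\eta_m(r)$ for $r\le t$, and in particular $\mathrm{D}_g\Y_m(t)=\eta_m(t)$. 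Applying the duality formula $\mE[\mathrm{D}_gF]=\mE\big[F\int_0^t\la g(s),\d\W_m(s)\ra\big]$ to $F=e^{-2\ga\mathcal{Y}_m(t)}f(\Y_m(t))$ and expanding $\mathrm{D}_gF$ by the chain rule isolates
\begin{align*}
\mE\big[e^{-2\ga\mathcal{Y}_m(t)}\la\mathrm{D}f(\Y_m(t)),\eta_m(t)\ra\big]
&=\tfrac1t\,\mE\Big[e^{-2\ga\mathcal{Y}_m(t)}f(\Y_m(t))\int_0^t\la\A^{\frac{\va}{2}}\eta_m(s),\d\W_m(s)\ra\Big]\\
&\quad+\tfrac{4\ga}{t}\,\mE\Big[e^{-2\ga\mathcal{Y}_m(t)}f(\Y_m(t))\int_0^t s\,\la\A^{\f2}\Y_m(s),\A^{\f2}\eta_m(s)\ra\,\d s\Big],
\end{align*}
the extra drift arising precisely because $\mathrm{D}_g$ also acts on the weight through $\mathrm{D}_g\Y_m(r)=\tfrac rt\eta_m(r)$. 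As the Poisson forcing in \eqref{2F3} is additive and independent of $\W_m$, it enters neither $\eta_m$ (the jump term is absent from \eqref{d1}) nor the Brownian Malliavin derivative, so this integration by parts is carried out conditionally on the jump part exactly as in the Gaussian case, which is the content of Appendix C of \cite{MSS}.

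It remains to combine the pieces and pass to general $f$. Adding the initial-datum weight term, which carries the full $\eta_m(r)$, to the shift-induced weight term, which carries $\tfrac rt\eta_m(r)$, the difference $1-\tfrac rt$ emerges, and the two coalesce into the drift correction $4\ga\int_0^t\big(1-\tfrac st\big)\la\A^{\f2}\eta_m(s),\A^{\f2}\Y_m(s)\ra\,\d s$ of \eqref{BE}, proving the formula for $f\in\mathrm{C}^1_b$. For arbitrary $f\in\mathrm{C}_b(\mathrm{P}_m\mH)$ I would approximate $f$ by smooth, uniformly bounded functions converging pointwise; since the right-hand side of \eqref{BE} contains no derivative of $f$, the bound $e^{-2\ga\mathcal{Y}_m(t)}\le1$ together with finite-dimensional moment estimates on $\eta_m$ and $\Y_m$ (which place both the stochastic integral and the Lebesgue weight in $\mathrm{L}^2(\Om)$) lets dominated convergence upgrade the identity to all $f\in\mathrm{C}_b$ and simultaneously yields G\^ateaux differentiability of $\mathrm{T}^m_t f$.

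I expect the main obstacle to be the rigorous justification of the Malliavin integration by parts in the presence of the jump noise — verifying that $g$ lies in the domain of the divergence (Skorokhod) operator and that the conditioning-on-jumps argument is valid — together with the moment bounds on the derivative flow $\eta_m$ required both for the duality step and for the final dominated-convergence passage. Careful bookkeeping of the signs of the two weight contributions is also needed to land on the exact form of the drift term in \eqref{BE}; everything else is routine.
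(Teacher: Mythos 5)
Your route is genuinely different from the paper's. The paper avoids Malliavin calculus altogether: it applies the finite-dimensional It\^o formula to the process $e^{-2\ga\mathcal{Y}_m(s)}\varphi_m(t-s,\Y_m(s))$, where $\varphi_m(t,\cdot)=\mathrm{T}^m_t f$ solves the auxiliary Kolmogorov equation \eqref{2p22}, so that this process is a martingale; it then multiplies by $I^m_t:=\int_0^t\la\A^{\frac{\va}{2}}\eta_m(s),\d\W_m(s)\ra$, takes expectations, uses the It\^o isometry together with the orthogonality of $I^m_t$ to the compensated jump martingale, and finally identifies the resulting terms via the chain rule and the Markov/semigroup property, which produces $t\la\mathrm{D}_\x(\mathrm{T}^m_tf)(\x),\H\ra$ plus the drift correction. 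Your Bismut-type argument---direct differentiation under the expectation plus the duality $\mE[\mathrm{D}_gF]=\mE[F\int_0^t\la g(s),\d\W_m(s)\ra]$ for the adapted direction $g=\frac{1}{t}\A^{\frac{\va}{2}}\eta_m$, with the key identity $\mathrm{D}_g\Y_m(r)=\frac{r}{t}\eta_m(r)$---is a legitimate alternative, and conditioning on the additive, independent jump component is the right way to make the duality valid. What your route buys is that no regularity of $\varphi_m$ is needed; what it costs is verifying that $g$ lies in the domain of the divergence operator in the jump setting, a point the paper's martingale computation never has to face.

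There is, however, a concrete error in your final assembly, plus a gap in the limiting step. Sign: your own intermediate identities produce, from differentiating the weight, the term $-4\ga\,\mE[e^{-2\ga\mathcal{Y}_m(t)}f(\Y_m(t))\int_0^t\la\A^{\f2}\Y_m(r),\A^{\f2}\eta_m(r)\ra\d r]$ (full $\eta_m$), and, after solving the duality identity for the $\mathrm{D}f$-term, the contribution $+4\ga\,\mE[e^{-2\ga\mathcal{Y}_m(t)}f(\Y_m(t))\int_0^t\frac{s}{t}\la\A^{\f2}\Y_m(s),\A^{\f2}\eta_m(s)\ra\d s]$. Their sum carries the factor $-4\ga\big(1-\frac{s}{t}\big)$, not $+4\ga\big(1-\frac{s}{t}\big)$: the drift correction comes out with a minus sign, and no rearrangement of your displayed steps yields the plus sign you claim ``coalesces'' out. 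A check with $f\equiv1$, where $\la\mathrm{D}_\x(\mathrm{T}^m_t 1)(\x),\H\ra$ can be computed directly from the definition of the semigroup, confirms that the minus sign is the consistent one; the paper's own method, carried to its conclusion, gives the same minus sign, so the discrepancy with the sign displayed in \eqref{BE} should be flagged explicitly rather than silently matched. Moments: you assert that finite-dimensional moment estimates place the stochastic integral $\int_0^t\la\A^{\frac{\va}{2}}\eta_m(s),\d\W_m(s)\ra$ by itself in $\mathrm{L}^2(\Om)$; this is not available, since the only bound on the derivative flow is of the form $\|\eta_m(t)\|^2\leq e^{4\mathcal{Y}_m(t)}\|\H\|^2$ and exponential moments of $\mathcal{Y}_m(t)$ are not free. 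The weight $e^{-2\ga\mathcal{Y}_m(t)}$ must stay attached to the stochastic integral---exactly as in the paper's treatment of $Z_m(t)=e^{-\ga\mathcal{Y}_m(t)}\int_0^t\la\A^{\frac{\va}{2}}\eta_m(s),\d\W_m(s)\ra$ in Section 4---and your dominated-convergence passage to general $f\in\mathrm{C}_b(\mathrm{P}_m\mH)$ has to be run on these weighted quantities, with $\ga$ chosen large enough.
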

\begin{Rem}
Differentiating \eqref{BE} further with respect to the initial data, we arrive at the following second differential of the semigroup $\mathrm{T}_t^m$ for $\H\in \mathrm{P}_m\mH$:
\begin{align} \label{BE6}
&\mathrm{D}^2_\x (\mathrm{T}^m_tf)(\x)\la\H,\H\ra\no\\&=\mE\left[e^{-2\ga\mathcal{Y}_m(t)}f(\Y_m(t)\left\{\frac{1}{t}\int_0^t\la \A^{\frac{\va}{2}}\zeta_m(s),\d\W_m(s)\ra\right.\right.\no\\
&\quad\left.\left.+4\ga\int_0^t\left(1-\frac{s}{t}\right)  \left[  \la\A^{\f2}\zeta_m(s),\A^{\f2}\Y_m(s)\ra + \la\A^{\f2}\eta_m(s),\A^{\f2}\eta_m(s)\ra\right] \d s\right\}\right] \no \\
&\quad+\mE\left[e^{-2\ga\mathcal{Y}_m(t)}f(\Y_m(t))\left\{-4\ga \int_0^t \la\A^{\f2}\eta_m(s),\A^{\f2}\Y_m(s)\ra f(\Y_m(t))+\la \mathrm{D}_\x f(\Y_m(t)), \eta_m(t)\ra\right\}\right.\no\\&\qquad\times \left.\left\{\frac{1}{t}\int_0^t\la \A^{\frac{\va}{2}}\eta_m(s),\d\W_m(s)\ra+4\ga\int_0^t\big(1-\frac{s}{t}\big)\la\A^{\f2}\eta_m(s),\A^{\f2}\Y_m(s)\ra \d s\right\}\right].
\end{align}
\end{Rem}

\begin{Pro}\label{main0}
Let the assumptions ($H_1$)-($H_2$) hold true. For any $\al_1<\al<\wi \al_1$ and $d\geq 0$, there exists a function $\w\in\C([0,T];\C^{\alpha,d,2})$ such that $\w$ satisfies the following mild form 
\begin{align}  \label{349}
\w(t,\x)=\mathrm{T}_t\wi f(\x)+\int_0^t\mathrm{T}_{t-s}\widetilde{\mathrm{F}}(\x,\w,\mathrm{D}_\x\w(s,\x))\d s+\int_0^t\mathrm{T}_{t-s}\wi g(\x)\d s.
\end{align}
of the  infinite dimensional HJB equation
\begin{eqnarray}\label{315}
\left\{
\begin{aligned} 
\mathrm{D}_t\w(t,\x)&=\mcL \w(t,\x)-2\ga\|\x\|_{\frac{1}{2}}^2\w(t,\x)+\widetilde{\mathrm{F}}(\x,\w(t,\x),\mathrm{D}_\x \w(t,\x))+ \wi g(\x), \ t\in[0,T],  \\
\w(0,\x)&=\wi f(\x),  \mbox{  with $\x$ restricted to a ball in $\mH$}.
\end{aligned}
\right.
\end{eqnarray}
\end{Pro}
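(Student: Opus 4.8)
The plan is to obtain $\w$ as the limit of the finite-dimensional solutions $\w_m$ of the mild equation \eqref{2p23}, combining the smoothing of the Feynman-Kac semigroup $\mathrm{T}^m_t$ with a compactness argument. First I would fix $m$ and solve \eqref{2p23} by a fixed-point argument in $\C([0,T];\C^{\alpha,d,2})$. Writing $\Gamma_m\w:=\mathrm{T}^m_t\wi f+\int_0^t\mathrm{T}^m_{t-s}\widetilde{\mathrm{F}}_m(\x,\w,\mathrm{D}_\x\w(s,\x))\d s+\int_0^t\mathrm{T}^m_{t-s}\wi g(\x)\d s$, I would show that $\Gamma_m$ maps a ball of this weighted space into itself and is a contraction on a short time interval, then iterate across $[0,T]$. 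The decisive ingredient is the gradient and Hessian control of $\mathrm{T}^m_t$ furnished by Proposition \ref{p2.1} and the second-order formula \eqref{BE6}.

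Quantitatively, estimating the martingale term $\tfrac1t\int_0^t\la\A^{\frac{\va}{2}}\eta_m(s),\d\W_m(s)\ra$ in $L^2$ via the It\^o isometry reduces matters to $\mE\int_0^t\|\A^{\frac{\va}{2}}\eta_m(s)\|^2\d s$, and the regularising action of the linearised flow \eqref{d1} on $\eta_m$ (and of \eqref{d2} on $\zeta_m$) yields, when the direction $\H$ is measured in the $\mD(\A^\al)$-norm, an integrable time singularity $t^{-\rho}$ with $\rho<1$. The point that $\rho$ stays below $1$ is exactly where the range $\al_1<\al<\wi\al_1<\tfrac12$ together with the admissible choice of $\va$ is used; this is what makes the two convolutions with $\widetilde{\mathrm{F}}_m$ and $\wi g$ well defined and the map $\Gamma_m$ contractive in the weighted norm.

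The core analytic work is then to bound $\sup_m\|\w_m\|_{\C([0,T];\C^{\alpha,d,2})}$ uniformly in $m$. Here the exponential weight $e^{-2\ga\mathcal{Y}_m(t)}$ inside the expectation is essential: it absorbs the exponential moments of $\Y_m$ that would otherwise render the right-hand sides of \eqref{BE} and \eqref{BE6} infinite, and for $\ga$ chosen large enough (relative to $\th$, $\tr(\A^{-\va})$ and the constant in $(H_1)$) it dominates the polynomial prefactors $(1+\|\x\|_\al)^d$. The growth and Lipschitz estimates of $\widetilde{\mathrm{F}}_m$ from \eqref{ft} then close: $(H_1)$ controls the jump integral, where the factor $e^{\ga\|\x+\G_m\|^2-\ga\|\x\|^2}$ is paired against $\exp(2\th\|\G\|^2)$, while $(H_2)$ estimates $\K^*\mathrm{D}_\x\w_m$ through $\|\A^{-\wi\al_1}\cdot\|$ in the regime $\al_1<\al<\wi\al_1$. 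Interpolating between the zeroth- and second-order weighted norms by Lemma \ref{lem2.1} keeps the nonlinear contributions subordinate and yields the a priori bound uniformly in $m$ and $t\in[0,T]$.

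Finally I would pass to the limit $m\to\infty$. Using the uniform bounds, the compact embeddings $\mD(\A^{s_2})\subset\mD(\A^{s_1})$ for $s_1<s_2$, and equicontinuity in $t$ read off from the mild formula, I would extract a subsequence $\w_m\to\w$ in $\C([0,T];\C^{\alpha,d,2})$ on balls of $\mH$, together with the convergence $\Y_m\to\Y$ and of the variations $\eta_m,\zeta_m$ to their infinite-dimensional analogues, so that $\mathrm{T}^m_t\to\mathrm{T}_t$ and $\widetilde{\mathrm{F}}_m\to\widetilde{\mathrm{F}}$ on the relevant arguments, giving \eqref{349}. The hard part will be the Hessian estimate, and for two reasons. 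First, \eqref{BE6} contains products of two stochastic integrals plus the term $\la\A^{\f2}\eta_m(s),\A^{\f2}\eta_m(s)\ra$, so an \emph{$m$-uniform}, time-integrable bound forces simultaneous control of second moments of $\zeta_m$ and fourth moments of $\A^{\f2}\eta_m$ against the exponential weight, which pushes the choice of $\ga$ and the window $\al_1<\al<\wi\al_1$ to their limit. Second, in the passage to the limit one must genuinely \emph{preserve} the $\C^{\alpha,d,2}$-regularity, in particular identify $\mathrm{D}^2_\x\w$ as the limit of $\mathrm{D}^2_\x\w_m$ and show the nonlocal jump term in $\mcL$ converges; this requires the second-order estimate to be equicontinuous rather than merely bounded, and is the technically heaviest point of the argument.
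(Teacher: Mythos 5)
Your overall architecture (finite-dimensional approximation, uniform weighted estimates from the Feynman--Kac smoothing, compactness to pass to the limit) matches the paper's, but your construction of the approximants $\w_m$ is different and contains a genuine gap. You propose to build $\w_m$ by a Banach fixed-point argument for the map $\Gamma_m$ in $\C([0,T];\C^{\alpha,d,2})$. This cannot close as stated, because the smoothing estimate for the semigroup (see \eqref{S1} and \eqref{C}) inflates the polynomial weight index: $\mathrm{T}_t^m$ maps $\C^{\al,k,\gamma}$ into $\C^{\al,k+2\b,\b+\gamma}$, and the nonlinearity costs two more powers, so one application of $\Gamma_m$ sends data with weight index $k$ to functions with weight index $k+2+2(1+\b)$. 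Hence $\Gamma_m$ does not map any single space $\C([0,T];\C^{\alpha,d,2})$ (nor any ball in it) into itself, and neither the invariance of a ball nor the contraction property can be verified; for the same reason your subsequent claim of a uniform-in-$m$ bound $\sup_m\|\w_m\|_{\C([0,T];\C^{\alpha,d,2})}<\infty$ is left without foundation. The paper escapes this circularity by \emph{not} solving the mild equation by iteration: it takes $\w_m(t,\x)=e^{-\ga\|\x\|^2}\v_m(t,\x)$, where $\v_m$ is the value function of the finite-dimensional control problem \eqref{2F2}, whose existence and HJB characterization come from finite-dimensional dynamic programming. The control interpretation gives the decisive $m$-uniform zeroth-order bound $0\le \v_m(t,\x)\le \mathcal{J}_m(0,T;\mathrm{P}_m\x,0)\le C(1+\|\x\|^2)$, i.e. $\|\w_m\|_{\al,0,0}\le C(\ga)$, and only then does the mild formula enter, as a bootstrap: interpolating (Lemma \ref{lem2.1}, as in \eqref{336}--\eqref{337}) between the $\C^{\al,0,0}$ bound and the inflated-weight estimate recovers a bound in a fixed space, which is Proposition \ref{prop3.2}. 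Some external input of this kind is indispensable; your fixed-point scheme has no substitute for it.

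Two smaller points of comparison. First, in the limit passage you worry about identifying $\mathrm{D}^2_\x\w$ as a limit of $\mathrm{D}^2_\x\w_m$ and about the convergence of the nonlocal term in $\mcL$; neither is needed, since the statement concerns the \emph{mild} form \eqref{349}, which involves only $\w$ and $\mathrm{D}_\x\w$ (the jump integral inside $\widetilde{\mathrm{F}}$ is handled by $(H_1)$ and dominated convergence). The paper accordingly only proves pointwise convergence of $\w_m$ and of $\mathrm{D}_\x\w_m$ in $\mD(\A^{-\al})$, the latter by an Arzel\`a--Ascoli plus Cauchy-sequence argument, with the limit lying in $\C^{\alpha,d,2}$ by the uniform bounds. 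Second, the equicontinuity in time that compactness requires is obtained in the paper not from the mild formula but from a pointwise bound on $\mathrm{D}_t\w_m$ read off from the differential form \eqref{2p21} of the transformed HJB equation (estimate \eqref{3.45}); this again uses that $\w_m$ is already known to solve that equation, not merely to be a fixed point of $\Gamma_m$.
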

We prove this proposition in section \ref{sec5}. 
\subsection{ A Priori Estimates}
In order to estimate \eqref{BE}  and \eqref{BE6}, we are in need of the following estimates.  The proof can be completed by the similar arguments as in \cite{Sa}.

\begin{Lem} \label{L2}
Suppose the assumption $(H_1)$ is satisfied. Then for any  $p\geq 2$ and $\va>1,$ there exists a constant $C>0$  such that for any $t\in[0,T]$ and $\x\in \mH$: 
\begin{align} \label{2p31}
\mE\left[\sup_{t\in[0,T]}\|\Y_m(t)\|^{p} + \int_0^T  \|\Y_m(t)\|^{p-2}\|\Y_m(t)\|_{\frac{1}{2}}^2 \d t  \right]
\leq C(p,T)(1+\|\x\|^{p}).  
\end{align}
\end{Lem}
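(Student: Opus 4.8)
The plan is to apply the finite--dimensional It\^o formula for jump--diffusions to the functional $\x\mapsto\|\x\|^p$ along the trajectory $\Y_m$ of \eqref{2F3}, exploiting the structural cancellation of the nonlinearity. Since $\Y_m(t)\in\mathrm{P}_m\mH$, the antisymmetry \eqref{2p9} gives $\la\B_m(\Y_m),\Y_m\ra=b(\Y_m,\Y_m,\Y_m)=0$, so the quadratic term drops out, while $\la\A\Y_m,\Y_m\ra=\|\Y_m\|_{\f2}^2$ produces the dissipation. The It\^o formula then yields, for each $t$,
\begin{align*}
\|\Y_m(t)\|^p + p\int_0^t\|\Y_m\|^{p-2}\|\Y_m\|_{\f2}^2\,\d s = \|\mathrm{P}_m\x\|^p + \int_0^t R(s)\,\d s + M^{(1)}_t + M^{(2)}_t,
\end{align*}
where $R(s)$ collects the Gaussian trace term $\tfrac{p}{2}\|\Y_m\|^{p-2}\tr(\mathrm{P}_m\A^{-\va})+\tfrac{p(p-2)}{2}\|\Y_m\|^{p-4}\la\A^{-\va}\Y_m,\Y_m\ra$ together with the L\'evy compensator $\int_{\mZ}\big[\|\Y_m+\G_m\|^p-\|\Y_m\|^p-p\|\Y_m\|^{p-2}\la\Y_m,\G_m\ra\big]\mu(\d z)$, and $M^{(1)},M^{(2)}$ are the Gaussian and jump martingales respectively.

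First I would bound $R$. The crucial use of $\va>1$ is that $\tr(\mathrm{P}_m\A^{-\va})=\sum_{k=1}^m\lam_k^{-\va}\le\sum_{k=1}^\infty\lam_k^{-\va}<\infty$ uniformly in the Galerkin dimension, since $\lam_k\sim\lam_1 k$; together with the boundedness of $\A^{-\va}$ this controls the Gaussian part by $C\|\Y_m\|^{p-2}$. For the compensator I would invoke the elementary inequality
\begin{align*}
\big|\,\|a+b\|^p-\|a\|^p-p\|a\|^{p-2}\la a,b\ra\,\big|\le C_p\big(\|a\|^{p-2}\|b\|^2+\|b\|^p\big),
\end{align*}
which, because $\|\G_m\|\le\|\G\|$ and $(H_1)$ makes $\int_{\mZ}\|\G\|^2\mu(\d z)$ and $\int_{\mZ}\|\G\|^{2p}\mu(\d z)$ finite (the exponential weight dominates every polynomial in $\|\G\|$), gives $|R(s)|\le C(1+\|\Y_m(s)\|^{p-2})\le C(1+\|\Y_m(s)\|^p)$, with $C$ independent of $m$.

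Next I would take the supremum over $[0,t]$ and expectations, and control the martingale suprema by the Burkholder--Davis--Gundy inequality. For the Gaussian part the quadratic variation is $\int_0^t p^2\|\Y_m\|^{2(p-2)}\|\A^{-\va/2}\Y_m\|^2\,\d s\le C\int_0^t\|\Y_m\|^{2p-2}\,\d s$, and for the jump part, using the same elementary inequality, $\int_0^t\!\int_{\mZ}\big|\,\|\Y_m+\G_m\|^p-\|\Y_m\|^p\,\big|^2\mu(\d z)\,\d s\le C\int_0^t(\|\Y_m\|^{2p-2}+1)\,\d s$. Writing $\|\Y_m\|^{2p-2}\le\big(\sup_{s\le t}\|\Y_m\|^p\big)\|\Y_m\|^{p-2}$ and applying Young's inequality lets me absorb $\delta\,\mE[\sup_{s\le t}\|\Y_m\|^p]$ into the left-hand side, leaving a remainder controlled by $\mE\int_0^t\|\Y_m\|^{p-2}\,\d s\le C\int_0^t(1+\mE\sup_{r\le s}\|\Y_m\|^p)\,\d s$.

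Finally, setting $\phi(t)=\mE[\sup_{s\le t}\|\Y_m(s)\|^p]$, all terms assemble into $\phi(t)\le C(1+\|\x\|^p)+C\int_0^t\phi(s)\,\d s$, and Gronwall's lemma gives $\phi(T)\le C(p,T)(1+\|\x\|^p)$; reinserting this bound into the dissipation integral then closes the estimate. To make the martingale manipulations rigorous I would first localize with the stopping times $\tau_N=\inf\{t:\|\Y_m(t)\|\ge N\}$, derive the inequality with constants independent of $N$ and $m$, and pass $N\to\infty$ by Fatou's lemma. I expect the main obstacle to be the careful treatment of the two jump contributions --- the Taylor-type bound on the compensator and the BDG estimate for the jump martingale --- and, above all, checking that every constant is uniform in the dimension $m$, which is precisely what makes this estimate usable in the later passage to the limit.
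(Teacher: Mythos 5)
Your proposal is correct and follows essentially the same route as the paper's proof: the finite-dimensional It\^o formula applied to $\|\Y_m(t)\|^p$ with the cancellation $\la\B_m(\Y_m),\Y_m\ra=0$, the Taylor-type inequality for the jump contributions, Burkholder-Davis-Gundy plus Young's inequality to absorb the supremum, Gronwall's lemma, and stopping-time localization with Fatou's lemma. The only difference is bookkeeping in the jump terms --- the paper groups the Taylor remainder with the uncompensated measure $\uppi$ and applies BDG only to the linear term against $\wi\uppi$, whereas you use the standard compensator-plus-martingale split with the full increment against $\wi\uppi$ --- and the two decompositions are equivalent and yield the same estimates.
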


\begin{Rem}
Note that the proof of Lemma \ref{L2} requires that $\tr(\A^{-\va})<+\infty$. Indeed, for the 2D NSE, using Corollary 2.2, \cite{AAI}, we have $$\tr(\A^{-\va})=\sum_{k=1}^\infty\lambda_k^{-\va}\leq \left(\frac{|\mathcal{O}|}{2\pi}\right)^{\var} \sum_{k=1}^\infty k^{-\va}<+\infty, \ \mbox{ for any} \ \  \va>1,$$  where $|\mathcal{O}|$ is the $2$-dimensional Lebesgue measure of $\mathcal{O}$.  
\end{Rem}

We also need the following estimate in the sequel. 
\begin{Lem} \label{L3}
Suppose the assumption $(H_1)$ is satisfied and let $\va>1.$   For any $k\geq 2$,  there exists a constant $C>0$  such that for any $t\in[0,T]$ and $\x\in \mH$:
\begin{align} \label{2p311}
\mE\left(\sup_{t\in[0,T]}\|\Y_m(t)\|^{2} + \int_0^T  \|\Y_m(t)\|_{\frac{1}{2}}^2 \d t \right)^k 
\leq C(k,T)(1+\|\x\|^{2})^k,
\end{align}
where $C$ is independent of $m$. 
\end{Lem}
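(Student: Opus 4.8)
The plan is to reduce everything to the energy balance obtained by applying the finite-dimensional It\^o formula to $\|\Y_m(t)\|^2$ for the system \eqref{2F3}, and then to estimate the two contributions to
$$E_m(t):=\sup_{s\in[0,t]}\|\Y_m(s)\|^2+\int_0^t\|\Y_m(s)\|_{\frac{1}{2}}^2\d s$$
separately, using Lemma \ref{L2} as a black box to control every moment that appears. A useful feature of this route is that, because Lemma \ref{L2} already guarantees that $\Y_m$ has finite moments of all orders (uniformly in $m$), the martingales below are genuine martingales with integrable quadratic variation, so no separate localization or self-referential absorption is needed; the stopping time $\ta=\inf\{t\ge0:\|\Y_m(t)\|\ge\tilde l\}\c T$ used in Lemma \ref{L2} has already done that work for us.

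First I would derive the energy identity. Using $\la\B_m(\Y_m),\Y_m\ra=0$, $\la\A\Y_m,\Y_m\ra=\|\Y_m\|_{\frac{1}{2}}^2$, and the jump expansion $\|\u+\G_m\|^2-\|\u\|^2-2\la\u,\G_m\ra=\|\G_m\|^2$, It\^o's formula gives
\begin{align*}
\|\Y_m(t)\|^2+2\int_0^t\|\Y_m(s)\|_{\frac{1}{2}}^2\d s
&=\|\x\|^2+t\,\tr(\A^{-\va})+\int_0^t\iZ\|\G_m(s,z)\|^2\mu(\d z)\d s\\
&\quad+\mathcal{M}_1(t)+\mathcal{M}_2(t),
\end{align*}
where $\mathcal{M}_1(t)=2\int_0^t\la\Y_m(s),\A^{-\frac{\va}{2}}\d\W_m(s)\ra$ and $\mathcal{M}_2(t)=\int_0^t\iZ\big(\|\Y_m(s-)+\G_m(s,z)\|^2-\|\Y_m(s-)\|^2\big)\wi\uppi(\d s,\d z)$. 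Here $\tr(\A^{-\va})<+\infty$ by the Remark following Lemma \ref{L2}, and the compensator term is bounded by a constant $C$ independent of $m$, since $\|\G_m\|\le\|\G\|$ and assumption $(H_1)$ makes $\int_0^T\iZ\|\G\|^2\mu(\d z)\d s$ finite (the exponential weight dominates $\|\G\|^2$). Dropping the nonnegative integral on the left and taking the supremum over $s\in[0,t]$ controls $\sup_{s\le t}\|\Y_m(s)\|^2$, while evaluating at $t=T$ and discarding $\|\Y_m(T)\|^2\ge0$ controls $2\int_0^T\|\Y_m\|_{\frac{1}{2}}^2\d s$ by the same right-hand side, so that
$$E_m(T)\le C\Big(\|\x\|^2+1+\sup_{s\le T}|\mathcal{M}_1(s)|+\sup_{s\le T}|\mathcal{M}_2(s)|\Big).$$

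Raising this to the power $k$ and taking expectations, it remains to bound the $k$-th moments of the two martingale suprema by the BDG inequality (as in the estimate of $J_2,J_3$ in Lemma \ref{L2}, see \cite{I}). For $\mathcal{M}_1$ the quadratic variation satisfies $[\mathcal{M}_1]_T=4\int_0^T\|\A^{-\frac{\va}{2}}\Y_m(s)\|^2\d s\le 4\lambda_1^{-\va}T\sup_{s\le T}\|\Y_m(s)\|^2$, whence $\mE\sup_{s\le T}|\mathcal{M}_1(s)|^k\le C\,\mE\sup_{s\le T}\|\Y_m\|^{k}$. For $\mathcal{M}_2$ the predictable integrand is $2\la\Y_m,\G_m\ra+\|\G_m\|^2$, whose square is bounded by $C(\|\Y_m\|^2\|\G_m\|^2+\|\G_m\|^4)$; using $(H_1)$ to bound $\int_0^T\iZ\|\G_m\|^2\mu(\d z)\d s$ and $\int_0^T\iZ\|\G_m\|^4\mu(\d z)\d s$ by constants independent of $m$, the predictable quadratic variation is at most $C(1+\sup_{s\le T}\|\Y_m\|^2)$, so that $\mE\sup_{s\le T}|\mathcal{M}_2(s)|^k\le C\big(1+\mE\sup_{s\le T}\|\Y_m\|^{k}\big)$.

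Finally I would invoke Lemma \ref{L2} twice, with two different exponents. Applied with $p=k\ (\ge2)$ it gives $\mE\sup_{s\le T}\|\Y_m\|^{k}\le C(1+\|\x\|^2)^{k/2}$, which controls both martingale moments, and applied with $p=2k$ it gives $\mE\big(\sup_{s\le T}\|\Y_m\|^2\big)^k\le C(1+\|\x\|^2)^{k}$. Collecting these in $\mE[E_m(T)^k]\le C_k\big(\mE\sup_{s\le T}\|\Y_m\|^{2k}+\mE(\text{integral term})^k\big)$ and using $(1+\|\x\|^2)^{k/2}\le(1+\|\x\|^2)^k$ yields the claimed bound $\mE[E_m(T)^k]\le C(k,T)(1+\|\x\|^2)^k$, with all constants independent of $m$ because every ingredient ($\tr(\A^{-\va})$, the noise integrals from $(H_1)$, and the constants in Lemma \ref{L2}) is uniform in $m$. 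The only genuinely delicate point is organizational rather than analytic: one must split $E_m$ so that BDG produces square roots of quadratic variations proportional to $\sup\|\Y_m\|^2$, and then feed these into Lemma \ref{L2} with $p=k$ while reserving $p=2k$ for the supremum term; this exact matching of exponents is what avoids any absorption argument and delivers the clean final power $(1+\|\x\|^2)^k$.
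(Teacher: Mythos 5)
Your overall strategy coincides with the paper's: apply the finite-dimensional It\^o formula to $\|\Y_m\|^2$, take the supremum over $[0,T]$, raise to the power $k$, and reduce everything to moment bounds on the two stochastic integrals plus Lemma \ref{L2}. The Gaussian martingale estimate is fine. The genuine gap is in your treatment of the jump martingale $\mathcal{M}_2$: you bound its \emph{predictable} quadratic variation $\la\mathcal{M}_2\ra_T=\int_0^T\iZ|2\la\Y_m,\G_m\ra+\|\G_m\|^2|^2\mu(\d z)\d s$ and then invoke an inequality of the form $\mE\sup_{t\le T}|\mathcal{M}_2(t)|^k\le C_k\,\mE\la\mathcal{M}_2\ra_T^{k/2}$. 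For discontinuous martingales this is false when $k>2$: the Burkholder--Davis--Gundy inequality is stated with the actual quadratic variation $[\mathcal{M}_2]$ (which for a compensated Poisson integral is itself a Poisson integral of the squared integrand, not its compensator), and one cannot replace $[\,\cdot\,]$ by $\la\cdot\ra$ inside a $k/2$-power. A compensated Poisson process $M_t=N_t-\lambda t$ with small intensity $\lambda$ shows this: $\mE\la M\ra_T^{k/2}=(\lambda T)^{k/2}$, while $\mE\sup_{t\le T}|M_t|^k\geq c\,\lambda T$ from the event of a single jump, so the claimed bound fails as $\lambda T\to0$ whenever $k>2$.

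The correct tool, and the one the paper uses, is Kunita's (Bichteler--Jacod) inequality, which for $f=2\la\G_m,\Y_m\ra+\|\G_m\|^2$ yields
\begin{align*}
\mE\sup_{t\le T}|\mathcal{M}_2(t)|^k\le C_k\left\{\mE\left(\int_0^T\iZ |f|^2\,\mu(\d z)\d s\right)^{k/2}+\mE\left(\int_0^T\iZ |f|^k\,\mu(\d z)\d s\right)\right\}.
\end{align*}
Your argument accounts only for the first term and omits the second, which is exactly the term $K_4$ in the paper's proof. The omission is repairable rather than fatal: expanding $|f|^k\le C_k(\|\G_m\|^k\|\Y_m\|^k+\|\G_m\|^{2k})$, using $(H_1)$ to bound $\int_0^T\iZ\|\G\|^k\mu(\d z)\d s$ and $\int_0^T\iZ\|\G\|^{2k}\mu(\d z)\d s$, and applying Lemma \ref{L2} with $p=k$ gives this extra term the same bound $C(k,T,\G)(1+\|\x\|^k)$, after which your conclusion follows as written. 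A minor further remark: your final invocation of Lemma \ref{L2} with $p=2k$ is unnecessary, since your own decomposition bounds $E_m(T)$ by deterministic terms and the martingale suprema only, so $p=k$ suffices throughout.
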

\begin{proof} By applying the finite dimensional It\^o formula to the process $\|\Y_m(t)\|^2,$ we obtain 
\begin{align}\label{2.37}
&\|\Y_m(t)\|^2+2\int_0^{t}\|\Y_m(s)\|^2_{\frac{1}{2}} \d s\nonumber\\&=\|\x\|^2+\tr(\A^{-\varepsilon})T+\int_0^t\la \Y_m(s), \A^{-\frac{\va}{2}}\d\W_m(s)\ra+2\int_0^t \iZm \la \G_m(s-,z),\Y_m(s-)\ra\wi\uppi(\d s,\d z)\nonumber\\&\quad +\int_0^t \iZm \| \G_m(s,z)\|^2\uppi(\d s,\d z)\nonumber\\& =\|\x\|^2+\tr(\A^{-\varepsilon})T+\int_0^t \iZm \| \G_m(s,z)\|^2\mu(\d z)\d s+\int_0^t\la \Y_m(s), \A^{-\frac{\va}{2}}\d\W_m(s)\ra\nonumber\\&\quad +\int_0^t \iZm \left[2\la \G_m(s-,z),\Y_m(s-)\ra+\| \G_m(s-,z)\|^2\right]\wi\uppi(\d s,\d z).
\end{align}	
Let us take supremum over $[0,T]$ in \eqref{2.37}, raise to the power $k\geq 2$ and then take expectation to get 
\begin{align} \label{2p322}
\mE\left(\sup_{t\in[0, T]}\|\Y_m(T)\|^2+2\int_0^{T}\|\Y_m(t)\|^2_{\frac{1}{2}} \d t\right)^k 
\leq C(k,T,\G)\left[\left(1+\|\x\|^2\right)^k+\sum_{i=1}^3\mE[ K_i^k]\right], 
\end{align}
where
\begin{align*}
K_1&=2\sup_{t\in[0, T]}\left|\int_0^t\la \Y_m(s), \A^{-\frac{\va}{2}}\d\W_m(s)\ra\right|,  \\
K_2&=\sup_{t\in[0, T]}\left|\int_0^t \iZm \left[2\la \G_m(s-,z),\Y_m(s-)\ra+\|\G_m(s-,z)\|^2\right]\wi\uppi(\d s,\d z)\right|. \\
\end{align*}
Applying Burkholder-Davis-Gundy inequality for $k$-th moment   (see, \cite{P}, page 37) and Lemma \ref{L3},  we obtain
\begin{align}
\mE[K_1^k]&\leq C(k) \mE\left(\int_0^T \|\Y_m(s)\|^2\tr(\A^{-\va}) \d s\right)^{k/2} \no\\&\leq C(k)\mE\left[\tr(\A^{-\va})T \sup_{t\in[0,T]}\|\Y_m(t)\|^2\right]^{k/2}\nonumber \\
&\leq   C(k,\G,T)(1+\|\x\|^k) .
\end{align}
By the compensated Poisson measure  $\wi \uppi(\d t,\d z) = \uppi(\d t,\d z)-\mu(\d z)\d t$ and Kunita's inequality (see \cite{A}),  we write integral $K_2$  as 
\begin{align}\label{kunita}
\mE[K_2^k]&\leq \sup_{t\in[0, T]}\left|\int_0^t \iZm \left[2\la \G_m(s-,z),\Y_m(s-)\ra+\|\G(s-,z)\|^2\right]\wi\uppi(\d s,\d z)\right|^k\nonumber\\&\leq C(k)\left\{\mE\left[\left(\int_0^T\iZm\left|2\la \G_m(s,z),\Y_m(s)\ra+\|\G(s,z)\|^2\right|^2\mu(\d z)\d s\right)^{k/2}\right]\right.\nonumber\\&\quad+ \left.\mE\left[\left(\int_0^T\iZm\left|2\la \G_m(s,z),\Y_m(s)\ra+\|\G(s,z)\|^2\right|^k\mu(\d z)\d s\right)\right]\right\}\nonumber\\&=:K_3+K_4. 
\end{align}
By Cauchy-Schwarz inequality, we estimate $K_3$ as 
\begin{align}
K_3&\leq C(k)\mE\left[\left(\int_0^T\iZm[\| \G_m(s,z)\|^2\|\Y_m(s)\|^2+\|\G(s,z)\|^4]\mu(\d z)\d s\right)^{k/2}\right]\no\\&\leq C(k)\mE\left[\sup_{s\in[0,T]}\|\Y_m(s)\|^k\right]\left(\int_0^T\iZ\|\G(s,z)\|^2\mu(\d z)\d s\right)^{k/2}\no\\&\quad +C(k)\left(\int_0^T\iZ\|\G(s,z)\|^4\mu(\d z)\d s\right)^{k/2}\no\\&\leq C(k,T,\G)(1+\|\x\|^k).
\end{align}
Similarly,  we have 
\begin{align}\label{2.41}
K_4\leq  C(k,T,\G)(1+\|\x\|^k).
\end{align}
Combining \eqref{2p322}-\eqref{2.41}, we finally obtain \eqref{2p311}.
\end{proof}
Next, we prove some regularity of solutions.  Consider the \emph{Ornstein-Uhlenbeck} stochastic differential equation 
\begin{align}\label{N1}\d\N_m(t)+\A\N_m(t)\d t=\A^{-\frac{\va}{2}}\d\W_m(t), \ \  \
\N_m(0)=0.
\end{align}
For any $0<\al<\f2$ and  $\va>1+\al,$   the problem \eqref{N1} has a unique solution  $\N\in \mathrm{C}([0,T]; \mD(\A^\al))$    represented by the variation of constant formula
$$\N_m(t)=\int_0^te^{-\A(t-s)}\A^{-\frac{\va}{2}}\d\W_m(s)$$   By factorization method (see \cite{DZ}, Section 5.3), one can show that
\begin{align} \label{N2}
\mE\left[\sup_{t\in[0,T]}\|\N_m(t)\|_\al^{2k}\right]\leq c(k,\al,T)\ \ \ \mbox{for any} \ \ k> 1. 
\end{align}
Indeed, taking the following integral identity into account, 
$$\int_r^t(t-s)^{\de-1}(s-r)^{-\de}\d s=\frac{\pi}{\sin\de\pi}, \ \ \ 0\leq r\leq t, \ \ \ \de\in(0,1),$$
one can re-write the integral $\N_m(t),$  by applying the stochastic Fubini's theorem, as 
$$\N_m(t)=\frac{\sin\de\pi}{\pi}\int_0^t (t-s)^{\de-1}e^{-\A(t-s)} \wi \N_m(s) \d s ,$$
where $$\wi \N_m(s)=\int_0^s(s-r)^{-\de} e^{-\A(s-r)}  \A^{-\frac{\va}{2}}\d\W_m(r), \ \mbox{ provided} \  \ \mE\left[\int_0^T\|\wi\N_m(s)\|^{2k}_\al\d s\right] <+\infty.$$  More precisely for any $p>0,$   we have (see Theorem 4.36, \cite{DZ})
\begin{align*}
\mE\left[ \|\wi\N_m(s)\|^p_\al \right]&\leq 
C(p)\left(\int_0^s\|(s-r)^{-\de} \A^\al e^{-\A(s-r)}  \A^{-\frac{\va}{2}}\|^2_{\mcL^2(\mH)} \d r\right)^{p/2} \\
&\leq C(p) \left(\int_0^s(s-r)^{-2\de} \|\A^{\frac{\al}{2}} e^{-\A(s-r)}\|^2_{\mcL(\mH)}  \|\A^{(\frac{\al}{2}-\frac{\va}{2})}\|^2_{\mcL^2(\mH)} \d r\right)^{p/2}. 
\end{align*}
By the smoothing properties  of the semigroup (see, \cite{Sh}), we have 
$$\|\A^{\frac{\al}{2}} e^{-\A t}\|_{\mcL(\mH)} \leq  t^{-\frac{\al}{2}} \ \ \mbox{and} \ \  \|\A^{(\frac{\al}{2}-\frac{\va}{2})}\|^2_{\mcL^2(\mH)} = \tr(\A^{\al-\va}) < +\infty \ \mbox{for any} \ \va>1+\al.$$
Moreover, if $\de < \f2-\frac{\al}{2},$ one can validate 
\begin{align*}
\mE \left[\sup_{t\in[0, T]}\|\wi\N_m(t)\|^p_\al  \right]
\leq C(p) \tr(\A^{\al-\va})^{p/2}\left(  \int_0^T\tau^{-2\de-\al}  \d\tau\right)^{p/2} <+\infty. 
\end{align*}
On the other hand,  we note that
\begin{align*}
\mE\left[\sup_{t\in[0,T]} \|\N_m(t)\|_\al ^{2k}\right]&\leq  \mE\left[\sup_{t\in[0,T]} \left(\frac{\sin\de\pi}{\pi} \int_0^t (t-s)^{\de-1}\|e^{-\A(t-s)}\|_{\mathscr{L}(\mH)}\|\wi\N_m(s)\|_\al \d s\right)^{2k}\right] \\ &\leq C\mE\left[\sup_{t\in[0,T]}\|\wi\N_m(t)\|_\al^{2k}\right]\left(\int_0^T \tau^{(\de-1)}\d \tau\right).
\end{align*}
Since for any  $\de > 0,$   we see that  $\int_0^T \tau^{(\de-1)} \d\tau<+\infty$ and hence the required estimate follows.     
Using this result, we prove the following regularity result. 
\begin{Rem}\label{rem3.3}
One can easily seen that the estimate \eqref{N2} holds true for $\alpha\in[0,\frac{1}{2}]$ as well. 
\end{Rem}
\begin{Lem}\label{lem2.4}
Suppose the assumption $(H_1)$ is satisfied.  Let  $\va>1+\al, \  \al\in(0,\f2).$  Then  there exist constants $C>0$  and $\ga_1>0$ such that for any $t\in[0,T]$ and $\x\in \mD(\A^\al)$, the following estimate holds:
\begin{align} \label{2p24}
\mE\left[e^{-\ga_1(\al)\mathcal{Y}_m(t)}\|\A^\al \Y_m(t)\|^{2} \right]\leq C(T)(1+\|\A^\al \x\|^{2}).
\end{align}
where $\mathcal{Y}_m(t)$ is the integral defined in Proposition \ref{p2.1}.   Moreover, for any $k\geq 1,$ we have
\begin{align} \label{2p241}
\mE\left[e^{-\ga_1(\al,k)\mathcal{Y}_m(t)}\|\A^\al \Y_m(t)\|^{2k} \right]\leq C(k,T)(1+\|\A^\al \x\|)^{2k}.
\end{align}
\end{Lem}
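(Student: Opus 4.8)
The plan is to strip off the Gaussian forcing by passing to the shifted process $\Z_m:=\Y_m-\N_m$, where $\N_m$ is the Ornstein--Uhlenbeck process of \eqref{N1}. Subtracting \eqref{N1} from \eqref{2F3} cancels the stochastic convolution, so $\Z_m$ solves the random equation
\begin{align*}
\d\Z_m(t)=-[\A\Z_m(t)+\B(\Y_m(t))]\d t+\iZm\G_m(t,z)\wi\uppi(\d t,\d z),\qquad \Z_m(0)=\mathrm{P}_m\x,
\end{align*}
with $\Y_m=\Z_m+\N_m$. The gain is that $\Z_m$ no longer feels the white noise, so the Gaussian trace term $\tr(\A^{2\al-\va})$ (which would force $\va>1+2\al$) never appears; the only regularity I must buy from the noise is $\N_m\in\mD(\A^\al)$, which holds exactly under the hypothesis $\va>1+\al$ by \eqref{N2} and Remark \ref{rem3.3}. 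Note also $\|\A^\al\mathrm{P}_m\x\|\le\|\A^\al\x\|$.

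First I would apply the finite-dimensional It\^o formula for c\`adl\`ag processes, exactly as in the proof of Lemma \ref{L3}, to $e^{-\ga_1\mathcal{Y}_m(t)}\|\A^\al\Z_m(t)\|^2$. Since $\mathcal{Y}_m$ is continuous of finite variation with $\tfrac{\d}{\d t}\mathcal{Y}_m=\|\Y_m\|_{\frac{1}{2}}^2$, the weight carries no quadratic variation, and after taking expectations so that the stochastic integrals drop out one obtains
\begin{align*}
&\mE\big[e^{-\ga_1\mathcal{Y}_m(t)}\|\A^\al\Z_m(t)\|^2\big]+2\mE\int_0^t e^{-\ga_1\mathcal{Y}_m}\|\A^{\al+\frac{1}{2}}\Z_m\|^2\d s\\
&=\|\A^\al\mathrm{P}_m\x\|^2-\ga_1\mE\int_0^t e^{-\ga_1\mathcal{Y}_m}\|\Y_m\|_{\frac{1}{2}}^2\|\A^\al\Z_m\|^2\d s-2\mE\int_0^t e^{-\ga_1\mathcal{Y}_m}b(\Y_m,\Y_m,\A^{2\al}\Z_m)\d s\\
&\quad+\mE\int_0^t e^{-\ga_1\mathcal{Y}_m}\iZm\|\A^\al\G_m(s,z)\|^2\mu(\d z)\d s,
\end{align*}
the last term being finite by $(H_1)$, whose exponential factor dominates $\|\A^\al\G\|^2$ and whose bound is inherited by $\G_m$ because $\mathrm{P}_m$ is a contraction commuting with $\A$.

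The decisive step is the nonlinear term. Applying the trilinear estimate \eqref{ne} with the choice $r=\al\in(0,1/2)$, and using $\|\A^{2\al}\Z_m\|_{\frac{1}{2}-\al}=\|\A^{\al+\frac{1}{2}}\Z_m\|$, gives $|b(\Y_m,\Y_m,\A^{2\al}\Z_m)|\leq C\|\Y_m\|_\al\|\Y_m\|_{\frac{1}{2}}\|\A^{\al+\frac{1}{2}}\Z_m\|$, so that Young's inequality together with $\|\Y_m\|_\al^2\leq 2\|\A^\al\Z_m\|^2+2\|\A^\al\N_m\|^2$ yields
\begin{align*}
2|b(\Y_m,\Y_m,\A^{2\al}\Z_m)|\leq \|\A^{\al+\frac{1}{2}}\Z_m\|^2+C'\|\Y_m\|_{\frac{1}{2}}^2\|\A^\al\Z_m\|^2+C'\|\Y_m\|_{\frac{1}{2}}^2\|\A^\al\N_m\|^2.
\end{align*}
The merit of taking $r=\al$ is that it produces exactly the factor $\|\Y_m\|_{\frac{1}{2}}$ matching the integrand of the weight, and exactly the dissipation order $\|\A^{\al+\frac{1}{2}}\Z_m\|$. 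Choosing $\ga_1=\ga_1(\al):=C'$, the term $C'\|\Y_m\|_{\frac{1}{2}}^2\|\A^\al\Z_m\|^2$ is swallowed by the weight-derivative term $-\ga_1\|\Y_m\|_{\frac{1}{2}}^2\|\A^\al\Z_m\|^2$, while $\|\A^{\al+\frac{1}{2}}\Z_m\|^2$ is swallowed by the dissipation on the left. This balancing is the crux and the main obstacle: it is precisely what forces the exponential weight into the statement, and it only closes because the critical power $2\al$ stays below $1$, i.e. because $\al<1/2$.

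After these cancellations only the harmless remainder survives, and using the elementary identity $\int_0^t e^{-\ga_1\mathcal{Y}_m}\|\Y_m\|_{\frac{1}{2}}^2\d s=\ga_1^{-1}(1-e^{-\ga_1\mathcal{Y}_m(t)})\leq \ga_1^{-1}$ I would bound the surviving integral by $\ga_1^{-1}\,\mE\big[\sup_{s\in[0,T]}\|\A^\al\N_m(s)\|^2\big]\leq C$ via \eqref{N2}. This gives $\mE[e^{-\ga_1\mathcal{Y}_m(t)}\|\A^\al\Z_m(t)\|^2]\leq \|\A^\al\x\|^2+C$, and since $\|\A^\al\Y_m\|^2\leq 2\|\A^\al\Z_m\|^2+2\|\A^\al\N_m\|^2$ with $e^{-\ga_1\mathcal{Y}_m}\leq1$, estimate \eqref{2p24} follows. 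For the higher moment \eqref{2p241} I would repeat the computation for $e^{-\ga_1\mathcal{Y}_m}\|\A^\al\Z_m\|^{2k}$; the nonlinear term then generates $k\|\A^\al\Z_m\|^{2k-2}\|\Y_m\|_{\frac{1}{2}}^2\|\Y_m\|_\al^2$, whose leading piece $\|\A^\al\Z_m\|^{2k}\|\Y_m\|_{\frac{1}{2}}^2$ is again absorbed by the weight once $\ga_1=\ga_1(\al,k)$ is taken proportional to $k$, the cross terms with $\N_m$ being controlled by Young's inequality and the $2k$-th moment bound \eqref{N2} (valid for $k>1$), with the jump contributions estimated by the Kunita/Taylor argument used in Lemma \ref{L3}.
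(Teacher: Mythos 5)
Your proposal is correct and follows essentially the same route as the paper's proof: the same Ornstein--Uhlenbeck shift $\Z_m=\Y_m-\N_m$, the same weighted It\^o computation for $e^{-\ga_1\mathcal{Y}_m(t)}\|\A^\al\Z_m(t)\|^2$, the same use of the trilinear estimate \eqref{ne} with $r=\al$, and the same absorption of the critical term $\|\Y_m\|_{\frac{1}{2}}^2\|\A^\al\Z_m\|^2$ into the exponential weight. The only deviations are minor and harmless: you bound the remainder involving $\N_m$ pathwise via the identity $\int_0^t e^{-\ga_1\mathcal{Y}_m(s)}\|\Y_m(s)\|_{\frac{1}{2}}^2\,\d s\leq \ga_1^{-1}$ (where the paper instead uses Cauchy--Schwarz together with Lemma \ref{L3}, and also inserts a stopping-time localization with Fatou's lemma that you elide), and you obtain \eqref{2p241} by applying It\^o directly to the $2k$-th power rather than raising the inequality \eqref{2.50} to the power $k$ as the paper does.
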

\begin{proof}
By the change of variable $\Z_m=\Y_m-\N_m,$   the process $\Z_m$ solves the  equation
\begin{equation}\label{T1}
\left\{
\begin{aligned}
\d\Z_m(t)&=-[\A\Z_m(t)+\B_m(\Y_m(t))]\d t+ \iZm\G_m(t,z) \wi\uppi(\d t,\d z), \   t\in (0,T),  \\
\Z_m(0)&=\mathrm{P}_m\x.
\end{aligned}
\right.
\end{equation}
For a given $m\in \mN$ and for all $\ l>0,$ define a stopping time $$\t:=\inf\left\{t\geq 0: \|\Z_m(t)\|_\al\geq  l\right\}\c T.$$  We can argue that  $\t\to T, \mP$-a.s., as $ l\to\infty.$ 
Applying  the  finite dimensional It\^o formula
to $[e^{-\wi C\mathcal{Y}_m(t)}\|\Z(t)\|_\al^2],$ where $\wi C >0$ is a constant to be chosen later, to get
\begin{align} \label{2p25}
&e^{-\wi C \mathcal{Y}_m(t\c\t)}\|\Z_m(t\c\t)\|_\al^2 +2\int_0^{t\c\t} e^{-\wi C \mathcal{Y}_m(s)} \|\Z_m(s)\|^2_{\al+\frac{1}{2}}\d s\no \\&= \|\x\|^2_\al 
-\wi C  \int_0^{t\c\t} e^{-\wi C \mathcal{Y}_m(s)} \|\Z_m(s)\|^2_\al \|\Y_m(s)\|_{\frac{1}{2}}^2 \d s\no \\
&\quad-2\int_0^{t\c\t} e^{-\wi C \mathcal{Y}_m(s)} \la\B_m(\Y_m(s)), \A^{2\al}\Z_m(s)\ra \d s + 
\int_0^{t\c\t} \iZm e^{-\wi C \mathcal{Y}_m(s)}\|\G_m(s,z)\|_\al^2\uppi(\d z,\d s)  \no\\
&\quad+2\int_0^{t\c\t}  \iZm e^{-\wi C \mathcal{Y}_m(s)}\la\A^\al \G_m(s,z),\A^\al \Z_m(s-)\ra\wi\uppi(\d s,\d z) 
\no\\&:= \|\x\|^2_\al +\sum_{i=1}^4 I_i({t\c\t}). 
\end{align}
For any $\al\in(0,1/2),$ we get from  the inequality \eqref{ne} with $r=\alpha$ that 
\begin{align}\label{nne}
|\la\B_m(\Y_m), \A^{2\al}\Z_m\ra| \leq C(\al) \|\Y_m\|_\al\|\Y_m\|_{\frac{1}{2}}\|\Z_m\|_{\al+\frac{1}{2}}.
\end{align}
By  the Cauchy-Schwartz inequality, we  obtain
\begin{align} \label{2p26}
|I_2({t\c\t})|
&\leq \int_0^{t\c\t} e^{-\wi C \mathcal{Y}_m(s)} \|\Z_m(s)\|^2_{\al+\frac{1}{2}} \d s + \frac{C(\al)}{4}  \int_0^{t\c\t} e^{-\wi C \mathcal{Y}_m(s)} \|\Y_m(s)\|^2_\al \|\Y_m(s)\|_{\frac{1}{2}}^2 \d s \no\\ 
&\leq  \int_0^{t\c\t} e^{-\wi C \mathcal{Y}_m(s)} \|\Z_m(s)\|^2_{\al+\frac{1}{2}} \d s + \frac{C(\al) }{2} \int_0^{t\c\t} e^{-\wi C \mathcal{Y}_m(s)} \|\Z_m(s)\|^2_\al \|\Y_m(s)\|_{\frac{1}{2}}^2 \d s \no\\ &\quad+\frac{C(\al)}{2}  \int_0^{t\c\t} e^{-\wi C \mathcal{Y}_m(s)} \|\N_m(s)\|^2_\al \|\Y_m(s)\|_{\frac{1}{2}}^2 \d s. 
\end{align}
Note that the first integral can be absorbed in the left side of \eqref{2p25} and when $\wi C = \frac{C(\al)}{2},$ the second integral cancels with the integral $I_2.$ Then, we have 
\begin{align} \label{2.50}
&\hspace*{-0.7cm}e^{-\wi C \mathcal{Y}_m(t\c\t)}\|\Z_m(t\c\t)\|_\al^2 +\int_0^{t\c\t} e^{-\wi C \mathcal{Y}_m(s)} \|\Z_m(s)\|^2_{\al+\frac{1}{2}}\d s\no \\&\leq  \|\x\|^2_\al 
+\frac{C(\al)}{2}  \int_0^{t\c\t} e^{-\wi C \mathcal{Y}_m(s)} \|\N_m(s)\|^2_\al \|\Y_m(s)\|_{\frac{1}{2}}^2 \d s\no\\&\quad +
\int_0^{t\c\t} \iZm e^{-\wi C \mathcal{Y}_m(s)}\|\G_m(s,z)\|_\al^2\uppi(\d z,\d s)  \no\\
&\quad+2\int_0^{t\c\t}  \iZm e^{-\wi C \mathcal{Y}_m(s)}\la\A^\al \G_m(s,z),\A^\al \Z_m(s-)\ra\wi\uppi(\d s,\d z).
\end{align}
For any $\va>1+\al,$  applying \eqref{N2} and Lemma \ref{L3},  we have the following estimate:
\begin{align}\label{2.51}
&\mE\left[\int_0^{t\c\t} e^{-\wi C \mathcal{Y}_m(s)} \|\N_m(s)\|^2_\al \|\Y_m(s)\|_\f2^2 \d s\right]\no\\
&\qquad \leq  \left[\mE\left(\sup_{s\in[0,t\c\t]}\|\N_m(s)\|^2_\al\right)^2\right]^{1/2}  \left[\mE\left(\int_0^{t\c\t}  \|\Y_m(s)\|_{\frac{1}{2}}^2 \d s\right)^2\right]^{1/2}\no\\&\qquad \leq C(T,\G)(1+\|\x\|_\al^2).
\end{align}
Using  $(H_1),$ we get 
\begin{align} \label{2p29}
\mE[I_3({t\c\t})]=  \mE  \left[\int_0^{t\c\t} e^{-\wi C \mathcal{Y}_m(s)}\left(\iZm \|\G_m(s,z)\|^2_\al\mu(\d z)\right)\d s \right]
\leq  C(\G,T).  
\end{align}
Note that the integral  $I_4$ is a local martingale.  Indeed, we have
$$ \mE  | I_4(t\c\t)|^2 \leq  4\mE \left[\int_0^{t\c\t}  \|\Z_m(s)\|_\al^{2 } \left(\iZm \| \G_m(s,z)\|^2_\al \mu(\d z)\right) \d s\right] <+\infty,$$
whence $\mE[I_4(t)]=0,$  for all $t\in[0,T].$
Consequently,  the identity \eqref{2p25} reduces to the following:
\begin{align} 
\mE\left[e^{-\wi C \mathcal{Y}_m(t\c\t)}\|\Z_m(t\c\t)\|_\al^2\right] +\mE\left[\int_0^{t\c\t} e^{-\wi C \mathcal{Y}_m(s)} \|\Z_m(s)\|^2_{\al+\frac{1}{2}}\d s \right]
\leq C(\G,T)(1+ \|\x\|^2_\al).
\end{align}
Recalling  that $\Y_m=\Z_m+\N_m,$ we obtain   $\|\Y_m(t)\|_\al \leq  \|\Z_m(t)\|_\al +\|\N_m(t)\|_\al.$  Since the constant $C>0$ is independent of $l>0,$  passing the limit $l\to\infty$ and applying  Fatou's lemma, one can arrive at \eqref{2p24}.   

Note that from (\ref{2.50}), we have 
\begin{align} 
&\mE\left[e^{-C(\alpha,k) \mathcal{Y}_m(t\c\t)}\|\Z_m(t\c\t)\|_\al^{2k}\right] \no \\&\leq  C(\alpha,k)\left[1+\|\x\|^{2k}_\al 
+\mE\left(
\int_0^{t\c\t} \iZm e^{-\wi C \mathcal{Y}_m(s)}\|\G_m(s,z)\|_\al^2\mu(\d z)\d s \right)^k \right]\no\\&\quad + \mE\left(\int_0^{t\c\t} e^{-\wi C \mathcal{Y}_m(s)} \|\N_m(s)\|^2_\al \|\Y_m(s)\|_{\frac{1}{2}}^2 \d s\right)^k \no\\
&\quad+\mE\left(\int_0^{t\c\t}  \iZm e^{-\wi C \mathcal{Y}_m(s)}\left[2\la\A^\al \G_m(s,z),\A^\al \Z_m(s-)\ra+\|\G_m(s-,z)\|_\al^2\right]\wi\uppi(\d s,\d z)\right)^k.
\end{align}
The estimates similar to (\ref{2.51}) and   (\ref{kunita}) respectively for the last two integrals completes the proof of \eqref{2p241}.
\end{proof} 

\begin{Pro}\label{lem3.2} Let the Assumption  \ref{ass2.1} hold. Then for any $x\in\mH$, $\ga>0$ and  $t\in [0,T],$ the following holds:
	\begin{align}\label{e00}
		\mE\left[\exp\left(\ga \|\Y_m(t)\|^2+\ga \int _0^t \|\Y_m(s)\|^2_1 \d s\right)\right] 
		\leq C(\ga,\Q,\G,T)  \exp(\ga \|x\|^2). 
	\end{align}
\end{Pro}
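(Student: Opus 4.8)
\emph{Proof sketch.} The plan is to apply It\^o's formula to the exponential of the functional
$\Lambda_m(t):=\|\Y_m(t)\|^2+\int_0^t\|\Y_m(s)\|_{\frac{1}{2}}^2\,\d s$, whose increments are governed by the energy identity \eqref{2.37}, and to show that the drift of $e^{\ga\Lambda_m(t)}$ has a coefficient bounded above by a constant multiple of $e^{\ga\Lambda_m(t)}$; a Gronwall argument then closes the estimate. As in Lemmas \ref{L2} and \ref{lem2.4}, I would first work with the stopped process using $\tau_l:=\inf\{t\ge 0:\|\Y_m(t)\|\ge l\}\wedge T$ and recover the claim as $l\to\infty$ by Fatou's lemma, using that $\tau_l\to T$ a.s. and that the polynomial moment bounds of Lemma \ref{L3} guarantee the integrability needed for the compensated integrals to be genuine martingales after stopping.

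From \eqref{2.37}, the continuous part of $\Lambda_m$ has drift $\tr(\A^{-\va})-\|\Y_m\|_{\frac{1}{2}}^2$ (the doubled dissipation $-2\|\Y_m\|_{\frac{1}{2}}^2$ of the energy balance is offset by the $+\|\Y_m\|_{\frac{1}{2}}^2$ produced by the time integral inside $\Lambda_m$, leaving a single negative copy), its continuous martingale part is $\int_0^t\la\Y_m,\A^{-\frac{\va}{2}}\,\d\W_m\ra$ with quadratic variation $\int_0^t\|\A^{-\frac{\va}{2}}\Y_m\|^2\,\d s$, and each jump changes $\Lambda_m$ by $\phi(s,z):=2\la\G_m(s,z),\Y_m(s-)\ra+\|\G_m(s,z)\|^2$. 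Applying the finite-dimensional jump It\^o formula to $e^{\ga\Lambda_m}$ and taking expectation, the expected drift of $e^{\ga\Lambda_m(t\wedge\tau_l)}$ equals
\begin{align*}
\mE\Big[e^{\ga\Lambda_m}\Big(&\ga\,\tr(\A^{-\va})-\ga\|\Y_m\|_{\frac{1}{2}}^2+\frac{\ga^2}{2}\|\A^{-\frac{\va}{2}}\Y_m\|^2\\
&+\iZm\big(e^{\ga\phi}-1\big)\,\mu(\d z)\Big)\Big].
\end{align*}

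The Gaussian It\^o correction is harmless: by Poincar\'e's inequality $\|\A^{-\frac{\va}{2}}\Y_m\|^2\le\lambda_1^{-\va-1}\|\Y_m\|_{\frac{1}{2}}^2$, so $\frac{\ga^2}{2}\|\A^{-\frac{\va}{2}}\Y_m\|^2$ is dominated by the negative dissipation $-\ga\|\Y_m\|_{\frac{1}{2}}^2$, while $\va>1$ ensures $\tr(\A^{-\va})<\infty$. The delicate term, and the main obstacle, is the jump compensator $\iZm(e^{\ga\phi}-1)\mu(\d z)$: through the cross term $e^{2\ga\la\G_m,\Y_m\ra}$ it couples the (possibly large) state $\Y_m$ to the jump size. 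I would control it using $(H_1)$, splitting $2\ga\la\G_m,\Y_m\ra\le\delta\|\Y_m\|_{\frac{1}{2}}^2+C_\delta\|\G_m\|^2$ by Young's inequality, absorbing the small $\delta\|\Y_m\|_{\frac{1}{2}}^2$ into the dissipation, and invoking the exponential integrability $\int_{\mZ}\exp(2\th\|\G\|^2)\mu(\d z)\le C$ from $(H_1)$ to render the remaining $\mu$-integral finite; the second-moment term $\int_{\mZ_m}\|\G_m\|^2\mu(\d z)$ contributes a further bounded quantity.

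Collecting these bounds, one arranges the parameters (fixing the dependence of the constant on $\ga$, $\lambda_1$, $\va$ and $\th$) so that the net coefficient of $\|\Y_m\|_{\frac{1}{2}}^2$ in the drift is non-positive, leaving only the bounded constant $c(\ga,\Q,\G,T)$. Hence $\frac{\d}{\d t}\mE\big[e^{\ga\Lambda_m(t\wedge\tau_l)}\big]\le c\,\mE\big[e^{\ga\Lambda_m(t\wedge\tau_l)}\big]$, and Gronwall's inequality gives $\mE\big[e^{\ga\Lambda_m(t\wedge\tau_l)}\big]\le e^{ct}e^{\ga\|\x\|^2}$ uniformly in $l$ and $m$. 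Letting $l\to\infty$ and applying Fatou's lemma yields \eqref{e00}. I expect the balancing of the jump compensator against the single available copy of the dissipation to be the crux of the argument.
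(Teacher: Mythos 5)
Your strategy---It\^o's formula for $e^{\ga\Lambda_m(t)}$ with $\Lambda_m(t)=\|\Y_m(t)\|^2+\int_0^t\|\Y_m(s)\|_{\frac{1}{2}}^2\,\d s$, localization, Gronwall, Fatou---is exactly what the paper's two-line justification gestures at (``an application of the Ito formula to exponential functions''), but your execution breaks down at the step you yourself single out as the crux, and the breakdown is not repairable. Write $\phi(s,z)=2\la\G_m(s,z),\Y_m(s-)\ra+\|\G_m(s,z)\|^2$. After your Young split $2\ga\la\G_m,\Y_m\ra\le\delta\|\Y_m\|_{\frac{1}{2}}^2+C_\delta\|\G_m\|^2$, the term $\delta\|\Y_m\|_{\frac{1}{2}}^2$ sits \emph{inside} the exponential of the compensator, so what you actually obtain is the multiplicative bound $\iZm\big(e^{\ga\phi}-1\big)\mu(\d z)\le K\,e^{\delta\|\Y_m\|_{\frac{1}{2}}^2}$ with $K=\iZm e^{(C_\delta+\ga)\|\G_m\|^2}\mu(\d z)$, not an additive term that the dissipation can cancel. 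An exponentially growing drift coefficient cannot be dominated by the single linear copy $-\ga\|\Y_m\|_{\frac{1}{2}}^2$; equivalently, absorbing $e^{\delta\|\Y_m\|_{\frac{1}{2}}^2}$ into $e^{\ga\Lambda_m}$ only gives $\frac{\d}{\d t}\mE[e^{\ga\Lambda_m}]\le C\,\mE[e^{(\ga+\delta')\Lambda_m}]$, which Gronwall cannot close. Two further constraints you pass over: $(H_1)$ provides exponential integrability of $\|\G\|^2$ only at the \emph{fixed} level $2\th$, so even $K<+\infty$ forces $C_\delta+\ga\le 2\th$; and with the correct martingale term $2\la\Y_m,\A^{-\frac{\va}{2}}\d\W_m\ra$ (your quadratic variation is off by a factor of $4$, as is the paper's own \eqref{2.37}) the Gaussian It\^o correction is $2\ga^2\|\A^{-\frac{\va}{2}}\Y_m\|^2$, which your Poincar\'e bound dominates by the dissipation only when $\ga\le\lambda_1^{1+\va}/2$---not for arbitrary $\ga>0$ as the statement requires.

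The deeper point is that the unrestricted claim you are trying to prove is false whenever the jump part is nondegenerate: if $\|\G\|$ is bounded below on a set of finite positive $\mu$-measure, the trajectory can accumulate $N$ aligned jumps in a short time window with probability decaying only like $e^{-cN\log N}$, so $\mP(\|\Y_m(t)\|\ge R)$ has tails no lighter than $e^{-cR\log R}$, which is heavier than Gaussian, and hence $\mE[e^{\ga\|\Y_m(t)\|^2}]=+\infty$ for every $\ga>0$. This is precisely why the paper's own remark restricts \eqref{e00} to initial data in a ball of radius $R_0$, or to the process stopped before exiting such a ball: on the stopped interval $\|\Y_m(s-)\|\le R_0$, so the cross term in $\phi$ is bounded by $2\ga R_0\|\G_m\|$, the compensator integral is finite (for $\ga$ compatible with the level $2\th$ in $(H_1)$), the Gaussian correction is bounded by a constant depending on $R_0$, and your Gronwall argument closes. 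The correct deliverable from your computation is therefore the localized estimate $\mE[e^{\ga\Lambda_m(t\wedge\tau_{R_0})}]\le C e^{\ga\|\x\|^2}$, or the global one under smallness conditions on $\ga$ relative to $\lambda_1$, $\va$ and $\th$, not \eqref{e00} as stated for all $\ga>0$ and all $\x\in\mH$.
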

The proof follows easily for the case $x$ restricted to a ball of radius $R_{0}$ in $\mH$ by an application of the Ito formula to exponential functions. This exponential estimate also holds up to a stopping time without restricting the initial data  $x$ to a ball in $\mH$.

The following lemmas give  estimations related to  the systems \eqref{d1}  and \eqref{d2} whose proofs are similar to that of  Lemmas 3.1 and 3.2 of \cite{Da1}.  
\begin{Lem}
Let $\x,\H\in\mH$ and $t\in[0,T].$  Then for any $m\in\mN,$ we have the following estimates:
\begin{align} \label{d3}
e^{-4\mathcal{Y}_m(t)}\|\eta_m(t)\|^2+\int_0^t e^{-4\mathcal{Y}_m(s)}\|\eta_m(s)\|_{\frac{1}{2}}^2\d s &\leq \|\H\|^2, \\
e^{-8\mathcal{Y}_m(t)}\|\zeta_m(t)\|^2+\int_0^t e^{-8\mathcal{Y}_m(s)}\|\zeta_m(s)\|_{\frac{1}{2}}^2\d s &\leq 32 \|\H\|^4,\label{d4}
\end{align}
where the integral $\mathcal{Y}_m(t)=\ds\int_0^t \|\Y_m(s)\|^2_{\frac{1}{2}}\d s.$
\end{Lem}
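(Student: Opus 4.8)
The plan is to treat both bounds as \emph{pathwise} energy estimates. For $\mathbb{P}$-a.e.\ $\omega$ the processes $\eta_m(\cdot)$ and $\zeta_m(\cdot)$ solve the \emph{deterministic} linear ODEs \eqref{d1} and \eqref{d2}, whose only randomness enters through the càdlàg coefficient $\Y_m(\cdot)$; since these equations carry no stochastic integral, no It\^o calculus is needed and $t\mapsto\|\eta_m(t)\|^2$, $t\mapsto\|\zeta_m(t)\|^2$ are continuous. Moreover $t\mapsto\mathcal{Y}_m(t)=\int_0^t\|\Y_m(s)\|_{\frac12}^2\d s$ is absolutely continuous with $\mathcal{Y}_m'(t)=\|\Y_m(t)\|_{\frac12}^2$ a.e.\ (finite by Lemma \ref{L2}), so the ordinary product and chain rules apply to the weighted quantities $e^{-4\mathcal{Y}_m(t)}\|\eta_m(t)\|^2$ and $e^{-8\mathcal{Y}_m(t)}\|\zeta_m(t)\|^2$.

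For \eqref{d3} I would pair \eqref{d1} with $\eta_m$. Using $\la\A\eta_m,\eta_m\ra=\|\eta_m\|_{\frac12}^2$, the identification $\la\B_m(\u,\mathbf{v}),\mathbf{w}\ra=b(\u,\mathbf{v},\mathbf{w})$ on $\mathrm{P}_m\mH$, and the cancellation $b(\Y_m,\eta_m,\eta_m)=0$ from \eqref{2p9}, only the term $b(\eta_m,\Y_m,\eta_m)$ survives, giving $\frac{\mathrm d}{\mathrm dt}\|\eta_m\|^2=-2\|\eta_m\|_{\frac12}^2-2b(\eta_m,\Y_m,\eta_m)$. The $2$D interpolation (Ladyzhenskaya-type) estimate extracted from \eqref{ne}--\eqref{II}, namely $|b(\eta_m,\Y_m,\eta_m)|\le 2\|\eta_m\|\,\|\eta_m\|_{\frac12}\|\Y_m\|_{\frac12}$, followed by Young's inequality, absorbs one factor of $\|\eta_m\|_{\frac12}^2$ into the dissipation and leaves $\frac{\mathrm d}{\mathrm dt}\|\eta_m\|^2\le-\|\eta_m\|_{\frac12}^2+4\|\eta_m\|^2\|\Y_m\|_{\frac12}^2$. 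The weight $e^{-4\mathcal{Y}_m}$ is chosen precisely so that $-4\mathcal{Y}_m'(t)\|\eta_m\|^2$ cancels the last term, whence $\frac{\mathrm d}{\mathrm dt}\big(e^{-4\mathcal{Y}_m}\|\eta_m\|^2\big)+e^{-4\mathcal{Y}_m}\|\eta_m\|_{\frac12}^2\le0$; integrating from $0$ and using $\eta_m(0)=\H$ gives \eqref{d3}.

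For \eqref{d4} the scheme is the same after pairing \eqref{d2} with $\zeta_m$. The terms involving $\Y_m$ behave exactly as above and are controlled by the weight $e^{-8\mathcal{Y}_m}$ (here $2|b(\zeta_m,\Y_m,\zeta_m)|$ contributes the coefficient $8$), but now there is the extra forcing $-4b(\eta_m,\eta_m,\zeta_m)$. Rewriting it via \eqref{2p9} as $b(\eta_m,\eta_m,\zeta_m)=-b(\eta_m,\zeta_m,\eta_m)$ and applying the same interpolation bound yields, after Young, $4|b(\eta_m,\eta_m,\zeta_m)|\le\frac12\|\zeta_m\|_{\frac12}^2+32\|\eta_m\|^2\|\eta_m\|_{\frac12}^2$. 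This leads to $\frac{\mathrm d}{\mathrm dt}\big(e^{-8\mathcal{Y}_m}\|\zeta_m\|^2\big)+e^{-8\mathcal{Y}_m}\|\zeta_m\|_{\frac12}^2\le32\,e^{-8\mathcal{Y}_m}\|\eta_m\|^2\|\eta_m\|_{\frac12}^2$. The decisive point is then to split $e^{-8\mathcal{Y}_m}=\big(e^{-4\mathcal{Y}_m}\big)^2$ and feed in the first estimate: the bound $e^{-4\mathcal{Y}_m}\|\eta_m\|^2\le\|\H\|^2$ controls the right-hand side by $32\|\H\|^2\,e^{-4\mathcal{Y}_m}\|\eta_m\|_{\frac12}^2$, and $\int_0^t e^{-4\mathcal{Y}_m}\|\eta_m\|_{\frac12}^2\d s\le\|\H\|^2$ (both from \eqref{d3}). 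Integrating with $\zeta_m(0)=0$ then yields \eqref{d4}.

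The routine part consists of the two weighted energy identities and the Gr\"onwall-free integration. The one place demanding care is the trilinear estimate: I must fix the constant in the $2$D bound for $b$ coming from \eqref{ne} so that the resulting coefficients of $\|\Y_m\|_{\frac12}^2\|\cdot\|^2$ match the exponential weights $4$ and $8$ exactly, and so that the quadratic-in-$\eta_m$ forcing in the $\zeta_m$ equation is dominated by a multiple of $\|\eta_m\|^2\|\eta_m\|_{\frac12}^2$ which, once closed through \eqref{d3}, produces the stated constant $32$. This coupling of the second estimate to the first through the common weight $e^{-4\mathcal{Y}_m}$ is the main structural subtlety; everything else is standard $2$D Navier--Stokes energy bookkeeping.
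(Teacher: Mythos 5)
Your proposal is correct and follows essentially the same route as the paper's own argument (which mirrors Lemmas 3.1--3.2 of \cite{Da1}): pathwise energy identities for \eqref{d1} and \eqref{d2}, the cancellation $b(\Y_m,\cdot,\cdot)=0$, the Ladyzhenskaya--Young bound $|b(\eta_m,\Y_m,\eta_m)|\leq 2\|\eta_m\|\|\eta_m\|_{\frac{1}{2}}\|\Y_m\|_{\frac{1}{2}}$, exponential weights $e^{-4\mathcal{Y}_m}$, $e^{-8\mathcal{Y}_m}$ chosen to absorb the $\|\Y_m\|_{\frac{1}{2}}^2$ terms, and the bootstrap of \eqref{d4} from \eqref{d3} via the splitting $e^{-8\mathcal{Y}_m}=\big(e^{-4\mathcal{Y}_m}\big)^2$. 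All constants (the weights $4$ and $8$, and the final $32$) match the paper's computation, so no further changes are needed.
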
 
\begin{Lem}\label{lem2.6}
Let $\x,\H\in\mD(\A^\al), \al\in(0,1/2)$ and $t\in[0,T].$  Then for any  $m,k\in\mN,$ there exist  constants $\ga_2,c_3$ and $\ga_3,c_4$  such that the following  hold:
\begin{align}\label{d6}
\mE\left(e^{-\ga_2(\al)\mathcal{Y}_m(t)}\|\A^\al\eta_m(t)\|^2+\int_0^t e^{-\ga_2(\al)\mathcal{Y}_m(s)}\|\A^{\al+\frac{1}{2}}\eta_m(s)\|^2\d s\right)^k \no\\
\leq C(\al,k,T)(1+\|\A^\al\x\|^2)^k \|\A^\al\H\|^{2k}
\end{align}
and 
\begin{align} \label{d7}
\mE\left(e^{-\ga_3(\al)\mathcal{Y}_m(t)}\|\A^\al\zeta_m(t)\|^2+\int_0^t e^{-\ga_3(\al)\mathcal{Y}_m(s)}\|\A^{\al+\frac{1}{2}}\zeta_m(s)\|^2\d s\right)^k \no\\
\leq  C(\al,k,T)(1+\|\A^\al\x\|^2) ^k\|\A^\al\H\|^{4k}.
\end{align}
\end{Lem}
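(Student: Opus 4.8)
The plan is to treat \eqref{d1} and \eqref{d2} as pathwise (random) evolution equations driven by the trajectory $\Y_m$: neither carries a stochastic integral, so I would estimate $\omega$-wise and take expectations only at the very end. As in the proof of Lemma \ref{lem2.4}, I would introduce the stopping time $\tau_l=\inf\{t\ge 0:\|\eta_m(t)\|_\al\ge l\}\wedge T$ to legitimise the computation, and apply the chain rule to $e^{-\ga_2\mathcal{Y}_m(t)}\|\A^\al\eta_m(t)\|^2$, where $\mathcal{Y}_m(t)=\int_0^t\|\Y_m(s)\|_{\frac{1}{2}}^2\,\d s$. Differentiating the weight produces the favourable term $-\ga_2\|\Y_m(t)\|_{\frac{1}{2}}^2 e^{-\ga_2\mathcal{Y}_m(t)}\|\A^\al\eta_m(t)\|^2$, while the equation contributes the dissipation $-2e^{-\ga_2\mathcal{Y}_m}\|\eta_m\|_{\al+\frac{1}{2}}^2$ together with the two bilinear terms $-2e^{-\ga_2\mathcal{Y}_m}\big[b(\eta_m,\Y_m,\A^{2\al}\eta_m)+b(\Y_m,\eta_m,\A^{2\al}\eta_m)\big]$.

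The core of the argument is to control these bilinear terms through the trilinear estimate \eqref{ne} with $r=\al\in(0,\frac{1}{2})$, which gives $|b(\eta_m,\Y_m,\A^{2\al}\eta_m)|\le C(\al)\|\eta_m\|_\al\|\Y_m\|_{\frac{1}{2}}\|\eta_m\|_{\al+\frac{1}{2}}$ and $|b(\Y_m,\eta_m,\A^{2\al}\eta_m)|\le C(\al)\|\Y_m\|_\al\|\eta_m\|_{\frac{1}{2}}\|\eta_m\|_{\al+\frac{1}{2}}$. For the first term Young's inequality produces $\epsilon\|\eta_m\|_{\al+\frac{1}{2}}^2+C_\epsilon\|\Y_m\|_{\frac{1}{2}}^2\|\eta_m\|_\al^2$, whose top order is absorbed by the dissipation and whose remainder is absorbed by the weight term upon choosing $\ga_2=\ga_2(\al)$ large. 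The second term is the genuine obstacle: for $\al>0$ the antisymmetry \eqref{2p9} that kills $b(\Y_m,\eta_m,\eta_m)$ in the $L^2$ estimate \eqref{d3} no longer applies once $\A^{2\al}$ is inserted. Interpolating $\|\eta_m\|_{\frac{1}{2}}\le\|\eta_m\|_\al^{2\al}\|\eta_m\|_{\al+\frac{1}{2}}^{1-2\al}$ via \eqref{II} and applying Young, this term is bounded by $\epsilon\|\eta_m\|_{\al+\frac{1}{2}}^2+C_\epsilon\|\Y_m\|_\al^{1/\al}\|\eta_m\|_\al^2$, and the further interpolation $\|\Y_m\|_\al\le\|\Y_m\|^{1-2\al}\|\Y_m\|_{\frac{1}{2}}^{2\al}$ recasts the coefficient as $\|\Y_m\|^{(1-2\al)/\al}\|\Y_m\|_{\frac{1}{2}}^2$, i.e.\ $\|\Y_m\|_{\frac{1}{2}}^2$ modulated by a power of the lower norm $\|\Y_m\|$.

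After these reductions I would arrive at a pathwise inequality of the form $\frac{\d}{\d t}\big(e^{-\ga_2\mathcal{Y}_m}\|\eta_m\|_\al^2\big)+e^{-\ga_2\mathcal{Y}_m}\|\eta_m\|_{\al+\frac{1}{2}}^2\le C\,e^{-\ga_2\mathcal{Y}_m}\|\Y_m\|^{(1-2\al)/\al}\|\Y_m\|_{\frac{1}{2}}^2\|\eta_m\|_\al^2$, so Gronwall's inequality bounds the left side by $\|\H\|_\al^2$ times $\exp\!\big(C\int_0^t\|\Y_m\|^{(1-2\al)/\al}\|\Y_m\|_{\frac{1}{2}}^2\,\d s\big)$. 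Removing the stopping time by Fatou's lemma gives \eqref{d6} for $k=1$; the general $k$ follows by raising to the $k$-th power and applying H\"older together with the moment bound \eqref{2p241}, Lemmas \ref{L2}--\ref{L3}, and the exponential estimate of Proposition \ref{lem3.2} to absorb the Gronwall factor. This absorption, which hinges on the sup-norm control of $\|\Y_m\|$ and the finiteness of the exponential moments of $\mathcal{Y}_m$, is the main technical hurdle. Finally, \eqref{d7} is proved by the identical weighted energy method applied to $\zeta_m$ with its own weight $\ga_3(\al)$, the only new feature being the inhomogeneity $-2\B_m(\eta_m,\eta_m)$ in \eqref{d2}; pairing it with $\A^{2\al}\zeta_m$ and invoking \eqref{ne} yields $C\|\eta_m\|_\al\|\eta_m\|_{\frac{1}{2}}\|\zeta_m\|_{\al+\frac{1}{2}}$, so after Young one is left with a source $\|\eta_m\|_\al^2\|\eta_m\|_{\frac{1}{2}}^2$ that is integrated and estimated through \eqref{d3} and \eqref{d6}, and since the $\eta_m$-bound is quadratic in $\|\A^\al\H\|$, its square delivers the quartic factor $\|\A^\al\H\|^{4k}$ on the right-hand side of \eqref{d7}.
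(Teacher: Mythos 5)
Your pathwise setup (stopping time, chain rule for $e^{-\ga_2\mathcal{Y}_m(t)}\|\A^\al\eta_m(t)\|^2$), the use of the trilinear bound \eqref{ne} with $r=\al$, and the absorption of the term $C(\al)\|\Y_m\|_{\frac{1}{2}}^2\|\eta_m\|_\al^2$ into the weight all coincide with the paper's argument. The divergence, and the genuine gap, is in your treatment of the cross term $b(\Y_m,\eta_m,\A^{2\al}\eta_m)$. Your interpolation turns it into $C\|\Y_m\|^{(1-2\al)/\al}\|\Y_m\|_{\frac{1}{2}}^2\|\eta_m\|_\al^2$, whose coefficient is a \emph{random, unbounded} multiple of $\|\Y_m\|_{\frac{1}{2}}^2$; no fixed $\ga_2(\al)$ can absorb it, so you are forced into Gronwall and must then bound $\mE\big[\exp\big(kC\int_0^t\|\Y_m(s)\|^{(1-2\al)/\al}\|\Y_m(s)\|_{\frac{1}{2}}^2\,\d s\big)\big]$. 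None of the tools you invoke delivers this: H\"older cannot decouple a product sitting \emph{inside} an exponential; Proposition \ref{lem3.2} gives exponential moments of $\ga\,\mathcal{Y}_m(t)$ only for a \emph{deterministic} coefficient $\ga$ (with a constant that blows up as $\ga\to\infty$, and only for initial data in a ball or up to a stopping time), whereas your exponent carries the random prefactor $\sup_s\|\Y_m(s)\|^{(1-2\al)/\al}$; and slicing on the events $\{\sup_s\|\Y_m(s)\|\le M\}$ does not close, especially for small $\al$ where $(1-2\al)/\al$ is large. Moreover, even if such a bound were available, it would produce a dependence on the initial data of the form $e^{\ga\|\x\|^2}$, which is incompatible with the polynomial bound $(1+\|\A^\al\x\|^2)^k$ asserted in \eqref{d6}--\eqref{d7}.

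The paper circumvents all of this by \emph{never interpolating} the cross term. It applies Young directly to $C(\al)\|\Y_m\|_\al\|\eta_m\|_{\frac{1}{2}}\|\eta_m\|_{\al+\frac{1}{2}}$, leaving the source $C(\al)\|\Y_m\|_\al^2\|\eta_m\|_{\frac{1}{2}}^2$, and chooses $\ga_2(\al)\geq 4+\ga_1(\al)$ so that the weight factors as $e^{-\ga_2\mathcal{Y}_m}\leq e^{-\ga_1\mathcal{Y}_m}\,e^{-4\mathcal{Y}_m}$. The time integral of the source is then bounded \emph{pathwise} by
\[
\sup_{s\in[0,t]}\big[e^{-\ga_1(\al)\mathcal{Y}_m(s)}\|\Y_m(s)\|_\al^2\big]\int_0^t e^{-4\mathcal{Y}_m(s)}\|\eta_m(s)\|_{\frac{1}{2}}^2\,\d s
\;\leq\; \sup_{s\in[0,t]}\big[e^{-\ga_1(\al)\mathcal{Y}_m(s)}\|\Y_m(s)\|_\al^2\big]\,\|\H\|^2,
\]
using \eqref{d3}, and the $k$-th moment of the surviving supremum is finite and polynomial in $\|\x\|_\al$ by Lemma \ref{lem2.4} (cf.\ \eqref{2p24}, \eqref{2p241}). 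No Gronwall factor and no exponential moments ever appear; the crucial structural point is that the "bad" term has $\eta_m$ carrying the $\frac12$-derivative, which makes it directly integrable via \eqref{d3} after the weight is split. The same weight-splitting handles \eqref{d7}: the equation \eqref{d2} contains the analogous term $\B_m(\Y_m,\zeta_m)$ (so your $\zeta_m$ step inherits the same gap), and the sources $\|\Y_m\|_\al^2\|\zeta_m\|_{\frac{1}{2}}^2$ and $\|\eta_m\|_\al^2\|\eta_m\|_{\frac{1}{2}}^2$ are controlled through \eqref{d3}, \eqref{d4} and \eqref{d6}, which is exactly where the quartic factor $\|\A^\al\H\|^{4k}$ originates. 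Replacing your interpolation-plus-Gronwall step by this weight-splitting argument repairs the proof.
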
 

\section{A Priori Estimates for Finite Dimensional  HJB Equation}
The following smoothness  result of the Feynman-Kac semigroup  is crucial in establishing smoothness of  solutions to the mild form of the HJB equation \eqref{2p23} and hence for the HJB equation  \eqref{2p20} as well.  The derivative of the semigroup $\mathrm{T}_t^m$ can be estimated  as follows. 
\begin{align}
|\la \mathrm{D}_\x (\mathrm{T}^m_tf)(\x),\H\ra| &\leq\mE\left[e^{-\ga\mathcal{Y}_m(t)}|f(\Y_m(t))|   \left|e^{-\ga\mathcal{Y}_m(t)}\frac{1}{t}\int_0^t\la\A^{\frac{\va}{2}}\eta_m(s),\d\W_m(s)\ra\right|\right]\no\\
&\quad+\mE\left[e^{-\ga\mathcal{Y}_m(t)}|f(\Y_m(t))|  \left|e^{-\ga\mathcal{Y}_m(t)} 4\ga\int_0^t\left(1-\frac{s}{t}\right)\la\A^{\f2}\eta_m(s),\A^{\f2}\Y_m(s)\ra \d s\right|\right]\no\\
&:=B_1+B_2.  \label{se1}
\end{align}
Let $0<\al_3<\f2$ and $k\geq 1.$  Let $\x,\mathbf{h}\in\mathcal{D}(\A^{\alpha_3})$. By the Cauchy-Schwarz inequality, we  estimate $B_1$ as 
\begin{align*}
B_1\leq  \|f\|_{\al_3,k,0}\Big[\mE e^{-2\ga\mathcal{Y}_m(t)}(1+\|\Y_m(t)\|_{\al_3})^{2k}   \Big]^{1/2}\Big[\mE\Big(e^{-\ga\mathcal{Y}_m(t)}\frac{1}{t}\int_0^t\la\A^{\frac{\va}{2}}\eta_m(s),\d\W_m(s)\ra \Big)^2\Big]^{1/2}.
\end{align*}
Note that for any $\uptheta$  large enough
\begin{align*}
\mE \left(e^{-2\ga\mathcal{Y}_m(t)}(1+\|\Y_m(t)\|_{\al_3})^{2k}\right)   \leq C(k,\G,T)(1+\|\x\|_{\al_3})^{2k}.
\end{align*}
Let $Z_m(t)=e^{-\ga\mathcal{Y}_m(t)}\int\limits_0^t\la\A^{\frac{\va}{2}}\eta_m(s),\d\W_m(s)\ra.$ By It\^o's formula, one can see that 
\begin{align*}
Z_m(t)^2&=2\int_0^tZ_m(s)\d Z_m(s)+[Z_m,Z_m](t)\\
&=-2\uptheta \int_0^t Z_m^2(s) \|\Y_m(s)\|^2_{\f2}\d s+2 \int_0^tZ_m(s)e^{-\ga\mathcal{Y}_m(s)}\la\A^{\frac{\va}{2}}\eta_m(s),\d\W_m(s)\ra\\
&\quad +\int_0^te^{-2\ga\mathcal{Y}_m(s)}\|\eta_m(s)\|_{\vf}^2 \d s.
\end{align*}
Since the first integral is non-positive and second integral is a martingale, we have
\begin{align*}
\mE\left[Z_m(t)^2\right]\leq \mE\left(\int_0^te^{-2\ga\mathcal{Y}_m(s)}\|\eta_m(s)\|_{\vf}^2 \d s\right).
\end{align*}
Observe that  for any $0<\wi\al<\al_3$ and  $\va<1+2\wi\al,$ one can get
\begin{align*}
\|\eta_m\|_{\vf}&= \left(\sum_{i=1}^\infty\lambda_i^\va|(\eta_m,e_i)|^2\right)^{1/2}
= \left(\sum_{i=1}^\infty\lambda_i^{\va-1-2\wi\al}\lam_i^{1+2\wi\al}|(\eta_m,e_i)|^2\right)^{1/2}\\
&\leq C\left(\sum_{i=1}^\infty\lam_i^{1+2\wi\al}|(\eta_m,e_i)|^2\right)^{1/2}=C\|\eta_m\|_{\f2+\wi\al}.
\end{align*}
Applying the interpolation inequality \eqref{II} with $s=\f2+\wi\al,s_1=\al_3$ and $s_2=\f2+\al_3$ so that $s_1\leq s\leq s_2$ and $\th=2(\al_3-\wi\al)$ to get
$$
\|\eta_m\|_{\f2+\wi\al} \leq \|\eta_m\|_{\al_3}^{2(\al_3-\wi\al)}\|\eta_m\|_{\f2+\al_3}^{1-2(\al_3-\wi\al)}.
$$
By applying Young's inequality and choosing $\uptheta$ sufficiently large so that Lemma \ref{lem2.6}  leads to
\begin{align*}
\mE\left[Z_m(t)^2\right]&\leq C\left[\mE\left(\int_0^te^{-\ga\mathcal{Y}_m(s)}\|\eta_m(s)\|_{\al_3}^{2}\right) \d s\right]^{{2(\al_3-\wi\al)}}\left[\mE\left(\int_0^te^{-\ga\mathcal{Y}_m(s)}\|\eta_m(s)\|_{\f2+\al_3}^{2} \d s\right)\right]^{{1-2(\al_3-\wi\al)}}\\
&\leq C(\al_3,\G,T)t^{2(\al_3-\wi\al)} (1+\|\x\|^2_{\al_3})\|\H\|^2_{\al_3}.
\end{align*}
It thus leads to
$$
B_1\leq  C(\al_3,k,\G,T) \ t^{-(1-(\al_3-\wi\al))} (1+\|\x\|_{\al_3})^{k+1}\|\H\|_{\al_3} \ \|f\|_{\al_3,k,0}.
$$
Similarly, we also obtain that
$$
B_2\leq C(k,\G,T,\uptheta)(1+\|\x\|_{\al_3})^{k}\|f\|_{\al_3,k,0}  \left[\mE\left( \int_0^te^{-2\ga\mathcal{Y}_m(s)}\left(1-\frac{s}{t}\right)|\la\A^{\f2}\eta_m(s),\A^{\f2}\Y_m(s)\ra| \d s\right)^2\right]^{1/2}.
$$
Noting $s/t\leq 1,$ we obtain from Lemma \ref{lem2.6} that
\begin{align*}
\mE\left( \int_0^te^{-2\ga\mathcal{Y}_m(s)}|\la\A^{\f2}\eta_m(s),\A^{\f2}\Y_m(s)\ra| \d s\right)^2 &\leq \|\H\|^2\mE\left(\int_0^te^{-2\ga\mathcal{Y}_m(s)}\|\Y_m(s)\|_\f2^2\d s\right)\\
&\leq C(\G,T)(1+\|\x\|^2)\|\H\|^2.
\end{align*}
Since $\|\x\|\leq C\|\x\|_{\al_3},  \al_3>0,$ one can arrive at
\[B_2\leq C(k,\G,T,\uptheta)(1+\|\x\|_{\al_3})^{k+1} \|\H\|_{\al_3}\|f\|_{\al_3,k,0}.
\]
In view of $B_1$ and $B_2,$ the estimate \eqref{se1}   reads as
$$
\|\mathrm{T}_t^mf\|_{\al_3,k+1,1}\leq C(\al_3, k, \G,T)t^{-(1-(\al_3-\wi\al))}\|f\|_{\al_3,k,0}.
$$
Besides, following the arguments in Proposition 3.3, \cite{Da1} for the estimate on the second derivative of the semigroup $\mathrm{T}_t^m$, we have the  following general regularity result:
\begin{Pro}     Let $\ga$ be the constant such that $\ga \geq \ga_4= \max\{8,\ga_1,\ga_2,\ga_3\}.$  Let $\beta\in [0,2],\gamma\in[0,1]$  such that $\b+\gamma\leq 2.$ Then for any $0<\wi\al<\al_3$ with $\va<1+2\wi\al$ and $k\in\mN,$  there exists a constant $C>0$ such that for any $f\in C_b(\mathrm{P}_m\mH), t\in[0,T]$ the following estimate holds when $\x$ restricted to a ball in $\mH$:
\begin{align} \label{S1}
\|\mathrm{T}_t^mf\|_{\al_3,k+2\b,\b+\gamma} \leq C(\al_3, k, \G,T) t^{-\b(1-(\al_3-\wi\al))}\|f\|_{\al_3,k,\gamma}.
\end{align}
\end{Pro}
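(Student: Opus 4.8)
I would set $\delta:=1-(\al_3-\wi\al)\in(0,1)$, so the claim reads $\|\mathrm{T}_t^mf\|_{\al_3,k+2\b,\b+\gamma}\le Ct^{-\b\delta}\|f\|_{\al_3,k,\gamma}$. The strategy is to prove the estimate first at the five integer corners $(\b,\gamma)\in\{(0,0),(1,0),(2,0),(0,1),(1,1)\}$ and then to recover every admissible pair by interpolation, noting that the region $\{\b\in[0,2],\ \gamma\in[0,1],\ \b+\gamma\le2\}$ is exactly the convex hull of the corners $(0,0),(2,0),(1,1),(0,1)$. Throughout, the hypothesis $\ga\ge\ga_4$ is what renders the weighted moments below finite.

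\textbf{The non-smoothing corners $\b=0$.} For $(0,0)$ I would use $(\mathrm{T}_t^mf)(\x)=\mE[e^{-2\ga\mathcal{Y}_m(t)}f(\Y_m(t,\x))]$, bound $|f(\Y_m(t))|\le\|f\|_{\al_3,k,0}(1+\|\Y_m(t)\|_{\al_3})^k$, and apply \eqref{2p241} of Lemma \ref{lem2.4} to get $\mE[e^{-2\ga\mathcal{Y}_m(t)}(1+\|\Y_m(t)\|_{\al_3})^k]\le C(1+\|\x\|_{\al_3})^k$; dividing and taking the supremum yields $\|\mathrm{T}_t^mf\|_{\al_3,k,0}\le C\|f\|_{\al_3,k,0}$. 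For $(0,1)$ I would estimate the weighted Lipschitz seminorm of $\mathrm{T}_t^mf$ via $|f(\Y_m(t,\x))-f(\Y_m(t,\y))|\le[f]_1\,\|\Y_m(t,\x)-\Y_m(t,\y)\|_{\al_3}$, modulo the polynomial weight, and control the flow difference by the first-variation bound \eqref{d6}; this gives preservation of the norm with no time singularity.

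\textbf{The smoothing corners.} The pair $(1,0)$ is precisely the estimate derived immediately before the statement from the Bismut--Elworthy--Li formula \eqref{BE}, namely $\|\mathrm{T}_t^mf\|_{\al_3,k+1,1}\le Ct^{-\delta}\|f\|_{\al_3,k,0}$ (hence also with the weaker weight $k+2$). For $(1,1)$ I would apply the second-order formula \eqref{BE6} to a Lipschitz $f$: the terms carrying $\zeta_m$ are controlled by $\|f\|_{\al_3,k,0}$ together with \eqref{d7}, while the cross term containing $\mathrm{D}_\x f$ is bounded using $\|\mathrm{D}_\x f\|\le\|f\|_{\al_3,k,1}$ and \eqref{d6}; an It\^o/$L^2$ argument on the martingale factor $\frac{1}{t}\int_0^t\la\A^{\va/2}\zeta_m,\d\W_m\ra$, combined with the interpolation \eqref{II} for $\|\zeta_m\|_{\va/2}$ exactly as in the computation of $B_1$ above, extracts a single power $t^{-\delta}$, giving $\|\mathrm{T}_t^mf\|_{\al_3,k+2,2}\le Ct^{-\delta}\|f\|_{\al_3,k,1}$. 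Finally $(2,0)$ follows from the semigroup property: writing $\mathrm{T}_t^m=\mathrm{T}_{t/2}^m\mathrm{T}_{t/2}^m$, applying $(1,0)$ to the inner factor (so $g:=\mathrm{T}_{t/2}^mf$ is Lipschitz) and then $(1,1)$ to $g$ produces $\|\mathrm{T}_t^mf\|_{\al_3,k+3,2}\le Ct^{-2\delta}\|f\|_{\al_3,k,0}$; this splitting is precisely what avoids assuming $f\in\mathrm{C}^1$.

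\textbf{Interpolation and the main difficulty.} With the five corners in hand, I would obtain every admissible $(\b,\gamma)$ by interpolation: Lemma \ref{lem2.1} interpolates the output order $\b+\gamma$ for the fixed function $\mathrm{T}_t^mf$, and the input H\"older order $\gamma$ is treated by real interpolation of the weighted H\"older scale, as in Proposition 3.3 of \cite{Da1}. The crucial observation is that along any segment joining two corners the \emph{smoothing gain} $\b=(\b+\gamma)-\gamma$ is affine and the time factor at both endpoints is $t^{-\b\delta}$ with the \emph{same} $\b$, so the multiplicative inequality \eqref{I} reproduces exactly $t^{-\b\delta}$, while the polynomial weight $k+2\b$ comes out as the affine combination of the endpoint weights. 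I expect the main obstacle to be the corner $(1,1)$, that is, the careful estimation of every term of \eqref{BE6} --- especially the $\A^{\va/2}\zeta_m$ stochastic integral and the $\mathrm{D}_\x f$ cross term --- so as to secure a single $t^{-\delta}$ rather than $t^{-2\delta}$ and to keep the degree at $k+2$, together with the bookkeeping ensuring that the interpolated exponent depends only on the gain $\b$ and not on the total order $\b+\gamma$.
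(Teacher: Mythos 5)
Your proposal is correct and is essentially the paper's approach: the paper establishes the $(\beta,\gamma)=(1,0)$ case in the text from the Bismut--Elworthy--Li formula \eqref{BE} (the $B_1$, $B_2$ estimates) and then invokes the argument of Proposition 3.3 of \cite{Da1} for the second-derivative estimate and the intermediate cases, which is exactly the corner-estimates-plus-interpolation scheme you describe. In particular, your splitting $\mathrm{T}_t^m=\mathrm{T}_{t/2}^m\mathrm{T}_{t/2}^m$ for the $(2,0)$ corner (avoiding any differentiability assumption on $f$) and your use of the second-order formula \eqref{BE6} together with Lemma \ref{lem2.6} for the $(1,1)$ corner are precisely the ingredients of the cited argument.
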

Next we estimate the terms  in  \eqref{2p23}  one by one. 
\begin{Lem}
Let $\ga$ be the constant such that $\ga\geq \ga_4.$  For any $m\in \mN$   and $\beta\in [0,2],\gamma\in[0,1]$  such that $\b+\gamma\leq 2,$    there exists a constant $C>0$ satisfying
\begin{align} \label{S}
\|\mathrm{T}_t^m\wi f\|_{0,0,\b+\gamma}\leq C(\ga),  \  \text{ for all }\ t\in[0,T] \mbox{ and $\x$ restriced to a ball in } \mH.
\end{align}
\end{Lem}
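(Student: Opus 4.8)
The plan is to exploit the fact that $\wi f(\x)=e^{-\ga\|\x\|^2}\|\x\|^2$ is a genuinely smooth function on $\mH$ all of whose derivatives are \emph{globally} bounded (each is a polynomial in $\x$ times the Gaussian weight $e^{-\ga\|\x\|^2}$), so that $\|\wi f\|_0,\ \|\mathrm{D}_\x\wi f\|_0,\ \|\mathrm{D}^2_\x\wi f\|_0\leq C(\ga)$. Because of this built-in regularity I will \emph{not} invoke the smoothing estimate \eqref{S1} (whose factor $t^{-\b(1-(\al_3-\wi\al))}$ is singular at $t=0$ and is in any case unavailable at $\al_3=0$); instead I differentiate $\mathrm{T}_t^m\wi f$ directly and bound the outcome uniformly in $t$, $m$ and $\x$. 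By the interpolation Lemma \ref{lem2.1} applied with $\al=0,k=0$ and the endpoints $\b+\gamma=0$ and $\b+\gamma=2$, it suffices to prove $\|\mathrm{T}_t^m\wi f\|_{0,0,0}\leq C(\ga)$ and $\|\mathrm{T}_t^m\wi f\|_{0,0,2}\leq C(\ga)$. The first is immediate: since $\mathcal{Y}_m(t)\geq 0$ we have $e^{-2\ga\mathcal{Y}_m(t)}\leq 1$, whence $|(\mathrm{T}^m_t\wi f)(\x)|\leq\mE[e^{-2\ga\mathcal{Y}_m(t)}|\wi f(\Y_m(t))|]\leq\|\wi f\|_0\leq C(\ga)$.

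For the second derivative I differentiate under the expectation, which is legitimate here because we work on the finite-dimensional space $\mathrm{P}_m\mH$ and $\wi f$ is $C^\infty$ with bounded derivatives. Writing $(\mathrm{T}^m_t\wi f)(\x)=\mE[e^{-2\ga\mathcal{Y}_m(t)}\wi f(\Y_m(t,\x))]$ and using $\mathrm{D}_\x\Y_m\cdot\H=\eta_m$, $\mathrm{D}^2_\x\Y_m\la\H,\H\ra=\zeta_m$ together with $\mathrm{D}_\x\mathcal{Y}_m(t)\cdot\H=2\int_0^t\la\A^{\f2}\Y_m(s),\A^{\f2}\eta_m(s)\ra\d s=:2P$, the product rule yields a finite sum of terms, each of which is $e^{-2\ga\mathcal{Y}_m(t)}$ times a product of a bounded derivative of $\wi f$ (evaluated at $\Y_m(t)$) and one of $\|\eta_m(t)\|^2$, $\|\zeta_m(t)\|$, $P^2$, $|P|\,\|\eta_m(t)\|$, $\int_0^t\|\eta_m(s)\|_{\f2}^2\d s$, or $\int_0^t\|\Y_m(s)\|_{\f2}\|\zeta_m(s)\|_{\f2}\d s$.

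Each such term is controlled by distributing the negative exponential weight $e^{-2\ga\mathcal{Y}_m(t)}$ over the time variables and invoking \eqref{d3}--\eqref{d4}. The two decisive ingredients are the pointwise consequences $e^{-2\mathcal{Y}_m(t)}\|\eta_m(t)\|\leq\|\H\|$ and $e^{-4\mathcal{Y}_m(t)}\|\zeta_m(t)\|\leq\sqrt{32}\,\|\H\|^2$ of \eqref{d3}--\eqref{d4}, and the elementary identity
$$\int_0^t e^{-c\mathcal{Y}_m(s)}\|\Y_m(s)\|_{\f2}^2\,\d s=\frac1c\big(1-e^{-c\mathcal{Y}_m(t)}\big)\leq\frac1c,\qquad c>0,$$
which follows from $\tfrac{\d}{\d s}\mathcal{Y}_m(s)=\|\Y_m(s)\|_{\f2}^2$ and tames the growth of $P$. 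Since $\ga\geq\ga_4\geq 8$, for $s\leq t$ and $c\in\{4,8\}$ one has $e^{-2\ga\mathcal{Y}_m(t)}\leq e^{-2\ga\mathcal{Y}_m(s)}\leq e^{-c\mathcal{Y}_m(s)}$, so the $\eta_m$ and $\zeta_m$ integrals are absorbed into the right-hand sides of \eqref{d3}--\eqref{d4}; and writing $e^{-2\ga\mathcal{Y}_m(t)}=(e^{-\ga\mathcal{Y}_m(t)})^2$ and applying Cauchy--Schwarz in $s$ gives $e^{-2\ga\mathcal{Y}_m(t)}P^2\leq\big(\int_0^t e^{-\ga\mathcal{Y}_m(s)}\|\Y_m(s)\|_{\f2}^2\d s\big)\big(\int_0^t e^{-\ga\mathcal{Y}_m(s)}\|\eta_m(s)\|_{\f2}^2\d s\big)\leq\frac1\ga\|\H\|^2$. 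Collecting the estimates, every term is $\leq C(\ga)\|\H\|^2$, uniformly in $t\in[0,T]$, $m$ and $\x$ (in particular on the ball), giving $\|\mathrm{T}_t^m\wi f\|_{0,0,2}\leq C(\ga)$. The main obstacle is exactly obtaining this \emph{uniform} (non-singular as $t\to 0$) control of the derivatives: the Bismut--Elworthy--Li route \eqref{BE}--\eqref{BE6} introduces the factor $1/t$ and is therefore unsuitable, so the argument must rely on direct differentiation of the smooth integrand and hinges on choosing $\ga\geq\ga_4$ so that the negative exponential weight is strong enough to be split between the a priori estimates \eqref{d3}--\eqref{d4} and the self-improving identity above.
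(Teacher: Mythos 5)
Your proposal is correct and follows essentially the same route as the paper: the paper also avoids the Bismut--Elworthy--Li formula for this term, differentiates $\mE\big[e^{-2\ga\mathcal{Y}_m(t)-\ga\|\Y_m(t)\|^2}\|\Y_m(t)\|^2\big]$ directly under the expectation, controls the resulting $\eta_m$, $\zeta_m$ and $\int_0^t\la\A^{1/2}\Y_m,\A^{1/2}\eta_m\ra\d s$ terms pathwise via \eqref{d3}--\eqref{d4} and the largeness of $\ga\geq\ga_4$, and then interpolates between the endpoint cases via Lemma \ref{lem2.1}. Your only (harmless) deviations are cosmetic: you package the $e^{-\ga\|\x\|^2}$-weighted polynomial bounds as "all derivatives of $\wi f$ are globally bounded," you absorb the mixed integral $P$ with the identity $\int_0^t e^{-c\mathcal{Y}_m(s)}\|\Y_m(s)\|_{1/2}^2\d s\leq 1/c$ rather than the paper's $\sqrt{\mathcal{Y}_m(t)}e^{-\ga\mathcal{Y}_m(t)}$ device, and you skip the explicit $\b+\gamma=1$ case, which the interpolation between $0$ and $2$ makes redundant.
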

\begin{proof}
Taking $\b=\gamma=0$ and recalling the definition of the semigroup $\mathrm{T}_t^m\wi f,$   we  have
\begin{align}  \label{S2}
\mathrm{ T}_t^m\wi f(\x)&= \mE\left[e^{-2\ga \mathcal{Y}_m(t) - \ga \|\Y_m(t)\|^2} \|\Y_m(t)\|^2 \right] \no\\
& \leq \mE\left[(\mathcal{Y}_m(t)+\|\Y_m(t)\|^2) e^{-\ga \{\mathcal{Y}_m(t) + \|\Y_m(t)\|^2\}}  \right] \leq C(\ga),
\end{align}
since $ye^{-\ga y}<+\infty$ for any large $\ga\geq \ga_4.$ Thus \eqref{S} follows in this case.     Next, we take $\b=1$ and $\gamma=0.$   
For any $\H\in \mathrm{P}_m\mH$, we have 
\begin{align} \label{S3}
&\la \mathrm{D}_\x (\mathrm{T}_t^m \wi f)(\x),\H\ra\no\\&=-\mE\left[ e^{-2\ga \mathcal{Y}_m(t) - \ga \|\Y_m(t)\|^2} \left\{ 4\ga \int_0^t\la \A^{1/2}\Y_m(s),\A^{1/2}\eta_m(s)\ra \d s +2\ga \la \Y_m(t),\eta_m(t)\ra \right\} \|\Y_m(t)\|^2\right]\no\\
&\quad +2 \mE \left[ e^{-2\ga \mathcal{Y}_m(t) - \ga \|\Y_m(t)\|^2} \la \Y_m(t),\eta_m(t)\ra\right] =: I_1+I_2+I_3. 
\end{align} Again for $\ga\geq \ga_4, $ we get from \eqref{d3} that
\begin{align*}
I_1&\leq  C(\ga) \mE\left[  (\sqrt{\mathcal{Y}_m(t)} e^{-\ga \mathcal{Y}_m(t)}) e^{-\ga \mathcal{Y}_m(t)}
\left(\int_0^t\|\eta_m(s)\|_{\frac{1}{2}}^2 \d s\right)^{1/2}\right] \\
&\leq C(\ga)\left[\mE\left( \int_0^t e^{-2\ga \mathcal{Y}_m(s)}  \|\eta_m(s)\|_{\frac{1}{2}}^2 \d s\right) \right]^{1/2} \leq C(\ga) \|\H\|.  
\end{align*}
Using \eqref{d3} again
\begin{align*}
I_2 &\leq C(\ga) \mE\left[ e^{-2\ga \mathcal{Y}_m(t) - \ga \|\Y_m(t)\|^2} \|\Y_m(t)\|^3\|\eta_m(t)\|\right] \leq C(\ga) \mE\left[ e^{-2\ga \mathcal{Y}_m(t)}\|\eta_m(t)\|\right] \\
&\leq C(\ga) \left[\mE\left(e^{-4\ga \mathcal{Y}_m(t)}\|\eta_m(t)\|^2\right)\right]^{1/2} \leq C(\ga) \|\H\|,
\end{align*}
since $y^\frac{3}{2} e^{-\ga y} \leq e^{\frac{3}{2} y}e^{-\ga y} <+\infty,$ for $\ga \geq \ga_4.$  Similarly, we can show that $I_3\leq C(\ga) \|\H\|.$ 
Hence, there exists a constant $  C(\ga)$ such that
\begin{align} \label{S4}
\|\mathrm{T}_t^m \wi f\|_{0,0,1} \leq  C(\ga). 
\end{align}
Next, we consider the extreme case $\b=\gamma=1.$ The differential of \eqref{S2} is given by
\begin{align} \label{sd1}
&\hspace*{-0.7cm}\mathrm{D}_\x^2\mathrm{T}_t^m \wi f(\x)\la\H,\H\ra\no\\
=&\mE\left[ e^{-2\ga \mathcal{Y}_m(t) - \ga \|\Y_m(t)\|^2} \left( 4\ga \int_0^t\la \A^{1/2}\Y_m(s),\A^{1/2}\eta_m(s)\ra \d s
+2\ga \la \Y_m(t),\eta_m(t)\ra \right)^2 \|\Y_m(t)\|^2\right]\no\\
&-4\ga\mE\left[ e^{-2\ga \mathcal{Y}_m(t) - \ga \|\Y_m(t)\|^2} \|\Y_m(t)\|^2\int_0^t\left( \la \A^{1/2}\Y_m(s),\A^{1/2}\zeta_m(s)\ra + \|\eta_m(s)\|^2_{\frac{1}{2}}\right)  \d s \right] \no \\
&-2\ga \mE\Big[ e^{-2\ga \mathcal{Y}_m(t) - \ga \|\Y_m(t)\|^2}  \|\Y_m(t)\|^2\big(\la \Y_m(t),\zeta_m(t)\ra +\|\eta_m(t)\|^2 \big) \Big]  \no\\
&-4\mE\left[ e^{-2\ga \mathcal{Y}_m(t) - \ga \|\Y_m(t)\|^2} \left\{ 4\ga \int_0^t\la \A^{1/2}\Y_m(s),\A^{1/2}\eta_m(s)\ra \d s +2\ga \la \Y_m(t),\eta_m(t)\ra\right\}\la \Y_m(t),\eta_m(t)\ra \right] \no\\
&+ 2 \mE \left[ e^{-2\ga \mathcal{Y}_m(t) - \ga \|\Y_m(t)\|^2} \left(\la \Y_m(t),\zeta_m(t)\ra+\|\eta_m(t)\|^2\right)\right]
=:\sum_{i=1}^5 J_i. 
\end{align}  
Applying \eqref{d3},  we obtain that 
\begin{align*}
J_1&\leq   C(\ga) \mE\left[ e^{-2\ga \mathcal{Y}_m(t) - \ga \|\Y_m(t)\|^2}  \left\{\|\Y_m\|^2  \mathcal{Y}_m(t) \int_0^t\|\eta_m(s)\|_{\frac{1}{2}}^2 \d s  + \|\Y_m(t)\|^4  \|\eta_m(t)\|^2\right\}\right] \\
&\leq  C(\ga)\mE\left[ \left(e^{-\ga \mathcal{Y}_m(t) }\mathcal{Y}_m(t)\right) \left(e^{-\ga \|\Y_m(t)\|^2} \|\Y_m(t)\|^2\right)  \int_0^t e^{-\ga \mathcal{Y}_m(s) }\|\eta_m(s)\|_{\frac{1}{2}}^2 \d s\right]  \\
&\quad + C(\ga)\mE\left[ \left(e^{-\ga \|\Y_m(t)\|^2} \|\Y_m(t)\|^4\right)  e^{-2\ga \mathcal{Y}_m(t) } \|\eta_m(t)\|^2\right] \\
&\leq C(\ga)\|\H\|^2, \ \mbox{provided}  \  \ga\geq \ga_4. 
\end{align*}
Using \eqref{d3} and \eqref{d4}, we get
\begin{align*}
|J_2|&\leq  C(\ga) \mE\left[ e^{-2\ga \mathcal{Y}_m(t)} \sqrt{\mcY_m(t)} \left(\int_0^t\|\zeta_m(s)\|_{\frac{1}{2}}^2 \d s\right)^{1/2} \right] + c(\ga) \mE\left[  \int_0^te^{-2\ga \mathcal{Y}_m(s)} \|\eta_m(s)\|^2_{\frac{1}{2}}\d s\right] \\
&\leq C(\ga) \left[\mE\left(\int_0^t e^{-2\ga \mathcal{Y}_m(s)}\|\zeta_m(s)\|_{\frac{1}{2}}^2\d s\right)\right]^{1/2}  +  C(\ga) \|\H\|^2  \leq    C(\ga) \|\H\|^2.
\end{align*}
Similarly, we can estimate the integrals $J_3,J_4$ and $J_5$ in terms of $\|\H\|^2$ as well. Therefore, there exists a constant $ C(\ga)>0$ such that 
$|\mathrm{D}_\x^2(\mathrm{T}_t^m \wi f)(\x)\la\H,\H\ra| \leq  C(\ga)\|\H\|^2, $ whence 
\begin{align} \label{S5}
\|\mathrm{T}_t^m \wi f\|_{0,0,2} \leq  C(\ga).
\end{align}
The rest of the cases follow from the interpolation estimate \eqref{I}.  Indeed, for $\beta+\gamma\in(0,2)$, then from \eqref{I} we have
$$ \|\mathrm{T}_t^m \wi f\|_{0,0,\beta+\gamma} \leq C \|\mathrm{T}_t^m \wi f\|^{(\beta+\gamma)/2}_{0,0,2} \|\mathrm{T}_t^m \wi f\|^{1-(\beta+\gamma)/2}_{0,0,0}.$$   
The required result follows from \eqref{S2} and \eqref{S4}. This completes the proof. 
\end{proof}

Next, we prove the  estimate for the second term in \eqref{2p23}.  
\begin{Lem}
Let $\al_1<\al<\wi \al_1$ and $k\in\mN.$  Let $\b>0$ be such that $(1+\b)(1-\upepsilon)<1,$ where $\upepsilon=(\al-\al_1).$  Then for any $t\in[0,T]$ and $\w_m\in C^{\al,k,1+\b},$ there exists a constant $C>0$ such that the  following holds when $\x$ restricted to a ball in $\mH$:
\begin{align} \label{C}
\|\mathrm{T}_t^m\widetilde{\mathrm{F}}_m\|_{\al,k+2+2(1+\b),1+\b+\gamma}  \leq C(\al, k, \b, \G,T) t^{-(1+\b)(1-\upepsilon)}(1+\|\w_m\|_{\al,k,1+\gamma}).
\end{align}
\end{Lem}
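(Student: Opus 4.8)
The plan is to reduce the bound \eqref{C} to two ingredients: a Nemytskii-type estimate showing that the nonlinearity $\widetilde{\mathrm{F}}_m$ costs exactly one order of Hölder regularity and two extra units of polynomial growth, together with the smoothing estimate \eqref{S1} for the Feynman--Kac semigroup $\mathrm{T}_t^m$. Concretely, I would first establish the pointwise nonlinearity bound
\begin{align*}
\|\widetilde{\mathrm{F}}_m(\cdot,\w_m,\mathrm{D}_\x\w_m)\|_{\al,k+2,\gamma}\leq C(\al,k,\G,T)\,(1+\|\w_m\|_{\al,k,1+\gamma}),
\end{align*}
and then feed it into \eqref{S1} (in the weighted form in which \eqref{S1} is actually derived) with $\al_3=\al$, smoothing index $1+\b$, source regularity $\gamma$ and source growth $k+2$, choosing the free parameter $\wi\al$ equal to $\al_1$ so that $\al-\wi\al=\upepsilon$. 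This produces the time factor $t^{-(1+\b)(1-\upepsilon)}$, raises the growth index to $(k+2)+2(1+\b)=k+2+2(1+\b)$ and the regularity index to $(1+\b)+\gamma=1+\b+\gamma$, which are exactly the indices on the left of \eqref{C}. The hypothesis $(1+\b)(1-\upepsilon)<1$ is not itself used to derive \eqref{C}; it merely guarantees that the time singularity $t^{-(1+\b)(1-\upepsilon)}$ is integrable, which is what makes \eqref{C} usable in the fixed-point argument for \eqref{2p23}.

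The heart of the argument is therefore the nonlinearity estimate, which I would prove term by term on the four contributions in \eqref{ft}. The two linear-in-$\w_m$ terms $2\th\la\A^{-\va}\x,\mathrm{D}_\x\w_m\ra$ and $(4\ga^2\|\A^{-\frac{\va}{2}}\x\|^2+2\ga\tr(\A^{-\va}))\w_m$ are the simplest: the smoothing factors $\A^{-\va}$, $\A^{-\va/2}$ together with $\tr(\A^{-\va})<\infty$ let me dominate them by $\|\mathrm{D}_\x\w_m\|$ and $(1+\|\x\|^2)\|\w_m\|$ up to constants, so the quadratic coefficient accounts for the two extra growth units while the appearance of $\mathrm{D}_\x\w_m$ accounts for the single lost order of Hölder regularity (passing from the $1+\gamma$ control of $\w_m$ to a $\gamma$ control of $\widetilde{\mathrm{F}}_m$). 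For the Hamiltonian term $e^{-\ga\|\x\|^2}\mathrm{F}\big(e^{\ga\|\x\|^2}(\K^*\mathrm{D}_\x\w_m+2\ga\K^*\w_m\x)\big)$ I would use the explicit form \eqref{2p15} of $\mathrm{F}$, which is globally Lipschitz with at most linear growth and derivative $\mathcal{G}$ given by \eqref{2p17}; the two exponentials cancel to leading order, reducing the term to a Lipschitz function of $\K^*\mathrm{D}_\x\w_m+2\ga\K^*\w_m\x$, and here $(H_2)$ is invoked to trade $\K^*$ for the extra smoothing $\A^{-\wi\al_1}$ with $\al<\wi\al_1$, which keeps the derivative contribution controllable in the $\mD(\A^\al)$ norm.

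The genuinely delicate piece is the compensated L\'evy integral
\begin{align*}
\iZm\big[(e^{\ga\|\x+\G_m\|^2-\ga\|\x\|^2}-1)\w_m(t,\x+\G_m(t,z))-2\th\la\G_m(t,z),\w_m\x\ra\big]\mu(\d z),
\end{align*}
because the exponent $\ga(\|\x+\G_m\|^2-\|\x\|^2)=\ga(2\la\x,\G_m\ra+\|\G_m\|^2)$ grows in $\x$. This is precisely where the hypothesis that $\x$ is restricted to a ball in $\mH$ enters: on $\|\x\|\leq R_0$ one has $e^{\ga(2\la\x,\G_m\ra+\|\G_m\|^2)}\leq e^{2\ga R_0\|\G_m\|+\ga\|\G_m\|^2}$, and taking $\th$ large enough the exponential weight $e^{2\th\|\G_m\|^2}$ in $(H_1)$ dominates this together with the polynomial factor $(1+\|\A^{\al_1}\G_m\|)^p$ coming from the polynomial bound on $\w_m(t,\x+\G_m)$; the subtracted first-order term $2\th\la\G_m,\w_m\x\ra$ supplies the Taylor cancellation needed for $\mu$-integrability near small jumps, exactly as in the moment estimates of Lemmas \ref{L2} and \ref{lem2.4}. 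Taking differences in $\x$ and using the $\gamma$-Hölder control of $\mathrm{D}_\x\w_m$ inside the integrand then yields the Hölder seminorm. Assembling the four contributions gives the nonlinearity estimate and hence \eqref{C}. I expect this L\'evy term, and in particular the interplay between the $\x$-growth of the exponential, the ball restriction, and the exponential-moment hypothesis $(H_1)$, to be the main obstacle; the remaining bookkeeping is routine once the index matching in the first paragraph is fixed.
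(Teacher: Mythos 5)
Your proposal follows essentially the same route as the paper: reduce \eqref{C} to the semigroup smoothing estimate \eqref{S1} applied with $\al_3=\al$, $\wi\al=\al_1$, smoothing index $1+\b$ and source growth $k+2$, and then bound $\|\widetilde{\mathrm{F}}_m\|_{\al,k+2,\gamma}$ term by term, with $(H_2)$ handling the Hamiltonian term and $(H_1)$ together with the ball restriction on $\x$ handling the exponential L\'evy term, exactly as the paper does; your observation that $(1+\b)(1-\upepsilon)<1$ is only needed later for the time-integrability of $t^{-(1+\b)(1-\upepsilon)}$ also matches the paper's usage. The only cosmetic difference is that the paper proves the nonlinearity bound at the endpoints $\gamma=0$ and $\gamma=1$ and then interpolates via Lemma \ref{lem2.1}, whereas you propose to treat general $\gamma$ directly by taking H\"older differences (and the paper estimates the two pieces of the L\'evy integral separately by the triangle inequality rather than exploiting any Taylor cancellation, which is permissible since $(H_1)$ forces the relevant $\mu$-integrals to be finite).
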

\begin{proof}
In view of the Proposition 3.1,   the Feynman-Kac semigroup $\mathrm{T}_t^m$ satisfies the estimate
\begin{align} \label{C1}
\|\mathrm{T}_t^m\widetilde{\mathrm{F}}_m\|_{\al,k+2+2(1+\b),1+\b+\gamma}  \leq C(\al, k, \b, \G,T) t^{-(1+\b)(1-\upepsilon)}(1+\|\widetilde{\mathrm{F}}_m\|_{\al,k+2,\gamma}).
\end{align}
The proof can be completed by estimating $\|\widetilde{\mathrm{F}}_m(\x,\w_m,\mathrm{D}_\x \w_m)\|_{\al,k+2,\gamma}.$ Let us first consider the case of $\gamma=0$. We have 
\begin{align}\label{3.11}
&\hspace*{-0.7cm}|\widetilde{\mathrm{F}}_m(\x,\w_m,\mathrm{D}_\x \w_m)|\nonumber\\&\leq 2\th|\la\A^{-\va}\x,\mathrm{D}_\x\w_m\ra|+4
\ga^2\|\A^{-\frac{\va}{2}}\x\|^2|\w_m|+2\ga\tr(\A^{-\va})|\w_m|\nonumber\\&\quad+e^{-\ga\|\x\|^2}|\mathrm{F}(e^{\ga\|\x\|^2}(\mathbf{K}^*\mathrm{D}_\x\w_m+2\ga\mathbf{K}^*\w_m\x))|\nonumber\\
&\quad +\iZm\big|(e^{\ga \|\x+\G_m\|^2-\ga\|\x\|^2}-1)\w_m(t,\x+\G_m(t,z))-2\th\la \G_m(t,z),\w_m\x\ra\big|\mu(\d z)\nonumber\\&=:\sum_{i=1}^5I_i.
\end{align}
We estimate $I_1$ as 
\begin{align}\label{i1}
I_1&= 2\uptheta| \la\A^{-\va}\A^{\alpha}\x,\A^{-\alpha}\mathrm{D}_\x\w_m\ra|\leq \|\A^{-\va}\A^{\alpha}\x\| \|\A^{-\alpha}\mathrm{D}_\x\w_m\|\nonumber\\&\leq 2\uptheta \|\A^{-\varepsilon}\|_{\mathcal{L}(\mH)}\|\x\|_{\alpha}\sup_{\|\H\|_{\alpha}=1}|\langle \A^{-\alpha}\mathrm{D}_{\x}\w_m,\H\rangle|\nonumber\\&\leq 2\uptheta(\tr(\A^{-2\va}))^{1/2}\frac{\|\x\|_{\alpha}}{(1+\|\x\|_{\alpha})^{k+2}}(1+\|\x\|_{\alpha})^{k+2}\sup_{\|\H\|_{\alpha}=1}|\langle \A^{-\alpha}\mathrm{D}_{\x}\w_m,\H\rangle|\nonumber\\&\leq C(\uptheta)(1+\|\x\|_{\alpha})^{k+2} \sup_{x\in\mathrm{D}(\A^{\alpha})}\frac{1}{(1+\|\x\|_{\alpha})^{k}}\sup_{\|\H\|_{\alpha}=1}|\langle \A^{-\alpha}\mathrm{D}_{\x}\w_m,\H\rangle|\nonumber\\&=C(\uptheta)(1+\|\x\|_{\alpha})^{k+2}\|\w_m\|_{\alpha,k,1}.
\end{align}
Similarly, we estimate $I_2$ as 
\begin{align}
I_2&=4
\ga^2\|\A^{-\frac{\va}{2}}\x\|^2|\w_m|\leq 4\uptheta^2 \|\A^{-\frac{\va}{2}}\|_{\mathcal{L}(\mH)}^2\|\x\|^2\frac{1}{(1+\|\x\|_{\alpha})^{k+2}}(1+\|\x\|_{\alpha})^{k+2}|\w_m|\nonumber\\&\leq C(\uptheta)(1+\|\x\|_{\alpha})^{k+2}\|\w_m\|_{\alpha,k,0}.
\end{align}
The term $I_3$ can be bounded by $C(\uptheta)(1+\|\x\|_{\alpha})^{k+2}\|\w_m\|_{\alpha,k,0}$. From the definition of $\mathrm{F}(\cdot)$ (see \eqref{2p16}) and Assumption \ref{ass2.1} $(H_2)$,  we obtain that 
\begin{align}
I_4&\leq C(R)\|\mathbf{K}^*\mathrm{D}_\x\w_m+2\ga \mathbf{K}^*\w_m\x \|\leq C(R,\uptheta)\left(\|\A^{-\widetilde{\alpha}_1}\mathrm{D}_{\x}\w_m\|+|\w_m|\|\A^{-\widetilde{\alpha}_1}\x\|\right)\nonumber\\&\leq C(R,\uptheta)\left(\|\A^{-\alpha}\mathrm{D}_{\x}\w_m\|+|\w_m|\|\x\|_{\alpha}\right) \leq C(R,\uptheta)(1+\|\x\|_{\alpha})^{k+2}\|\w_m\|_{\alpha,k,1}.
\end{align}
The term $I_5$ can be estimated as 
\begin{align*}
I_5&\leq \iZm\big|(e^{\ga \|\x+\G_m\|^2-\ga\|\x\|^2}-1)\w_m(t,\x+\G_m(t,z))\big|\mu(\d z)+2\th \iZm\big|\la \G_m(t,z),\w_m\x\ra\big|\mu(\d z)\nonumber\\&=: I_6+I_7.
\end{align*}
Using the Assumption \ref{ass2.1}$-(H_1)$, we note that 
\begin{align}\label{3.15}
I_6&\leq 2\iZm e^{\ga \|\x+\G_m\|^2-\uptheta\|\x\|^2}\big|\w_m(t,\x+\G_m(t,z))\big|\mu(\d z)\nonumber\\&\leq 2\iZm e^{\ga \|\G_m\|^2}e^{2\uptheta\|\G_m\|\|\x\|}(1+\|\x\|_{\alpha}+\|\G_m\|_{\alpha})^{k+2}\nonumber\\
&\quad\quad\times\frac{1}{(1+\|\x\|_{\alpha}+\|\G_m\|_{\alpha})^{k+2}}\big|\w_m(t,\x+\G_m(t,z))\big|\mu(\d z)\nonumber\\&\leq C(\uptheta)\iZm e^{2\ga \|\G_m\|^2}(1+\|\x\|_{\alpha}+\|\G_m\|_{\alpha})^{k+2}\frac{1}{(1+\|\x+\G_m\|_{\alpha})^{k}}\big|\w_m(t,\x+\G_m(t,z))\big|\mu(\d z)\nonumber\\&\leq C(\uptheta)\sup_{\mathbf{y}\in\mathrm{D}(\A^{\alpha})}\sup_{z\in\mathcal{Z}}\left(\frac{1}{(1+\|\mathbf{y}(z)\|_{\alpha})^{k}}\big|\w_m(t,\mathbf{y}(z))\big|\right)\nonumber\\
&\quad\quad\times\int_{\mathcal{Z}}e^{2\uptheta\|\G_m\|^2}(1+\|\x\|_{\alpha})^{k+2}(1+\|\G_m\|_{\alpha})^{k+2}\mu(\d z)\nonumber\\&\leq C(\uptheta)(1+\|\x\|_{\alpha})^{k+2}\|\w_m\|_{\alpha,k,0}\int_{\mathcal{Z}}e^{2\uptheta\|\G\|^2}(1+\|\G\|_{\alpha})^{k+2}\mu(\d z).
\end{align}
Now, we estimate $I_7$ as 
\begin{align}\label{i7}
I_7&\leq 2\uptheta\int_{\mathcal{Z}_m}\|\G_m(t,z)\|\|\x\||\w_m|\mu(\d z)\leq C(\uptheta) \|\w_m\|_{\alpha,k,0}(1+\|\x\|_{\alpha})^{k+2}\int_{\mathcal{Z}}\|\G(t,z)\|\mu(\d z).
\end{align}
Combining (\ref{i1})-(\ref{i7}), we obtain 
\begin{align}\label{417}
&\|\widetilde{\mathrm{F}}_m(\x,\w_m,\mathrm{D}_\x \w_m)\|_{\alpha,k+2,0}\nonumber\\&\leq C(R,\uptheta)\left(1+\int_{\mathcal{Z}}e^{2\uptheta\|\G\|^2}(1+\|\G\|_{\alpha})^{k+2}\mu(\d z)+\int_{\mathcal{Z}}\|\G(t,z)\|\mu(\d z)\right)\|\w_m\|_{\alpha,k,1}.
\end{align}
We now consider the case of $\gamma=1$. We know that 
\begin{align}\label{418}
\la \mathrm{D}_{\x}\widetilde{\mathrm{F}}_m,\H\ra &=2\uptheta \la\A^{-\va}\H,\mathrm{D}_\x\w_m\ra+2\uptheta \mathrm{D}_{\x}^2\w_m\la \A^{-\va}\x,\H\ra+8
\ga^2\la \A^{-\va}\x, \H\ra\w_m\nonumber\\&\quad+4
\ga^2\|\A^{-\frac{\va}{2}}\x\|^2\la \mathrm{D}_{\x}\w_m,\H\ra+ 2\ga\tr(\A^{-\va})\la \mathrm{D}_{\x}\w_m,\H\ra\nonumber\\&\quad +\la\mathrm{D}_{\x}e^{-\ga\|\x\|^2}\mathrm{F}(e^{\ga\|\x\|^2}(\mathbf{K}^*\mathrm{D}_\x\w_m+2\ga\mathbf{K}^*\w_m\x)),\H\ra\nonumber\\
&\quad +\iZm\left<\mathrm{D}_{\x}\left((e^{\ga \|\x+\G_m\|^2-\ga\|\x\|^2}-1)\w_m(t,\x+\G_m(t,z))-2\th\la \G_m(t,z),\w_m\x\ra\right),\H\right>\mu(\d z)\nonumber\\&=\sum_{i=8}^{14}I_i.
\end{align}
We estimate $I_8$ as 
\begin{align}\label{i8}
|I_8|&\leq 2\uptheta|\la \A^{-\va+\alpha}\H,\A^{-\alpha}\mathrm{D}_\x\w_m\ra|\leq 2\uptheta\|\A^{-\va+\alpha}\H\|\|\A^{-\alpha}\mathrm{D}_{\x}\w_m\|\nonumber\\&\leq 2\uptheta\|\H\|_{\alpha}\|\A^{-\va}\|_{\mathcal{L}(\mH)}\sup_{\|\H\|_{\alpha}=1}\left|\la\A^{-\alpha}\mathrm{D}_{\x}\w_m,\H\ra\right|\leq C(\uptheta)\|\H\|_{\alpha}(1+\|\x\|_{\alpha})^{k+2}\|\w_m\|_{\alpha,k,1}.
\end{align}
Similarly, we have 
\begin{align}
|I_9|&\leq 2\uptheta |\mathrm{D}_{\x}^2\w_m\la \A^{-\va}\x,\H\ra|\leq 2\uptheta\|\A^{-\alpha}\mathrm{D}_{\x}^2\w_m\A^{-\va}\x\|\|\H\|_{\alpha}\nonumber\\&\leq 2\uptheta\|\A^{-\alpha}\mathrm{D}_{\x}^2\w_m\|_{\mathcal{L}(\mH^2;\mathbb{R})}\|\A^{-\va}\x\|\|\H\|_{\alpha}\nonumber\\&\leq 2\uptheta\sup_{\|\H\|_{\alpha}=1}|\la\A^{-\alpha}\mathrm{D}_{\x}^2\w_m\H,\H\ra|\|\A^{-\va}\|_{\mathcal{L}(\mH)}\|\x\|_{\alpha}\|\H\|_{\alpha}\nonumber\\&\leq C(\uptheta)\|\H\|_{\alpha}(1+\|\x\|_{\alpha})^{k+2}\|\w_m\|_{\alpha,k,2}.
\end{align}
Using Cauchy-Schwarz inequality, we estimate $I_{10}$ as 
\begin{align}
|I_{10}|\leq 8\uptheta^2 \|\A^{-\va}\|_{\mathcal{L}(\mH)}\|\x\|_{\alpha}\|\H\|_{\alpha}|\w_m|\leq C(\uptheta)\|\H\|_{\alpha}\|\w_m\|_{\alpha,k,0}.
\end{align}
One can estimate $I_{11}$ as 
\begin{align}
|I_{11}|&\leq 4
\ga^2\|\A^{-\frac{\va}{2}}\x\|^2\|\A^{-\alpha}\mathrm{D}_{\x}\w_m\|\|\H\|_{\alpha}\leq 4
\ga^2 \|\A^{-\frac{\va}{2}}\|_{\mathcal{L}(\mH)}^2\|\x\|^2_{\alpha}\|\H\|_{\alpha}\sup_{\|\H\|_{\alpha}=1}\left|\la\A^{-\alpha}\mathrm{D}_{\x}\w_m,\H\ra\right|\nonumber\\&\leq C(\uptheta)\|\H\|_{\alpha}(1+\|\x\|_{\alpha})^{k+2}\|\w_m\|_{\alpha,k,1}.
\end{align}
Similarly $I_{12}$ can be estimated as 
\begin{align}
|I_{12}|\leq 2\ga\tr(\A^{-\va})\|\A^{-\alpha} \mathrm{D}_{\x}\w_m\|\| \H\|_{\alpha}\leq C(\uptheta)\|\H\|_{\alpha}(1+\|\x\|_{\alpha})^{k+2}\|\w_m\|_{\alpha,k,1}.
\end{align}
One can write $I_{13}$ as 
\begin{align}
I_{13}&=e^{-\uptheta\|\x\|^2}\la\mathrm{D}_{\p}\mathrm{F}(\p),2\uptheta \la\x,\H\ra\p+\mathbf{K}^*\mathrm{D}_{\x}^2\w_m\H+2\uptheta\mathbf{K}^*\la\mathrm{D}_{\x}\w_m,\H\ra\x+2\uptheta\mathbf{K}^*\w_m\H\ra\nonumber\\&\quad-2\uptheta e^{-\uptheta\|\x\|^2}\la\x,\H\ra\mathrm{F}(\p),
\end{align}
where $\p=e^{\ga\|\x\|^2}(\mathbf{K}^*\mathrm{D}_\x\w_m+2\ga\mathbf{K}^*\w_m\x)$, and $\mathrm{F}(\p)$ and $\mathrm{D}_{\p}(\mathrm{F}(\p))$ are  defined in (\ref{2p15}) and (\ref{2p17}) respectively. Now we estimates the terms of $I_{13}$ separately. We estimate the term  $| e^{-\uptheta\|\x\|^2}\la\mathrm{D}_{\p}\mathrm{F}(\p),2\uptheta \la\x,\H\ra\p\ra|$ as 
\begin{align}\label{324}
|e^{-\uptheta\|\x\|^2}\la\mathrm{D}_{\p}\mathrm{F}(\p),2\uptheta \la\x,\H\ra\p|&\leq 2\uptheta e^{-\uptheta\|\x\|^2}\|\mathrm{D}_{\p}\mathrm{F}(\p)\||\la\x,\H\ra|\|\p\|\nonumber\\&\leq C(R,\uptheta)\|\x\|\|\H\|\|\mathbf{K}^*\mathrm{D}_\x\w_m+2\ga\mathbf{K}^*\w_m\x\|\nonumber\\&\leq C(R,\uptheta)\|\x\|_{\alpha}\|\H\|_{\alpha}\left(\|\A^{-\widetilde{\alpha}_1}\mathrm{D}_{\x}\w_m\|+|\w_m|\|\A^{-\widetilde{\alpha}_1}\x\|\right)\nonumber\\&\leq C(R,\uptheta)(1+\|\x\|_{\alpha})^{k+2}\|\w_m\|_{\alpha,k,1}\|\H\|_{\alpha}.
\end{align}
Similarly using Assumption \ref{ass2.1} $(H_2)$, we estimate $|e^{-\uptheta\|\x\|^2}\la\mathrm{D}_{\p}\mathrm{F}(\p),\mathbf{K}^*\mathrm{D}_{\x}^2\w_m\H\ra|$ as 
\begin{align}
|e^{-\uptheta\|\x\|^2}\la\mathrm{D}_{\p}\mathrm{F}(\p),\mathbf{K}^*\mathrm{D}_{\x}^2\w_m\H\ra|&\leq e^{-\uptheta\|\x\|^2}\|\mathrm{D}_{\p}\mathrm{F}(\p)\|\|\mathbf{K}^*\mathrm{D}_{\x}^2\w_m\H\|\nonumber\\&\leq C(R) \|\A^{-\widetilde{\alpha}_1}\mathrm{D}_{\x}^2\w_m\H\|\leq C(R) \|\A^{-\alpha}\mathrm{D}_{\x}^2\w_m\|_{\mathcal{L}(\mH^2;\mR)}\|\H\|\nonumber\\&\leq C(R) \sup_{\|\H\|_{\alpha}=1}|\la\A^{-\alpha}\mathrm{D}_{\x}^2\w_m\H,\H\ra|\|\H\|_{\alpha}\nonumber\\&\leq C(R) (1+\|\x\|_{\alpha})^{k+2}\|\w_m\|_{\alpha,k,2}\|\H\|_{\alpha}.
\end{align}
Finally, we estimate $|2\uptheta e^{-\uptheta\|\x\|^2}\la\mathrm{D}_{\p}\mathrm{F}(\p),\mathbf{K}^*\la\mathrm{D}_{\x}\w_m,\H\ra\x+\mathbf{K}^*\w_m\H\ra|$ as 
\begin{align}\label{326}
&|2\uptheta e^{-\uptheta\|\x\|^2}\la\mathrm{D}_{\p}\mathrm{F}(\p),\mathbf{K}^*\la\mathrm{D}_{\x}\w_m,\H\ra\x+\mathbf{K}^*\w_m\H\ra|\nonumber\\&\leq 2\uptheta\|\mathrm{D}_{\p}\mathrm{F}(\p)\|\|\mathbf{K}^*\la\mathrm{D}_{\x}\w_m,\H\ra\x+\mathbf{K}^*\w_m\H\|\nonumber\\&\leq 2\uptheta\left(\|\mathbf{K}^*\x\||\la\mathrm{D}_{\x}\w_m,\H\ra|+|\w_m|\|\mathbf{K}^*\H\|\right)\nonumber\\&\leq 2\uptheta \left(\|\A^{-\widetilde{\alpha}_1}\x\|\|\A^{-\alpha}\mathrm{D}_{\x}\w_m\|\|\H\|_{\alpha}+|\w_m|\|\A^{-\widetilde{\alpha}_1}\H\|\right)\nonumber\\&\leq 2\uptheta\left(\|\x\|_{\alpha}\sup_{\|\H\|_{\alpha}=1}|\la\A^{-\alpha}\mathrm{D}_{\x}\w_m,\H\ra|+|\w_m|\right)\|\H\|_{\alpha}\nonumber\\&\leq 2\uptheta(1+\|\x\|_{\alpha})^{k+2}\|\w_m\|_{\alpha,k,1}\|\H\|_{\alpha}.
\end{align}
Combining (\ref{324})-(\ref{326}), we obtain 
\begin{align}
|I_{13}|\leq C(\uptheta)(1+\|\x\|_{\alpha})^{k+2}\|\w_m\|_{\alpha,k,2}\|\H\|_{\alpha}.
\end{align}
We can write $I_{14}$ as 
\begin{align}\label{i14}
I_{14}&=\iZm\left(e^{\ga \|\G_m\|^2+2\uptheta\la \G_m,\x\ra}-1\right)\la \mathrm{D}_{\x}\w_m(t,\x+\G_m),\H\ra\mu(\d z)\nonumber\\&\quad+\iZm e^{\ga \|\G_m\|^2+2\uptheta\la \G_m,\x\ra}2\uptheta\la\G_m,\H\ra\w_m(t,\x+\G_m)\mu(\d z)\nonumber\\&\quad-2\uptheta\iZm\left[\la\G_m,\x\ra\la\mathrm{D}_{\x}\w_m,\H\ra+\la\G_m,\w_m\H\ra\right]\mu(\d z)\nonumber\\&=:I_{15}+I_{16}+I_{17}.
\end{align}
A calculation similar to (\ref{3.15}) yields 
\begin{align}\label{i15}
|I_{15}|&\leq 2\iZm e^{\ga \|\G_m\|^2+2\uptheta|\la \G_m,\x\ra|}|\la \mathrm{D}_{\x}\w_m(t,\x+\G_m),\H\ra|\mu(\d z)\nonumber\\&\leq 2\iZm e^{\ga \|\G_m\|^2+2\uptheta\| \G_m\| \|\x\|}\|\A^{-\alpha} \mathrm{D}_{\x}\w_m(t,\x+\G_m)\|\mu(\d z)\|\H\|_{\alpha}\nonumber\\&\leq C(R,\uptheta)\iZm e^{2\ga \|\G_m\|^2}\frac{1}{(1+\|\y(z)\|_{\alpha})^k}\nonumber\\
&\quad\quad\times\sup_{\|\H\|_{\alpha}=1}|\la\A^{-\alpha}\mathrm{D}_{\x}\w_m(\y(z)),\H\ra|(1+\|\x\|_{\alpha}+\|\G\|_{\alpha})^{k+2}\mu(\d z)\|\H\|_{\alpha}.
\nonumber\\&\leq C(R,\uptheta)\|\w_m\|_{\alpha,k,1}(1+\|\x\|_{\alpha})^{k+2}\|\H\|_{\alpha}\int_{\mathcal{Z}}e^{2\uptheta\|\G\|^2}(1+\|\G\|_{\alpha})^{k+2}\mu(\d z).
\end{align}
The term $|I_{16}|$ can be estimated similarly as 
\begin{align}\label{i16}
|I_{16}|&\leq C(R,\uptheta)\|\w_m\|_{\alpha,k,0}(1+\|\x\|_{\alpha})^{k+2}\|\H\|_{\alpha}\int_{\mathcal{Z}}e^{2\uptheta\|\G\|^2}(1+\|\G\|_{\alpha})^{k+3}\mu(\d z).
\end{align}
We estimate $|I_{17}|$ as 
\begin{align}\label{i17}
|I_{17}|&\leq 2\uptheta\iZm\left(|\la\G_m,\x\ra| |\la\mathrm{D}_{\x}\w_m,\H\ra|+\|\G_m\||\w_m|\|\H\|\right)\mu(\d z)\nonumber\\&\leq 2\uptheta\iZm\left(\|\G_m\|\|\x\| \|\A^{-\alpha}\mathrm{D}_{\x}\w_m\|+\|\G_m\||\w_m|\right)\mu(\d z)\|\H\|_{\alpha}\nonumber\\&\leq C(\uptheta)(1+\|\x\|_{\alpha})^{k+2}\|\w\|_{\alpha,k,1}\|\H\|_{\alpha} \int_{\mathcal{Z}}\|\G\|\mu(\d z).
\end{align}
Combining \eqref{i15}-\eqref{i17} and substituting it in \eqref{i14}, we get 
\begin{align}\label{i14a}
|I_{14}|\leq C(R,\uptheta)\|\w_m\|_{\alpha,k,1}(1+\|\x\|_{\alpha})^{k+2}\|\H\|_{\alpha}\int_{\mathcal{Z}}\left(e^{2\uptheta\|\G\|^2}(1+\|\G\|_{\alpha})^{k+3}+\|\G\|\right)\mu(\d z).
\end{align}
Using the estimates of $I_8$-$I_{14}$ given in \eqref{i8}-\eqref{i14a} in \eqref{418}, we further have 
\begin{align}\label{434}
&\|\widetilde{\mathrm{F}}_m(\x,\w_m,\mathrm{D}_\x \w_m)\|_{\alpha,k+2,1}\nonumber\\&\leq C(R,\uptheta)\left(1+\int_{\mathcal{Z}}e^{2\uptheta\|\G\|^2}(1+\|\G\|_{\alpha})^{k+3}\mu(\d z)+\int_{\mathcal{Z}}\|\G(t,z)\|\mu(\d z)\right)\|\w_m\|_{\alpha,k,2}.
\end{align}
An application of interpolation inequality given in \eqref{I} for \eqref{417} and  \eqref{434} yields the bound for \eqref{C1} and hence the required result \eqref{C}. 
\end{proof}

Finally, we estimate the  last term in \eqref{2p23} as follows. 
\begin{Lem}
Let $\ga$ be the constant such that $\ga\geq \ga_4.$  For any $m\in \mN$   and $\beta\in [0,2],\gamma\in[0,1]$  such that $\b+\gamma\leq 2,$    there exists a constant $c_{12}>0$ satisfying
\begin{align} \label{SS}
\left\|\int_0^t \mathrm{T}_{t-s}^m\wi g(\x) \d s\right\|_{0,0,\b+\gamma}\leq C(\ga),  \  \text{ for all }\ t\in[0,T], \mbox{ and $\x$ restricted to a ball in } \mH.
\end{align}
\end{Lem}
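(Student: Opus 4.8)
The plan is to mirror the proof of the preceding lemma for $\mathrm{T}_t^m\wi f$, the only new feature being that, recalling $\|\text{curl}\,\x\|^2=\|\x\|_{\f2}^2$, the datum $\wi g(\x)=e^{-\ga\|\x\|^2}\|\x\|_{\f2}^2$ carries the \emph{unbounded} factor $\|\x\|_{\f2}^2$ in place of the factor $\|\x\|^2$ that was bounded after weighting. First I would change variables $r=t-s$ and bring the time integral inside the expectation, writing
\[
\int_0^t\mathrm{T}_{t-s}^m\wi g(\x)\d s=\mE\left[\int_0^t e^{-2\ga\mcY_m(r)-\ga\|\Y_m(r)\|^2}\|\Y_m(r)\|_{\f2}^2\,\d r\right].
\]
The decisive observation, used throughout, is that $\d\mcY_m(r)=\|\Y_m(r)\|_{\f2}^2\,\d r$, so that for every $c>0$ one has the pathwise identity $\int_0^t e^{-c\mcY_m(r)}\|\Y_m(r)\|_{\f2}^2\,\d r=\frac1c\big(1-e^{-c\mcY_m(t)}\big)\le\frac1c$. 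Bounding $e^{-\ga\|\Y_m(r)\|^2}\le1$ this at once settles the base case $\b=\gamma=0$ with constant $\tfrac1{2\ga}$.

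For $\b=1,\gamma=0$ I would differentiate in a direction $\H$, interchanging $\mathrm{D}_\x$ with the $r$-integral and the expectation. Analogously to \eqref{S3} this produces three contributions: differentiating $e^{-2\ga\mcY_m(r)}$ yields a factor $-4\ga\int_0^r\la\A^{\f2}\Y_m(\rho),\A^{\f2}\eta_m(\rho)\ra\d\rho$, differentiating $e^{-\ga\|\Y_m(r)\|^2}$ yields $-2\ga\la\Y_m(r),\eta_m(r)\ra$, and differentiating $\|\Y_m(r)\|_{\f2}^2$ yields $2\la\A^{\f2}\Y_m(r),\A^{\f2}\eta_m(r)\ra$. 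For the third term I apply Cauchy--Schwarz in $r$, using the integration identity above for the $\Y_m$-factor and \eqref{d3} (legitimate since $\ga\ge\ga_4\ge8>4$) for $\int_0^t e^{-2\ga\mcY_m(r)}\|\eta_m(r)\|_{\f2}^2\,\d r\le\|\H\|^2$, obtaining a bound $C(\ga)\|\H\|$. For the second term I absorb $e^{-\ga\|\Y_m\|^2}\|\Y_m\|\le C(\ga)$, use the pathwise bound $\|\eta_m(r)\|\le e^{2\mcY_m(r)}\|\H\|$ from \eqref{d3}, and then invoke the integration identity with $c=2\ga-2$. For the first term I estimate the inner double integral by $\sqrt{\mcY_m(r)}\,e^{2\mcY_m(r)}\|\H\|$ via Cauchy--Schwarz and \eqref{d3}, split off the bounded factor $\sqrt{\mcY_m(r)}\,e^{-(2\ga-3)\mcY_m(r)}$, and close with the integration identity for $c=1$. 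Each term is thus $\le C(\ga)\|\H\|$, which gives $\big\|\int_0^t\mathrm{T}_{t-s}^m\wi g\,\d s\big\|_{0,0,1}\le C(\ga)$.

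The case $\b=\gamma=1$ follows the same scheme applied to the full second differential, whose structure is identical to \eqref{sd1}: one now also meets the second-variation process $\zeta_m$, controlled through \eqref{d4}, together with squares of the first-order quantities handled as above. After Cauchy--Schwarz every term reduces to a product of a power of $\mcY_m$ times a decaying exponential (bounded by $C(\ga)$ for $\ga\ge\ga_4$) and a time integral of $\|\Y_m\|_{\f2}^2$ weighted by $e^{-c\mcY_m}$ (bounded by $1/c$), so that $\big\|\int_0^t\mathrm{T}_{t-s}^m\wi g\,\d s\big\|_{0,0,2}\le C(\ga)$. The remaining values $\b+\gamma\in(0,2)$ then follow from the interpolation inequality \eqref{I} exactly as at the end of the previous lemma. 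The only genuine difficulty, and the point at which largeness of $\ga$ is essential, is precisely the unbounded weight $\|\cdot\|_{\f2}^2$ appearing in $\wi g$; every occurrence of it is neutralised by pairing it with a factor $e^{-c\mcY_m}$ and exploiting $\d\mcY_m=\|\Y_m\|_{\f2}^2\,\d r$, which is the whole purpose of the Feynman--Kac exponential weight.
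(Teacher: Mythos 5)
Your proposal is correct and follows essentially the same route as the paper's proof: the change of variables plus the pathwise identity $\int_0^t e^{-c\mathcal{Y}_m(r)}\|\Y_m(r)\|_{\frac{1}{2}}^2\d r\leq \frac{1}{c}$ for the base case, the same three-term decomposition of the first differential estimated via \eqref{d3}, the second differential handled analogously with $\zeta_m$ and \eqref{d4}, and interpolation \eqref{I} for the intermediate cases. The only (cosmetic) difference is that you absorb factors like $e^{-\ga\|\Y_m\|^2}\|\Y_m\|$ and $e^{-2\mathcal{Y}_m}\|\eta_m\|$ pathwise, where the paper instead applies Cauchy--Schwarz in expectation; both are valid and yield the same bound $C(\ga)\|\H\|$.
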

\begin{proof}
Since $\| \text{curl} \ \u\|=\|\A^\f2\u\|=\|\u\|_\f2$ for any $\u\in \mV,$   we obtain for   $\b=\gamma=0$   and  $\ga\geq \ga_4$   as follows
\begin{align} \label{G}
\left|\int_0^t \mathrm{T}_{t-s}^m\wi g(\x)\d s\right| &=  \mE\left[\int_0^t e^{-2\ga \mathcal{Y}_m(t-s) - \ga \|\Y_m(t-s)\|^2} \| \text{curl } \Y_m(t-s)\|^2 \d s \right] \no\\ &\leq   \mE\left[\int_0^t  e^{-2\ga \mathcal{Y}_m(r)}\|\Y_m(r)\|_{\frac{1}{2}}^2 \d r \right]   \no \\
&=\mE\left[-\frac{1}{2\uptheta}\int_0^t\d(e^{-2\ga \mathcal{Y}_m(r)})\right]=\frac{1}{2\uptheta}\mE\big(1-e^{-2\ga \mathcal{Y}_m(t)}\big)\leq C(\uptheta).
\end{align}
Next, we consider $\b=1$ and $\gamma=0.$ The first differential for  any $\H\in \mathrm{P}_m\mH$ is
\begin{align} \label{G1}
&\left< \mathrm{D}_\x \int_0^t (\mathrm{T}_{t-s}^m \wi g)(\x)\d s,\H\right>\no\\&=-\mE\left[\int_0^t\left( e^{-2\ga \mathcal{Y}_m(s) - \ga \|\Y_m(s)\|^2} \left\{ 4\ga \int_0^{s}\la \A^{1/2}\Y_m(r),\A^{1/2}\eta_m(r)\ra \d r
+2\ga \la \Y_m,\eta_m\ra \right\} \|\Y_m(s)\|^2_\f2\right)\d s\right]\no\\&\quad +2 \mE \left[\int_0^te^{-2\ga \mathcal{Y}_m(s) - \ga \|\Y_m(s)\|^2} \la \A^\f2\Y_m(s),\A^\f2\eta_m(s)\ra \d s\right] =: I_1+I_2+I_3. 
\end{align} 
Invoking \eqref{d3} for any $\uptheta$ sufficiently large and arguments similar to \eqref{G},  we get 
\begin{align*}
I_1&\leq  4\th \mE\left[ \int_0^t  \left(e^{-\frac{\ga}{2} \mathcal{Y}_m(s)}\|\Y_m(s)\|^2_\f2\right)\left(\sqrt{\mathcal{Y}_m(s)} e^{-\frac{\ga}{2} \mathcal{Y}_m(s)}\right)
\left(\int_0^se^{-\ga \mathcal{Y}_m(r)}\|\eta_m(r)\|_{\frac{1}{2}}^2 \d r\right)^{1/2}\d s\right] \\
&\leq C(\th) \|\H\| \mE \left[\int_0^t  e^{-\frac{\ga}{2} \mathcal{Y}_m(s)}\|\Y_m(s)\|^2_\f2 \d s\right] \leq C(\ga) \|\H\|  
\end{align*}
and
\begin{align*}
I_2&\leq 2\th\mE\left(\sup_{s\in[0,t]}\left[e^{-\ga \mathcal{Y}_m(s)}\|\eta_m(s)\|\right]\int_0^t  e^{-\th\mathcal{Y}_m(s)-\th\|\Y_m(s)\|^2}\|\Y_m(s)\|\|\Y_m(s)\|^2_\f2 \d s\right)\\
&\leq C(\th)\|\H\|\left[\mE\left(\sup_{s\in[0,t]}\left[e^{-2\ga \|\Y_m(s)\|^2}\|\Y_m(s)\|^2\right]\right)\right]^{1/2}\left[\mE\left(\int_0^t  e^{-\th\mathcal{Y}_m(s)}\|\Y_m(s)\|^2_\f2 \d s\right)^2\right]^{1/2}\\
&\leq C(\ga) \|\H\|  .
\end{align*}
Similar to \eqref{G}, we obtain
\begin{align*}
I_3&\leq 2 \mE\left[ \left(\int_0^t  e^{-2\th \mathcal{Y}_m(s)}\|\Y_m(s)\|^2_\f2\d s\right)^{1/2}
\left(\int_0^te^{-2\ga \mathcal{Y}_m(s)}\|\eta_m(s)\|_{\frac{1}{2}}^2 \d s\right)^{1/2}\right] 
\leq C(\th)\|\H\|.
\end{align*}
Estimations of $I_1,I_2$ and $I_3$ lead to 
$\left\|\int\limits_0^t(\mathrm{T}_{t-s}^m\wi g)(\x)\d s\right\|_{0,0,1}\leq C(\th).$
Writing the second differential  of \eqref{G1}, as we did in \eqref{sd1}, and by similar inequalities as above, one can prove the theorem for $\b=\gamma=1.$  Other cases can be completed by applying the interpolation inequalities \eqref{I}. This completes the proof. 
\end{proof}

\begin{Pro}\label{prop3.2}
Let $\al_1<\al<\wi \al_1$, there exists $d(\alpha_1,\alpha)$ and a constant $C(\alpha_1,\alpha)$ such that for any $m\in\mathbb{N}$ and $\x$ restricted to a ball in $\mH$ we have:
\begin{align} \l{3p35}
\sup_{t\in[0, T]}\|\w_m(t,\cdot)\|_{\alpha,d,2}\leq C.
\end{align}
\end{Pro}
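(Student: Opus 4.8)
The plan is to establish \eqref{3p35} as an a priori bound on the solution $\w_m$ of the finite-dimensional mild equation \eqref{2p23}, realized through a fixed-point argument in $\C([0,T];\C^{\alpha,d,2})$ whose constants are all independent of $m$. Writing $\mathcal{K}_m$ for the right-hand side of \eqref{2p23}, namely $\mathcal{K}_m(\w_m)(t,\x)=\mathrm{T}^m_t\wi f(\x)+\int_0^t\mathrm{T}^m_{t-s}\widetilde{\mathrm{F}}_m(\x,\w_m,\mathrm{D}_\x\w_m(s,\x))\,\d s+\int_0^t\mathrm{T}^m_{t-s}\wi g(\x)\,\d s$, I would bound each of the three terms in the $\|\cdot\|_{\alpha,d,2}$-norm using the three preceding lemmas, and then close a singular Gronwall inequality. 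Since every constant appearing in \eqref{S}, \eqref{C} and \eqref{SS} is independent of $m$, the resulting bound is automatically uniform in $m$.

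For the two source terms the argument is soft. The lemma \eqref{S} controls $\mathrm{T}^m_t\wi f$ in $\|\cdot\|_{0,0,2}$; since $\A^{-\alpha}\in\mathcal{L}(\mH)$ for $\alpha>0$, passing to the substituted function $\psi(\A^{-\alpha}\cdot)$ only improves the derivative bounds, while raising the growth index from $0$ to $d\ge0$ only weakens the norm, so $\|\mathrm{T}^m_t\wi f\|_{\alpha,d,2}\le C(\ga)$. The same reasoning applied to \eqref{SS} gives $\big\|\int_0^t\mathrm{T}^m_{t-s}\wi g\,\d s\big\|_{\alpha,d,2}\le C(\ga)$, both uniformly in $t\in[0,T]$ and in $m$.

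The heart of the matter is the nonlinear term, where I would apply \eqref{C} with the choice $\b=0$, $\gamma=1$. Then the output differentiability $1+\b+\gamma$ equals $2$, the output growth index $k+2+2(1+\b)$ equals $d$ (forcing $k=d-4$ and thereby fixing $d=d(\alpha_1,\alpha)$), and the time singularity has exponent $(1+\b)(1-\upepsilon)=1-\upepsilon<1$, which is integrable precisely because $\upepsilon=\alpha-\alpha_1>0$; this is exactly where the hypothesis $\alpha_1<\alpha$ enters, and it is also the smallness condition $(1+\b)(1-\upepsilon)<1$ required by \eqref{C}. This yields $\|\mathrm{T}^m_{t-s}\widetilde{\mathrm{F}}_m(\w_m(s))\|_{\alpha,d,2}\le C(t-s)^{-(1-\upepsilon)}\big(1+\|\w_m(s)\|_{\alpha,d-4,2}\big)$, with the same differentiability index $2$ on both sides, so the inequality couples the target norm to itself up to the shift of the growth index from $d$ to $d-4$.

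Reconciling this growth-index shift is the main obstacle, and I expect it to be resolved by the restriction of $\x$ to a ball: there the polynomial weights $(1+\|\x\|_\alpha)^{d}$ and $(1+\|\x\|_\alpha)^{d-4}$ are comparable up to a constant depending only on the radius, so that $\|\w_m(s)\|_{\alpha,d-4,2}$ and $\|\w_m(s)\|_{\alpha,d,2}$ become equivalent, the intermediate differentiability losses incurred for $\gamma<1$ being absorbed by the interpolation Lemma \ref{lem2.1} (inequality \eqref{I}). Setting $n_m(t):=\sup_{0\le s\le t}\|\w_m(s)\|_{\alpha,d,2}$ and combining the three estimates gives $n_m(t)\le C+C\int_0^t(t-s)^{-(1-\upepsilon)}n_m(s)\,\d s$, and a singular-kernel Gronwall lemma then delivers $\sup_{t\in[0,T]}n_m(t)\le C$ with $C$ independent of $m$. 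Equivalently, one may run the contraction first on a short interval $[0,T_0]$, where $\int_0^{T_0}\tau^{-(1-\upepsilon)}\,\d\tau=T_0^{\upepsilon}/\upepsilon$ is small uniformly in $m$, and then iterate over $[0,T]$ in finitely many $m$-independent steps; a preliminary bootstrap controlling $\|\w_m\|_{\alpha,d,1}$ before $\|\w_m\|_{\alpha,d,2}$ guarantees that the gradient and Hessian feeding $\widetilde{\mathrm{F}}_m$ are well defined at each stage.
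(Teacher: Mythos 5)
Your treatment of the two source terms is fine, and your reading of the smoothing estimate \eqref{C} is accurate, but the mechanism you propose for closing the estimate has a genuine gap, and it sits exactly where the paper's proof uses an ingredient your proposal never invokes. With your choice $\b=0$, $\gamma=1$ (note, incidentally, that \eqref{C} is stated only for $\b>0$), the inequality bounds $\|\mathrm{T}^m_{t-s}\widetilde{\mathrm{F}}_m\|_{\alpha,k+4,2}$ by $C(t-s)^{-(1-\upepsilon)}\big(1+\|\w_m(s)\|_{\alpha,k,2}\big)$. Since a \emph{larger} growth index gives a \emph{weaker} norm, this controls the weak norm $\|\cdot\|_{\alpha,k+4,2}$ by the strictly stronger norm $\|\cdot\|_{\alpha,k,2}$, so the inequality is not of Gronwall type. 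Your proposed repair — that on the ball the weights $(1+\|\x\|_{\alpha})^{k}$ and $(1+\|\x\|_{\alpha})^{k+4}$ are comparable — fails because the ball in the statement is a ball in $\mH$, not in $\mD(\A^{\alpha})$: for $\x=R_0e_n$ one has $\|\x\|=R_0$ while $\|\x\|_{\alpha}=R_0\lambda_n^{\alpha}\to\infty$, so the two weights differ by the unbounded factor $(1+\|\x\|_{\alpha})^{4}$ and the corresponding norms are not equivalent on that ball. There is also the secondary problem that a singular Gronwall argument presupposes $\sup_{t}\|\w_m(t,\cdot)\|_{\alpha,d,2}<+\infty$ a priori (and the contraction variant requires a uniqueness statement identifying $\w_m$ with the fixed point); your "preliminary bootstrap" does not supply either.

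The paper closes the loop differently, and the idea you are missing is control-theoretic rather than analytic: since $\v_m$ is the value function of the approximated control problem, comparison with the zero control together with the energy estimate of Lemma \ref{L2} gives $0\le\v_m(t,\x)\le \mathcal{J}_m(0,T;\mathrm{P}_m\x,0)\le C(1+\|\x\|^2)$, and the transformation $\w_m=e^{-\ga\|\x\|^2}\v_m$ turns this into the uniform bound $\|\w_m\|_{\alpha,0,0}\le C(\ga)$, independent of $m$. With this zeroth-order bound in hand, the paper applies \eqref{C} with $\gamma=0$, so the right-hand side involves only the first-order norm $\|\w_m\|_{\alpha,k,1}$, yielding $\sup_{t}\|\w_m(t,\cdot)\|_{\alpha,k+4+2\b,1+\b}\le C\big(1+\sup_{t}\|\w_m(t,\cdot)\|_{\alpha,k,1}\big)$; it then interpolates (Lemma \ref{lem2.1}) between the $(\alpha,0,0)$-norm and the $(\alpha,k+4+2\b,1+\b)$-norm with the calibrated choice $k=\frac{1}{\b}(4+2\b)$, which places an exponent strictly less than one on the higher norm. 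The uniform zeroth-order bound then permits a Young-type absorption, giving $\sup_{t}\|\w_m(t,\cdot)\|_{\alpha,k+4+2\b,1+\b}\le C$ — no Gronwall lemma and no weight-comparability are needed. Without the value-function bound there is nothing to interpolate against, and the self-coupled inequality you set up cannot be closed; this is the concrete missing step in your argument.
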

\begin{proof}
Since the value function $\v_m(\cdot,\cdot)$ is the solution of the approximated control problem \eqref{2F2}, by the energy estimate (see Lemma \ref{L2}), we have 
\begin{align*}
0\leq \v_m(T,\x)=\inf_{\U_m\in {\mathcal U_{R}^{0,T}} \cap \mathrm{L}^2(\Omega\times [0,T];\mathrm{P}_m\mH)}\mathrm{J}_m(\U_m)\leq \mathrm{J}_m(0)\leq C(1+\|\x\|^2).
\end{align*}
The above estimate is also true for any $t\in[0,T]$, that is,
$
\v_m(t,\x)\leq C(1+\|\x\|^2).
$
Since $\v_m(t,\x)=e^{\uptheta\|\x\|^2}\w_m(t,\x)$, we have 
\begin{align*}
|\w_m(t,\x)|\leq C e^{-\uptheta\|\x\|^2}(1+\|\x\|^2)\leq C(\uptheta),
\end{align*}
for any $t\in[0,T]$ and $\x\in\mathrm{P}_m\mH$. It leads to 
$
\|\w_m\|_{\alpha,0,0}\leq C(\uptheta).
$  Now we estimate the equation \eqref{2p23} as follows: 
\begin{align}\label{336}
&\|\w_m(t,\cdot)\|_{\al,k+2+2(1+\b),1+\b}\nonumber\\&\leq \|\mathrm{T}^m_t\wi f\|_{\al,k+2+2(1+\b),1+\b}+\int_0^t\|\mathrm{T}^m_{t-s}\widetilde{\mathrm{F}}_m\|_{\al,k+2+2(1+\b),1+\b}\d s+\int_0^t\|\mathrm{T}^m_{t-s}\wi g\|_{\al,k+2+2(1+\b),1+\b}\d s\nonumber\\&\leq \|\mathrm{T}^m_t\wi f\|_{0,0,1+\beta}+C\int_0^t(t-s)^{-(1+\b)(1-\upepsilon)}(1+\|\w_m(t-s,\cdot)\|_{\al,k,1})\d s+\int_0^t\|\mathrm{T}^m_{t-s}\wi g\|_{0,0,1+\b}\d s\nonumber\\&\leq C(\uptheta)\left[1+T+\frac{T^{1-(1+\b)(1-\upepsilon)}}{1-(1+\b)(1-\upepsilon)}\left(1+\sup_{t\in[0, T]}\|\w_m(t,\cdot)\|_{\al,k,1}\right)\right]\nonumber\\&\leq C(\uptheta,T,\beta,\upepsilon)\left(1+\sup_{t\in[0, T]}\|\w_m(t,\cdot)\|_{\al,k,1}\right).
\end{align}
Let us choose $k=\frac{1}{\beta}(4+2\beta)$ and apply the interpolation inequality given in Lemma \ref{lem2.1} to obtain 
\begin{align}\label{337}
\|\w_m(t,\cdot)\|_{\alpha,\frac{1}{\beta}(4+2\beta),1}\leq C\|\w_m(t,\cdot)\|_{\alpha,0,0}^{\frac{1}{1+\beta}}\|\w_m(t,\cdot)\|_{\alpha,\frac{1}{\beta}(4+2\beta)+4+2\beta,1+\beta}^{\frac{\beta}{1+\beta}}.
\end{align} 
Substituting (\ref{337}) in (\ref{336}) and using the fact that $\|\w_m(t,\cdot)\|_{\alpha,0,0}$ is uniformly bounded, we arrive at 
\begin{align}
\sup_{t\in[0, T]}\|\w_m(t,\cdot)\|_{\alpha,\frac{1}{\beta}(4+2\beta)+4+2\beta,1+\beta}\leq C,
\end{align}
which completes the proof. 
\end{proof}

In order to prove the existence of mild solution for \eqref{2p22}, we need the following convergence results for the solution of the SNSE \eqref{2p19}. 	For any $m\in\mathbb{N}$, let us set 
\begin{align*}
\mathrm{K}_m(t)=\int_0^te^{-(t-s)\A}\A^{-\frac{\var}{2}}\d\W_m(t)+\int_0^t \int_{\mZ_m}e^{-(t-s)\A}\G_m(t,z)\widetilde{\uppi}(\d s,\d z),
\end{align*}
which is the unique solution of 
\begin{equation}
\left\{
\begin{aligned}
\d \mathrm{K}_m(t)&=-\A \mathrm{K}_m(t)+\A^{-\frac{\var}{2}}\d\W_m(t)+\int_{\mZ_m}\G_m(s,z)\widetilde{\uppi}(\d s,\d z), \ t\in(0,T),\\
\mathrm{K}_m(0)&=0,
\end{aligned}
\right.
\end{equation}
where $\W_m=\mathrm{P}_m\W$,  $\G_m=\mathrm{P}_m\G$ and $\mZ_m=\mathrm{P}_m\mZ$. By the convolution estimate for the Gaussian noise  \eqref{N2}, Remark \ref{rem3.3} and a convolution estimate similar to the jump noise integral proved in Lemma 2.15, \cite{MSS}, we have 
\begin{align}\label{4.40}
\sup_{t\in[0,T]}\|\mathrm{K}_m(t)\|_{\frac{1}{2}}<+\infty, \ \mathbb{P}\text{-a.s.}
\end{align} It shows that $\mathrm{K}_m$ converges almost surely in $\mathrm{L}^{\infty}(0,T;\mV)$ to the unique solution 
\begin{align*}
\mathrm{K}(t)=\int_0^te^{-(t-s)\A}\A^{-\frac{\var}{2}}\d\W(t)+\int_0^t \int_{\mZ}e^{-(t-s)\A}\G(t,z)\widetilde{\uppi}(\d s,\d z),
\end{align*}
of the equation 
\begin{equation}
\left\{
\begin{aligned}
\d \mathrm{K}(t)&=-\A \mathrm{K}(t)+\A^{-\frac{\var}{2}}\d\W(t)+\int_{\mZ}\G(s,z)\widetilde{\uppi}(\d s,\d z), \ t\in(0,T),\\
\mathrm{K}(0)&=0.
\end{aligned}
\right.
\end{equation}

\begin{Pro}\label{prop3.3}
Suppose that the Assumption \ref{ass2.1} is satisfied. Let $\{x_m\}_{m=1}^{\infty}$ be such that $x_m\to x$ strongly in $\mH.$ Then $\Y_m(\cdot),$ the solution of  \eqref{2F3} converges to the unique solution  $\Y(\cdot)$ of \eqref{2p19}  in $\mathrm{L}^2(\Om;\mathrm{L}^{\infty}([0,T];\mH))\cap   \mathrm{L}^2(\Om;\mathrm{L}^2(0,T;\mV))$ having c\`adl\`ag paths
and almost surely in $\mathscr{D}([0,T];\mH)\cap\mathrm{L}^2(0,T;\mV).$  
\end{Pro}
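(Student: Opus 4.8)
The plan is to strip off the stochastic forcing by an additive decomposition, reducing the assertion to a \emph{pathwise} convergence for the Galerkin scheme of a perturbed $2$D Navier--Stokes system, and then to promote the resulting almost sure convergence to $\mathrm{L}^2(\Om;\cdot)$ by uniform integrability. I would write $\Y_m=\mathrm{K}_m+\wi\Z_m$, with $\mathrm{K}_m$ the linear stochastic convolution introduced before \eqref{4.40}. Subtracting its equation from \eqref{2F3}, the remainder $\wi\Z_m\in\mathrm{P}_m\mH$ solves, for each fixed $\om$,
\[
\frac{d}{dt}\wi\Z_m(t)+\A\wi\Z_m(t)+\mathrm{P}_m\B\big(\wi\Z_m(t)+\mathrm{K}_m(t)\big)=0,\qquad \wi\Z_m(0)=x_m,
\]
whose right-hand side is the time integral of a c\`adl\`ag $\mV'$-valued map, so $\wi\Z_m\in\C([0,T];\mH)$ and all jumps of $\Y_m$ are carried by $\mathrm{K}_m$. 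Since $\mathrm{K}_m\to\mathrm{K}$ almost surely in $\mathrm{L}^{\infty}(0,T;\mV)$ by \eqref{4.40} and $x_m\to x$, and since the limit $\wi\Z:=\Y-\mathrm{K}$ is already available from the solution $\Y$ of \eqref{2p19} given by Theorem \ref{eu1}, it suffices to prove $\wi\Z_m\to\wi\Z$ pathwise.

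Fixing $\om$ outside a null set, I would argue deterministically. Lemmas \ref{L2}--\ref{L3} and $\mathrm{K}_m\to\mathrm{K}$ in $\mathrm{L}^{\infty}(0,T;\mV)$ give uniform-in-$m$ bounds for $\wi\Z_m$ (and for $\wi\Z$) in $\mathrm{L}^{\infty}(0,T;\mH)\cap\mathrm{L}^2(0,T;\mV)$. For the strong convergence I would estimate $q_m:=\wi\Z_m-\mathrm{P}_m\wi\Z\in\mathrm{P}_m\mH$: applying $\mathrm{P}_m$ to the limiting equation (it commutes with $\A$), subtracting, and testing with $q_m$—so that $\la\mathrm{P}_m(\cdot),q_m\ra=\la\cdot,q_m\ra$—gives
\[
\tfrac12\tfrac{d}{dt}\|q_m\|^2+\|q_m\|_{\frac12}^2+\big\la\B(\Y_m)-\B(\Y),q_m\big\ra=0,\qquad \Y_m-\Y=q_m+(\mathrm{P}_m-I)\wi\Z+(\mathrm{K}_m-\mathrm{K}).
\]
Expanding $\B(\Y_m)-\B(\Y)$ by bilinearity, using the antisymmetry \eqref{2p9} and the critical $2$D trilinear bound \eqref{ne} with $r\in(0,\tfrac12)$, the terms cubic in $q_m$ are absorbed by $\|q_m\|_{\frac12}^2$ while the remaining terms produce an integrable Gronwall coefficient (from the uniform $\mathrm{L}^2(0,T;\mV)$ bounds) multiplied by $\|(\mathrm{P}_m-I)\wi\Z\|_{\mathrm{L}^2(0,T;\mV)}$ and $\|\mathrm{K}_m-\mathrm{K}\|_{\mathrm{L}^{\infty}(0,T;\mV)}$. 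A Gronwall argument then bounds $\sup_t\|q_m(t)\|^2+\int_0^T\|q_m\|_{\frac12}^2\,dt$ by these quantities together with $\|x_m-\mathrm{P}_m x\|^2$, all of which vanish as $m\to\infty$.

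Combining $q_m\to0$ with $(\mathrm{P}_m-I)\wi\Z\to0$ yields $\wi\Z_m\to\wi\Z$, hence $\Y_m=\wi\Z_m+\mathrm{K}_m\to\wi\Z+\mathrm{K}=\Y$ pathwise in $\mathrm{L}^{\infty}(0,T;\mH)\cap\mathrm{L}^2(0,T;\mV)$; the uniform-in-time convergence of the continuous part $\wi\Z_m$ and of $\mathrm{K}_m$ (which shares the jump times of $\mathrm{K}$) gives convergence in $\mathscr{D}([0,T];\mH)$. Finally, the higher moment bounds of Lemmas \ref{L2} (with $p>2$) and \ref{L3} (with $k\ge2$) make $\sup_t\|\Y_m\|^2+\int_0^T\|\Y_m\|_{\frac12}^2\,dt$ uniformly integrable, so Vitali's theorem upgrades the almost sure convergence to convergence in $\mathrm{L}^2(\Om;\mathrm{L}^{\infty}(0,T;\mH)\cap\mathrm{L}^2(0,T;\mV))$.

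\textbf{Main obstacle.} The delicate step is the energy estimate for strong convergence: one must route the Galerkin projection $\mathrm{P}_m$ through the nonlinear term so that every error term is expressed by a quantity already known to vanish, and close the estimate using only the critical bound \eqref{ne}, so that the nonlinearity is absorbed by the single available viscous power $\|q_m\|_{\frac12}^2$ rather than by an unavailable higher-order norm. The remaining steps—the pathwise reduction, the $\mathscr{D}([0,T];\mH)$ convergence, and the passage from almost sure to $\mathrm{L}^2(\Om)$ convergence—are routine given Lemmas \ref{L2}--\ref{L3} and \eqref{4.40}.
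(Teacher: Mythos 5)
Your proposal is correct in substance, but it organizes the proof in the opposite order from the paper and uses a different mechanism for the $\mathrm{L}^2(\Om)$ convergence. The paper first proves the $\mathrm{L}^2(\Om)$ convergence by a functional-analytic argument: the It\^o energy identities \eqref{c1} and \eqref{c3} give boundedness of $\{\Y_m\}$ in $\mathrm{L}^2(\Om;\mH)\cap\mathrm{L}^2(\Om;\mathrm{L}^2(0,T;\mV))$, Banach--Alaoglu gives the weak convergences \eqref{cc2}, and the convergence of the energy norms \eqref{c4} upgrades weak to strong convergence (a Radon--Riesz type argument). Only then does it run the difference estimate on $\Y_m-\mathrm{P}_m\Y$ in \eqref{3.59}--\eqref{3.63}, but with expectations taken, using the just-established strong convergences to make the error terms vanish, followed by Chebyshev/Markov inequalities and a subsequence-plus-uniqueness argument to recover the a.s. convergence. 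You instead run essentially that same difference estimate (on $q_m=\wi\Z_m-\mathrm{P}_m\wi\Z$, after subtracting the stochastic convolution, which is the same device as the paper's $\widetilde{\Y}_m=\Y_m-\mathrm{K}_m$) purely pathwise with Gronwall, obtaining a.s. convergence of the whole sequence directly, and then upgrade to $\mathrm{L}^2(\Om)$ by Vitali's theorem using the uniform-in-$m$ higher moments of Lemmas \ref{L2}--\ref{L3}. Your route buys two things: it avoids the weak-compactness/norm-convergence step altogether, and it yields full-sequence a.s. convergence without subsequence extraction (the paper's passage from ``a subsequence converges a.s.'' to ``the entire sequence converges a.s.'' via uniqueness of the limit is the least solid step of its proof). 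The paper's route keeps every estimate at the level of expectations, where the It\^o isometry produces clean identities and no uniform-in-$m$ pathwise control is ever needed.

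One point needs repair. You attribute the ``uniform-in-$m$ bounds for $\wi\Z_m$ in $\mathrm{L}^{\infty}(0,T;\mH)\cap\mathrm{L}^2(0,T;\mV)$'' to Lemmas \ref{L2}--\ref{L3}; those are moment bounds with constants independent of $m$, and they do not give bounds holding a.s. uniformly in $m$, which is what your pathwise Gronwall coefficient requires. The correct source is the pathwise energy estimate for $\wi\Z_m$ itself---exactly the computation \eqref{442}--\eqref{443} and the ensuing Gronwall bound in the paper---whose right-hand side depends only on $\|x_m\|$ and on norms of $\mathrm{K}_m$, and which is a.s. bounded uniformly in $m$ precisely because $\mathrm{K}_m\to\mathrm{K}$ a.s. in $\mathrm{L}^{\infty}(0,T;\mV)$ (see \eqref{4.40} and the discussion following it). Likewise, for Vitali you also need the corresponding moment bound for the limit $\Y$; it follows from Lemma \ref{L3} and Fatou's lemma along the a.s. convergence. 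With these substitutions your argument closes.
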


\begin{proof}
Let $\Y_m(\cdot)$ be the solution of \eqref{2F3}. Let us set $\widetilde{\Y}_m=\Y_m-\mathrm{K}_m$. Then, $\widetilde{\Y}_m$ satisfies the following system:
\begin{equation}
\left\{
\begin{aligned}
\frac{\d\widetilde{\Y}_m(t)}{\d t}&=-[\A\widetilde{\Y}_m(t)+\B(\widetilde{\Y}_m(t)+\mathrm{K}_m(t))], \ t\in(0,T),\\
\widetilde{\Y}_m(0)&=\mathrm{P}_m\x.
\end{aligned}
\right.
\end{equation}
Taking inner product with $\widetilde{\Y}_m$, we find 
\begin{align}\label{442}
\frac{1}{2}\frac{\d}{\d t}\|\widetilde{\Y}_m(t)\|^2+\|\widetilde{\Y}_m(t)\|_{\frac{1}{2}}^2=-\langle \B(\widetilde{\Y}_m(t)+\mathrm{K}_m(t)), \widetilde{\Y}_m(t)\rangle.
\end{align}
Using H\"older's, Ladyzhenskaya and Young's inequalities, we estimate $\langle \B(\widetilde{\Y}_m+\mathrm{K}_m), \widetilde{\Y}_m\rangle$ as 
\begin{align}\label{443}
\langle \B(\widetilde{\Y}_m+\mathrm{K}_m), \widetilde{\Y}_m\rangle&= \langle \B(\widetilde{\Y}_m,\mathrm{K}_m), \widetilde{\Y}_m\rangle+ \langle \B(\mathrm{K}_m,\mathrm{K}_m), \widetilde{\Y}_m\rangle\nonumber\\&\leq \|\widetilde{\Y}_m\|_{\mathbb{L}^4}\|\widetilde{\Y}_m\|_{\frac{1}{2}}\|\mathrm{K}_m\|_{\mathbb{L}^4}+\|\mathrm{K}_m\|_{\mathbb{L}^4}^2\|\widetilde{\Y}_m\|_{\frac{1}{2}}\nonumber\\&\leq 2^{1/4}\|\widetilde{\Y}_m\|^{1/2}\|\widetilde{\Y}_m\|_{\frac{1}{2}}^{3/2}\|\mathrm{K}_m\|_{\mathbb{L}^4}+\frac{1}{4}\|\widetilde{\Y}_m\|_{\frac{1}{2}}^2+\|\mathrm{K}_m\|_{\mathbb{L}^4}^4\nonumber\\&\leq \frac{1}{2}\|\widetilde{\Y}_m\|_{\frac{1}{2}}^2+\frac{27}{2}\|\mathrm{K}_m\|_{\mathbb{L}^4}^4\|\widetilde{\Y}_m\|^2+\|\mathrm{K}_m\|_{\mathbb{L}^4}^4.
\end{align}
Integrating \eqref{442} from $0$ to $t$ and then using \eqref{443}, we get 
\begin{align}
\|\widetilde{\Y}_m(t)\|^2+\int_0^t\|\widetilde{\Y}_m(s)\|_{\frac{1}{2}}^2\d s\leq \|\x\|^2+\int_0^t\|\mathrm{K}_m(s)\|_{\mathbb{L}^4}^4\d s+ 27\int_0^t\|\mathrm{K}_m(s)\|_{\mathbb{L}^4}^4\|\widetilde{\Y}_m(s)\|^2\d s.
\end{align}
An application of Gronwall's inequality and \eqref{4.40} yields 
\begin{align}
\|\widetilde{\Y}_m(t)\|^2+\int_0^t\|\widetilde{\Y}_m(s)\|_{\frac{1}{2}}^2\d s&\leq \left(\|\x\|^2+\int_0^t\|\mathrm{K}_m(s)\|_{\mathbb{L}^4}^4\d s\right)\exp\left( 27\int_0^t\|\mathrm{K}_m(s)\|_{\mathbb{L}^4}^4\d s\right)\nonumber\\&\leq (1+\|\x\|^2) \exp\left( C\sup_{t\in[0,T]}\|\mathrm{K}(t)\|^2\int_0^T\|\mathrm{K}(t)\|_{\frac{1}{2}}^2\d t\right)<+\infty,
\end{align}
$\mathbb{P}$-a.s.	This implies that for each $\omega\in\Omega$, $\widetilde{\Y}_m$ is a bounded sequence in $\mathrm{L}^{\infty}(0,T;\mH)$ and $\mathrm{L}^2(0,T;\mV)$. By a standard argument (based on compactness and uniqueness of limits), $\widetilde{\Y}_m$ converges almost surely to $\widetilde{\Y}$ in $\mathrm{L}^2(0,T;\mH)$, which satisfies:
\begin{equation}\label{446}
\left\{
\begin{aligned}
\frac{\d\widetilde{\Y}(t)}{\d t}&=-[\A\widetilde{\Y}(t)+\B(\widetilde{\Y}(t)+\mathrm{K}(t))], \ t\in(0,T), \\
\widetilde{\Y}(0)&=\x.
\end{aligned}
\right.
\end{equation}
Let us set $\Y=\widetilde{\Y}+\mathrm{K}$, so that $\Y$ satisfies: 
\begin{equation}
\left\{
\begin{aligned} 
\d\Y(t)&=-[\A\Y(t)+\B(\Y(t))]\d t+\A^{-\frac{\va}{2}}\d\W(t)+ \int_{\mZ}\G(t,z) \wi\uppi(\d t,\d z), \   t\in (0,T),  \\
\Y(0)&=\x, \ \x\in \mH.
\end{aligned}
\right.\end{equation}

The existence and uniqueness of solution of the  equation \eqref{2F3} is given in Theorem \ref{eu1}. We only need to prove the strong convergence of $\Y_m(\cdot)$ to $\Y(\cdot)$ in the given topology. Using the It\^o formula and taking expectation, we get
\begin{align} \label{c1}
\hspace{-.12in}\mE\left[\|\Y_m(t)\|^2+2\int_0^t\|\Y_m(s)\|^2_{\frac{1}{2}}\d s\right]
&=\|x_m\|^2 +t\sum_{j=1}^m\lambda_j^{-\var} +\int_0^t\int_{\mZ_m}\|\G_m(t,z)\|^2\mu(\d z) \d t\\&\leq \|x\|^2 +T\tr(\A^{-\var}) +\int_0^T\iZ\|\G(t,z)\|^2\mu(\d z) \d t. \no
\end{align}
So, $\{\Y_m\}$ is bounded in $\mathrm{L}^2(\Om;\mH)\cap   \mathrm{L}^2(\Om;\mathrm{L}^2(0,T;\mV)).$  By the Banach-Alaoglu theorem, there exists a subsequence of $\{\Y_m\}$ which we denote again by $\{\Y_m\}$ such that 
\begin{equation} \label{cc2}
\left\{
\begin{aligned}
&\Y_m(t) \rightharpoonup \Y(t), \ \ \mbox{in} \ \ \mathrm{L}^2(\Om;\mH), \\
&\Y_m \rightharpoonup \Y, \ \ \mbox{in} \ \ \mathrm{L}^2(\Om;\mathrm{L}^2(0,T;\mV)),  \ \mbox{as}  \ m\to\infty. 
\end{aligned}
\right.
\end{equation} 
Applying the infinite dimensional It\^o formula (\cite{Me}) to $\|\Y(\cdot)\|^2,$ we get
\begin{align} \label{c3}
\mE\left[\|\Y(t)\|^2+2\int_0^t\|\Y(s)\|^2_{\frac{1}{2}}\d s\right]
=\|x\|^2 +t\tr(\A^{-\var}) +\int_0^t\iZ\|\G(s,z)\|^2\mu(\d z) \d s.
\end{align}  
From the equalities of (\ref{c1}) and (\ref{c3}), we obtain 
\begin{align*}
&\left|\E\left[\|\Y_m(t)\|^2-\|\Y(t)\|^2+2\int_0^t\left(\|\Y_m(s)\|_{\frac{1}{2}}^2-\|\Y(s)\|_{\frac{1}{2}}^2\right)\d s\right]\right|\no\\&\leq \|x_m-x\|\left(\|x_m\|+\|x\|\right)+t\Tr[(\mathrm{I}-\mathrm{P}_m)\A^{-\var}]\no\\&\quad+\left|\int_0^t\int_{\mZ_m}\|\G_m(t,z)\|^2\mu(\d z)\d t-\int_0^t\iZ\|\G_m(t,z)\|^2\mu(\d z)\d t\right|\no\\&\quad+\left|\int_0^t\iZ\|\G_m(t,z)\|^2\mu(\d z)\d t-\int_0^t\iZ\|\G(t,z)\|^2\mu(\d z)\d t\right|\no\\&\leq 2\|x\| \|x_m-x\|+t\|\mathrm{I}-\mathrm{P}_m\|_{\mathcal{L}(\mH)}\Tr(\A^{-\var})+\int_0^t\int_{{\mathcal{Z}\backslash\mathcal{Z}_m}}\|\mathrm{\G}_m(s,z)\|^2\mu(\d z)\d s\nonumber\\&\quad +\int_0^t\iZ\left(\|\G_m(t,z)\|+\|\G(t,z)\|\right)\|\G_m(t,z)-\G(t,z)\|\mu(\d z)\d t\no\\&\leq 2\|x\| \|x_m-x\|+t\|\mathrm{I}-\mathrm{P}_m\|_{\mathcal{L}(\mH)}\Tr(\A^{-\var})+\int_0^t\int_{{\mathcal{Z}\backslash\mathcal{Z}_m}}\|\mathcal{\G}(s,z)\|^2\mu(\d z)\d s\nonumber\\&\quad+2\|\mathrm{I}-\mathrm{P}_m\|_{\mathcal{L}(\mH)}\int_0^t\iZ\|\mathrm{\G}(s,z)\|^2\mu(\d z)\d s\no\\& \to 0\text{ as }m\to\infty.
\end{align*}
It leads to 
\begin{align} \label{c4}
\mE\left[\|\Y_m(t)\|^2+2\int_0^t\|\Y_m(s)\|^2_{\frac{1}{2}}\d s\right]
\to \mE\left[\|\Y(t)\|^2+2\int_0^t\|\Y(s)\|^2_{\frac{1}{2}}\d s\right], \ \  \mbox{as} \ \ m\to \infty. 
\end{align} 
Clearly, by the weak convergence of $\{\Y_m\}$ established in \eqref{cc2} together with \eqref{c4}, we get  the strong convergence of $\{\Y_m\}$ to $\Y$ in  $\mathrm{L}^2(\Om;\mH)$ and $\mathrm{L}^2(\Om;\mathrm{L}^2(0,T;\mV))$ respectively. Since $\Y_m\in\mathrm{L}^{p}(\Omega;\mathrm{L}^{\infty}(0,T;\mH)\cap\mathrm{L}^2(0,T;\mV))$, for all $p\geq 2$, we  also get the convergence $\Y_m\to\Y\in\mathrm{L}^{p}(\Omega;\mathrm{L}^{\infty}(0,T;\mH)\cap\mathrm{L}^2(0,T;\mV))$. 
Moreover, applying $\mathrm{P}_m$ on \eqref{2p19} and taking difference with \eqref{2F3}, we get
\begin{equation}\d(\Y_m(t)-\mathrm{P}_m\Y(t))=-\A\big(\Y_m(t)-\mathrm{P}_m\Y(t)\big)\d t + \big(\mathrm{P}_m\B(\Y_m(t))-\mathrm{P}_m\B(\Y(t))\big)\d t.
\end{equation}
Taking inner product with $(\Y_m-\mathrm{P}_m\Y)$, integrating by parts and applying Young's inequality,  we obtain
\begin{align}\label{3.59}
&\frac{1}{2}\|\Y_m(t)-\mathrm{P}_m\Y(t)\|^2+\int_0^t\|\Y_m(s)-\mathrm{P}_m\Y(s)\|^2_{\frac{1}{2}}\d s\no\\
&=\i0t \la  \mathrm{P}_m\B(\Y_m(s))-\mathrm{P}_m\B(\Y(s)),\Y_m(s)-\mathrm{P}_m\Y(s)\ra \d s.
\end{align}
We estimate $\la  \mathrm{P}_m\B(\Y_m)-\mathrm{P}_m\B(\Y),\Y_m-\mathrm{P}_m\Y\ra$ as
\begin{align}\label{3.60}
&\la  \mathrm{P}_m\B(\Y_m)-\mathrm{P}_m\B(\Y),\Y_m-\mathrm{P}_m\Y\ra\nonumber\\&= \la  \B(\Y_m,\Y_m-\mathrm{P}_m\Y)+\B(\Y_m-\Y,\mathrm{P}_m\Y)+\B(\Y,\mathrm{P}_m\Y-\Y),\Y_m-\mathrm{P}_m\Y\ra\nonumber\\&=  \la  \B(\Y_m-\Y,\mathrm{P}_m\Y),\Y_m-\mathrm{P}_m\Y\ra-\la \B(\Y,\Y-\mathrm{P}_m\Y),\Y_m-\mathrm{P}_m\Y\ra.
\end{align}
Let us estimate the first term from the right hand side of the equality \eqref{3.60} using H\"older's, Ladyzhenskaya and Young's inequalities as
\begin{align}\label{3.61}
&	|\la  \B(\Y_m-\Y,\mathrm{P}_m\Y),\Y_m-\mathrm{P}_m\Y\ra|\nonumber\\&=|\la  \B(\Y_m-\Y,\Y_m-\mathrm{P}_m\Y),\mathrm{P}_m\Y\ra|\nonumber\\&\leq \|\Y_m-\Y\|_{\mL^4}\|\Y_m-\mathrm{P}_m\Y\|_{\frac{1}{2}}\|\mathrm{P}_m\Y\|_{\mL^4}\nonumber\\& \leq \frac{1}{4}\|\Y_m-\mathrm{P}_m\Y\|_{\frac{1}{2}}^2+\|\Y_m-\Y\|_{\mL^4}^2\|\mathrm{P}_m\Y\|_{\mL^4}^2 \nonumber\\&\leq \frac{1}{4}\|\Y_m-\mathrm{P}_m\Y\|_{\frac{1}{2}}^2+\sqrt{2}\|\Y_m-\Y\|\|\Y_m-\Y\|_{\frac{1}{2}}\|\Y\|\|\Y\|_{\frac{1}{2}}\nonumber\\&\leq \frac{1}{4}\|\Y_m-\mathrm{P}_m\Y\|_{\frac{1}{2}}^2+\frac{1}{2}\|\Y_m-\Y\|_{\frac{1}{2}}^2+\|\Y_m-\Y\|^2\|\Y\|^2\|\Y\|_{\frac{1}{2}}^2.
\end{align}
Similarly, we have 
\begin{align}\label{3.62}
&	|\la \B(\Y,\Y-\mathrm{P}_m\Y),\Y_m-\mathrm{P}_m\Y\ra|\nonumber\\&\leq  \frac{1}{4}\|\Y_m-\mathrm{P}_m\Y\|_{\frac{1}{2}}^2+\frac{1}{2}\|\Y-\mathrm{P}_m\Y\|_{\frac{1}{2}}^2+\|\Y-\mathrm{P}_m\Y\|^2\|\Y\|^2\|\Y\|_{\frac{1}{2}}^2.
\end{align}
Combining \eqref{3.61} and \eqref{3.62}, substituting it in \eqref{3.59},  taking supremum over time and then taking expectation, we find  
\begin{align}\label{3.63}
&\mE\left[\sup_{t\in[0,T]}\|\Y_m(t)-\mathrm{P}_m\Y(t)\|^2\right]+\mE\left[\int_0^T\|\Y_m(t)-\mathrm{P}_m\Y(t)\|^2_{\frac{1}{2}}\d t\right] \nonumber\\&\leq \mE\left[\int_0^T\|\Y_m(t)-\Y(t)\|^2_{\frac{1}{2}}\d t\right]+ \mE\left[\int_0^T\|\Y(t)-\mathrm{P}_m\Y(t)\|^2_{\frac{1}{2}}\d t\right]\nonumber\\&\quad +2\mE\left[\int_0^T\|\Y_m(t)-\Y(t)\|^2\|\Y(t)\|^2\|\Y(t)\|_{\frac{1}{2}}^2\d t\right]\nonumber\\&\quad+2\mE\left[\int_0^T\|\Y(t)-\mathrm{P}_m\Y(t)\|^2\|\Y(t)\|^2\|\Y(t)\|_{\frac{1}{2}}^2\d t\right]=:\sum_{i=1}^4I_i.
\end{align}
Note that $I_1\to 0$ as $m\to\infty$ using the fact that $\Y_m\to\Y\in \mathrm{L}^2(\Omega;\mathrm{L}^2(0,T;\mV))$. For the term $I_2$, we have 
\begin{align*}
I_2\leq \|\mathrm{I}-\mathrm{P}_m\|_{\mathcal{L}(\mV,\mV)}\mE\left[\int_0^T\|\Y(t)\|_{\frac{1}{2}}^2\d t\right]\to 0, \ \text{ as } \ m\to\infty,
\end{align*}
since $\Y\in \mathrm{L}^2(\Omega;\mathrm{L}^2(0,T;\mV))$. Using H\"older's inequality, we estimate $I_3$ as 
\begin{align*}
I_4&\leq \mE\left[\sup_{t\in[0,T]}\|\Y_m(t)-\Y(t)\|^2\sup_{t\in[0,T]}\|\Y(t)\|^2\int_0^T\|\Y(t)\|_{\frac{1}{2}}^2\d t\right]\nonumber\\&\leq \left\{\mE\left[\sup_{t\in[0,T]}\|\Y_m(t)-\Y(t)\|^8\right]\right\}^{1/4} \left\{\mE\left[\sup_{t\in[0,T]}\|\Y(t)\|^2\right]\right\}^{1/4} \left\{\mE\left[\left(\int_0^T\|\Y(t)\|_{\frac{1}{2}}^2\d t\right)^2\right]\right\}^{1/2}\nonumber\\& \to 0  \ \text{ as } \ m\to\infty,
\end{align*}
since $\Y_m\to\Y\in\mathrm{L}^p(\Omega;\mathrm{L}^2(0,T;\mH))$,  and $\Y\in\mathrm{L}^{2p}(\Omega;\mathrm{L}^2(0,T;\mH))\cap\mathrm{L}^p(\Omega;\mathrm{L}^2(0,T;\mV))$, for all $p\geq 2$. Thus from \eqref{3.63}, we infer that 
\begin{align*}
\mE\left[\sup_{t\in[0,T]}\|\Y_m(t)-\mathrm{P}_m\Y(t)\|^2\right]+\mE\left[\int_0^T\|\Y_m(t)-\mathrm{P}_m\Y(t)\|^2_{\frac{1}{2}}\d t\right]\to 0, \ \text{ as } \ m\to\infty. 
\end{align*}
Therefore, for any fixed  $\varepsilon>0,$ we get by the Chebyshev inequality that
\begin{align*}
\mP\left(\sup_{t\in[0,T]}\|\Y_m(t)-\mathrm{P}_m\Y(t)\|\geq \varepsilon\right)&\leq \frac{1}{\varepsilon^2}\mE\left(\sup_{t\in[0,T]}\|\Y_m(t)-\mathrm{P}_m\Y(t)\|^2\right) \to 0 \  \mbox{as} \ \ m\to \infty
\end{align*}
and by the Markov inequality that 
\begin{align}\label{449}
\mP\left(\int_0^T\|\Y_m(t)-\mathrm{P}_m\Y(t)\|_1^2\d t\geq \varepsilon\right)\leq \frac{1}{\varepsilon}\mE\left(\int_0^T\|\Y_m(t)-\mathrm{P}_m\Y(t)\|_1^2\d t\right) \to 0,
\end{align}
as $m\to\infty$. Moreover
\begin{align}\label{4.50}
&	\mP\left(\sup_{t\in[0,T]}\|\Y_m(t)-\Y(t)\|\geq \varepsilon\right)\nonumber\\& \leq 	\mP\left(\sup_{t\in[0,T]}\|\Y_m(t)-\mathrm{P}_m\Y(t)\|\geq \frac{\varepsilon}{2}\right)+	\mP\left(\sup_{t\in[0,T]}\|\mathrm{P}_m\Y(t)-\Y(t)\|\geq \frac{\varepsilon}{2}\right)\to 0,
\end{align}
as $m\to\infty$.  Similarly, arguing for \eqref{449} as in \eqref{4.50}, we get that  there exists a subsequence of $\Y_m$ (denoted by $\Y_m$ for notational simplification, see Theorem 17.3, \cite{J}) such that 
$\Y_m \to \Y$ almost surely in $\mathscr{D}([0,T];\mH)\cap\mathrm{L}^2(0,T;\mV).$ Since the solution $\Y(\cdot)$ is unique, the entire sequence $\Y_m(\cdot)$ converges almost surely. This completes  the proof.
\end{proof}

\begin{Pro}\label{prop4.4}
Suppose that Assumption \ref{ass2.1} is satisfied. 	Then, $\Y\in\mathscr{D}([0,T];\mathcal{D}(\A^{\alpha}))$, $\mathbb{P}$-a.s., for any $\alpha\in(0,\frac{1}{2})$ and satisfies the estimate: 
\begin{align}\label{a}
\sup_{s\in[0,T]}\|\Y(t)\|_{\alpha}\leq C\left(\|\x\|_{\alpha},\sup_{t\in[0,T]}\|\mathrm{K}(t)\|_{\alpha},\int_0^T\|\mathrm{K}(t)\|_{\frac{1}{2}}^2\d t\right)<+\infty, \ \mathbb{P}\text{-a.s.}
\end{align}
Moreover,  $\Y_m(\cdot),$ the solution of  \eqref{2F3} converges to the unique solution  $\Y(\cdot)$  almost surely in $\mathscr{D}([0,T];\mathcal{D}(\A^{\alpha}))\cap\mathrm{L}^2(0,T;\mathcal{D}(\A^{\alpha+\frac{1}{2}})).$ 
\end{Pro}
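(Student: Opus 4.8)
\emph{Strategy.} The plan is to bootstrap the convergence of Proposition \ref{prop3.3} to the stronger topology by keeping the splitting $\Y=\wi\Y+\mathrm{K}$ used there, where $\mathrm{K}$ is the stochastic convolution and $\wi\Y$ solves the \emph{pathwise} problem \eqref{446}. The convolution estimate \eqref{N2}, Remark \ref{rem3.3} and the jump--convolution bound of \cite{MSS} already give $\mathrm{K}\in\mathscr{D}([0,T];\mathcal{D}(\A^\al))$ with $\sup_{t\in[0,T]}\|\mathrm{K}(t)\|_\al<+\infty$, $\mathbb{P}$-a.s., for every $\al\in(0,\f2)$, so it suffices to show $\wi\Y\in\C([0,T];\mathcal{D}(\A^\al))\cap\mathrm{L}^2(0,T;\mathcal{D}(\A^{\al+\f2}))$, $\mathbb{P}$-a.s., together with the matching bound. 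Since $\wi\Y$ has continuous paths while $\mathrm{K}$ is c\`adl\`ag, the sum $\Y$ is then c\`adl\`ag in $\mathcal{D}(\A^\al)$, which gives the first assertion.

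\emph{A priori estimate.} First I would run the estimate on the Galerkin level, where it is rigorous, and then pass to the limit. Testing the equation for $\wi\Y_m$ against $\A^{2\al}\wi\Y_m$ and using $\la\A\wi\Y_m,\A^{2\al}\wi\Y_m\ra=\|\wi\Y_m\|_{\al+\f2}^2$ gives
\begin{align*}
\f2\frac{\d}{\d t}\|\wi\Y_m(t)\|_\al^2+\|\wi\Y_m(t)\|_{\al+\f2}^2=-b\big(\wi\Y_m+\mathrm{K}_m,\wi\Y_m+\mathrm{K}_m,\A^{2\al}\wi\Y_m\big).
\end{align*}
I would bound the trilinear term by \eqref{ne} with $r=\al\in(0,\f2)$, observing that $\|\A^{2\al}\wi\Y_m\|_{\f2-\al}=\|\wi\Y_m\|_{\al+\f2}$, which produces $C\,\|\wi\Y_m+\mathrm{K}_m\|_\al\,\|\wi\Y_m+\mathrm{K}_m\|_{\f2}\,\|\wi\Y_m\|_{\al+\f2}$. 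A Young inequality then absorbs one factor $\|\wi\Y_m\|_{\al+\f2}^2$ into the dissipation and leaves a remainder controlled by $C\big(\|\wi\Y_m\|_\al^2+\|\mathrm{K}_m\|_\al^2\big)\|\wi\Y_m+\mathrm{K}_m\|_{\f2}^2$. Because $\wi\Y_m+\mathrm{K}_m\in\mathrm{L}^2(0,T;\mV)$ and $\sup_t\|\mathrm{K}_m(t)\|_\al$ is finite $\mathbb{P}$-a.s., a \emph{pathwise} Gronwall argument yields a bound on $\sup_{t}\|\wi\Y_m(t)\|_\al^2+\int_0^T\|\wi\Y_m(t)\|_{\al+\f2}^2\,\d t$ that is uniform in $m$ and depends only on $\|\x\|_\al$, $\sup_t\|\mathrm{K}(t)\|_\al$ and $\int_0^T\|\mathrm{K}(t)\|_{\f2}^2\,\d t$. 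Weak-$\ast$ compactness and lower semicontinuity of the norms transfer this bound to $\wi\Y$, and adding the controlled quantity $\|\mathrm{K}\|_\al$ gives \eqref{a}.

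\emph{Convergence.} For the second claim I would repeat, in the $\mathcal{D}(\A^\al)$ norm, the difference argument of Proposition \ref{prop3.3}. The crucial point is that $\Y_m-\mathrm{P}_m\Y$ solves a \emph{pathwise} equation (the projected noises cancel), so I can test it against $\A^{2\al}(\Y_m-\mathrm{P}_m\Y)$ with no stochastic-integral terms. Splitting the nonlinear difference $\B(\Y_m)-\B(\Y)$ as in \eqref{3.60} and estimating each piece by \eqref{ne} with $r=\al$, the $\A^{2\al}$-weighted terms absorb into $\|\Y_m-\mathrm{P}_m\Y\|_{\al+\f2}^2$, leaving coefficients that are integrable in time (using the uniform bound from the previous step and $\Y\in\C([0,T];\mathcal{D}(\A^\al))$) and forcing terms that vanish by the $\mathrm{L}^2(\Om;\mathrm{L}^\infty(0,T;\mH))\cap\mathrm{L}^2(\Om;\mathrm{L}^2(0,T;\mV))$ convergence already furnished by Proposition \ref{prop3.3}, together with $\sup_t\|(\mathrm{I}-\mathrm{P}_m)\Y(t)\|_\al\to0$ (valid since a c\`adl\`ag path has precompact range). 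A pathwise Gronwall inequality then shows $\Y_m-\mathrm{P}_m\Y\to0$ in $\mathscr{D}([0,T];\mathcal{D}(\A^\al))\cap\mathrm{L}^2(0,T;\mathcal{D}(\A^{\al+\f2}))$; combining this with the projection convergence $\mathrm{P}_m\Y\to\Y$ gives $\Y_m\to\Y$ in the asserted topology along a subsequence and hence, by uniqueness of the limit, for the whole sequence.

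\emph{Main obstacle.} The delicate step is the higher-order nonlinear estimate: one must verify that $b(\wi\Y+\mathrm{K},\wi\Y+\mathrm{K},\A^{2\al}\wi\Y)$ can be absorbed into the top-order dissipation $\|\wi\Y\|_{\al+\f2}^2$, which is precisely why \eqref{ne} is invoked with $r=\al\in(0,\f2)$ and why the gained regularity lands in $\mathcal{D}(\A^{\al+\f2})$ rather than in $\mV$. A second point requiring care is that every constant depends on the pathwise quantities $\sup_t\|\mathrm{K}(t)\|_\al$ and $\int_0^T\|\mathrm{K}(t)\|_{\f2}^2\,\d t$, so the Gronwall step must be run $\om$ by $\om$ rather than in expectation; their almost sure finiteness, supplied by the convolution estimates, is what makes the scheme close, the top-order $\mathcal{D}(\A^{\al+\f2})$ content being carried by $\wi\Y$ while the convolution contribution is controlled in the admissible range of $\va$.
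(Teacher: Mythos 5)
Your proposal is correct and takes essentially the same route as the paper: the same splitting $\Y=\wi\Y+\mathrm{K}$, testing against $\A^{2\al}\wi\Y$ and $\A^{2\al}(\Y_m-\mathrm{P}_m\Y)$, the trilinear bound \eqref{ne} with $r=\al$ combined with Young/interpolation to absorb into the $\mathcal{D}(\A^{\al+\frac{1}{2}})$ dissipation, and pathwise Gronwall arguments for both the a priori bound \eqref{a} and the convergence. The only deviations are minor: you run the a priori estimate at the Galerkin level and transfer it by lower semicontinuity (the paper tests the limit equation \eqref{446} directly), and you justify $\sup_{t\in[0,T]}\|(\mathrm{I}-\mathrm{P}_m)\Y(t)\|_{\al}\to 0$ via precompactness of the c\`adl\`ag range, which is in fact a cleaner justification than the operator-norm bound the paper invokes in \eqref{4.69}.
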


\begin{proof} 
Let us now take inner product with $\A^{2\alpha}\widetilde{\Y}(\cdot)$ to the first equation in \eqref{446} to obtain 
\begin{align}\label{4.61}
\frac{1}{2}\frac{\d}{\d t}\|\widetilde{\Y}(t)\|_{\alpha}^2+\|\widetilde{\Y}(t)\|_{\alpha+\frac{1}{2}}^2= -\la\B(\widetilde{\Y}(t)+\mathrm{K}(t)),\A^{2\alpha}\widetilde{\Y}(t)\ra.
\end{align}
Using \eqref{ne}, we estimate $\la\B(\widetilde{\Y}+\mathrm{K}),\A^{2\alpha}\widetilde{\Y}\ra$ as 
\begin{align}\label{4.62}
&	\la\B(\widetilde{\Y}+\mathrm{K}),\A^{2\alpha}\widetilde{\Y}\ra\no\\&\leq   
\la\B(\widetilde{\Y}, \widetilde{\Y}),\A^{2\alpha}\widetilde{\Y}\ra
+	\la\B(\widetilde{\Y},\mathrm{K}),\A^{2\alpha}\widetilde{\Y}\ra
+	\la\B(\mathrm{K}, \widetilde{\Y}),\A^{2\alpha}\widetilde{\Y}\ra
+\la\B(\mathrm{K}, \mathrm{K}),\A^{2\alpha}\widetilde{\Y}\ra\no\\&\leq C \|\widetilde{\Y}\|_{\alpha}\|\widetilde{\Y}\|_{\frac{1}{2}}\|\widetilde{\Y}\|_{\alpha+\frac{1}{2}}+C  \|\widetilde{\Y}\|_{\alpha}\|\mathrm{K}\|_{\frac{1}{2}}\|\widetilde{\Y}\|_{\alpha+\frac{1}{2}} +C\|\mathrm{K}\|_{\alpha}\|\widetilde{\Y}\|_{\frac{1}{2}}\|\widetilde{\Y}\|_{\alpha+\frac{1}{2}}+C
\|\mathrm{K}\|_{\alpha}\|\mathrm{K}\|_{\frac{1}{2}}\|\widetilde{\Y}\|_{\alpha+\frac{1}{2}} \nonumber\\&\leq \frac{1}{2} \|\widetilde{\Y}\|_{\alpha+\frac{1}{2}}^2+C\left(\|\widetilde{\Y}\|_{\frac{1}{2}}^2+\|\mathrm{K}\|_{\frac{1}{2}}^2\right)\|\widetilde{\Y}\|_{\alpha}^2+C\|\mathrm{K}\|_{\alpha}^2\|\widetilde{\Y}\|_{\frac{1}{2}}^2+C\|\mathrm{K}\|_{\alpha}^2\|\mathrm{K}\|_{\frac{1}{2}}^2.
\end{align}
Using \eqref{4.62} in \eqref{4.61} and then 	applying Gronwall's inequality, we arrive at
\begin{align}\label{4.63}
&	\|\widetilde{\Y}(t)\|_{\alpha}^2+\int_0^t\|\widetilde{\Y}(s)\|_{\alpha+\frac{1}{2}}^2\d s\nonumber\\&\leq \left(  \|\x\|_{\alpha}^2+C\sup_{s\in[0,t]}\|\mathrm{K}(s)\|_{\alpha}^2\int_0^t\|\widetilde{\Y}(s)\|_{\frac{1}{2}}^2\d s+C\sup_{s\in[0,t]}\|\mathrm{K}(s)\|_{\alpha}^2\int_0^t\|\mathrm{K}(s)\|_{\frac{1}{2}}^2\d s \right) \nonumber\\&\qquad \times\exp\left(C\int_0^t\left(\|\widetilde{\Y}(s)\|_{\frac{1}{2}}^2+\|\mathrm{K}(s)\|_{\frac{1}{2}}^2\right)\d s\right)<+\infty,
\end{align}
$\mathbb{P}$-a.s., since $\widetilde{\Y}\in\mathrm{L}^2(0,T;\mV)$, 	$\mathbb{P}$-a.s and \eqref{4.40}. The estimate \eqref{a} follows from \eqref{4.63}. 

Taking inner product with $\A^{2\alpha}\Y_m(\cdot)$ to the equation\eqref{3.59}, we obtain
\begin{align}\label{4.55}
&\frac{1}{2}\|\Y_m(t)-\mathrm{P}_m\Y(t)\|^2_{\alpha}+\int_0^t\|\Y_m(s)-\mathrm{P}_m\Y(s)\|^2_{\alpha+\frac{1}{2}}\d s\no\\
&=\i0t\la\mathrm{P}_m\B(\Y_m(s))-\mathrm{P}_m\B(\Y(s)),\A^{2\alpha}(\Y_m(s)-\mathrm{P}_m\Y(s))\ra \d s.
\end{align}
We estimate $\la\mathrm{P}_m\B(\Y_m)-\mathrm{P}_m\B(\Y),\A^{2\alpha}(\Y_m-\mathrm{P}_m\Y)\ra$ as
\begin{align}\label{4.56}
&\la\mathrm{P}_m\B(\Y_m)-\mathrm{P}_m\B(\Y),\A^{2\alpha}(\Y_m-\mathrm{P}_m\Y)\ra\nonumber\\&=\la\B(\Y_m,\Y_m-\mathrm{P}_m\Y),\A^{2\alpha}(\Y_m-\mathrm{P}_m\Y) \ra+\la\B(\Y_m-\mathrm{P}_m\Y,\mathrm{P}_m\Y),\A^{2\alpha}(\Y_m-\mathrm{P}_m\Y) \ra\nonumber\\&\quad + \la\B(\mathrm{P}_m\Y-\Y,\mathrm{P}_m\Y),\A^{2\alpha}(\Y_m-\mathrm{P}_m\Y) \ra+\la\B(\Y,\mathrm{P}_m\Y-\Y),\A^{2\alpha}(\Y_m-\mathrm{P}_m\Y) \ra\nonumber\\&=:\sum_{i=1}^4I_i.
\end{align}
We estimate $I_i$, for $i=1,\ldots,4$ using the trilinear estimate \eqref{ne} and interpolation inequality \eqref{II} as follows:
\begin{align*}
I_1&\leq C \|\Y_m\|_{\alpha}\|\Y_m-\mathrm{P}_m\Y\|_{\frac{1}{2}}\|\Y_m-\mathrm{P}_m\Y\|_{\alpha+\frac{1}{2}}\leq C\|\Y_m\|_{\alpha}\|\Y_m-\mathrm{P}_m\Y\|^{2\alpha}_{\alpha}\|\Y_m-\mathrm{P}_m\Y\|_{\alpha+\frac{1}{2}}^{2(1-\alpha)}\nonumber\\&\leq \frac{1}{8}\|\Y_m-\mathrm{P}_m\Y\|_{\alpha+\frac{1}{2}}^{2}+C\|\Y_m\|_{\alpha}^{\frac{1}{\alpha}}\|\Y_m-\mathrm{P}_m\Y\|^{2}_{\alpha}, \nonumber\\I_2&\leq C\|\Y_m-\mathrm{P}_m\Y\|_{\alpha}\|\mathrm{P}_m\Y\|_{\frac{1}{2}}\|\Y_m-\mathrm{P}_m\Y\|_{\alpha+\frac{1}{2}} \leq \frac{1}{8}\|\Y_m-\mathrm{P}_m\Y\|_{\alpha+\frac{1}{2}}^2+C\|\Y\|_{\frac{1}{2}}^2\|\Y_m-\mathrm{P}_m\Y\|_{\alpha}^2,\nonumber\\ 
I_3&\leq \|\mathrm{P}_m\Y-\Y\|_{\alpha}^2\|\mathrm{P}_m\Y\|_{\frac{1}{2}}\|\Y_m-\mathrm{P}_m\Y\|_{\alpha+\frac{1}{2}} \nonumber
\leq \frac{1}{8}\|\Y_m-\mathrm{P}_m\Y\|_{\alpha+\frac{1}{2}}^2+C\|(\mathrm{P}_m-\mathrm{I})\Y\|_{\alpha}^2\|\Y\|_{\frac{1}{2}}^2,\nonumber
\end{align*}
and 
\begin{align*}
I_4\leq C\|\Y\|_{\alpha}\|\mathrm{P}_m\Y-\Y\|_{\frac{1}{2}}\|\Y_m-\mathrm{P}_m\Y\|_{\alpha+\frac{1}{2}} \nonumber\leq \frac{1}{8}\|\Y_m-\mathrm{P}_m\Y\|_{\alpha+\frac{1}{2}}^2+C\|\Y\|_{\alpha}^2\|\mathrm{P}_m\Y-\Y\|_{\frac{1}{2}}^2.
\end{align*}
Thus, from \eqref{4.55}, it is immediate that 
\begin{align}\label{457}
&\|\Y_m(t)-\mathrm{P}_m\Y(t)\|^2_{\alpha}+\int_0^t\|\Y_m(s)-\mathrm{P}_m\Y(s)\|^2_{\alpha+\frac{1}{2}}\d s\no\\&\leq C\int_0^t\|\Y_m(s)\|_{\alpha}^{\frac{1}{\alpha}}\|\Y_m(s)-\mathrm{P}_m\Y(s)\|^{2}_{\alpha}\d s+C\int_0^t\|\Y(s)\|_{\frac{1}{2}}^2\|\Y_m(s)-\mathrm{P}_m\Y(s)\|_{\alpha}^2\d s \nonumber\\&\quad+C\int_0^t\|(\mathrm{P}_m-\mathrm{I})\Y(s)\|_{\alpha}^2\|\Y(s)\|_{\frac{1}{2}}^2\d s+C\int_0^t\|\Y(s)\|_{\alpha}^2\|\mathrm{P}_m\Y(s)-\Y(s)\|_{\frac{1}{2}}^2\d s.
\end{align}
Applying Gronwall's inequality, we arrive at
\begin{align}\label{4.69}
&\|\Y_m(t)-\mathrm{P}_m\Y(t)\|^2_{\alpha}+\int_0^t\|\Y_m(s)-\mathrm{P}_m\Y(s)\|^2_{\alpha+\frac{1}{2}}\d s\no\\&\leq  \bigg\{C\|(\mathrm{P}_m-\mathrm{I})\|_{\mathcal{L}(\mathcal{D}(\A^{\alpha});(\mathcal{D}(\A^{\alpha}))}^2\sup_{s\in[0,t]}\|\Y(s)\|_{\alpha}^2\int_0^t\|\Y(s)\|_{\frac{1}{2}}^2\d s\nonumber\\&\quad +C\|(\mathrm{P}_m-\mathrm{I})\|_{\mathcal{L}(\mV;\mV)}^2\sup_{s\in[0,t]}\|\Y(s)\|_{\alpha}^2\int_0^t\|\Y(s)\|_{\frac{1}{2}}^2\d s\bigg\}\nonumber\\&\qquad\times\exp\left(Ct\sup_{s\in[0,t]}\|\Y_m(s)\|_{\alpha}^{\frac{1}{\alpha}}+C\int_0^t\|\Y(s)\|_{\frac{1}{2}}^2\d s\right)\to 0,
\end{align}
$\mathbb{P}$-a.s., as $m\to\infty$, since  $\Y\in\mathscr{D}([0,T];\mathcal{D}(\A^{\alpha}))\cap\mathrm{L}^2(0,T;\mV),$ $\mathbb{P}$-a.s. From this, we further deduce that $\Y_m(\cdot),$ converges to   $\Y(\cdot)$  almost surely in $\mathscr{D}([0,T];\mathcal{D}(\A^{\alpha}))\cap\mathrm{L}^2(0,T;\mathcal{D}(\A^{\alpha+\frac{1}{2}}))$.
\end{proof}

\section{Proof of Theorem \ref{main1}}\label{sec5}
We first prove the existence of smooth solution for the transformed HJB equation \eqref{315}.
\begin{proof}[Proof of Proposition \ref{main0}]
We prove the convergence of \eqref{2p21} in several steps. 

\noindent\textbf{Step (1):} \emph{Pointwise convergence and continuity of the sequence $(\w_m)_{m\in\mathbb{N}}$.} To attain this aim, we appeal to  Arz\'ela-Ascoli theorem. For any $ l \in \mathbb{N}$, we define the compact sets as follows: 
\begin{align*}
&K_l:=\{\x\in \mathrm{P}_l \mD (\A^{\alpha}):\|\x\|_{\alpha}\leq l\}, \\
&\w_m^l=\w_m|_{K_l\times[0,T]} \ \ \mbox{and} \ \ \ \mathrm{D}_{\x}\w_m^l=\mathrm{D}_{\x}\w_m|_{K_l\times[0,T]} .
\end{align*}
From \eqref{2p21},  we have 
\begin{align}\label{3.39}
&\sup_{(\x,t)\in K_l\times[0,T]}|\mathrm{D}_t\w_m^l(t,\x)| \nonumber\\&\leq \sup_{(\x,t)\in K_l\times[0,T]}\left[ |\mcL_m \w_m^l(t,\x)|+2\ga \|\x\|_{\frac{1}{2}}^2|\w_m^l(t,\x)|\right]\nonumber\\&\quad +\sup_{(\x,t)\in K_l\times[0,T]}\left[|\widetilde{\mathrm{F}}_m(\x,\w_m^l(t,\x),\mathrm{D}_\x \w_m^l(t,\x))|+ |\wi g(\x)|\right]=\sum_{i=1}^4J_i,
\end{align}
where $\wi g(\x)=e^{-\ga\|\x\|^2}\|\text{curl} \ \x\|^2$, and $\mcL_m$ and $\wi \F_m$ are defined in \eqref{g12} and \eqref{ft}, respectively.  The terms in $J_1$ can be estimated as follows: 
\begin{align}\label{3.40} 
J_1&\leq \sup_{(\x,t)\in K_l\times[0,T]}\bigg[ \frac{1}{2}\tr(\A^{-\va} \mathrm{D}_\x^2\w_m^l(t,\x))+|\la\A\x+\B(\x),\mathrm{D}_\x\w_m^l(t,\x)\ra| \no\\
&\quad+\left|\int_{\mathcal{Z}_m}\big[\w_m^l(t,\x+\G_m(t,z))-\w_m^l(t,\x)-\la \G_m(t,z),\mathrm{D}_\x\w_m^l(t,\x)\ra\big]\mu(\d z)\right|\bigg]=\sum_{i=5}^7J_i.
\end{align}
It is clear from \eqref{3p35} in Proposition \eqref{prop3.2} that 
\begin{align} \l{3.41}
\sup_{t \in [0,T]}\|\w_m^l(t,\cdot)\|_{\alpha,d,2} \leq \sup_{t\in[0, T]}\|\w_m(t,\cdot)\|_{\alpha,d,2}\leq C(\alpha_1,\alpha).
\end{align} It leads to the following estimate
\begin{align}
J_5&\leq  \sup_{(\x,t)\in K_l\times[0,T]}\frac{1}{2}\tr(\A^{-\va} \mathrm{D}_\x^2\w_m^l(t,\x)) \no\\&\leq \frac{1}{2}\text{Tr}(\A^{-(\va-\alpha)})\sup_{(\x,t)\in K_l\times[0,T]}|\A^{-\alpha}\mathrm{D}_{\x}^2\w_m^l(t,\x)|_{\mathcal{L}^2(\mH^2;\mR)}\nonumber\\&\leq  \frac{1}{2}\tr(\A^{-(\va-\alpha)})\sup_{(\x,t)\in K_l\times[0,T]}\sup_{\|\mathbf{h}\|_{\alpha}=1}|\langle \mathrm{D}_{\x}^2\w_m^l(t,\x)\mathbf{h},\mathbf{h}\rangle|\nonumber\\&\leq \frac{1}{2}\tr(\A^{-(\va-\alpha)})\sup_{(\x,t)\in K_l\times[0,T]}(1+\|\x\|_{\alpha})^d\|\w_m^l\|_{\alpha,d,2}\leq C(l),
\end{align}
since $\tr(\A^{-(\va-\alpha)})<+\infty$, for $\va>1+\alpha.$ 
Note that for any $\x\in K_l,$ we have 
\begin{equation*}\left\{\begin{aligned}\|\A^{1+\alpha} \x\|^2&=\sum_{i=1}^r\lambda_i^{2(1+\alpha)}\left\|(\x,e_i)\right\|^2\leq \lambda_l^{2(1+\alpha)} l\|\x\|^2\leq \lambda_l^{2(1+\alpha)} l\|\x\|_\alpha^2,\\
\|\A^{\alpha}\B_m(\x)\|^2&\leq \|\mathrm{P}_m(\x\cdot\nabla)\x\|_{\alpha}\leq \|(\x\cdot\nabla)\x\|_{\alpha+1} \leq C\|\x\|_{\alpha+1}\|\nabla\x\|_{\alpha+1}\leq C\|\x\|_{\alpha+\frac{3}{2}}^2\\&\leq C\lambda_l^{2(\alpha+\frac{3}{2})}l\|\x\|^2\leq C\lambda_l^{2\alpha+3}l\|\x\|^2_{\alpha}.
\end{aligned}\right.\end{equation*}
Using the above nonlinear estimates, we get
\begin{align}
J_6&\leq C\sup_{(\x,t)\in K_l\times[0,T]}\left[\|\A^{1+\alpha}\x\|+\|\A^{\alpha}\B(\x)\|\right]\|\A^{-\alpha}\mathrm{D}_{\x}\w_m^l(t,\x)\|
\nonumber\\&\leq C(l,\lambda_l)\sup_{(\x,t)\in K_l\times[0,T]}\sup_{\|\mathbf{h}\|_{\alpha}=1}|\langle \mathrm{D}_{\x}\w_m^l(t,\x),\mathbf{h}\rangle|\leq C(l,\lambda_l).
\end{align}
Finally, we estimate $J_7$ using Taylor's formula for some $0<\theta<1$ as 
\begin{align}
J_7&=\sup_{(\x,t)\in K_l\times[0,T]}\frac{1}{2}\left|\int_{\mathcal{Z}_m}\la \mathrm{D}_\x^2\w_m^l(t,\x+\theta\G_m(t,z))\G_m(t,z),\G_m(t,z)\ra\mu(\d z)\right|\nonumber\\&\quad \leq \frac{1}{2} \sup_{(\x,t)\in K_l\times[0,T]}\int_{\mathcal{Z}_m}\|\A^{-\alpha} \mathrm{D}_\x^2\w_m^l(t,\x+\theta\G_m(t,z))\G_m(t,z)\|\|\G_m(t,z)\|_{\alpha}\mu(\d z)\nonumber\\&\quad \leq \frac{1}{2} \sup_{(\x,t)\in K_l\times[0,T]}\int_{\mathcal{Z}_m}\|\A^{-\alpha} \mathrm{D}_\x^2\w_m^l(t,\x+\theta\G_m(t,z))\|_{\mathcal{L}^2(\mH^2;\mR)}\|\G_m(t,z)\|\|\G_m(t,z)\|_{\alpha}\mu(\d z)\nonumber\\&\quad \leq \frac{1}{2} \sup_{(\x,t)\in K_l\times[0,T]}\int_{\mathcal{Z}_m}(1+\|\x\|_{\alpha}+\|\G_m\|_{\alpha})^d\frac{1}{(1+\|\x+\theta\G_m\|_{\alpha})^d}\nonumber\\&\qquad\qquad\times\|\A^{-\alpha} \mathrm{D}_\x^2\w_m^l(t,\x+\theta\G_m(t,z))\|_{\mathcal{L}^2(\mH^2;\mR)}\|\G_m(t,z)\|_{\alpha}^2\mu(\d z)\nonumber\\&\quad \leq C \sup_{(\x,t)\in K_l\times[0,T]}\sup_{z\in\mathcal{Z}}\frac{1}{(1+\|\y(z)\|_{\alpha})^d}\|\A^{-\alpha} \mathrm{D}_\x^2\w_m^l(t,\y(z))\|_{\mathcal{L}^2(\mH^2;\mR)}\nonumber\\&\qquad\qquad  \times\int_{\mathcal{Z}}(1+\|\x\|_{\alpha})^d(1+\|\G(t,z)\|_{\alpha})^d\|\G(t,z)\|_{\alpha}^2\mu(\d z)\nonumber\\&\leq C(1+l)^d\sup_{t\in[0,T]}\left\{\|\w(t)\|_{\alpha,d,2}\int_{\mathcal{Z}}(1+\|\G(t,z)\|_{\alpha})^d\|\G(t,z)\|_{\alpha}^2\mu(\d z)\right\}\leq C(l).
\end{align}
By the calculations similar to the estimation of the terms in \eqref{3.11}, we have 
\begin{align}\label{3.44}
J_4&\leq C(R,l,\uptheta)\sup_{t\in[0,T]}\left\{\left(1+\int_{\mathcal{Z}}e^{2\uptheta\|\G\|^2}(1+\|\G(t,z)\|_{\alpha})^{d}\mu(\d z)\nonumber\right.\right.\\
&\left.\left.\quad\quad+\int_{\mathcal{Z}}\|\G(t,z)\|\mu(\d z)\right)\|\w_m^l(t)\|_{\alpha,d,1}\right\}\nonumber\\
&\leq C(R,l,\uptheta).
\end{align}
Combining \eqref{3.40}-\eqref{3.44} and substituting it in  \eqref{3.39}, we arrive at 
\begin{align}\label{3.45}
\sup_{(\x,t)\in K_l\times[0,T]}|\mathrm{D}_t\w_m^l(t,\x)|\leq C(l).
\end{align}

In order to apply Arzel\`a-Ascoli's theorem, we proceed as follows.  For any $\x,\y\in K_l$, $s,t\in[0,T]$, we obtain 
\begin{align}\label{3.46}
|\w_m^l(t,\x)-\w_m^l(s,\y)|&\leq 	|\w_m^l(t,\x)-\w_m^l(t,\y)|+	|\w_m^l(t,\y)-\w_m^l(s,\y)|\nonumber\\&=  \left|\int_0^1\frac{\d}{\d r}\w_m^l(t,r\x+(1-r)\y)\d r\right|+\left|\int_s^t\frac{\d}{\d r}\w_m^l(r,\y)\d r\right|\nonumber\\&\leq \sup_{(t,\z)\in[0,T]\times K_l}\left[\|\mathrm{D}_{\z}\w_m^l(t,\z)\|_{-\alpha}\|\x-\y\|_{\alpha}+|\mathrm{D}_t\w_m^l(t,\z)| |t-s|\right]\nonumber\\&\leq C\sup_{(t,\x)\in[0,T]\times K_l}\left[(1+\|\x\|_{\alpha})^d\|\w_m^l(t)\|_{\alpha,d,1}\|\x-\y\|_{\alpha}+\left|\mathrm{D}_t\w_m^l(t)\right||t-s|\right]\nonumber\\&\leq C(l)\left[\|\x-\y\|_{\alpha}+|t-s|\right],
\end{align}
where we used \eqref{3.41} and \eqref{3.45}.  It shows that $\{\w_m^l\}_{m\in\mathbb{N}}$ is equicontinuous on the compact set $[0,T]\times K_l$. By estimate \eqref{3.41} and Arzel\`a-Ascoli's theorem, there exists a function $ \w^l\in\C([0,T]\times K_l;\mR)$ and a subsequence $\{\w^l_{m_k}\}_{k\in\mathbb{N}}$ of $\{\w_m^l\}_{m\in\mathbb{N}}$  such that 
$$\w^l_{m_k}\to \w^l \ \text{ in } \  \C([0,T]\times K_l;\mR), \ \text{ as }\ k\to\infty.$$  

Moreover, by the estimate \eqref{3.41}, we have that for any $\|\x\|_\alpha,\|\y\|_\alpha \leq r$:
\begin{align}\label{3.461}
|\w_{m_k}^l(t,\x)-\w_{m_k}^l(t,\y)| \leq C(r)\|\x-\y\|_{\alpha}.
\end{align}
Letting $k,l\to\infty$, we get that exists a function $\w(t,\x)\in\mathrm{C}^{\alpha,d,2}$ such that  $\w$ is continuous on $[0,T]\times \Big(\bigcup\limits_{l\in\mathbb{N}}	 \mathrm{P}_l \mD (\A^{\alpha})\Big),$ so that it can be extended uniquely to $[0,T]\times \mD (\A^{\alpha}).$ Thus, we have
\begin{align}\label{360}
\w_{m_k}(t,\x)\to \w(t,\x), \ \text{ as } \ k\to\infty \ \text{ in }\ \mR,
\end{align}
for all $(t,\x)\in[0,T]\times\mD(\A^{\alpha})$. 

\noindent\textbf{Step (2):} \emph{Convergence and continuity of the derivative sequence  $(\mathrm{D}_{\x}\w_m)_{m\in\mathbb{N}}$.}	Using interpolation inequality for space of continuous functions, we have 
\begin{align}
&|\mathrm{D}_{\x}\w_m^l(t,\x)-\mathrm{D}_{\x}\w_m^l(s,\x)|\no\\&\leq \|\w_m^l(t)-\w_m^l(s)\|_{\C^1(K_l)}\no\\&\leq C \|\w_m^l(t)-\w_m^l(s)\|_{\C^2(K_l)}^{1/2} \|\w_m^l(t)-\w_m^l(s)\|_{\C^0(K_l)}^{1/2}\no\\&\leq C\left( \|\w_m^l(t)\|_{\C^2(K_l)}^{1/2}+\|\w_m^l(s)\|_{\C^2(K_l)}^{1/2}\right) \left\|\int_s^t\frac{\d}{\d r}\w_m^l(r)\d r\right\|_{\C^0(K_l)}^{1/2}\nonumber\\&\leq C\sup_{(t,\x)\in[0,T]\times K_l}\left[(1+\|\x\|_{\alpha})^d\|\w_m^l(t)\|_{\alpha,d,2}\left|\mathrm{D}_t\w_m^l(t)\right|^{1/2}\right]|t-s|^{1/2}\nonumber\\&\leq C(l)|t-s|^{1/2},
\end{align}
where we used Proposition \eqref{3.41}  and \eqref{3.45}. Moreover, for any $\x,\y\in\mathrm{K}_l$ and $t\in[0,T]$, we also have 
\begin{align}\label{3.48}
\|\mathrm{D}_{\x}\w_m^l(t,\x)-\mathrm{D}_{\x}\w_m^l(t,\y)\|_{-\alpha}&= \left\|\A^{-\alpha}\int_0^1\frac{\d}{\d r}\mathrm{D}_{\x}\w_m^l(t,r\x+(1-r)\y)\d r\right\|\nonumber\\&\leq \left\|\int_0^1\A^{-\alpha}\mathrm{D}_{\x}^2\w_m^l(t,r\x+(1-r)\y)(\x-\y)\d r\right\|\nonumber\\&\leq \|\A^{-\alpha}\mathrm{D}_{\x}^2\w_m^l(t,\z)\|_{\mathcal{L}(\mH^2;\mR)}\|\x-\y\|\nonumber\\&\leq C(l)\|\x-\y\|_{\alpha}.
\end{align}
It shows that $\{\mathrm{D}_{\x}\w_m^l\}_{m\in\mathbb{N}}$ is equicontinuous on the compact set $[0,T]\times K_l$. Once again using Proposition \ref{prop3.2} and Arzel\`a-Ascoli's theorem, there exists a function $W^l\in\C([0,T]\times K_l;\mD(\A^{-\alpha}))$ and a subsequence $\{\mathrm{D}_{\x}\w^l_{m_k}\}_{k\in\mathbb{N}}$ of $\{\mathrm{D}_{\x}\w_m^l\}_{m\in\mathbb{N}}$  such that 
\begin{align}\label{3.49}\mathrm{D}_{\x}\w^l_{m_k}\to W^l \ \text{ in } \  \C([0,T]\times K_l;\mD(\A^{-\alpha})), \ \text{ as }\ k\to\infty.\end{align}  
We define $W(t,\x)$ in $\mathcal{D}(\A^{\alpha})$ by $\mathrm{P}_l W(t,\x)=W^l(t,\x), $  for all $l\in \mathbb{N}$, so that $W$ is defined on $[0,T]\times \Big(\bigcup\limits_{l\in\mathbb{N}}	 \mathrm{P}_l \mD (\A^{\alpha})\Big)$. 
Besides,  for any $\|\x\|_\alpha,\|\y\|_\alpha \leq r,$ we also get 
$$
\|\mathrm{D}_{\x}\w_m^l(t,\x)-\mathrm{D}_{\x}\w_m^l(t,\y)\|_{-\alpha}\leq C(r)\|\x-\y\|_{\alpha},
$$
and so the function $W$ is also continuously  extended to $ \mD (\A^{\alpha})\times[0,T].$
Now we show that $\mathrm{D}_{\x}\w=W$. For any $m,n\in\mathbb{N}$,  $\|\x\|_{\alpha}\leq r$, $t\in[0,T]$, $\mathbf{h},\mathbf{h}_j\in\mD(\A^{\alpha})$ such that $\mathbf{h}_j\to\mathbf{h}$ in $\mD(\A^{\alpha})$ for some fixed $\alpha$,  we find 
\begin{align}\label{3.50}
&|\la\mathrm{D}_{\x}\w_m(t,\x)-\mathrm{D}_{\x}\w_n(t,\x), \mathbf{h}\ra|\nonumber\\& \leq |\la\mathrm{D}_{\x}\w_m(t,\x), \mathbf{h}-\mathbf{h}_j\ra|+|\la\mathrm{D}_{\x}\w_m(t,\x)-\mathrm{D}_{\x}\w_n(t,\x), \mathbf{h}_j\ra|+|\la\mathrm{D}_{\x}\w_n(t,\x), \mathbf{h}-\mathbf{h}_j\ra|\nonumber\\&\leq \|\A^{-\alpha}\mathrm{D}_{\x}\w_m(t,\x)\|\|\mathbf{h}-\mathbf{h}_j\|_{\alpha}+\|\A^{-\alpha}(\mathrm{D}_{\x}\w_m(t,\x)-\mathrm{D}_{\x}\w_n(t,\x))\|\|\mathbf{h}_j\|_{\alpha}\nonumber\\&\quad +\|\A^{-\alpha}\mathrm{D}_{\x}\w_n(t,\x)\|\|\mathbf{h}-\mathbf{h}_j\|_{\alpha}
\nonumber\\&\leq C(r)\|\mathbf{h}-\mathbf{h}_j\|_{\alpha}+\|\mathrm{D}_{\x}\w_m(t,\x)-\mathrm{D}_{\x}\w_n(t,\x)\|_{-\alpha}\|\mathbf{h}_j\|_{\alpha}\to 0,
\end{align}
using \eqref{3.49}. Hence $\la\mathrm{D}_{\x}\w_m(t,\x)-\mathrm{D}_{\x}\w_n(t,\x), \mathbf{h}\ra$ is a Cauchy sequence in $\mathbb{R}$ and converging to $\la W, \mathbf{h}\ra$. Hence, it coincides with the Gateaux derivative $\la\mathrm{D}_{\x}\w(t,\x),\mathbf{h}\ra$. It shows that 
\begin{align}\label{3.64}
\mathrm{D}_{\x}\w_m(t,\x)\to \mathrm{D}_{\x}\w(t,\x), \ \text{ as } \ k\to\infty \ \text{ in }\ \mathcal{D}(\A^{-\alpha}),
\end{align}
for all $(t,\x)\in[0,T]\times\mD(\A^{\alpha})$. 

Moreover, for any $r>0$ $\|\x\|_{\alpha},\|\y\|_{\alpha}\leq r$, we have 
\begin{align*}
&\|\mathrm{D}_{\x}\w(t,\x)-\mathrm{D}_{\x}\w(t,\y)\|_{-\alpha}\nonumber\\&\leq \|\mathrm{D}_{\x}\w(t,\x)-\mathrm{D}_{\x}\w_{m_k}(t,\x)\|_{-\alpha}+ \|\mathrm{D}_{\x}\w_{m_k}(t,\x)-\mathrm{D}_{\x}\w_{m_k}(t,\y)\|_{-\alpha}\nonumber\\&\quad+\|\mathrm{D}_{\x}\w_{m_k}(t,\y)-\mathrm{D}_{\x}\w(t,\y)\|_{-\alpha}\nonumber\\&\leq C(r)\|\x-\y\|_{\alpha}+\varepsilon,
\end{align*}
where we used \eqref{3.48}-\eqref{3.50}. Since $\varepsilon>0$ is arbitrary, we have 
\begin{align}\label{3.65}
&\|\mathrm{D}_{\x}\w(t,\x)-\mathrm{D}_{\x}\w(t,\y)\|_{-\alpha}\leq  C(r)\|\x-\y\|_{\alpha}.
\end{align}

\noindent\textbf{Step (3).} \emph{Passing limit in the mild form \eqref{2p23} of the HJB equation \eqref{2p21}.}
Now we show that $\w$ is a mild solution of \eqref{349}. For any $k\in\mathbb{N}$, we have 
\begin{align}  \label{3.51}
\w_{m_k}(t,\x)=\mathrm{T}^{m_k}_t\wi f(\x)+\int_0^t\mathrm{T}^{m_k}_{t-s}\widetilde{\mathrm{F}}_{m_k}(\x,\w_{m_k},\mathrm{D}_\x\w_{m_k}(s,\x))\d s+\int_0^t\mathrm{T}^{m_k}_{t-s}\wi g(\x)\d s,
\end{align}
for any $\x\in\mathrm{P}_{m_k}\mH$, $t\in[0,T]$.

By the definition of the semigroup $\mathrm{T}_t^m$, we have 
\begin{align}
(\mathrm{T}^{m_k}_t\widetilde{f})(\x)&=\mE\big[e^{-2\ga\mathcal{Y}_{m_k}(t)}\widetilde{f}(\Y_{m_k}(t,\x))\big] \nonumber\\&= \mE\left[e^{-2\ga\int_0^t\|\Y_{m_k}(r)\|_{\frac{1}{2}}^2\d r}e^{-\ga\|\Y_{m_k}(t,\x)\|^2}\|\Y_{m_k}(t,\x)\|^2 \right] \nonumber\\& \leq \mE\left[\|\Y_{m_k}(t,\x)\|^2 \right]\leq C(T)(1+\|\x\|^{2}),
\end{align}
using Lemma \ref{L3}. Since $\Y_{m_k} \to \Y$ almost surely in $\mathscr{D}([0,T];\mH)\cap\mathrm{L}^2(0,T;\mV),$ by continuity, $e^{-2\ga\int_0^t\|\Y_{m_k}(r)\|_{\frac{1}{2}}^2\d r}\to e^{-2\ga\int_0^t\|\Y(r)\|_{\frac{1}{2}}^2\d r}$ almost surely as $k\to\infty$. 
Using dominated convergence theorem, we have 
\begin{align}\label{3.54}
&|(\mathrm{T}^{m_k}_t\widetilde{f})(\x)-(\mathrm{T}_t\widetilde{f})(\x)| \nonumber\\& =\left|\mE\left[e^{-2\ga\int_0^t\|\Y_{m_k}(r)\|_{\frac{1}{2}}^2\d r}e^{-\ga\|\Y_{m_k}(t,\x)\|^2}\|\Y_{m_k}(t,\x)\|^2 \right]  - \mE\left[e^{-2\ga\int_0^t\|\Y(r)\|_{\frac{1}{2}}^2\d r}e^{-\ga\|\Y(t,\x)\|^2}\|\Y(t,\x)\|^2 \right] \right|\nonumber\\&\to 0, \ \text{ as }\ k\to\infty, 
\end{align}
Hence, we get  $(\mathrm{T}^{m_k}_t\widetilde{f})(\x)\to (\mathrm{T}_t\widetilde{f})(\x)$.  Note that 
\begin{align}
\int_0^t\mathrm{T}^{m_k}_{t-s}\wi g(\x)\d s&=\int_0^t\mE\left[e^{-2\ga\int_0^{t-s}\|\Y_{m_k}(r)\|_{\frac{1}{2}}^2\d r}e^{-\ga\|\Y_{m_k}(t-s,\x)\|^2}\|\Y_{m_k}(t-s,\x)\|_{\frac{1}{2}}^2\right]\d s\nonumber\\&\leq \mathbb{E}\left[\int_0^t\|\Y_{m_k}(s,\x)\|_{\frac{1}{2}}^2\right]\d s\leq C(T)(1+\|\x\|^{2}),
\end{align}
once again using Lemma \ref{L3}. 
Since $\Y_{m_k} \to \Y$ almost surely in $\mathscr{D}([0,T];\mH)\cap\mathrm{L}^2(0,T;\mV),$ we have  \begin{align}\label{3.71}\int_0^t\mathrm{T}^{m_k}_{t-s}\wi g(\x)\d s\to\int_0^t\mathrm{T}_{t-s}\wi g(\x)\d s, \ \text{ as } \ k\to\infty.\end{align}

We estimate the middle term of \eqref{2p23} as follows.
\begin{align}\label{3.72}
&\int_0^t\mathrm{T}^{m_k}_{t-s}\widetilde{\mathrm{F}}_{m_k}(\x,\w_{m_k},\mathrm{D}_\x\w_{m_k}(s,\x))\d s\nonumber\\& = \int_0^t  \mE\left[e^{-2\ga\int_0^{t-s}\|\Y_{m_k}(r)\|_{\frac{1}{2}}^2\d r}\widetilde{\mathrm{F}}_{m_k}(\Y_{m_k}(s),\w_{m_k}(s,\Y_{m_k}(s)),\mathrm{D}_\x\w_{m_k}(s,\Y_{m_k}(s)))\right] \d s\nonumber\\&= \sum_{i=1}^4I^{m_k}_i,
\end{align}
where 
\begin{align*}
I_1^{m_k}&= 2\th \mE\left[\int_0^te^{-2\ga\int_0^{t-s}\|\Y_{m_k}(r)\|_{\frac{1}{2}}^2\d r}\la\A^{-\va}\Y_{m_k}(s),\mathrm{D}_\x\w_{m_k}(s,\Y_{m_k}(s))\ra\d s\right], \\ I_2^{m_k}&= \mE\left[\int_0^te^{-2\ga\int_0^{t-s}\|\Y_{m_k}(r)\|_{\frac{1}{2}}^2\d r}(4
\ga^2\|\A^{-\frac{\va}{2}}\Y_{m_k}(s)\|^2+2\ga\tr(\A^{-\va}))\w_{m_k}(s,\Y_{m_k}(s))\d s\right]\\ I_3^{m_k}&= \mE\left[\int_0^te^{-2\ga\int_0^{t-s}\|\Y_{m_k}(r)\|_{\frac{1}{2}}^2\d r} e^{-\ga\|\Y_{m_k}(s)\|^2}\right.\nonumber\\&\quad\quad\left.\times\mathrm{F}(e^{\ga\|\Y_{m_k}(s)\|^2}(\mathbf{K}^*\mathrm{D}_\x\w_{m_k}(s,\Y_{m_k}(s))+2\ga\mathbf{K}^*\w_{m_k}(s,\Y_{m_k}(s))\Y_{m_k}(s)))\d s\right],\\  I_4^{m_k}&=\mE\left[\int_0^te^{-2\ga\int_0^{t-s}\|\Y_{m_k}(r)\|_{\frac{1}{2}}^2\d r} \right.\nonumber\\&\quad\quad \times  \int_{\mathcal{Z}_{m_k}}\left(e^{\ga \|\Y_{m_k}(s)+\G_{m_k}\|^2-\ga\|\Y_{m_k}(s)\|^2}-1\right)\w_{m_k}(s,\Y_{m_k}(s)+\G_{m_k}(s,z))\mu(\d z)\d s\bigg]\nonumber\\ I_5^{m_k}&=-2\th\mE\left[\int_0^te^{-2\ga\int_0^{t-s}\|\Y_{m_k}(r)\|_{\frac{1}{2}}^2\d r}   \int_{\mathcal{Z}_{m_k}}\la \G_{m_k}(t,z),\w_{m_k}(s.\Y_{m_k}(s))\Y_{m_k}(s)\ra\mu(\d z)\d s\right].
\end{align*}
For simplicity, the limit integrals of $I_i^{m_k},$ $i=1,\ldots,4$ will be denoted as $I_i$, $i=1,\ldots,4$, where $I_i$ stands for the integral in $I_i^{m_k}$ without the subscript $m_k$. Now we prove the convergences of integrals $I_i^{m_k}$, $i=1,\ldots,4$. The boundedness of $I_1^{m_k}$ can be easily obtained using Proposition \ref{prop3.2} as follows:
\begin{align}
|I_1^{m_k}|&\leq 2\th \mE\left[\int_0^te^{-2\ga\int_0^{t-s}\|\Y_{m_k}(r)\|_{\frac{1}{2}}^2\d r}\|\A^{\alpha-\va}\Y_{m_k}(s)\|\|\A^{-\alpha}\mathrm{D}_\x\w_{m_k}(s,\Y_{m_k}(s))\|\d s\right]\no\\&\leq 2\uptheta \|\A^{-\varepsilon}\|_{\mathcal{L}(\mH)}\|\w_{m_k}\|_{\alpha,d,1}\int_0^t\mE\left[e^{-2\ga\int_0^{t-s}\|\Y_{m_k}(r)\|_{\frac{1}{2}}^2\d r}(1+\|\A^{\alpha}\Y_{m_k}(s)\|)^{d+1}\right]\d s\no\\&\leq C(d,T)(1+\|\A^{\alpha}\x\|)^{1+d}.
\end{align}
Let us consider 
\begin{align}\label{3.73}
&\mE\left[e^{-2\ga\mathcal{Y}_{m_k}(\tau)}\la\A^{-\va}\Y_{m_k}(\tau),\mathrm{D}_\x\w_{m_k}(\tau,\Y_{m_k}(\tau))\ra- e^{-2\ga\mathcal{Y}_{m_k}(\tau)}\la\A^{-\va}\Y(\tau),\mathrm{D}_\x\w(\tau,\Y(\tau))\ra\right]\nonumber\\& = \mE\left[ \left(e^{-\ga\mathcal{Y}_{m_k}(\tau)}-e^{-\ga\mathcal{Y}(\tau)}\right)e^{-\ga\mathcal{Y}_{m_k}(\tau)}\la\A^{-\va}\Y_{m_k}(\tau),\mathrm{D}_\x\w_{m_k}(\tau,\Y_{m_k}(\tau))\ra\right]\nonumber\\&\quad +  \mE\left[ e^{-\ga\mathcal{Y}(\tau)}e^{-\ga\mathcal{Y}_{m_k}(\tau)}\la\A^{-\va}(\Y_{m_k}(\tau)-\Y(\tau)),\mathrm{D}_\x\w_{m_k}(\tau,\Y_{m_k}(\tau))\ra\right]\nonumber\\&\quad+  \mE\left[ e^{-\ga\mathcal{Y}(\tau)}e^{-\ga\mathcal{Y}_{m_k}(\tau)}\la\A^{-\va}\Y(\tau),\mathrm{D}_\x\w_{m_k}(\tau,\Y_{m_k}(\tau))-\mathrm{D}_\x\w(\tau,\Y(\tau))\ra\right]\nonumber\\&\quad+  \mE\left[( e^{-\ga\mathcal{Y}_{m_k}(\tau)}-e^{-\ga\mathcal{Y}(\tau)})e^{-\ga\mathcal{Y}(\tau)}\la\A^{-\va}\Y(\tau),\mathrm{D}_\x\w(\tau,\Y(\tau))\ra\right] =:\sum_{i=1}^4J_i. 
\end{align}
We estimate terms $J_i$, $i=1,\ldots,4$ as follows. 
\begin{align}\label{3.74}
J_1&\leq\left\{ \mE\left(e^{-\ga\mathcal{Y}_{m_k}(\tau)}-e^{-\ga\mathcal{Y}(\tau)}\right)^2 \right\}^{1/2}\nonumber\\&\quad\times\left\{\mE\left(e^{-2\ga\mathcal{Y}_{m_k}(\tau)}\|\A^{-\va}\|_{\mathcal{L}(\mH)}^2\|\A^{\alpha}\Y_{m_k}(\tau)\|^2\|\A^{-\alpha}\mathrm{D}_\x\w_{m_k}(\tau,\Y_{m_k}(\tau))\|^2\right)\right\}^{1/2}\nonumber\\&\leq \|\A^{-\va}\|_{\mathcal{L}(\mH)}\|\w_{m_k}\|_{\alpha,d,1}  \left\{ \mE\left(e^{-\ga\mathcal{Y}_{m_k}(\tau)}-e^{-\ga\mathcal{Y}(\tau)}\right)^2 \right\}^{1/2} \left\{\mE\left(e^{-2\ga\mathcal{Y}_{m_k}(\tau)}(1+\|\A^{\alpha}\Y_{m_k}\|)^{2(d+1)}\right)\right\}^{1/2}.
\end{align}
The final term in \eqref{3.74} is bounded by Lemma \ref{lem2.4}. By the almost sure  convergence of $\Y_{m_k} \to \Y$  in $\mathscr{D}([0,T];\mH)\cap\mathrm{L}^2(0,T;\mV),$ $\Y_{m_k}(t)$ and dominated convergence theorem, we have  $\mE\left(e^{-\ga\mathcal{Y}_{m_k}(\tau)}-e^{-\ga\mathcal{Y}(\tau)}\right)^2\to 0$ as $k\to\infty$. 
Next, we show the convergence of $J_2$. 
\begin{align}\label{3.75}
J_2&\leq \|\A^{-\va}\|_{\mathcal{L}(\mH)}  \mE\left[ e^{-\ga\mathcal{Y}(\tau)}e^{-\ga\mathcal{Y}_{m_k}(\tau)}\|\A^{\alpha}(\Y_{m_k}(\tau)-\Y(\tau))\|\|\A^{-\alpha}\mathrm{D}_\x\w_{m_k}(\tau,\Y_{m_k}(\tau))\|\right]\nonumber\\&\leq \|\A^{-\va}\|_{\mathcal{L}(\mH)} \|\w_{m_k}\|_{\alpha,d,1}  \mE\left[ e^{-\ga\mathcal{Y}(\tau)}e^{-\ga\mathcal{Y}_{m_k}(\tau)}\|\A^{\alpha}(\Y_{m_k}(\tau)-\Y(\tau))\|(1+\|\A^{\alpha}\Y_{m_k}\|)^d\right]\nonumber\\&\leq  \|\A^{-\va}\|_{\mathcal{L}(\mH)} \|\w_{m_k}\|_{\alpha,d,1} \left\{ \mE\left[ e^{-\ga\mathcal{Y}(\tau)}e^{-\ga\mathcal{Y}_{m_k}(\tau)}\|\A^{\alpha}(\Y_{m_k}(\tau)-\Y(\tau))\|^2\right]\right\}^{1/2} \nonumber\\&\quad\times\left\{ \mE\left[ e^{-\ga\mathcal{Y}(\tau)}e^{-\ga\mathcal{Y}_{m_k}(\tau)}(1+\|\A^{\alpha}\Y_{m_k}\|)^{2d}\right]\right\}^{1/2}.
\end{align}
The final term in \eqref{3.75} is bounded. Since $\Y_{m_k} \to \Y$ almost surely in $\mathrm{L}^2(0,T;\mV),$ we have $\Y_{m_k} (\tau)\to \Y(\tau)$ in $\mV$ and hence in $\mathcal{D}(\A^{\alpha}),$ $\alpha<\frac{1}{2}$,  for a. e. $\tau\in[0,T]$. By dominated convergence theorem, $\mE\left[ e^{-\ga\mathcal{Y}(\tau)}e^{-\ga\mathcal{Y}_{m_k}(\tau)}\|\A^{\alpha}(\Y_{m_k}(\tau)-\Y(\tau))\|^2\right]\to  0$ as $k\to\infty$.  The term $J_4\to 0$ as $k\to\infty$, by an argument similar to $J_1$. Finally, we estimate $J_3$ as 
\begin{align}\label{3.76}
J_3&=  \mE\left[ e^{-\ga\mathcal{Y}(\tau)}e^{-\ga\mathcal{Y}_{m_k}(\tau)}\la\A^{-\va}\Y(\tau),\mathrm{D}_\x\w_{m_k}(\tau,\Y_{m_k}(\tau))-\mathrm{D}_\x\w_{m_k}(\tau,\Y(\tau))\ra\right]\no\\&\quad+  \mE\left[ e^{-\ga\mathcal{Y}(\tau)}e^{-\ga\mathcal{Y}_{m_k}(\tau)}\la\A^{-\va}\Y(\tau),\mathrm{D}_\x\w_{m_k}(\tau,\Y(\tau))-\mathrm{D}_\x\w(\tau,\Y(\tau))\ra\right]\nonumber\\&\leq  \|\A^{-\va}\|_{\mathcal{L}(\mH)}   \mE\left[ e^{-\ga\mathcal{Y}(\tau)}e^{-\ga\mathcal{Y}_{m_k}(\tau)}\|\A^{\alpha}\Y(\tau)\|\|\A^{-\alpha}(\mathrm{D}_\x\w_{m_k}(\tau,\Y_{m_k}(\tau))-\mathrm{D}_\x\w_{m_k}(\tau,\Y(\tau)))\|\right]\nonumber\\&\quad + \|\A^{-\va}\|_{\mathcal{L}(\mH)}  \mE\left[ e^{-\ga\mathcal{Y}(\tau)}e^{-\ga\mathcal{Y}_{m_k}(\tau)}\|\A^{\alpha}\Y(\tau)\|\|\A^{-\alpha}(\mathrm{D}_\x\w_{m_k}(\tau,\Y(\tau))-\mathrm{D}_\x\w(\tau,\Y(\tau)))\|\right]\nonumber\\&\leq C \|\A^{-\va}\|_{\mathcal{L}(\mH)} \left\{  \mE\left[ e^{-\ga\mathcal{Y}(\tau)}e^{-\ga\mathcal{Y}_{m_k}(\tau)}\|\A^{\alpha}\Y(\tau)\|^2\right]\right\}^{1/2}\Big(\left\{ \mE\left[ e^{-\ga\mathcal{Y}(\tau)}e^{-\ga\mathcal{Y}_{m_k}(\tau)}\|\Y_{m_k}-\Y\|_{\alpha}^2\right]\right\}^{1/2} \nonumber\\&\quad+ \left\{  \mE\left[ e^{-\ga\mathcal{Y}(\tau)}e^{-\ga\mathcal{Y}_{m_k}(\tau)}\|\A^{-\alpha}(\mathrm{D}_\x\w_{m_k}(\tau,\Y(\tau))-\mathrm{D}_\x\w(\tau,\Y(\tau)))\|^2\right]\right\}^{1/2}\Big).
\end{align}
By the arguments similar to the convergence of $J_2$, the first term in the final inequality of \eqref{3.76} tends to zero as $k\to\infty$. By an application of dominated convergence theorem and \eqref{3.64}, the last term in \eqref{3.76} also tends to zero a.s. and a.e. $\tau\in[0,T]$ as $k\to\infty$. Combining \eqref{3.73}-\eqref{3.76}, we arrive at $I_1^{m_k}\to I_1$ as $k\to\infty$. 

Clearly, the terms in $I_2^{m_k}$ is bounded by using energy estimates and Proposition \ref{prop3.2}. Once again by using dominated convergence theorem, one can see that $I_2^{m_k}$ converges to $I_2$. Indeed, for the second term, one can use the fact that $\Y_{m_k}\to\Y$, a.s. and a.e. $t\in[0,T]$ in $\mathcal{D}(\A^{\alpha})$,  the continuity of $\w_{m_k}$ on $[0,T]\times\mathcal{D}(\A^{\alpha})$ and \eqref{360}. 

By the definition of $\mathrm{F}(\cdot)$ in \eqref{2p15} and Assumption \ref{ass2.1}$-(H_2)$, we have the following bound for $I_3^{m_k}$. 
\begin{align}
|I_3^{m_k}|&\leq R \mE\left[\int_0^te^{-2\ga\mathcal{Y}_{m_k}(t-s)}( \|\mathbf{K}^*\mathrm{D}_\x\w_{m_k}(s,\Y_{m_k}(s))\|+2\ga\|\mathbf{K}^*\w_{m_k}(s,\Y_{m_k}(s))\Y_{m_k}(s)\|)\d s\right]\no\\&\leq C(R, \ga)\mE\left[\int_0^te^{-2\ga\mathcal{Y}_{m_k}(t-s)}( \|\A^{-\alpha}\mathrm{D}_\x\w_{m_k}(s,\Y_{m_k}(s))\|\no\right.\\
&\left.\quad\quad+|\w_{m_k}(s,\Y_{m_k}(s))|\|\A^{-\alpha}\Y_{m_k}(s)\|)\d s\right]\no\\
&\leq C(R, \ga)\|\w_{m_k}\|_{\alpha,d,1}\mE\left[\int_0^te^{-2\ga\mathcal{Y}_{m_k}(t-s)}( (1+\|\A^{\alpha}\Y_{m_k}(s))^d+\|\A^{-\alpha}\|_{\mathcal{L}(\mH)}\|\Y_{m_k}(s)\|)\d s\right]<+\infty.
\end{align}
Therefore, by dominated convergence theorem, one can show that the integral $I_3^{m_k}$ converges to $I_3$ as $k\to\infty$, by applying the similar arguments used for $I_1^{m_k}$. 

Let us now show the boundedness of the integral $I_4^{m_k}$. 
\begin{align}
|I_4^{m_k}|&=\mE\left[\int_0^t  \int_{\mathcal{Z}_{m_k}}e^{\ga \|\G_{m_k}(s,z)\|^2+2\ga\la\Y_{m_k}(s),\G_{m_k}(s,z)\ra}|\w_{m_k}(s,\Y_{m_k}(s)+\G_{m_k}(s,z))|\mu(\d z)\d s\right]\nonumber\\&\leq \|\w_{m_k}\|_{0,0,0}\int_0^t\mE\left[ e^{\ga\|\Y(s)\|^2}\right] \int_{\mathcal{Z}}e^{2\ga \|\G(s,z)\|^2}\mu(\d z)\d s\leq C,
\end{align}
where we used Assumption \ref{ass2.1} and Proposition 3.3. Note that 
\begin{align}
&\int_{\mathcal{Z}_{m_k}}(e^{\ga \|\Y_{m_k}(s)+\G_{m_k}(s,z)\|^2-\ga\|\Y_{m_k}(s)\|^2}-1)\w_{m_k}(s,\Y_{m_k}(s)+\G_{m_k}(s,z))\mu(\d z) \nonumber\\& =   \int_{\mathcal{Z}}(e^{\ga \|\Y_{m_k}(s)+\G_{m_k}(s,z)\|^2-\ga\|\Y_{m_k}(s)\|^2}-1)\w_{m_k}(s,\Y_{m_k}(s)+\G_{m_k}(s,z))\mu(\d z)\nonumber\\&\quad+  \int_{\mathcal{Z}_{m_k}^c}(e^{\ga \|\Y_{m_k}(s)+\G_{m_k}(s,z)\|^2-\ga\|\Y_{m_k}(s)\|^2}-1)\w_{m_k}(s,\Y_{m_k}(s)+\G_{m_k}(s,z))\mu(\d z)\nonumber\\&=:J_5^{m_k}+J_6^{m_k}.
\end{align}
Since, for almost all $t\in[0,T]$, $\Y_{m_k}\to\Y$, $\mathbb{P}$-a.s. and $\G_{m_k}\to\G$, a.e. $z\in\mZ$ in $\mathcal{D}(\A^{\alpha})$,  the continuity of $\w_{m_k}$ on $[0,T]\times\mathcal{D}(\A^{\alpha})$ and the convergence  \eqref{360} show that $\w_{m_k}(s,\Y_{m_k}(s)+\G_{m_k}(s,z))$  converges to $ \w(s,\Y(s)+\G(s,z))$, $\mathbb{P}$-a.s. and a,e. $t\in[0,T]$ in $\mathbb{R}$. Therefore, by dominated convergence theorem, 
\begin{align}
J_5^{m_k}\to  \int_{\mathcal{Z}}(e^{\ga \|\Y(s)+\G(s,z)\|^2-\ga\|\Y(s)\|^2}-1)\w(s,\Y(s)+\G(s,z))\mu(\d z),
\end{align}
as $k\to\infty$, $\mathbb{P}$-a.s. and a.e. $t\in[0,T]$. Now, we consider 
\begin{align}
|J_6^{m_k}|&\leq  \int_{\mathcal{Z}_{m_k}^c}e^{\ga \|\G_{m_k}(s,z)\|^2+2\ga\la\Y_{m_k}(s),\G_{m_k}(s,z) \ra}|\w_{m_k}(s,\Y_{m_k}(s)+\G_{m_k}(s,z))|\mu(\d z)\nonumber\\&\leq \|\w_{m_k}\|_{0,0,0} e^{\ga\|\Y(s)\|^2} \int_{\mathcal{Z}_{m_k}^c}e^{2\ga \|\G(s,z)\|^2}\mu(\d z).
\end{align}
Therefore $I^4_{m_k}\to I_4$, $\mathbb{P}$-a.s., as $k\to\infty$, since $\mE\left[ e^{\ga\|\Y(s)\|^2}\right] \leq C, $ and $\int_0^t\int_{\mathcal{Z}_{m_k}^c}e^{2\ga \|\G(s,z)\|^2}\mu(\d z)\d s\to 0$, as $k\to\infty$. Using similar arguments, one can easily show that $I_5^{m_k}\to I_5$, as $k\to\infty$. Passing limit as $k\to\infty$ in  \eqref{3.72} and  combining \eqref{3.54}, \eqref{3.71}, we obtain that $\w$ as a mild solution of \eqref{349}. This completes the proof of Proposition \ref{main0}. 
\end{proof}
Now, the proof of Theorem \ref{main1} is a direct consequence of the Proposition \ref{main0}. Indeed, by  the transformation $\v(t,\x)=e^{\th \|\x\|^2}\w(t,\x),$  one can see that $\v(\cdot,\cdot)$ is a mild solution of the HJB equation \eqref{2p16}.

\begin{Rem}\label{rem5.1}
Since $v_m(t,\x)=e^{\th\|\x\|^2}w_m(t,\x)$, by \eqref{360} and \eqref{3.64}, we get that as $k\to\infty$
\begin{align}\label{534}
&\v_{m}(t,\x)\to \v(t,\x), \ \text{ in }\ \mR,\
\mathrm{D}_{\x}\w_m(t,\x)\to \mathrm{D}_{\x}\w(t,\x), \ \text{ in }\ \mathcal{D}(\A^{-\alpha}),
\end{align}
for all $(t,\x)\in[0,T]\times\mD(\A^{\alpha})$ and $\|\x\|_{\alpha}\leq r$. 

Since $v(t,\x)=e^{\th\|\x\|^2}w(t,\x)$, 
by the continuity estimates for the transformed HJB equation \eqref{315} proved in step 1 and \eqref{3.65}, one can easily show that the solution of the HJB equation \eqref{2p16} is locally Lipschitz and satisfies the following estimates: 
\begin{align}
|v(t,\x)-\v(t,\y)|&\leq C(r)\|\x-\y\|_{\alpha}, \label{5.34}\\ 
\|\mathrm{D}_{\x}\v(t,\x)-\mathrm{D}_{\x}\v(t,\y)\|_{-\alpha}&\leq  C(r)\|\x-\y\|_{\alpha},\label{5.35}
\end{align}
for any $\|\x\|_{\alpha},\|\y\|_{\alpha}\leq r$, $r>0$. 
\end{Rem}

\section{Proof of Theorem  \ref{thm3.3}}\label{sec6}

In order to prove Theorem \ref{thm3.3}, we need the following results on the approximated cost functional. 

\begin{Lem}\label{lem6.1}
The approximated solution  $\X_m(\cdot)$ of (\ref{2F1}) converges almost surely to the solution $\X(\cdot)$ of (\ref{2p12}) in $\mathscr{D}([0,T];\mathcal{D}(\A^{\alpha}))\cap\mathrm{L}^2(0,T;\mathcal{D}(\A^{\alpha+\frac{1}{2}}))$. Moreover, the approximated cost functional $\mathcal{J}_m(0,T;\x,\U_m)$ defined in \eqref{2F2} converges  to $\mathcal{J}(0,T;\x,\U)$ defined in \eqref{2p13b}.
\end{Lem}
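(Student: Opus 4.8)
The plan is to reduce the claim to the convergence results already proved for the uncontrolled equation, namely Propositions \ref{prop3.3} and \ref{prop4.4}, by regarding the control as an additional \emph{bounded} and \emph{linear} forcing term. For the fixed admissible control $\U\in\mathcal{U}_R^{0,T}$ I set $\U_m:=\mathrm{P}_m\U$; since $\mathrm{P}_m$ is an orthogonal projection, $\U_m$ is admissible for the finite dimensional problem (it is $\{\mF_t\}$-adapted with $\|\U_m(\cdot,t)\|\le\|\U(\cdot,t)\|\le R$ and takes values in $\mathrm{P}_m\mH$) and $\U_m\to\U$ strongly in $\mathrm{L}^2(\Omega\times[0,T];\mH)$. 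The decisive point is that $\K_m\U_m=\mathrm{P}_m\K\mathrm{P}_m\U$ is uniformly bounded, $\|\K_m\U_m\|\le\|\K\|_{\mathcal{L}(\mH)}R$, so every a priori estimate and every compactness/uniqueness argument used for $\Y_m$ in Lemmas \ref{L2}, \ref{L3} and Propositions \ref{prop3.3}, \ref{prop4.4} carries over to $\X_m$ after inserting one extra lower order term.

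First I would prove the almost sure convergence of $\X_m$ in $\mathscr{D}([0,T];\mathcal{D}(\A^{\alpha}))\cap\mathrm{L}^2(0,T;\mathcal{D}(\A^{\alpha+\frac{1}{2}}))$. Following Proposition \ref{prop4.4}, applying $\mathrm{P}_m$ to \eqref{2p12} and subtracting \eqref{2F1} gives
\begin{align*}
\d\big(\X_m(t)-\mathrm{P}_m\X(t)\big)&=-\A\big(\X_m(t)-\mathrm{P}_m\X(t)\big)\d t+\big(\mathrm{P}_m\B(\X_m(t))-\mathrm{P}_m\B(\X(t))\big)\d t\\
&\quad+\mathrm{P}_m\K(\mathrm{P}_m-\mathrm{I})\U(t)\d t,
\end{align*}
because $\K_m\U_m-\mathrm{P}_m\K\U=\mathrm{P}_m\K(\mathrm{P}_m-\mathrm{I})\U$. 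Taking the inner product with $\A^{2\alpha}(\X_m-\mathrm{P}_m\X)$, the nonlinear contribution is estimated exactly as in \eqref{4.56}--\eqref{4.69}, while the new control contribution is bounded by
$$\big|\la\mathrm{P}_m\K(\mathrm{P}_m-\mathrm{I})\U,\A^{2\alpha}(\X_m-\mathrm{P}_m\X)\ra\big|\le\tfrac{1}{8}\|\X_m-\mathrm{P}_m\X\|_{\alpha+\frac{1}{2}}^2+C\|\A^{\alpha-\frac{1}{2}}\mathrm{P}_m\K(\mathrm{P}_m-\mathrm{I})\U\|^2 ;$$
since $\alpha<\frac{1}{2}$ the operator $\A^{\alpha-\frac{1}{2}}$ is bounded, so the last term is dominated by $C\|(\mathrm{P}_m-\mathrm{I})\U\|^2\to0$. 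Gronwall's inequality, together with the regularity $\X\in\mathscr{D}([0,T];\mathcal{D}(\A^{\alpha}))$ inherited from Proposition \ref{prop4.4} under the same bounded forcing modification, then yields
$$\mE\Big[\sup_{t\in[0,T]}\|\X_m(t)-\mathrm{P}_m\X(t)\|_{\alpha}^2+\int_0^T\|\X_m(t)-\mathrm{P}_m\X(t)\|_{\alpha+\frac{1}{2}}^2\d t\Big]\to0,$$
from which the desired almost sure convergence follows as in Proposition \ref{prop4.4}.

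Next I would pass to the cost functionals. Writing $\|\text{curl }\u\|=\|\u\|_{\frac{1}{2}}$ and splitting,
\begin{align*}
\mathcal{J}_m(0,T;\x,\U_m)-\mathcal{J}(0,T;\x,\U)&=\mE\int_0^T\big(\|\X_m\|_{\frac{1}{2}}^2-\|\X\|_{\frac{1}{2}}^2\big)\d t+\tfrac{1}{2}\mE\int_0^T\big(\|\U_m\|^2-\|\U\|^2\big)\d t\\
&\quad+\mE\big(\|\X_m(T)\|^2-\|\X(T)\|^2\big).
\end{align*}
For the control cost, $\|\U_m\|\le\|\U\|$ and $\U_m\to\U$ in $\mathrm{L}^2(\Omega\times[0,T];\mH)$ give convergence by dominated convergence. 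For the state terms, the embedding $\mathcal{D}(\A^{\alpha+\frac{1}{2}})\hookrightarrow\mV$ converts the almost sure convergence of the first part into $\int_0^T\|\X_m\|_{\frac{1}{2}}^2\d t\to\int_0^T\|\X\|_{\frac{1}{2}}^2\d t$ almost surely, and the supremum estimate above gives $\|\X_m(T)\|^2\to\|\X(T)\|^2$ almost surely; the uniform in $m$ moment bounds (the analogues of Lemmas \ref{L2} and \ref{L3} for $\X_m$, valid precisely because $\|\K_m\U_m\|\le\|\K\|_{\mathcal{L}(\mH)}R$) supply $\mathrm{L}^k(\Omega)$ boundedness for some $k>1$ and hence uniform integrability, so Vitali's theorem upgrades the almost sure convergences to convergence of the expectations.

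The main obstacle is exactly this upgrade from pathwise to expected convergence in the cost functional: the Skorokhod convergence of the first part must be matched with uniform integrability, which rests on confirming that the polynomial and exponential moment estimates of Lemmas \ref{L2}, \ref{L3} and Proposition \ref{lem3.2} survive for the controlled state $\X_m$. This is where the boundedness of the control enters decisively, since $\K_m\U_m$ contributes only a term $\le C(1+\|\X_m\|^2)$ to the energy identity for $\|\X_m\|^2$ and therefore leaves the moment bounds intact.
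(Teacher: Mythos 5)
Your strategy is essentially the paper's own: regard the control as a bounded lower-order forcing, run the difference-equation arguments of Propositions \ref{prop3.3} and \ref{prop4.4} on $\X_m-\mathrm{P}_m\X$, and then pass to the cost functionals using $\|\U_m(\cdot,t)\|\le R$ to control moments. The organizational differences are minor. The paper first proves convergence in $\mathscr{D}([0,T];\mH)\cap\mathrm{L}^2(0,T;\mV)$ by the energy-equality/weak-convergence argument of Proposition \ref{prop3.3}, whose only new ingredient is the convergence of the cross term $\mE\int_0^T\la\U_m(t),\K^*\X_m(t)\ra\d t$, and only afterwards upgrades to the $\mathcal{D}(\A^{\alpha})$ topology; you go directly to the pathwise $\alpha$-level estimate, which is legitimate since that argument never uses the base-level convergence. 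Where the paper bounds the control discrepancy $\la\K(\U_m-\mathrm{P}_m\U),\A^{2\alpha}(\X_m-\mathrm{P}_m\X)\ra$ through hypothesis $(H_2)$, you use boundedness of $\A^{\alpha-\frac{1}{2}}$ and of $\K$ on $\mH$; both are adequate, and your identification $\K_m\U_m-\mathrm{P}_m\K\U=\mathrm{P}_m\K(\mathrm{P}_m-\mathrm{I})\U$ is the correct form of the extra term. Your Vitali/uniform-integrability step is the honest version of the paper's appeal to dominated convergence, and the required uniform moment bounds for $\X_m$ do survive the bounded control, as you say.

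One step is wrong as written: Gronwall cannot give you $\mE\big[\sup_{t\in[0,T]}\|\X_m(t)-\mathrm{P}_m\X(t)\|_{\alpha}^2+\int_0^T\|\X_m(t)-\mathrm{P}_m\X(t)\|_{\alpha+\frac{1}{2}}^2\d t\big]\to0$. The Gronwall factor inherited from \eqref{457}--\eqref{4.69} is $\exp\big(CT\sup_{s\in[0,T]}\|\X_m(s)\|_{\alpha}^{1/\alpha}+C\int_0^T\|\X(s)\|_{\frac{1}{2}}^2\d s\big)$, a random variable whose expectation is not controlled by anything in the paper: the exponential moments of Proposition \ref{lem3.2} concern $\|\cdot\|^2$ (and only for initial data in a ball), not $\|\cdot\|_{\alpha}^{1/\alpha}$ with $1/\alpha>2$. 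The correct, and entirely sufficient, conclusion is the pathwise one, exactly as in \eqref{4.69}: the prefactor, which involves $\|(\mathrm{P}_m-\mathrm{I})\U\|$, $\|(\mathrm{P}_m-\mathrm{I})\X\|_{\alpha}$ and $\|(\mathrm{P}_m-\mathrm{I})\X\|_{\frac{1}{2}}$ in time-integrated form, tends to zero $\mathbb{P}$-a.s., while the exponential factor is a.s. finite uniformly in $m$ by the pathwise a priori bounds; hence $\sup_{t\in[0,T]}\|\X_m(t)-\mathrm{P}_m\X(t)\|_{\alpha}^2+\int_0^T\|\X_m(t)-\mathrm{P}_m\X(t)\|_{\alpha+\frac{1}{2}}^2\d t\to0$, $\mathbb{P}$-a.s. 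This is what the lemma asserts and all that your cost-functional argument actually consumes (the a.s. convergences plus uniform moment bounds). With that correction, i.e. dropping the expectation and arguing $\omega$-wise, your proof is complete and coincides in substance with the paper's.
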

\begin{proof}
It is clear from (\ref{c1}) and the rest of the arguments in Proposition \ref{prop3.3} that in order  to prove the almost sure convergence of $\X_m(\cdot)$ in the given topology, it is sufficient to prove that 
\begin{align}\label{3.127}
\mE\left[\int_0^T\la\U_m(t),\K^*\X_m(t)\ra\d t\right]\to\mE\left[\int_0^T\la\U(t),\K^*\X(t)\ra\d t\right], \text{ as } m\to\infty.
\end{align}
Since $\X_m\rightharpoonup\X$ in $\mathrm{L}^2(\Omega;\mathrm{L}^2(0,T;\mV))$ and $\U_m\to\U$ in $\mathrm{L}^2(\Omega;\mathrm{L}^2(0,T;\mH))$, the convergence in (\ref{3.127}) follows. Therefore, $\X_m(\cdot)$ converges almost surely to $\X(\cdot)$  in $\mathrm{L}^2(0,T;\mV)\cap\mathscr{D}([0,T];\mH)$. 

To prove $\X\in\mathscr{D}([0,T];\mathcal{D}(\A^{\alpha}))$, $\mathbb{P}$-a.s., we note that in view of the identity \eqref{4.61}, it is enough to estimate the following: 
\begin{align}
|\la\K\U,\A^{2\alpha}\X\ra|\leq \|\U\|\|\K^*\A^{2\alpha}\X\|\leq C_{\K}\|\U\|\|\X\|_{\alpha}\leq \frac{1}{2}\|\X\|_{\alpha+\frac{1}{2}}+C_{\K}\|\U\|^2. 
\end{align}
Using this in \eqref{4.61} and noting that $\int_0^T\|\U(t)\|^2\d t<+\infty$, $\mathbb{P}$-a.s., we get an estimate similar to \eqref{4.63}.

Now,  in view of \eqref{4.55}, we note that 
\begin{align}
&	|\la\K(\U_m-\mathrm{P}_m\U),\A^{2\alpha}(\X_m-\mathrm{P}_m\X)\ra|\nonumber\\& \leq \frac{1}{2}\|\X_m-\mathrm{P}_m\X\|_{\alpha+\frac{1}{2}}^2+C_{\K}\left(\|\U_m-\U\|^2+\|\mathrm{P}_m-\I\|^2\|\U\|^2\right). 
\end{align}
Arguing as in \eqref{4.69} and using the fact that $\U_m\to \U$ in $\mathrm{L}^2(\Omega;\mathrm{L}^2(0,T;\mH))$, one can conclude   the almost sure convergence  in $\mathscr{D}([0,T];\mathcal{D}(\A^{\alpha}))\cap\mathrm{L}^2(0,T;\mathcal{D}(\A^{\alpha+\frac{1}{2}}))$,

Since $\|\U_m(\cdot,t)\|\leq R$, $\mathbb{P}$-a.s., $t\in[0,T]$,  the definition of $\mathcal{J}_m$ in \eqref{2F2} and the energy estimate in \eqref{2p31}, it is clear that 
\begin{align}
\mathcal{J}_m(0,T;\x,\U_m)\leq C(T,R)(1+\|\x\|^2).
\end{align}
By using the respective convergences of $\X_m$, $\U_m$ and dominated convergence theorem, one can prove that $\mathcal{J}_m(0,T;\x,\U_m)$ converges to $\mathcal{J}(0,T;\x,\U)$.
\end{proof}

Using the regularity of solutions of the HJB equation \eqref{2p16} proved in Theorem \ref{main1}, we derive an identity satisfied by the cost functional in terms this solution. 

\begin{Lem}
For $m\in\mathbb{N}$, let $\X_m(\cdot)$ be the solution of (\ref{2F1}) and $\U_m\in \mathrm{L}^2(\Omega;\mathrm{L}^2(0,T;\mathrm{P}_m\mH))$. For any $\x\in\mathrm{P}_m\mH$, the following identity holds for the approximated cost functional:
\begin{align}\label{65}
\mathcal{J}_m(0,T;\x,\U_m)&=v_m(T,\x)+\frac{1}{2}\mE\left[\int_0^T\Big(\|\U_m(t)+\K^*\mathrm{D}_{\x}v_m(T-t,\X_m(t))\|^2
\nonumber\right.\\
&\left.\quad\quad-\chi(\|\K^*\mathrm{D}_{\x} v_m(T-t,\X_m(t))\|-R)\Big)\d t\right], 
\end{align}
where $\chi(\cdot)$ is the function defined in Theorem \ref{thm3.3}. 

Moreover, $\mathcal{J}_m(0,T;\x,\U_m)$ converges to  $\mathcal{J}(0,T;\x,\U)$ defined in \eqref{3.122a}.
\end{Lem}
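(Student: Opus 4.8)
The plan is to obtain \eqref{65} by applying the finite dimensional It\^o formula to the process $t\mapsto v_m(T-t,\X_m(t))$ on $[0,T]$, where $\X_m$ solves \eqref{2F1}, and then to identify the resulting drift through the approximated HJB equation \eqref{3.4}. First I would record that, by Theorem \ref{main1} and Proposition \ref{prop3.2}, $v_m$ is $C^1$ in $t$ and twice G\^ateaux differentiable in $\x$ with continuous second derivative of controlled polynomial growth in the weighted norm $\|\cdot\|_{\al,d,2}$; together with the moment bounds of Lemmas \ref{L2}--\ref{L3} and the exponential integrability of $\G$ in $(H_1)$, this guarantees that every stochastic integral appearing in the It\^o expansion is a genuine (not merely local) martingale and that the compensated jump integral is well defined. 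Writing $\mathbf{p}(t):=\K^*\mathrm{D}_\x v_m(T-t,\X_m(t))$, the It\^o formula produces the continuous martingale $\int_0^\cdot\la\A^{-\frac{\va}{2}}\mathrm{D}_\x v_m,\d\W_m\ra$, the compensated Poisson martingale $\int_0^\cdot\int_{\mZ_m}[v_m(\X_m+\G_m)-v_m(\X_m)]\wi\uppi$, and a drift equal to $-\mathrm{D}_t v_m+\mcL_m v_m+\la\U_m,\mathbf{p}\ra$, where $\mcL_m$ is exactly the integro-differential operator of \eqref{3.4} (using $\B_m(\X_m)=\mathrm{P}_m\B(\X_m)$ and $\mathrm{D}_\x v_m\in\mathrm{P}_m\mH$ so that the projections are transparent).

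Next I would substitute the HJB identity $\mathrm{D}_t v_m=\mcL_m v_m+\mathrm{F}(\K^*\mathrm{D}_\x v_m)+g_m$ from \eqref{3.4}, whereupon the drift collapses to $-\mathrm{F}(\mathbf{p})-g_m(\X_m)+\la\U_m,\mathbf{p}\ra$, with $g_m$ the running vorticity cost of \eqref{2F2}. Integrating over $[0,T]$, taking expectation to annihilate the martingale terms, and using the initial datum $v_m(0,\cdot)=\|\cdot\|^2$ of \eqref{3.4}, I obtain
\begin{align*}
v_m(T,\x)=\mE\|\X_m(T)\|^2+\mE\int_0^T\big(g_m(\X_m(t))+\mathrm{F}(\mathbf{p}(t))-\la\U_m(t),\mathbf{p}(t)\ra\big)\d t.
\end{align*}
Recalling the definition \eqref{2F2} of $\mathcal{J}_m$ then gives $\mathcal{J}_m(0,T;\x,\U_m)=v_m(T,\x)+\mE\int_0^T\big(\tfrac12\|\U_m\|^2+\la\U_m,\mathbf{p}\ra-\mathrm{F}(\mathbf{p})\big)\d t$. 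The crux is the pointwise algebraic identity $\tfrac12\|\U_m\|^2+\la\U_m,\mathbf{p}\ra-\mathrm{F}(\mathbf{p})=\tfrac12\|\U_m+\mathbf{p}\|^2-\tfrac12\chi(\|\mathbf{p}\|-R)$, which I would check by separating the two regimes of the Hamiltonian \eqref{2p15}: for $\|\mathbf{p}\|\le R$ both sides equal $\tfrac12\|\U_m+\mathbf{p}\|^2$ since $\chi=0$ and $\mathrm{F}(\mathbf{p})=-\tfrac12\|\mathbf{p}\|^2$, while for $\|\mathbf{p}\|>R$ one uses $\mathrm{F}(\mathbf{p})=-R\|\mathbf{p}\|+\tfrac{R^2}{2}$ and $\chi(\|\mathbf{p}\|-R)=(\|\mathbf{p}\|-R)^2$ and the linear and constant terms match. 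Inserting this identity yields exactly \eqref{65}.

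For the convergence assertion I would invoke Lemma \ref{lem6.1}, which already supplies $\mathcal{J}_m(0,T;\x,\U_m)\to\mathcal{J}(0,T;\x,\U)$ together with the almost sure convergence $\X_m\to\X$ in $\mathscr{D}([0,T];\mD(\A^\al))\cap\mathrm{L}^2(0,T;\mD(\A^{\al+\frac12}))$. To pass to the limit in the right-hand side of \eqref{65}, I would use the pointwise convergences of Remark \ref{rem5.1}, namely $v_m(T,\x)\to v(T,\x)$ and $\mathrm{D}_\x v_m(T-t,\X_m)\to\mathrm{D}_\x v(T-t,\X)$ in $\mD(\A^{-\al})$, combined with the smoothing bound $\|\K^\ast\cdot\|\le C_\K\|\A^{-\widetilde{\al}_1}\cdot\|\le C\|\A^{-\al}\cdot\|$ from $(H_2)$ to control $\mathbf{p}$; the constraint $\|\U_m(\cdot,t)\|\le R$ and the uniform weighted bounds of Proposition \ref{prop3.2} then provide the domination for a dominated-convergence argument. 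Letting $m\to\infty$ identifies the limit of the right side with $v(T,\x)+\tfrac12\mE\int_0^T\big(\|\U+\K^\ast\mathrm{D}_\x v(T-t,\X)\|^2-\chi(\|\K^\ast\mathrm{D}_\x v(T-t,\X)\|-R)\big)\d t$, i.e.\ the identity \eqref{3.122a}.

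The main obstacle I anticipate is the rigorous justification of the It\^o step: because $v_m$ is controlled only in the weighted spaces $C^{\al,d,2}$ and $\X_m$ is a c\`adl\`ag jump process, one must verify that the second-order and compensated-jump contributions are integrable and that the stochastic integrals are true martingales, which requires feeding the polynomial growth of $\mathrm{D}_\x v_m$ and $\mathrm{D}_\x^2 v_m$ into the moment estimates of Lemmas \ref{L2}--\ref{L3} and the exponential jump bound $(H_1)$. A secondary difficulty is the limit in the term $\la\U_m,\K^\ast\mathrm{D}_\x v_m(T-t,\X_m)\ra$, since $\mathrm{D}_\x v_m$ converges only in the negative-order space $\mD(\A^{-\al})$ while $\X_m$ converges only almost surely; reconciling these two modes of convergence under a single dominated-convergence umbrella is the delicate point.
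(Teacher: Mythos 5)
Your proposal is correct and follows essentially the same route as the paper: apply the finite dimensional It\^o formula to $v_m$ along the controlled trajectory $\X_m$, use the approximated HJB equation \eqref{3.4} to reduce the drift to $-\mathrm{F}(\K^*\mathrm{D}_\x v_m)-g_m+\la\U_m,\K^*\mathrm{D}_\x v_m\ra$, take expectations with the initial datum $v_m(0,\cdot)=\|\cdot\|^2$, and then complete the square against the Hamiltonian \eqref{2p15} (your two-regime check of the pointwise identity is exactly the paper's use of the definition of $\F_m$ to produce the $\chi$ term). Your limit argument — Lemma \ref{lem6.1}, the pointwise convergences of Remark \ref{rem5.1}, the bound from $(H_2)$, and dominated convergence — likewise matches the paper's treatment, including the delicate splitting of $\K^*\mathrm{D}_\x v_m(T-t,\X_m)-\K^*\mathrm{D}_\x v(T-t,\X)$ into a uniform-Lipschitz term and a pointwise-convergence term.
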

\begin{proof}
Let us apply the finite dimensional It\^o formula to the process $v_m(t-s,\X_m(s))$ to obtain 
\begin{align}\label{3.124}
\d v_m(t-s,\X_m(s))&=\Big[-\mathrm{D}_sv_m(t-s,\X_m(s))+\mathscr{L}_xv_m(t-s,\X_m(s))\no\\&\qquad +\la\K^*\mathrm{D}_{\x}v_m(t-s,\X_m(s)),\U_m(s)\ra\Big]\d s+\d\M^m_{s}\no\\&= \Big[-\mathrm{F}_m(\K^* \mathrm{D}_\x \v_m(t-s,\X_m(s)))- g_m(\X_m(s))\no\\ &\qquad+\la\K^*\mathrm{D}_{\x}v_m(t-s,\X_m(s)),\U_m(s)\ra\Big]\d s +\d\M^m_{s},
\end{align}
where 
\begin{align*}
\d\M^m_{s}&=\la\mathrm{D}_{\x}v_m(t-s,\X_m(s)),\A^{-\frac{\var}{2}}\d\W_m(s)\ra \no\\&\quad+\int_{{\mZ}_m}\left[v_m(t-s,\X_m(s)+\G_m(s,z))-v_m(t-s,\X_m(s))\right]\wi{\uppi}(\d s,\d z).
\end{align*}
Setting $t=T$, integrating from $0$ to $T$, and taking expectation, we get 
\begin{align}\label{3.125}
\mE\Big[v_m(0,\X_m(T))\Big]&=v_m(T,\x)+\mE\left[\int_0^T\la\K^*\mathrm{D}_{\x}v_m(T-s,\X_m(s)),\U_m(s)\ra\d s\right]\no\\&\quad -\mE\left[\int_0^T\Big(\mathrm{F}_m(\K^* \mathrm{D}_\x \v_m(T-s,\X_m(s)))+ g_m(\X_m(s))\Big)\d s\right].
\end{align} 
Using $v_m(0,\X_m(T))=\|X_m(T)\|^2$   and rearranging the terms in (\ref{3.125}), we arrive at 
\begin{align}\label{3.125}
\mE\left[\|X_m(T)\|^2 +\int_0^Tg_m(\X_m(t))dt\right]&=v_m(T,\x)+\mE\left[\int_0^T\la\K^*\mathrm{D}_{\x}v_m(T-s,\X_m(s)),\U_m(s)\ra\d s\right]\no\\&\quad -\mE\left[\int_0^T\mathrm{F}_m(\K^* \mathrm{D}_\x \v_m(T-s,\X_m(s)))\d s\right].
\end{align} 
Adding $\frac{1}{2}\|\U_m(t)\|^2$ on both sides and using the definition of approximated cost functional $\mathcal{J}_m,$  one can obtain
\begin{align}\label{3.126}
\mathcal{J}_m(0,T;x,\U_m)&= \mE\left[\int_0^T\left(g_m(\X_m(t))+\frac{1}{2}\|\U_m(t)\|^2\right)\d t+\|\X_m(T)\|^2\right]\no\\&=v_m(T,\x)+\frac{1}{2}\mE\left[\int_0^T\|\U_m(t)+\K^*\mathrm{D}_{\x}v_m(T-t,\X_m(t))\|^2\d t\right]\\&\quad -\mE\left[\int_0^T\Big(\F_m(\K^*\mathrm{D}_{\x}v_m(T-t,\X_m(t)))+\frac{1}{2}\|\K^*\mathrm{D}_{\x}v_m(T-t,\X_m(t))\|^2\Big)\d t\right].\no
\end{align}
Using the definition of $\F_m(\cdot)$, the final integral in (\ref{3.126}) can be written as 
\begin{align*}
\frac{1}{2}\mE\left[\int_0^T\chi(\|\K^*\mathrm{D}_{\x} v_m(T-t,\X(t))\|-R)\d t\right],
\end{align*}
which leads to the identity \eqref{65}. 

To prove the convergence of the cost functional, we proceed as follows: 
\begin{align}
&\|\U_m(t)+\K^*\mathrm{D}_{\x}v_m(T-t,\X_m(t))\|\leq \|\U(t)\|+\|\K^*\mathrm{D}_{\x}v_m(T-t,\X_m(t))\|<+\infty,
\end{align}
$\mathbb{P}$-a.s., since 	by Lemma \ref{lem6.1}, we have 
\begin{align}
&	\|\K^*\mathrm{D}_{\x}v_m(T-t,\X_m(t))\|\nonumber\\&\leq C_{\K}\|\mathrm{D}_{\x}v_m(T-t,\X_m(t))\|_{-\alpha} \nonumber\\&\leq C(\K,\th)e^{\th\|\X_m(t)\|^2}\Big(\|\X_m(t)\|_{-\alpha}|w_m(T-t,\X_m(t))| +\|\mathrm{D}_{\x}w_m(T-t,\X_m(t))\|_{-\alpha}\Big)\nonumber\\& \leq C(\K,\th) \exp\left({\th\sup\limits_{t\in[0,T]}\|\X(t)\|^2}\right)\left[\sup_{t\in[0,T]}\|\X(t)\|\|w\|_{0,0,0}+\sup_{t\in[0,T]}(1+\|\X(t)\|_{\alpha})^{d}\|\w\|_{\alpha,d,1}\right]\no\\&<+\infty,\  \mathbb{P}\text{-a.s.}
\end{align}
It is clear that the second integrand $\chi(\|\K^*\mathrm{D}_{\x} v_m(T-t,\X_m(t))\|-R)$ in \eqref{65} is also $\mathbb{P}$-a.s. bounded. We further note that 
\begin{align}\label{610}
&	\|\K^*\mathrm{D}_{\x}v_m(T-t,\X_m(t))-\K^*\mathrm{D}_{\x}v(T-t,\X(t))\| \\&\leq 	\|\mathrm{D}_{\x}v_m(T-t,\X_m(t))-\mathrm{D}_{\x}v_m(T-t,\X(t))\| _{-\alpha}+ \|\mathrm{D}_{\x}v_m(T-t,\X(t))-\mathrm{D}_{\x}v(T-t,\X(t))\| _{-\alpha}.\no
\end{align}
By using \eqref{5.35} and the almost sure convergence of $\X_m\to\X$ in $\mathcal{D}(\A^{\alpha})$, a.e. $t\in[0,T]$, the first term in \eqref{610} goes to zero as $m\to\infty$ and so does the second term by \eqref{534}. Therefore, passing $m\to\infty$ in \eqref{65} and applying dominated convergence theorem, we conclude that $\mathcal{J}_m(0,T;\x,\U_m)$ converges to  $\mathcal{J}(0,T;\x,\U)$ defined in \eqref{3.122a}. 
\end{proof}

\begin{proof}[Proof of Theorem \ref{thm3.3}] 
It can be easily shown that the following approximated closed loop equation  has a unique solution $\wi\X_m(\cdot)$:
\begin{equation} 
\left\{\begin{aligned}
\d\wi \X_m(t)&=[-\A\wi \X_m(t)+\B(\wi \X_m(t))+{\mathcal G}(\K^*\mathrm{D}_{\x}v(T-t,\wi \X_m(t)))]\d t+\A^{-\frac{\var}{2}}\d\W_m(t)\\
&\quad+ \int_{\mZ_m}\G_m(t,z) \wi{\uppi}(\d t,\d z), \;t\in (0,T), \\
\wi  \X_m(0)&=\mathrm{P}_m\x.
\end{aligned}
\right.
\end{equation}
From the definition of $\mathcal{G}$ in (\ref{2p17}), we know that $$\|\mathcal{G}(\p)\|\leq R,\ \text{ for all }\ \p\in\mH.$$ Arguing similarly as in Proposition \ref{prop3.3}, one can obtain that $\wi\X_m(\cdot)$ is almost surely bounded in $\mathscr{D}([0,T];\mH)\cap\mathrm{L}^2(0,T;\mV).$  Then, there exists a subsequence converging to the solution $\wi\X(\cdot)$ of the closed loop equation (\ref{2p18}). If we show that the closed loop equation (\ref{2p18}) has at most one solution, the above convergence along the subsequence imply that the entire sequence converges.

Now, we prove the uniqueness of the closed loop equation (\ref{2p18}). Let $\X_1(\cdot)$ and $\X_2(\cdot)$ be two solution of (\ref{2p18}) and $\wi\X(\cdot)=\wi\X_1(\cdot)-\wi\X_2(\cdot)$. 	Then, we have 
\begin{equation} \label{3.131}
\frac{\d}{\d t}\wi \X(t)=-\A\wi \X(t)+\B(\wi \X_1(t))-\B(\wi \X_2(t))+{\mathcal G}(\K^*\mathrm{D}_{\x}v(T-t,\wi \X_1(t)))-{\mathcal G}(\K^*\mathrm{D}_{\x}v(T-t,\wi \X_2(t))).
\end{equation} 
Taking inner product with $\A^{2\alpha}\wi\X(\cdot)$ in (\ref{3.131}), we get
\begin{align}\label{3.132}
\frac{1}{2}\frac{\d}{\d t}\|\wi\X(t)\|^2_{\alpha}+\|\wi\X(t)\|_{\alpha+\frac{1}{2}}^2&=\la\B(\wi \X_1(t))-\B(\wi \X_2(t)),\A^{2\alpha}\wi\X(t)\ra\no\\&\quad+\la{\mathcal G}(\K^*\mathrm{D}_{\x}v(T-t,\wi \X_1(t)))-{\mathcal G}(\K^*\mathrm{D}_{\x}v(T-t,\wi \X_2(t))),\A^{2\alpha}\wi\X(t)\ra\no\\&:=I_1+I_2.
\end{align}
The term $I_1$ can be written as 
\begin{align}
I_1=\la\B(\wi \X_1-\wi \X_2,\wi \X_1),\A^{2\alpha}\wi\X\ra+\la\B(\wi \X_2,\wi \X_1-\wi \X_2),\A^{2\alpha}\wi\X\ra.
\end{align}
Using the trilinear estimate \eqref{ne} and Young's inequality, we estimate $\la\B(\wi \X_1-\wi \X_2,\wi \X_1),\A^{2\alpha}\wi\X\ra$ as 
\begin{align}
|\la\B(\wi \X_1-\wi \X_2,\wi \X_1),\A^{2\alpha}\wi\X\ra|\leq C \|\wi\X\|_{\alpha}\|\wi\X_1\|_{\frac{1}{2}}\|\wi\X\|_{\alpha+\frac{1}{2}}\leq \frac{1}{4}\|\wi\X\|_{\alpha+\frac{1}{2}}^2+C\|\wi\X_1\|_{\frac{1}{2}}^2\|\wi\X\|_{\alpha}^2.
\end{align}
Once again using the trilinear estimate \eqref{ne}  and interpolation inequality \eqref{II}, we estimate $\la\B(\wi \X_2,\wi \X_1-\wi \X_2),\A^{2\alpha}\wi\X\ra$ as 
\begin{align}
|\la\B(\wi \X_2,\wi \X_1-\wi \X_2),\A^{2\alpha}\wi\X\ra|&\leq \|\wi\X_2\|_{\alpha}\|\wi\X\|_{\frac{1}{2}}\|\wi\X\|_{\alpha+\frac{1}{2}}\leq \|\wi\X_2\|_{\alpha}\|\wi\X\|_{\alpha}^{2\alpha}\|\wi\X\|_{\alpha+\frac{1}{2}}^{2(1-\alpha)}\nonumber\\&\leq \frac{1}{4}\|\wi\X\|_{\alpha+\frac{1}{2}}+C\|\wi\X_2\|_{\alpha}^{\frac{1}{\alpha}}\|\wi\X\|_{\alpha}^2. 
\end{align}
Thus it is immediate that 
\begin{align}
|I_1|\leq \frac{1}{2}\|\wi\X\|_{\alpha+\frac{1}{2}}+C\left(\|\wi\X_1\|_{\frac{1}{2}}^2+\|\wi\X_2\|_{\alpha}^{\frac{1}{\alpha}}\right)\|\wi\X\|_{\alpha}^2.
\end{align}
From the definition of $\mathcal{G}(\cdot)$ and Assumption \ref{ass2.1}-($H_4$), it is clear that
\begin{align}\label{3.134}
|I_2|&\leq  \|{\mathcal G}(\K^*\mathrm{D}_{\x}v(T-t,\wi \X_1))-{\mathcal G}(\K^*\mathrm{D}_{\x}v(T-t,\wi \X_2))\|_{\alpha-\frac{1}{2}} \|\wi\X\|_{\alpha+\frac{1}{2}}\no\\&\leq  \|\K^*\mathrm{D}_{\x}v(T-t,\wi \X_1)-\K^*\mathrm{D}_{\x}v(T-t,\wi \X_2)\|_{\alpha-\frac{1}{2}}\|\wi\X\|_{\alpha+\frac{1}{2}}\no\\&\leq  C_{\K} \|\mathrm{D}_{\x}v(T-t,\wi \X_1)-\mathrm{D}_{\x}v(T-t,\wi \X_2)\|_{-\alpha}\|\wi\X\|_{\alpha+\frac{1}{2}}\no\\&\leq C_{\K}(\|\X_1\|_{\alpha},\|\X_2\|_{\alpha}) \|\wi\X\|_{\alpha} \|\wi\X\|_{\alpha+\frac{1}{2}}\no\\&\leq  \frac{1}{2}\|\wi\X\|_{\alpha+\frac{1}{2}}^2+C_{\K}(\|\X_1\|_{\alpha},\|\X_2\|_{\alpha})  \|\wi\X\|_{\alpha}^2.
\end{align}
for $\|\K^*\mathrm{D}_{\x}v(T-t,\wi \X_1)\|\leq R$ and $\|\K^*\mathrm{D}_{\x}v(T-t,\wi \X_2)\|\leq R$.  For $\|\K^*\mathrm{D}_{\x}v(T-t,\wi \X_1)\|>  R$ and $\|\K^*\mathrm{D}_{\x}v(T-t,\wi \X_2)\|> R$, we have 
\begin{align}\label{3.134a}
|I_2|&\leq \left\|-R\frac{\K^*\mathrm{D}_{\x}v(T-t,\wi \X_1)}{\|\K^*\mathrm{D}_{\x}v(T-t,\wi \X_1)\|}+R\frac{\K^*\mathrm{D}_{\x}v(T-t,\wi \X_2)}{\|\K^*\mathrm{D}_{\x}v(T-t,\wi \X_2)\|}\right\|_{\alpha-\frac{1}{2}}\|\wi\X\|_{\alpha+\frac{1}{2}}\no\\&\leq \frac{R}{\|\K^*\mathrm{D}_{\x}v(T-t,\wi \X_1)\|\|\K^*\mathrm{D}_{\x}v(T-t,\wi \X_2)\|}\nonumber\\&\qquad\times\left\|(\|\K^*\mathrm{D}_{\x}v(T-t,\wi \X_1)\|-\|\K^*\mathrm{D}_{\x}v(T-t,\wi \X_2)\|)\K^*\mathrm{D}_{\x}v(T-t,\wi \X_1)\right.\no\\&\left.\quad +\|\K^*\mathrm{D}_{\x}v(T-t,\wi \X_1)\|\left(\K^*\mathrm{D}_{\x}v(T-t,\wi \X_2)-\K^*\mathrm{D}_{\x}v(T-t,\wi \X_1)\right)\right\|_{\alpha-\frac{1}{2}}\|\wi\X\|_{\alpha+\frac{1}{2}}\no\\&\leq \frac{2R}{\|\K^*\mathrm{D}_{\x}v(T-t,\wi \X_2)\|}\|\mathrm{D}_{\x}v(T-t,\wi \X_1)-\mathrm{D}_{\x}v(T-t,\wi \X_2)\|_{-\alpha}\|\wi\X\|_{\alpha+\frac{1}{2}}\no\\&\leq  \frac{1}{2}\|\wi\X\|_{\alpha+\frac{1}{2}}^2+C_{\K}(\|\X_1\|_{\alpha},\|\X_2\|_{\alpha})  \|\wi\X\|_{\alpha}^2.
\end{align}
using \eqref{3.134}. Now for $\|\K^*\mathrm{D}_{\x}v(T-t,\wi \X_1)\|>  R$ and $\|\K^*\mathrm{D}_{\x}v(T-t,\wi \X_2)\|\leq  R$, we obtain 
\begin{align}\label{3.134b}
|I_2|&\leq \left\|-R\frac{\K^*\mathrm{D}_{\x}v(T-t,\wi \X_1)}{\|\K^*\mathrm{D}_{\x}v(T-t,\wi \X_1)\|}+\K^*\mathrm{D}_{\x}v(T-t,\wi \X_2)\right\|_{\al-\f2}\|\wi\X\|_{\al+\f2}\no\\&\leq \frac{1}{\|\K^*\mathrm{D}_{\x}v(T-t,\wi \X_1)\|}\left\| \|\K^*\mathrm{D}_{\x}v(T-t,\wi \X_1)\|\left(\K^*\mathrm{D}_{\x}v(T-t,\wi \X_2)-\K^*\mathrm{D}_{\x}v(T-t,\wi \X_1)\right)\right.\no\\&\left.\quad +\K^*\mathrm{D}_{\x}v(T-t,\wi \X_1)\left(\|\K^*\mathrm{D}_{\x}v(T-t,\wi \X_1)\|-R\right)\right\|_{\al-\f2} \|\wi\X\|_{\al+\f2}\no\\&\leq \left(\|\K^*\mathrm{D}_{\x}v(T-t,\wi \X_1)-\K^*\mathrm{D}_{\x}v(T-t,\wi \X_2)\|_{\al-\f2}+\|\K^*\mathrm{D}_{\x}v(T-t,\wi \X_1)\|-R\right)\|\wi\X\|_{\al+\f2}\no\\&\leq  \frac{1}{2}\|\wi\X\|_{\alpha+\frac{1}{2}}^2+C_{\K}(\|\X_1\|_{\alpha},\|\X_2\|_{\alpha})  \|\wi\X\|_{\alpha}^2.
\end{align}
using \eqref{3.134}. An estimate similar to \eqref{3.134b} can be obtained for $\|\K^*\mathrm{D}_{\x}v(T-t,\wi \X_1)\|\leq  R$ and $\|\K^*\mathrm{D}_{\x}v(T-t,\wi \X_2)\|>  R$. Now the equation (\ref{3.132}) can be estimated by the Gr\"onwall inequality as follows: 
\begin{align*}
\|\wi\X(t)\|^2_{\alpha}\leq \|\wi\X(0)\|^2_{\alpha}\exp\left(\int_0^tC\left(\|\wi \X_1(s)\|_{\alpha},\|\wi \X_2(s)\|_{\alpha}\right)\d t\right),
\end{align*}
for any $t\in[0,T)$.  Thus, if $\wi\X_1(0)=\wi\X_2(0)=\x$, then $\wi\X_1(t)=\wi\X_2(t)$ for all $t\in[0,T)$, $\mP$-a.s. If there is a jump at $t=T$, we can extend the analysis for some time $\wi T>T$ and can conclude the uniqueness for all $t\in[0,\wi T]$. Otherwise the uniqueness follows immediately. 

Moreover, $\wi\U(\cdot)$ is an optimal control. Indeed, from (\ref{3.122a}), it is clear that $$v(T,\x)\leq \mathcal{J}(0,T;\x,\U),\text{ for all }\U\in\mathrm{L}^2(\Omega;\mathrm{L}^2(0,T;\mH)).$$ Since $\wi\X(\cdot)$ is the solution corresponding to the control $\wi\U(t)={\mathcal G}( \mathrm{D}_{\x}v(T-t,\wi \X(t)))$, from (\ref{3.122a}) we also have $v(T,\x)=\mathcal{J}(0,T;\x,\wi\U)$. From the above inequality it is clear that $\wi\U(\cdot)$ is an optimal control and $(\wi\X,\wi\U)$ is an optimal pair.
\end{proof}

\medskip\noindent
{\bf Acknowledgments:}  M. T. Mohan would  like to thank the Department of Science and Technology (DST), India for Innovation in Science Pursuit for Inspired Research (INSPIRE) Faculty Award (IFA17-MA110) and Indian Institute of Technology Roorkee, for providing stimulating scientific environment and resources.

\begin{appendix}
\renewcommand{\thesection}{\Alph{section}}
\numberwithin{equation}{section}
\section{Dynamic Programming Principle} \label{DPP}
\setcounter{section}{1}
\setcounter{equation}{0}
\renewcommand{\thesection}{\Alph{section}} 

We briefly give  the key steps in deriving the stochastic Dynamic Programming Principle and  HJB equation.  Let us define 
\begin{align*}
\mathfrak{L}(\X,\U)=\|\text{curl } \X\|^2+\frac{1}{2}\|\U\|^2, \ \   \Psi(\X(T))=\|\X(T)\|^2, \ \
\F(\X,\U)=-\A\X+\B(\X)+\K\U.
\end{align*} 
Let $T>0$ be given. Then for  any $t\in[0,T),$ consider the problem of minimizing the cost functional 
\begin{align} \label{a1}
{\mathcal J}(t,T;\x,\U)= \mE_t\left[\int_t^T\mathfrak{L}(\X(s),\U(s))\d s+ \Psi(\X(T)) \right],
\end{align}
over all controls $\U\in \mU^{t,T}_R$  and  $\X(\cdot)$ satisfying   the following state equation  
\begin{equation} \label{a2}
\left\{
\begin{aligned}
\d  \X(s)&=\F(\X(s),\U(s))\d s+\A^{-\frac{\var}{2}}\d\W(s) +\int_{\mZ}\G(s,z)\wi{\uppi}(\d s,\d z),  \ s\in [t,T], \\
\X(t)&=\x, \ \x\in \mH.
\end{aligned} 
\right.
\end{equation}
Here $\mE_t[\X(s)]=\mE[\X(s) | \X(t)=\x]$, for $s\geq t$  and the admissible control set is defined as
\begin{equation*}
\mU^{t,T}_{R}=\Big\{\U\in \mathrm{L}^2(\Omega, \mathrm{L}^2(t,T;\mH)):\|\U(\cdot, s)\|\leq R,\mP\text{-a.s.,} \ \mbox{and}  \  \U \ \mbox{is adapted to } \mF_{t,s}\Big\},
\end{equation*}
where $\mF_{t,s}$ is the $\sigma$-algebra  generated by the paths of $\W$ and random measures $\uppi$ upto time $s,$ i.e.,  $\sigma\{\W(r); t\leq r\leq s\}$ and $\sigma\{\uppi(S); S\in \mB([t,s]\times\mZ)\}.$      The value function of the above control problem is defined as  
\begin{equation*}
\left\{
\begin{aligned}
\mathscr{V}(t,\x)&=\inf_{\U\in\mU^{t,T}_R} {\mathcal J}(t,T;\x,\U),   \text{ for all }t\in[0,T), \ \x\in \mH, \\ 
\mathscr{V}(T,\x)&=\|\x\|^2.
\end{aligned} 
\right.
\end{equation*}
Following the dynamic programming strategy,  any admissible control  $\U\in\mU^{t,T}_R$ is the combination of  controls in $ \mU^{t,\tau}_R$ and $ \mU^{\tau,T}_R$ for any $0\leq t< \tau< T.$ More precisely,  suppose the processes $\U_1(s)$ and $\U_2(s)$ be the restriction of the control process $\U(s)$ to the time intervals $[t,\tau] $ and $[\tau,T] $  respectively, i.e.,
\begin{equation*}
\U(s)= (\U_1\oplus\U_2)(s)
=\left\{
\begin{array}{ll}
\U_t(s),& s\in[t,\tau], \\
\U_\tau(s),& s\in[\tau, T]. \\
\end{array}\right.
\end{equation*}
Accordingly, the  admissible control set is written as $\mU^{t,T}_R=\mU^{t,\tau}_R\oplus \mU^{\tau,T}_R.$ Note also that the controls $\U_1(s)$ and $ \U_2(s)$ are adapted to $\mF_{t,s}$ and $\mF_{\tau,s}$ respectively. 

The system state $\X(\cdot)$ is determined by \eqref{a2} with $\U(s)=(\U_1\oplus\U_2)(s)\in \mU^{t,T}_R.$  We decompose the system state as $\X(s)=(\X_1\oplus  \X_2)(s),$ where $\X_1$ and $\X_2$ satisfy 
\begin{equation}\label{a4}
\left\{
\begin{aligned}
\d  \X_1(s)&=\F(\X_1(s),\U_1(s))\d s+\A^{-\frac{\var}{2}}\d\W(s) +\int_{\mZ}\G(s,z)\wi{\uppi}(\d s,\d z),  \ s\in [t,\tau], \\
\X_1(t)&=\X(t)=\x\in \mH, 
\end{aligned}
\right.
\end{equation}
and 
\begin{equation} \label{a5}
\left\{
\begin{aligned}
\d  \X_2(s)&=\F(\X_2(s),\U_2(s))\d s+\A^{-\frac{\var}{2}}\d\W(s) +\int_{\mZ}\G(s,z)\wi{\uppi}(\d s,\d z),  \ s\in [\tau,T],\\
\X_2(\tau)&=\X_1(\tau)=\X(\tau).
\end{aligned}
\right.
\end{equation} 
By the tower property of conditional expectation 
$$\mE_t\Big[\mE_t\big(\mathfrak{L}(\X_2(s),\U_2(s))\ |  \mF_{t,\tau}\big)\Big] =\mE_t\big[\mathfrak{L}(\X_2(s),\U_2(s))\big], \tau\leq s\leq T, $$   we write
\begin{align*} 
\mathscr{V}(t,\x)&= \inf_{\U\in\mU^{t,T}_R }\mE_t\left\{\int_t^\tau\mathfrak{L}(\X(s),\U(s))\d s + \int_\tau^T\mathfrak{L}(\X(s),\U(s))\d s+\Psi(\X(T))\right\}  \\ 
&= \inf_{\substack{\U_1\in\mU^{t,\tau}_R, \U_2\in\mU^{\tau,T}_R \\  \X_2(\tau)=\X_1(\tau) }} \mE_t\left\{ \int_t^\tau\mathfrak{L}(\X_1(s),\U_1(s))\d s \right.\\
&\left. \quad+ \mE_t\left[ \int_\tau^T\mathfrak{L}(\X_2(s),\U_2(s))\d s+\Psi(\X_2(T))\big|\mF_{t,\tau}\right]\right \}. 
\end{align*}
Using the Markovian property of the process $\X(\cdot)$, one can get 
for $\tau\leq s\leq T,$   $$\mE_t\Big[\mathfrak{L}(\X_2(s),\U_2(s))\ |  \mF_{t,\tau}\Big] =\mE_{\tau}\Big[\mathfrak{L}(\X_2(s),\U_2(s))\ | \ \X_2(\tau)=\X(\tau) \Big]$$ and the same reasoning is true for $\Psi(\X_2(T))$ as well. It leads to  
\begin{align} \label{a21}
{\mathcal J}(\tau,T;\X_2(\tau),\U_2)= \mE_t\left\{\int_\tau^T\mathfrak{L}(\X_2(s),\U_2(s))\d s+\Psi(\X_2(T)) \ \Big | \mF_{t,\tau}\right\}.
\end{align} Besides, for more details on the case of continuous diffusion, one can  refer to \cite{Y} (and also \cite{G}) and that can be modified to this case. Hence
\begin{align}\label{a3}
\mathscr{V}(t,\x)&=  \inf_{\U_1\in\mU^{t,\tau}_R}  \mE_t\left\{\int_t^\tau\mathfrak{L}(\X_1(s),\U_1(s))\d s\right\}\no  \\
&\quad+ \mE_t\left\{\inf_{\substack{\U_2\in\mU^{\tau,T}_R \\ \X_2(\tau)=\X_1(\tau) }}\mE_t\left[\int_\tau^T\mathfrak{L}(\X_2(s),\U_2(s))\d s+\Psi(\X_2(T))\Big| \mF_{t,\tau}\right]\right\} \no \\ 
&= \inf_{\U_1\in\mU^{t,\tau}_R}  \mE_t\left\{\int_t^\tau\mathfrak{L}(\X_1(s),\U_1(s))\d s  +\mathscr{V}(\tau,\X_1(\tau)) \right\}. 
\end{align}
Thus, we have proved the following  DPP {(or \it Bellman's principle of optimality)}
\begin{align} \label{a6}
\mathscr{V}(t,\x)=\inf_{\U\in\mU^{t,\tau}_R}  \mE\left\{\int_t^\tau\mathfrak{L}(\X(s),\U(s))\d s  +\mathscr{V}(\tau,\X(\tau)) \big| \X(t)=\x\right\}.
\end{align}

\subsection{Dynamic Programming Equation} 

Rewrite \eqref{a6} as follows
\begin{align}\label{a7}
\inf_{\U\in\mU^{t,\tau}_R}  \mE\left\{\int_t^\tau\mathfrak{L}(\X(s),\U(s))ds  +\mathscr{V}(\tau,\X(\tau)) - \mathscr{V}(t,\x) \big| \X(t)=\x\right\}=0.
\end{align}
Now the HJB equation can be formally obtained by applying the  It\^o formula for $\mathscr{V}(\tau,\X(\tau)) - \mathscr{V}(t,\x).$ 

\begin{Rem}[It\^o Formula]
	For any $t\geq 0$ and   $\Phi\in \C_b^{1,2}([0,T];\mathcal{D}(\mathscr{A}^{\U})),$ the following relation holds (in fact,  $\Phi$ could be a function such that whose Gate\^aux derivatives $\mathrm{D}_{\x}\Phi$ and $\mathrm{D}^2_\x\Phi$ are H\"older continuous with  exponent $\delta=1$, see, \cite{Me}): 
	\begin{align} \label{a8}
	\Phi(\tau,\X(\tau))-\Phi(t,\x)=\int_t^\tau\big[\mathrm{D}_s\Phi(s,\X(s))+\mA^{\U}\Phi(s,\X(s))\big]\d s + \M_\tau,  \ \mP{\text -a.s.},
	\end{align}
	where $\mA^{\U}\Phi$ is the second order partial integrodifferential operator
	\begin{align*}
	\mA^{\U}\Phi(s,\X(s))& = \la \mathrm{D}_{\x}\Phi(s,\X(s)), \F(\X(s),\U(s))\ra +\frac{1}{2} \tr(\A^{-\var} \mathrm{D}^2_\x\Phi(s,\X(s)) \\
	&\quad+\iZ\big[\Phi(s,\X(s)+\G(s,z))-\Phi(s,\X(s))-\la \G(s,z),\mathrm{D}_{\x}\Phi(s,\X(s))\ra\big]\mu(\d z)
	\end{align*}
	and $\M_{\tau}$ is the martingale given by
	\begin{align*}
	\M_\tau=&\int_t^\tau\la  \mathrm{D}_{\x}\Phi(s,\X(s)),\A^{-\frac{\var}{2}}\d\W(s)\ra \\
	&+ \int_t^\tau\iZ \big[\Phi(s,\X(s-)+\G(s,z))-\Phi(s,\X(s-))\big]\wi{\uppi}(\d s,\d z).
	\end{align*}
\end{Rem}
Taking  $\Phi(\tau,\X(\tau))=\mathscr{V}(\tau,\X(\tau))$ (of course by assuming required smoothness and boundedness on $\mathscr{V}$)  and plug this back into \eqref{a7}, and use the fact that martingale $\M_{\tau}$ has a zero mean to obtain the following
\begin{equation}\label{a9}
\inf_{\U\in\mU^{t,\tau}_R}  \mE\left\{\int_t^\tau\big[\mathfrak{L}(\X(s),\U(s))+\mathrm{D}_s\mathscr{V}(s,\X(s))+\mA^{\U}\mathscr{V}(s,\X(s))\big]\d s \big| \X(t)=\x\right\}=0.
\end{equation}
Finally, take $\tau=t+h, h>0$ and divide by $h.$  Passing $h\to 0$ and using the conditional expectation, we formally obtain the HJB equation 
\begin{equation}\label{a10}
\left\{
\begin{aligned}
&\mathrm{D}_t\mathscr{V}(t,\x)+\inf_{\U\in\mathfrak{U}} \big\{\mA^{\U}\mathscr{V}(t,\x) +\mathfrak{L}(\x,\U(t)) \big\}=0, \ \ t\in[0,T), \\
&\mathscr{V}(T,\x)=\|\x\|^2,  \ \x\in \mH.
\end{aligned}
\right.
\end{equation}
Moreover, setting $v(t,\x)=\mathscr{V}(T-t,\x),$ one can obtain the following initial value problem
\begin{equation}\label{a11}
\left\{
\begin{aligned} 
\mathrm{D}_tv(t,\x)=&  \mathscr{H}(\x,t,\mathrm{D}_{\x}v,\mathrm{D}^2_\x v), \ \  \ t\in (0,T), \\
v(0,\x)=& \|\x\|^2, \ \x\in \mH, \no
\end{aligned} 
\right.\end{equation}
where $\mathscr{H}$ is given by
\begin{align*}
\mathscr{H}(\x,t,\mathrm{D}_{\x}v,\mathrm{D}^2_\x v) =&\frac{1}{2}\tr(\A^{-\var}\mathrm{D}_{\x}^2v)+\la - \A \x+\B(\x),\mathrm{D}_{\x}v\ra  \\
&+\iZ\big[v(t,\x+\G(t,z))-v(t,\x)-\la \G(t,z),\mathrm{D}_{\x}v\ra\big]\mu(\d z) \\
&+\|\text{curl } \x\|^2+\inf_{\U\in\mathfrak{U}}\left\{\la \U(t),\mathrm{D}_{\x}v\ra+\frac{1}{2}\|\U(t)\|^2\right\}.
\end{align*}  

\end{appendix}

\end{document}